\newtheorem{theorem}{Theorem}[section]
\newtheorem{proposition}[theorem]{Proposition}
\newtheorem{lemma}[theorem]{Lemma}
\newtheorem{corollary}[theorem]{Corollary}
\theoremstyle{definition}
\newtheorem{definition}[theorem]{Definition}
\newtheorem{example}[theorem]{Example}
\newcounter{smalllist}
\DeclareMathOperator{\diam}{diam}
\numberwithin{equation}{section}
\newcommand{\lb}{\label}
\newcommand{\beq}{\begin{equation}}
\newcommand{\eeq}{\end{equation}}
\newcommand{\bal}{\begin{align}}
\newcommand{\eal}{\end{align}}
\newcommand{\bals}{\begin{align*}}
\newcommand{\eals}{\end{align*}}
\newcommand{\bbN}{{\mathbb{N}}}
\newcommand{\bbR}{{\mathbb{R}}}
\newcommand{\bbP}{{\mathbb{P}}}
\newcommand{\bbE}{{\mathbb{E}}}
\newcommand{\bbZ}{{\mathbb{Z}}}
\newcommand{\bbQ}{{\mathbb{Q}}}
\newcommand{\bbS}{{\mathbb{S}}}
\newcommand{\calE}{{\mathcal E}}
\newcommand{\calS}{{\mathcal S}}
\newcommand{\calC}{{\mathcal C}}
\newcommand{\calF}{{\mathcal F}}
\newcommand{\calH}{{\mathcal H}}
\newcommand{\calK}{{\mathcal K}}
\newcommand{\calG}{{\mathcal G}}
\newcommand{\calU}{{\mathcal U}}
\newcommand{\eps}{\varepsilon}
\newcommand{\al}{\alpha}
\newcommand{\be}{\beta}
\newcommand{\izero}{\iota}
\begin{document}
\title[Long Time Dynamics for Combustion in Random Media]
{Long Time Dynamics for Combustion in Random Media}

\author{Yuming Paul Zhang and Andrej Zlato\v s}

\address{\noindent Department of Mathematics \\ University of
California San Diego \\ La Jolla, CA 92093 \newline Email: \tt
zlatos@ucsd.edu}

\address{\noindent Department of Mathematics \\ University of
California San Diego \\ La Jolla, CA 92093 \newline Email: \tt
yzhangpaul@ucsd.edu}


\begin{abstract} 
We study long time dynamics of combustive processes in random media, modeled by reaction-diffusion equations with random ignition reactions.  One expects that under reasonable hypotheses on the randomness, large space-time scale dynamics of solutions to these equations  is almost surely governed by a different effective PDE, which should be a homogeneous Hamilton-Jacobi equation.  While this was previously proved in one dimension as well as for isotropic reactions  in several dimensions (i.e., with radially symmetric laws), we provide here the first proof of this phenomenon in the general non-isotropic multidimensional setting.  Our results hold for reactions that have finite ranges of dependence (i.e., their values are independent at sufficiently distant points in space) as well as for 
some with
infinite ranges of dependence, and are based on proving existence of deterministic front (propagation) speeds in all directions for these reactions.
\end{abstract}

\maketitle

\section{Introduction} 

The reaction-diffusion equation
\beq\lb{1.1}
u_t=\Delta u+f(x,u,\omega),
\eeq
with $(t,x)\in (0,\infty)\times \bbR^d$ and $\omega$ an element of some probability space $(\Omega,\calF,\bbP)$, models a host of physical phenomena occurring in random media.  These phenomena all exhibit diffusion, modeled by the Laplacian, as well as some reactive process, modeled by the non-linear {\it reaction function} $f$.  The nature of the latter process determines the behavior of $f$ in the variable $u$, which models the property under study and will take values between its minimum and maximum, customarily normalized to be 0 and 1.

When $u=0$ is an unstable equilibrium for the $(x,\omega)$-dependent ODE $\dot u=f(x,u,\omega)$ and $u=1$ a stable one (e.g., when $f>0$ for $u\in(0,1)$), the reaction is of the {\it monostable type}.  A special case of this are the {\it Kolmogorov-Petrovskii-Piskunov (KPP)} or {\it Fisher-KPP type}  reactions \cite{Fisher, KPP}, for which the growth rate  $u^{-1}f(x,u,\omega)$ of the reactive process is largest near $u=0$ for each $(x,\omega)$ (e.g., in the case of logistic growth functions $f(x,u,\omega)=g(x,\omega)u(1-u)$).  These reactions are used in, for instance, population dynamics models, with $u$ being the normalized population density and $u^{-1}f(x,u,\omega)$ the sum of the birth and death rates.
When both $u=0$ and $u=1$ are asymptotically stable equilibria for $\dot u=f(x,u,\omega)$ (e.g., when $f(x,u,\omega)=g(x,\omega)u(1-u)(u-h(x,\omega))$ with $h(x,\omega)\in(0,1)$), the reaction is of the {\it bistable type}, used in modeling   phase transition processes.

In this paper we will consider the third main type of reactions, modeling various combustive processes, including forest fires.  Here $u$ is the normalized temperature and $f$ vanishes for all $u$ below some possibly $(x,\omega)$-dependent ignition temperature (so $u=0$ is typically a stable but not asymptotically stable equilibrium), which is why these reactions are of the {\it ignition type}.  Our interest is in the long term dynamics of solutions to \eqref{1.1}.  The PDE typically exhibits ballistic propagation of solutions, which means that the state $u\sim 1$ invades the region where initially $u\sim 0$ at a linear-in-time rate.    
If the medium is sufficiently random, one expects this invasion to acquire a deterministic  asymptotic speed as $t\to\infty$, which may depend on the invading direction but not on the position (or $\omega$), due to averaging of the variations in the medium over long distances.

This phenomenon is called {\it homogenization}, because over large space-time scales, solutions behave as if the medium were possibly non-isotropic but homogeneous (i.e., direction- but not position-dependent).  One can study solutions on these scales by rescaling them via the transformation
\beq\lb{1.3}
u_\eps(t,x,\omega):=u\left({\eps}^{-1} t,{\eps}^{-1} x,\omega\right),
\eeq
with $\eps>0$ small, which  turns \eqref{1.1} into
\beq\lb{1.4}
( u_\eps)_t=\eps\Delta u_\eps+{\eps}^{-1}f\left({\eps}^{-1} x, u_\eps,\omega\right).
\eeq
If we now take $\eps\to 0$, the hope is to recover some (almost surely) $\omega$-independent limit $u_\eps\to\bar u$, in an appropriate sense and for appropriate initial data $u_\eps(0,\cdot,\omega)$, that should ideally also satisfy some limiting {\it effective PDE}.

However, unlike in typical homogenization scenarios, the limiting PDE for reaction-diffusion equations cannot be another reaction-diffusion equation, or even another second order parabolic PDE.  The reason for this is that one expects solutions to exhibit uniformly bounded in time width of the regions where transition between values $u\sim 0$ and $u\sim 1$ happens, which means that this width becomes zero in the scaling from \eqref{1.3} as $\eps\to 0$, and any limiting function $\bar u$ takes only values 0 and 1.  For instance, in the homogeneous deterministic reaction case $f(x,u,\omega)=f(u)$, the simplest solutions are {\it traveling fronts}, which are of the form $u(t,x)=U(x\cdot e-ct)$ for some vector $e\in\bbS^{d-1}$, where the front profile and speed $(U,c)$ solve the ODE $U''+cU'+f(U)=0$ with boundary values $U(-\infty)=1$ and $U(\infty)=0$.  Clearly, the region where $u(t,\cdot)\in[\eta,1-\eta]$ for any fixed $\eta>0$ is a slab of a constant-in-$t$ width that shrinks to zero as we take $\eps\to 0$ in \eqref{1.3}.  But then the limiting solution will be the (discontinuous) characteristic function of the half-space-time $\{x\cdot e<ct\}$,
which does not solve a second order parabolic PDE.

This suggests that any effective equation should be of the first order, with any limiting function $\bar u$ being its discontinuous solution, taking only values 0 and 1.  The expectation of the effective (asymptotic) propagation speeds being direction- but not position-dependent then suggests that the effective PDE should be the {\it Hamilton-Jacobi equation}
\beq\lb{1.5}
\bar{u}_t = c^*\left(- \frac{\nabla \overline{u}}{|\nabla \overline{u}|} \right)|\nabla \bar{u}|,
\eeq
with $c^*(e)$ being the $(x,\omega)$-independent effective propagation speed in direction $e\in \bbS^{d-1}$.  Moreover, the traveling front solutions above suggest that in the deterministic homogeneous reaction case, the speed $c^*(e)$ should be precisely the traveling front speed $c$ (which is also direction-independent in that case).  One may therefore hope that in the general random case, it is also possible to find some front-like solutions in all directions $e\in \bbS^{d-1}$, and that each of these has an associated speed $c^*(e)$ in some sense.

Unfortunately, there are some serious obstacles to realizing this hope.  The first is that its basic premise, that the width of the transition region where $u(t,\cdot,\omega)\in[\eta,1-\eta]$ stays uniformly bounded  in time (or at least $o(t)$) for any fixed $\eta>0$, may not be true in some media.  The second author in fact showed that this need not happen for bistable reactions, even for periodic ones in one dimension \cite{ZlaBist}, where solutions can develop linearly-in-time growing intervals on which they are close to periodic functions with values strictly away from 0 and 1.  As a result, there may be no analog of a traveling front for such reactions, and hence no homogenization as described above.   

Recalling pictures of forest fires, which are usually actively burning only along the margins of the already burnt area, one may hope that such issues do not occur for ignition reactions.  The second author showed that this is indeed the case in dimensions $d\le 3$ \cite{ZlaInhomog}, where the widths of the transition regions (properly defined, as these regions may have  complicated geometries in heterogeneous media; see \eqref{d.2.1} below) indeed remain uniformly bounded in time, by constants depending on $\eta$ above and some bounds on the reaction.  However, he also showed in \cite{ZlaInhomog} that this need not be the case in dimensions $d\ge 4$, where these widths may grow linearly in time as in the above bistable example.  Nevertheless, the relevant examples have a special structure and it is not clear to what extent they indicate possible almost sure behaviors of solutions for various stochastic reactions (in particular, those with finite ranges of spatial dependence).

All this demonstrates the difficulties associated with even the question whether solutions to \eqref{1.1} have some basic properties required for one to be able to initiate the study of homogenization for \eqref{1.1}.  This is the reason for relatively little progress in this area, until recently, particularly in the multi-dimensional case $d\ge 2$.  In the one-dimensional setting, there are only two directions of propagation of solutions, and homogenization simply refers to showing that solutions starting from large enough compactly supported initial data propagate almost surely with some deterministic asymptotic speeds $c_+$ (to the right) and $c_-$ (to the left).  Moreover,  the transition regions (which are intervals) have trivial geometries.  This allowed several authors to obtain such ``homogenization'' results in this setting for all three types of stationary ergodic reactions --- KPP \cite{GF}, ignition \cite{NolRyz, ZlaGenfronts}, and bistable \cite{NolRyz, VakVol, ZlaBist} --- although with some non-trivial limitations in the latter case, due to the counterexamples from \cite{ZlaBist} mentioned above.  There are also a number of 1D and quasi-1D results concerning related models and/or periodic reactions, which we do not discuss here.

Once we move to higher dimensions, the geometry of the level sets of solutions becomes much more complicated, and relatively little is known.  One previous  result appears in the paper \cite{LioSou} by Lions and Souganidis, which studies homogenization for viscous Hamilton-Jacobi equations.   Their Theorem 9.3 states that homogenization also holds for general stationary ergodic KPP reactions in any dimension.  (While it is indicated in \cite{LioSou} that its proof can be obtained via methods from \cite{LioSou,38majda1994} and two other papers, a proof is not provided there.)
The reason why Hamilton-Jacobi homogenization techniques should be applicable to KPP reaction-diffusion equations is that the dynamics of solutions for these reactions is determined, to the leading order, by the linearization of \eqref{1.1} (i.e., of $f$) at $u=0$.   This linear PDE can then be turned into a viscous Hamilton-Jacobi equation with a convex Hamiltonian  via the Hopf-Cole transformation.  

This linearization approach can only work for KPP reactions, and is not applicable to other types, including other monostable ones.  In particular, it cannot be used in ignition-reaction-based models of combustion, where one has to work with the original non-linear PDE.  Because of this complication, so far there has only been a single result proving homogenization for (non-KPP) stationary ergodic reactions in several dimensions.  This is a conditional result by Lin and the second author \cite{zlatos2019}, who proved homogenization for ignition reactions whose Wulff shapes exist and have no corners (a Wulff shape for \eqref{1.1}, if it exists, is an open set $\calS\subseteq\bbR^d$ such that solutions starting from any large enough compactly supported initial data converge to $\chi_\calS$ as $t\to\infty$, after being scaled down by $t$ in space).  They also showed that these properties hold for isotropic ignition reactions in dimensions $d\le 3$, with  the dimension limitation being used to show that  the Wulff shape exists (recall the above-mentioned examples of solutions with linearly growing widths of transition regions  in dimensions $d\ge 4$ from \cite{ZlaInhomog})
and  isotropy then guaranteeing that the Wulff shape is a (corner-less) ball centered at the origin.  We also note that it follows from a result of Caffarelli, Lee, and Mellet \cite{caffarelli2006} that  Wulff shapes can have corners, even for periodic ignition reactions in two dimensions.

In fact, even homogenization for periodic  reactions in several dimensions has seen fairly limited progress until recently, despite many results concerning existence of pulsating fronts and Wulff shapes for such reactions (see \cite{16berestycki2002, zlatos2019, weinberger2002, 49xin1992} and references therein).   While Theorem~9.3 in \cite{LioSou} applies to periodic KPP  reactions (and is based in part on methods from \cite{38majda1994}, applicable to KPP reactions in periodic media),  homogenization for  periodic non-KPP  reactions in several dimensions has only recently been obtained for ignition reactions as a byproduct of the method in \cite{zlatos2019}, as well as  for  monostable reactions  by Alfaro and Giletti \cite{AlfGil} (for initial data with smooth convex supports, later extended to general convex supports in \cite{zlatos2019}).

In this paper we prove for the first time unconditional stochastic homogenization for ignition reactions, without assuming the reaction to be isotropic.  Our Theorems \ref{T.1.1} and \ref{T.1.2} below are valid for random pure ignition reactions (see Definition \ref{D.2}) in dimensions $d\le 3$ that either have a finite range of dependence (see Definition \ref{D.1}) or can be uniformly approximated by such reactions.  We also extend these results in Theorems \ref{T.1.3} and \ref{T.1.7} to ignition reactions in any dimension, provided some a priori assumptions on the dynamics of certain special solutions to \eqref{1.1} are satisfied.  

Our proof uses a result from \cite{zlatos2019}, which shows that to prove homogenization, it suffices to show that the above-mentioned propagation speeds $c^*(e)$ (called {\it front speeds}) exist for all directions $e\in\bbS^{d-1}$, are almost-surely $\omega$-independent, and also {\it exclusive} (see Definition \ref{D.5.0}).  This is, however, a difficult problem in general, and \cite{zlatos2019} was only able to show existence of a deterministic front speed in direction $e$ when the reaction has a Wulff shape with outer normal vector $e$ at some point (this is where the absence of corners is needed), because then the expanding Wulff shape can be used at large times to locally approximate a front-like solution propagating in direction $e$.

To show existence of deterministic front speeds, we apply a method modeled on the one employed by Armstrong and Cardaliaguet \cite{armstrong2015} in their proof of homogenization for Hamilton-Jacobi equations with $\alpha$-homogeneous (for $\alpha\ge 1$) non-convex (in $\nabla u$) Hamiltonians with finite ranges of dependence.  This was the first proof of stochastic Hamilton-Jacobi homogenization for non-convex Hamiltonians in several dimensions without special structural hypotheses (such as $H(x,\nabla u,\omega)=H(\nabla u)+V(x,\omega)$).  While there are many homogenization results for convex and level-set-convex Hamiltonians, including in the paper \cite{armstrong2014} by Armstrong, Cardaliaguet, and Souganidis where the method used in \cite{armstrong2015} originated, non-convexity of the Hamiltonian presents serious issues.  In fact, similarly to our reaction-diffusion setting, there are examples when homogenization  does not happen  for Hamilton-Jacobi equations with stationary ergodic non-convex  Hamiltonians \cite{FelSou, Ziliotto}, even in one dimension.
The approach in \cite{armstrong2015} overcomes these problems by leveraging the finite range of dependence hypothesis (which is akin to an i.i.d.~medium setting) and the resulting mixing properties of the environment to obtain strong quantitative estimates on the solutions where a soft approach via ergodic theorems does not appear to work.
%
These estimates involve
 fluctuations of the values of solutions to the so-called {\it metric problem} for any compact set $S\subseteq\bbR^d$  (which is an appropriate time-independent Hamilton-Jacobi PDE on $\bbR^d\setminus S$) with a smooth enough boundary.  These estimates improve at an exponential rate as the distance from $S$ increases, and were then upgraded to similar estimates for $S$ being any half-space.  
 
 Here we apply this strategy to reaction-diffusion equations, with the relevant estimates involving fluctuations of ``arrival times'' at any point $x\in\bbR^d$ for solutions initially approximating $\chi_S$ (we only need to consider $S=B_k(0)$ for any $k\in\bbN$).  We still obtain an exponentially-in-$d(x,S)$ decaying estimate (see Proposition \ref{P.3.4} below), albeit at a slower rate.  However, we are also able to extend it to some reactions with infinite ranges of dependence (see Proposition \ref{P.3.1}) by carefully tracking the dependence of this rate on the  range of dependence of $f$ when the latter is finite, something that was described in \cite{armstrong2015} as completely open in the Hamilton-Jacobi setting (and appears to remain such at this time)!  
 
 After we upgrade this estimate  from balls to half-spaces, we are able to  prove existence of deterministic exclusive front speeds in all directions, and thus homogenization after using results from \cite{zlatos2019}.
  We note that while the effective equations in Hamilton-Jacobi homogenization are still Hamilton-Jacobi PDE (although some of their terms can disappear in the homogenization process), and the limiting functions are their  continuous solutions, our limiting functions are {\it discontinuous} viscosity solutions to \eqref{1.5}, which causes extra difficulties in the analysis.  For a more thorough discussion of similarities and differences between Hamilton-Jacobi homogenization for non-convex Hamiltonians and reaction-diffusion homogenization, as well as for further references, we refer the reader to the introduction of \cite{zlatos2019}. 

\subsection{Hypotheses and Main Results}  
Let us now turn to our main results.  Our goal is to show that as $\eps\to 0$, solutions to \eqref{1.4} with initial data approximating $\chi_A$ for any open set $A\subseteq\bbR^d$ converge to the unique (discontinuous viscosity) solution to \eqref{1.5} with initial data $\chi_A$.  Here, of course, $c^*(e)$ are the deterministic front speeds discussed above, and establishing their existence forms the bulk of our work.

One can show that if $c^*:\bbS^{d-1}\to(0,\infty)$ is Lipschitz (which will be our case), then
for any open $A\subseteq\bbR^d$, there is an open set $\Theta^{A, c^*}\subseteq (0,\infty)\times\bbR^d$ such that  the unique 
solution to \eqref{1.5} with initial data $\chi_A$ is $\bar u:=\chi_{\Theta^{A, c^*}}$.  In fact, this set can also be found from the formula
\beq \lb{1.11}
\Theta^{A, c^*}:= \big\{ (t,x)\in(0,\infty)\times\bbR^d \,\big|\, v(t,x)>0 \big\},
\end{equation}
where $v_{0}:\bbR^d\to\bbR$  is any Lipschitz function satisfying $v_0>0$ on $A$ and $v_0<0$ on $\bbR^d\setminus  \overline A$, and $v$ is the unique (continuous) viscosity solution  to \eqref{1.5} with $v(0,\cdot)=v_{0}$.  The open set $\Theta^{A, c^{*}}$ is then independent of the choice of $v_0$ as above, and $\partial \Theta^{A, c^{*}}$ has zero measure.    

All these claims are contained in Theorem 5.3 in \cite{zlatos2019}, which is a
combination of results by Barles, Soner, and Souganidis \cite{13barles1993}, Crandall, Ishii, and Lions \cite{crandall1992user}, Souganidis \cite{47souganidis}, and Soravia \cite{soravia1994generalized}.  The reader can also consult Definition 5.1 in \cite{zlatos2019} for the definition of viscosity solutions to initial value problems for \eqref{1.5}.  

We also note that it was shown in the proof of Theorem 1.4(iii) in \cite{zlatos2019} that for any convex open $A\subseteq\bbR^d$ we have the explicit formula
\[ 
\Theta^{A, c^*}= \bigcap_{e\in\bbS^{d-1}} \left\{ (t,x)\in(0,\infty)\times\bbR^d\,\bigg|\, x\cdot e < \sup_{y\in\partial A} y\cdot e+c^*(e)t\,\right\}.
\]
In particular, if $A=\{x\in\bbR^d\,|\,x\cdot e<0\}$ is the half-space with outer normal $e$, then we obviously have $\Theta^{A, c^*}=\{(t,x)\in(0,\infty)\times\bbR^d\,|\,x\cdot e< c^*(e)t\}$.  This also shows that if we let $(\Theta^{A, c^*})_t$ be the spatial slice of $\Theta^{A, c^*}$ at the time $t>0$, then for any open bounded $A$ we have
\[
\lim_{t\to\infty} \frac { (\Theta^{A, c^*})_t}t =  \bigcap_{e\in\bbS^{d-1}} \left\{ y\in \bbR^d\,\big|\, y\cdot e < c^*(e)\,\right\}
\]
(e.g., in the sense of Hausdorff distances of boundaries of sets).  Hence the set on the right-hand side is the {\it Wulff shape} for \eqref{1.5}, and therefore also for \eqref{1.1} if homogenization holds.  

We will consider here stationary ignition reactions that either have finite ranges of dependence, or can be uniformly approximated by such reactions (see Example \ref{E.1.5} below for a simple example of the latter).  These properties are summarized in the following definition and in hypothesis \textbf{(H1)} below.


\begin{definition}\lb{D.1}
Consider a probability space $(\Omega,\calF,\bbP)$ that is endowed with a group of measure-preserving bijections $\{{\Upsilon_y:\Omega\to\Omega}\}_{y\in\bbR^d}$ such that for all $y,z\in \bbR^d$ we have
\[
\Upsilon_y\circ\Upsilon_z=\Upsilon_{y+z}.
\]
A reaction function $f:\bbR^d\times [0,1]\times\Omega\to [0,\infty)$, uniformly continuous in the first two arguments and with the random variables $X_{x,u} :=f(x,u,\cdot)$ being $\calF$-measurable for all $(x,u)\in \bbR^d\times [0,1]$, is called \textit{stationary} if for each $(x,y,u,\omega)\in \bbR^{2d}\times [0,1]\times\Omega$ we have
\[
f(x,u,\Upsilon_y\omega) = f(x+y,u,\omega).
\] 
The \textit{range of dependence} of such  $f$ is the infimum of all $r\in\bbR^+\cup\{\infty\}$ such that 
\[
\calE(U)\text{ and }\calE(V) \text{ are $\bbP$-independent}
\]
for any $U,V\subseteq \bbR^d$ with $d(U,V)\geq r$,
where $\calE(U)$ is the $\sigma$-algebra generated by 
the family of random variables 
$
\{X_{x,u}  \,|\, (x,u)\in U\times [0,1]\}.
$
\end{definition}

\noindent {\it Remark.}
While stationary reactions with finite ranges of dependence are also stationary ergodic, we will not need to use this property here due to our quantitative approach.  We note that although the main results in \cite{zlatos2019} apply to stationary ergodic reactions, that assumption is only needed to prove that all the deterministic (exclusive) front speeds for \eqref{1.1} exist and are strong (see Definition \ref{D.5.0} below),
which we instead  prove in Sections \ref{S3}--\ref{S6}.
\smallskip


We will consider here stationary reaction functions $f:\bbR^d\times [0,1]\times\Omega\to [0,\infty)$, and extend them to $\bbR^d\times \bbR\times\Omega$ by $0$ whenever we need to evaluate them with $u\notin[0,1]$.  Additionally, our reactions will be of the {\it ignition type}.  That is, we will assume the following hypothesis. 

\smallskip

\begin{itemize}
    \item[\textbf{(H1)}] The reaction  $f$ is stationary, Lipschitz in both $x$ and $u$ with constant ${M}\geq 1$, 
    and there are $\theta_1\in (0,\frac 12)$, $m_1>1$, and $\al_1>0$ such that
    $f(\cdot,u,\cdot)\equiv 0$ for $u\in [0,\theta_1]\cup\{1\}$, $f(\cdot,u,\cdot)\ge \al_1 (1-u)^{m_1}$ for $u\in [1-\theta_1,1)$, and  $f$ is non-increasing in $u\in [1-\theta_1,1)$.
%
%
%
\end{itemize}
 \smallskip
    




It is not difficult to see that one cannot hope for general reactions satisfying \textbf{(H1)} to lead to homogenization for  \eqref{1.1}, even if $f$ is independent of $(x,\omega)$  (see, e.g.,  \cite{ZlaInhomog, ZlaBist}).  Indeed, if $f$ is allowed to vanish at some intermediate value $\theta'\in(\theta_1,1-\theta_1)$ and is also sufficiently large for some $u\in(\theta_1,\theta')$, solutions could easily form ``plateaus'' with values near $\theta'$ (or another intermediate value) whose widths grow linearly in time.  And if that happens, the widths of these plateaus will not vanish even after the scaling from \eqref{1.3} is applied.

To avoid this scenario, one should assume that as the argument $u$ grows from 0 to 1 (for any fixed $(x,\omega)$),  the reaction $f$ cannot become arbitrarily small  (except near $u=1$) once it has become large enough.  This is expressed in Definition \ref{D.1.2} below, which was used in \cite{ZlaInhomog} to show that not only solutions to \eqref{1.1} do not develop such plateaus, but the transition from values $u\sim 0$ to values $u\sim 1$ in fact occurs over uniformly-in-time bounded distances in space (see, e.g.,  Lemma \ref{T.2.4} below).  Our most general results apply in this setting, as well as when one instead only assumes at most $O(t^\alpha)$ growth of the above transition distances, with $\alpha<1$
(see hypothesis \textbf{(H2')} below).

However,  for the sake of simplicity, in our first two results we will consider the case where the reaction does not become arbitrarily small (except near $u=1$) after it has become just {\it positive}.  That is, once $u$ has exceeded the {\it ignition temperature} 
\[
\theta_{x,\omega} := \sup \{\theta\ge 0 \,|\, f(x,u,\omega)=0 \text{ for all $u\in[0,\theta]$}\} \qquad (\in [\theta_1,1-\theta_1)).
\]
Of course, this is the case for any realistic model of combustion, where the reaction rate is positive at all temperatures above the ignition temperature (its vanishing at $u=1$ is due to fuel exhaustion in systems of equations for temperature and concentration of the reactant, which  in certain regimes simplify to \eqref{1.1} with $f(\cdot,1,\cdot)\equiv 0$).

\begin{definition}\lb{D.2}
A reaction $f$ satisfying  \textbf{(H1)} is a stationary {\it pure ignition} reaction if for each $\eta>0$ we have
\[ 
\inf_{\substack{(x,\omega)\in\bbR^d\times\Omega \\ \theta_{x,\omega} + \eta < 1-\theta_1 }} f(x,\theta_{x,\omega}+\eta,\omega)>0.
\]
\end{definition}

\noindent {\it Remark.}  This definition (with the bound for $u\in[1-\theta_1,1)$ being $\inf_{(x,\omega)} f(x,u,\omega)>0$) is from \cite{ZlaBist}.  Note that it is trivially satisfied, for instance, when  $f(x,u,\omega)=g(x,\omega)F_0(u)$, with $g$ bounded away from 0 and $\infty$, Lipschitz in $x$, and stationary in $\omega$, and with Lipschitz $F_0:[0,1]\to[0,\infty)$   such that $F_0=0$ on $[0,\theta_0]\cup \{1\}$ and $F_0>0$ on $(\theta_0,1)$ for some $\theta_0\in(0,1)$, and $F_0$ is non-increasing and bounded below by $\al_1 (1-u)^{m_1}$ near 1 (for some $m_1,\al_1$).
\smallskip


We will therefore start by assuming the following hypothesis.

\smallskip

\begin{itemize}
    \item[\textbf{(H2)}]  $f$ is a stationary pure ignition  reaction and $d\leq 3$.

\end{itemize}

\smallskip


The additional restriction $d\le 3$ is necessitated by the above-mentioned surprising result from \cite{ZlaInhomog}, where the second author showed that even for pure ignition reactions, transition from values $u= \eta$ to values $u= 1-\eta$  may only occur over linearly-in-time growing distances for solutions to \eqref{1.1} and all small $\eta>0$  in dimensions $d\ge 4$ (while these distances remain bounded in dimensions $d\le 3$).  

%

We are now ready to state our first main homogenization result. In it and later  we use the notation $B_r(A):=A+(B_r(0)\cup\{0\})$ and $A^0_r:=A\backslash\overline{B_r(\partial A)}$ for $A\subseteq\bbR^d$ and $r\ge 0$.  For the sake of generality, we also allow $O(1)$ shifts  and $o(1)$ errors in initial data as $\eps\to 0$ in \eqref{1.4}.

\begin{theorem}\lb{T.1.1}
If $f$ satisfying \textbf{(H2)} has a finite range of dependence, then there is Lipschitz $c^*:\bbS^{d-1}\to (0,\infty)$ such that the following holds for any open $A\subseteq\bbR^d$ and $\Theta^{A, c^*}$ from \eqref{1.11}. 
If $\Lambda>0$, and for all $\omega\in\Omega$ and $\eps>0$, the function $u_\eps(\cdot,\cdot,\omega)$ solves \eqref{1.4} and satisfies
\beq \lb{1.77}
(1-\theta_1)\chi_{A^0_{\psi(\eps)}}\leq {u_\eps}(0,\cdot +y_\eps,\omega)\leq \chi_{B_{\psi(\eps)}(A)}+\psi(\eps)\chi_{\bbR^d\backslash B_{\psi(\eps)}(A)}
\eeq
for some $y_\eps\in B_{\Lambda}(0)$ and some $\psi$ with $\lim_{\eps\to 0} \psi(\eps)=0$ (when $y_\eps=0$ and $\psi(\eps)= 0$, this becomes just $(1-\theta_1)\chi_{A}\le {u_\eps}(0,\cdot,\omega) \le \chi_{A}$), then for almost all $\omega\in\Omega$ we have
\[
\lim_{\eps\to 0}u_\eps(\cdot,\cdot+y_\eps,\omega)= \chi_{\Theta^{A, c^*}}
\]
locally uniformly on  $([0,\infty)\times\bbR^d)\setminus \partial \Theta^{A, c^*}$.
\end{theorem}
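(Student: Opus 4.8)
The plan is to deduce Theorem~\ref{T.1.1} from the homogenization criterion of \cite{zlatos2019} recalled in the introduction: it suffices to produce, for every direction $e\in\bbS^{d-1}$, a deterministic (i.e.\ $\omega$-independent) number $c^*(e)\in(0,\infty)$ that is an \emph{exclusive} front speed for \eqref{1.1} in direction $e$ (Definition~\ref{D.5.0}), and to verify that the resulting map $c^*:\bbS^{d-1}\to(0,\infty)$ is Lipschitz. Granting this, the passage from \eqref{1.4} to the Hamilton--Jacobi equation \eqref{1.5}, the identification of the scaling limit with $\chi_{\Theta^{A,c^*}}$, and the locally uniform convergence off $\partial\Theta^{A,c^*}$ --- including the absorption of the $O(1)$ shift $y_\eps$ and of the $o(1)$ error allowed in \eqref{1.77} --- are all supplied by the results quoted from \cite{zlatos2019,13barles1993,crandall1992user,47souganidis,soravia1994generalized} together with the comparison principle for \eqref{1.1}. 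So essentially all of the work is the construction of $c^*$ and the verification of its properties.

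First I would introduce the reaction--diffusion analog of the metric problem of \cite{armstrong2015}. For $k\in\bbN$ let $u^k(\cdot,\cdot,\omega)$ solve \eqref{1.1} with a fixed initial datum pinched between $(1-\theta_1)\chi_{B_k(0)}$ and $\chi_{B_{k+1}(0)}$, and define the arrival time $T_k(x,\omega):=\inf\{t>0:u^k(t,x,\omega)\ge 1-\theta_1\}$. Since $d\le 3$, Lemma~\ref{T.2.4} provides uniformly-in-time bounded widths of transition regions, which forces $T_k$ to be finite, to grow at most and at least linearly in $|x|$ with deterministic positive constants (the upper linear bound reflecting the ballistic propagation of solutions, whence $c^*(e)>0$; the lower one the bounded invasion speed characteristic of ignition reactions, whence $c^*(e)<\infty$), and to be Lipschitz in $x$ with a deterministic constant; stationarity of $f$ makes the law of $T_k$ shift-covariant. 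Restarting the evolution from an intermediate ball and using the comparison principle (together with the bound $f\ge\al_1(1-u)^{m_1}$ near $u=1$ to recover a full ball of burnt material in bounded extra time) yields an approximate triangle inequality $T_k(x,\omega)\le T_k(y,\omega)+T_k(x-y,\Upsilon_y\omega)+C$ with $C$ depending only on the reaction bounds. This is the subadditive structure that drives the limit.

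Next comes the quantitative heart of the argument, Proposition~\ref{P.3.4}. Using the finite range of dependence $r$, I would show that $T_k(x,\omega)$ concentrates around its mean with deviations decaying exponentially in $d(x,B_k(0))$ (at a slower rate than in the Hamilton--Jacobi case of \cite{armstrong2015}): a near-optimal path realizing $T_k(x,\omega)$ is cut into $\sim|x|/L$ pieces of a scale $L\gg r$, and since $f$ restricted to regions at mutual distance $\ge r$ is $\bbP$-independent, a coarse-grained version of the time increments over these pieces behaves like an independent sequence, to which an Azuma--Hoeffding / bounded-differences estimate applies; tracking how this rate depends on $r$ is moreover what later allows the same scheme to reach reactions that are uniform limits of finite-range ones (Proposition~\ref{P.3.1}, Example~\ref{E.1.5}). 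Combining this concentration with the approximate triangle inequality and Borel--Cantelli, I would obtain that $n^{-1}T_k(ne,\omega)$ converges, as $n\to\infty$ through integers and then $k\to\infty$, almost surely and in $L^1$, to a deterministic limit which I call $c^*(e)^{-1}\in(0,\infty)$; interpolating in $e$ via the deterministic spatial Lipschitz bound on $T_k$ then shows $c^*$ is Lipschitz.

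Finally I would upgrade the ball geometry to half-spaces and deduce exclusivity. For $A=\{x\cdot e<0\}$, a solution started from data $\approx\chi_A$ is sandwiched between translated copies of the ball solutions $u^k$: one huge enclosing ball gives, via the ball asymptotics just obtained, the upper bound that the burnt set $\{u\approx 1-\theta_1\}$ reaches only $x\cdot e\le c^*(e)t+o(t)$, while a family of large balls inscribed tangentially to $\partial A$, together with a merging/connectedness argument showing that their burnt regions coalesce, gives the matching lower bound $x\cdot e\ge c^*(e)t-o(t)$ --- the exponential fluctuation control keeping both errors $o(t)$ almost surely. This two-sided statement is precisely that $c^*(e)$ is the deterministic exclusive front speed in direction $e$, and the criterion of \cite{zlatos2019} then yields Theorem~\ref{T.1.1}. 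The step I expect to be the main obstacle is exactly this quantitative estimate and its half-space upgrade in the non-convex regime: with no convexity of the effective Hamiltonian one cannot invoke a subadditive ergodic theorem and genuinely needs Armstrong--Cardaliaguet-type concentration, and adapting it from the first-order equation (whose metric problem is a clean static PDE) to the parabolic reaction--diffusion setting --- where the ``arrival time'' is only defined through the nonlinear evolution, the level sets have nontrivial geometry, and the dependence on $r$ must be made explicit --- is where the real difficulty lies.
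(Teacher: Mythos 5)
Your overall architecture does match the paper's: reduce Theorem \ref{T.1.1} to producing deterministic (strong) exclusive front speeds in all directions plus Lipschitz continuity of $c^*$, obtain these from Azuma-type concentration of arrival times exploiting the finite range of dependence, an approximate subadditivity of the means, and Borel--Cantelli. But your construction of $c^*(e)$ is the wrong object in the non-isotropic case, which is exactly the case the theorem is about. You define $c^*(e)^{-1}$ as $\lim_n n^{-1}T_k(ne,\omega)$ for \emph{ball} data; that limit is the reciprocal of the radial (Wulff-shape) spreading speed $w(e)$, i.e.\ the radial function of the Wulff shape $\calS$, whereas the front speed for half-space data with outer normal $e$ is the support function $\sup_{e'\in\bbS^{d-1}} w(e')(e'\cdot e)\ge w(e)$, with strict inequality unless $e$ is an outer normal of $\partial\calS$ at $w(e)e$. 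Consequently your claimed upper bound ``the burnt set reaches only $x\cdot e\le c^*(e)t+o(t)$'' obtained from enclosing balls is false with your $c^*$ (a covering family of balls yields $x\cdot e\le t\sup_{e'}w(e')(e'\cdot e)+o(t)$), and the function fed into \eqref{1.5}/\eqref{1.11} would be wrong. This is precisely why the paper never passes through radial speeds or the Wulff shape: it proves concentration for ball data (Proposition \ref{P.3.3}), transfers it to half-space data by approximating $\calH_e^-$ with the huge balls $B_k(-ke)$ (Proposition \ref{P.3.4}), and then establishes convergence of the half-space means $\tfrac1l\bbE[T(le,\cdot\,;\calH_e^-)]$ directly (Propositions \ref{P.4.4}--\ref{P.4.5}), defining $c^*(e)$ from those. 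The tangent-ball/Wulff-shape route you sketch is essentially the conditional argument of \cite{zlatos2019} that the paper is specifically written to avoid.

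Two further gaps. First, the concentration mechanism you describe --- cutting ``a near-optimal path realizing $T_k(x,\omega)$'' into independent pieces --- has no meaning for the parabolic evolution: there is no optimizing path, and the solution spreads from the entire burnt region. The actual argument builds a filtration generated by the discretized burnt set $\Gamma_{u,\theta^*}(t,\omega)$, replaces $f$ near that set by its mean so as to manufacture an auxiliary arrival time independent of the past, and controls the martingale increments through the approximate additivity $T(x,\omega)\approx t+T_{\izero(t,\omega)}(x,\omega)$ with \emph{sublinear} error (Proposition \ref{P.3.0}); making that error sublinear is where $d\le 3$ (bounded transition widths, Lemma \ref{T.2.4}) and the perturbation Lemma \ref{L.3.1} enter, and it is what produces the exponent $\be_1<1$ and the explicit dependence on the range $\rho$. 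Your $O(1)$-error triangle inequality is not available; the paper only gets an $O(l^{\beta+\delta})$ error and needs a nontrivial iteration to convert it into a rate for the means. Second, exclusivity is not the two-sided front-speed statement: it concerns initial data $\chi_{\calH_e^-}+a\chi_{\calH_e^+}$ with a small positive value $a$ \emph{ahead} of the front and requires a separate supersolution/perturbation argument (Proposition \ref{P.6.4}), which your proposal asserts but does not address.
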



\noindent {\it Remark.}  Our proofs use results from \cite{zlatos2019} which in fact show that in all our main results, $1-\theta_1$ in \eqref{1.77} can be replaced by any $\theta$ satisfying $\inf_{(x,u,\omega)\in\bbR^d\times[\theta,1-\theta_1]\times\Omega} f(x,u,\omega)>0$.
\smallskip

We next extend this  to the case of reactions with infinite ranges of dependence that are  uniform limits of reactions with finite ranges of dependence.  Here we will also require some uniform decay of $f$ near $u=1$.  This is the content of the next two hypotheses.

\smallskip

\begin{itemize}
\item[\textbf{(H3)}] There are $m_3\geq 1$ and $\al_3>0$ such that for all $\eta\in (0,\frac 12\theta_1]$ we have 
\[
\inf_{\substack{
(x,\omega)\in\bbR^d\times\Omega\\
    u\in [1- \theta_1/2,1]}}\left(f(x,u-\eta,\omega)-f(x,u,\omega)\right)\geq \al_3 \eta^{m_3}.
    \]
\end{itemize} 

\smallskip



  
\smallskip

\begin{itemize}
\item[\textbf{(H4)}] 
There are $m_4,n_4,\al_4>0$ such that for each $n\geq n_4$, there exists a stationary reaction $f_n$
with range of dependence $\le n$ and $\|f_n-f\|_\infty \leq \al_4 {n^{-m_4}}$.
%
%
\end{itemize}


\begin{theorem}\lb{T.1.2}
Theorem \ref{T.1.1} holds for any $f$ satisfying \textbf{(H2)--(H4)}.
\end{theorem}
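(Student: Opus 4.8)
The plan is to run the very same machinery that yields Theorem \ref{T.1.1}, replacing the finite‑range‑of‑dependence fluctuation estimate by an infinite‑range counterpart adapted to \textbf{(H4)}. Recall that, by the results of \cite{zlatos2019} quoted above, in order to obtain the stated homogenization conclusion it is enough to produce a Lipschitz function $c^*:\bbS^{d-1}\to(0,\infty)$ such that for every $e\in\bbS^{d-1}$ the deterministic, $\omega$‑independent, exclusive front speed of $f$ in direction $e$ exists and equals $c^*(e)$ (see Definition \ref{D.5.0}). So the core of the argument is to establish, for $f$ satisfying \textbf{(H2)--(H4)}, the exponential‑in‑$d(x,S)$ decay of the fluctuations of the ``arrival times'' at $x$ for solutions of \eqref{1.1} started from data approximating $\chi_{B_k(0)}$ — this is exactly Proposition \ref{P.3.1}. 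Once that estimate is in hand, the remaining steps (upgrading it from balls $B_k(0)$ to half‑spaces, deducing existence of the exclusive front speeds in all directions, proving Lipschitz continuity of $c^*$, and concluding homogenization via \cite{zlatos2019}) go through verbatim as in the proof of Theorem \ref{T.1.1}, since those steps use only the \emph{conclusion} of the fluctuation estimate, not how it was derived.

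To prove Proposition \ref{P.3.1} I would argue by approximation in the range of dependence. Fix a large scale $R:=d(x,S)$ and pick $n=n(R)$ to be a suitable power of $R$. By \textbf{(H4)} there is a stationary reaction $f_n$ with range of dependence $\le n$ and $\|f_n-f\|_\infty\le\al_4 n^{-m_4}$, to which the finite‑range estimate of Proposition \ref{P.3.4} applies. The key structural input is that the proof of Proposition \ref{P.3.4} is carried out so as to keep explicit track of how its exponential decay rate deteriorates as the range of dependence $n$ grows, and that this deterioration is only polynomial in $n$. Comparing solutions of \eqref{1.1} with reaction $f$ to those with reaction $f_n$ — using the ignition structure together with \textbf{(H3)}, which provides the uniform non‑degeneracy near $u=1$ needed to keep the two families of solutions close in that delicate regime — one propagates the $O(\al_4 n^{-m_4})$ perturbation of the reaction into an $o(1)$ (as $R\to\infty$) perturbation of the relevant arrival times, over the time intervals of order $R$ that enter the estimate. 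Choosing $n(R)$ so that the polynomial loss in the decay rate is dominated by the exponential gain coming from $R$, while simultaneously $n(R)^{-m_4}$ times the relevant time scale remains negligible, then yields a genuine exponential‑in‑$R$ fluctuation bound for $f$ itself, with a positive (if smaller) rate.

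The step I expect to be the main obstacle is precisely this balancing act inside Proposition \ref{P.3.1}: one must have the decay rate of Proposition \ref{P.3.4} degrade slowly enough in the range of dependence (at most polynomially) that the substitution $n=n(R)$ still leaves net exponential decay, and at the same time one must control the accumulation of the $O(n^{-m_4})$ reaction error through the nonlinear equation \eqref{1.1} over long times — which is where the ignition structure and \textbf{(H3)} are used, since comparison arguments for ignition reactions are most fragile near $u=1$. This is the quantitative bookkeeping described in \cite{armstrong2015} as open in the Hamilton–Jacobi setting; tracking the constants carefully is the crux. Everything downstream of Proposition \ref{P.3.1} — the half‑space upgrade, the construction and Lipschitz regularity of $c^*$, and the passage to the limit $\eps\to 0$ giving $\chi_{\Theta^{A,c^*}}$ — is then an unchanged repetition of the corresponding parts of the proof of Theorem \ref{T.1.1}.
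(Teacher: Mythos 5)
Your overall strategy is the paper's: reduce homogenization to the existence of deterministic strong exclusive front speeds via \cite{zlatos2019}, prove the Azuma-type fluctuation estimate for finite ranges of dependence while tracking the (polynomial) dependence of its constants on the range $\rho$, and then transfer it to $f$ by choosing the approximant at scale $n\sim d(x,S)^{\be_3}$ and comparing arrival times via \textbf{(H3)} — this is exactly Proposition \ref{P.3.1}, proved through Lemma \ref{L.4.3}. The downstream steps (half-space upgrade, Proposition \ref{P.4.5}, Propositions \ref{P.5.3} and \ref{P.6.4}, and the final appeal to \cite{zlatos2019}) do indeed use only the conclusion of the fluctuation bound, as you say.

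There is, however, one concrete gap: you cannot apply Proposition \ref{P.3.4} to the reactions $f_n$ supplied by \textbf{(H4)} as they stand. Hypothesis \textbf{(H4)} only guarantees that $f_n$ is a stationary reaction in the sense of Definition \ref{D.1} with range of dependence $\le n$ and $\|f_n-f\|_\infty\le\al_4 n^{-m_4}$; it does not guarantee that $f_n$ satisfies \textbf{(H1)} (uniform Lipschitz constant, vanishing on $[0,\theta_1]\cup\{1\}$, monotonicity and the lower bound near $u=1$), let alone the pure-ignition/transition-width hypotheses that the entire finite-range machinery of Section \ref{S3} — in particular \eqref{2.1}, Lemma \ref{L.3.1}, and Proposition \ref{P.3.3} — requires \emph{uniformly in $n$}. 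The paper closes this gap in Corollary \ref{C.2.5}: one first monotonizes $f_n$ on $[1-\theta_1,1]$, mollifies at scale $n^{-m_4}$ to restore a uniform Lipschitz bound, and truncates between $F$ and $\bar F$ to produce reactions $g_n$ that still have range of dependence $\le n+2$ and $\|g_n-f\|_\infty\le Cn^{-m_4}$, and that lie in a fixed pure-ignition class $\calF(\cdot)$, hence satisfy \textbf{(H2')} uniformly in $n$; this is precisely what \textbf{(H4')} encodes, and Theorem \ref{T.1.2} is then deduced as a corollary of Theorem \ref{T.1.3}. This regularization is not cosmetic — without it the comparison Lemma \ref{L.4.3} and the concentration estimate for $T_n$ are simply unavailable — but once it is supplied, your balancing of $n(R)$ against the polynomial loss in $\rho$ is exactly what the paper does.
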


While this result does not cover all interesting  pure ignition reactions in dimensions $d\le 3$ with  correlations of $f(x,u,\cdot)$ and $f(y,v,\cdot)$ decreasing as $|x-y|\to\infty$ (for all $u,v\in[0,1]$), it does apply to many of them.  
Here is a simple such example.

\begin{example} \lb{E.1.5}
Let $d\le 3$, 
 $F_0:[0,1]\to[0,\infty)$  be Lipschitz with $F_0=0$ on $[0,\theta_0]\cup \{1\}$ and $F_0>0$ on $(\theta_0,1)$ for some $\theta_0\in(0,1)$, and $F_0'(u)\le -(1-u)^m$ near $u=1$ for some $m$.  
Also pick some Lipschitz $g:\bbR^d\to [0,\infty)$ 
with $\sup_{x\in\bbR^d} |x|^{m'}g(x)<\infty$ for some $m'>0$,
and some Lebesgue measurable  $a:[0,1]\to[0,1]$.  
Consider the product probability space $\Omega=[0,1]^{\bbZ^d}$, with the Lebesgue measure on each copy of $[0,1]$, and
for any $k\in\bbZ^d$, denote by $\omega_k$ the $k^{\rm th}$ coordinate of $\omega\in \Omega$ (note that these are i.i.d. random variables).  Let $\Upsilon_y:\Omega\to\Omega$ for $y\in\bbZ^d$ be given by $(\Upsilon_y\omega)_k:=\omega_{y+k}$ for all $k\in\bbZ^d$.    Then 
\[
f(x,u,\omega):= \left( 1+\sup_{k\in\bbZ^d} a(\omega_k) g(x-k) \right) F_0(u)
\]
 satisfies \textbf{(H2)}--\textbf{(H4)} (see next paragraph for stationarity), with $f_n$ defined as $f$ but with $g$ replaced by $g_n(x):=g(x)\min\{1,d(x,\bbR^d\setminus B_{n/2}(0))\}$.
 Hence Theorem~\ref{T.1.2} applies.  
Note that $f$ may have infinite range of dependence when $g$ is not compactly supported.

Note also that while this  $f$ is  stationary only with respect to integer shifts (i.e., $y\in\bbZ^d$ in Definition \ref{D.1}),
such settings can be easily transformed to the case considered in the present paper by letting $\tilde\Omega:=\Omega\times[0,1)^d$ with the product measure, $\tilde f(x,u,(\omega,z)):= f(x+z,u,\omega)$, and $\tilde \Upsilon_y(\omega,z):=(\Upsilon_{\lfloor y+z \rfloor}\omega, \{y+z\})$ for $y\in\bbR^d$.
Since inclusion of $y_\eps$ in \eqref{1.77} shows that all our main results continue to hold if we replace the identified full-measure set $\tilde \Omega'\subseteq\tilde \Omega$ by $\bigcup_{y\in\bbR^d} \tilde \Upsilon_y\tilde \Omega'$, which is of the form $\Omega'\times[0,1)^d$, they then also apply in integer-shift settings.
\end{example}

In the above example and in Theorem \ref{T.1.2}, reactions $f$ with infinite ranges of dependence are uniform limits of those with finite ranges of dependence.  The next example is a natural situation when this need not be the case (it is an analog of the setting where sticks of random unbounded lengths are randomly positioned in  $\bbR^d$).  While Theorem \ref{T.1.2} does not apply here, one can instead use its generalization, Theorem \ref{T.1.3} below, which allows this.

\begin{example} \lb{E.1.5'}
Consider the setting from Example \ref{E.1.5}, without the functions $g$ and $a$.  Instead pick some uniformly bounded and uniformly Lipschitz   $g_j:\bbR^d\to [0,\infty)$ ($j\in\bbN$) that
satisfy $\sup_{j\in\bbN} \sup_{|x|> j} |x|^{m'}g_j(x)<\infty$ for some $m'>0$,
and some Lebesgue measurable $a:[0,1]\to\bbN$ with $|a^{-1}(j)|\le j^{-\gamma}$ for some $\gamma>3d+2$ and all $j\in\bbN$.  Then 
\[
f(x,u,\omega):= \left( 1+\sup_{k\in\bbZ^d}  g_{a(\omega_k)}(x-k) \right) F_0(u)
\]
 satisfies \textbf{(H3)} and \textbf{(H4')} below (see Example \ref{E.1.5} for stationarity), with $f_n$ defined as $f$ but with $g_j$ replaced by $g_{j,n}(x):=g_j(x) \min\{1,2d(x,\bbR^d\setminus B_{n/2}(0))\}$.  This uses the fact that
 \[
 \sum_{j\ge n/2} (n^{1+m_4'} +j)^d |a^{-1}(j)| = o(n^{-(2d+1+m_4')})
 \]  
 for some $m_4'>0$ (because $\gamma>3d+2$), with the left-hand side being (up to a constant factor) an upper bound on the probability that $a(\omega_k)\ge \max\{\frac n2, |k|-n^{1+m_4'}\}$ for at least one  $k\in\bbZ^d$.
 Hence Theorem~\ref{T.1.3} below applies.  
 Note that $f$ need not be a uniform limit of reactions with finite ranges of dependence when the functions $g_j$ do not decay uniformly to 0 as $x\to\infty$.
\end{example}

\subsection{Generalizations}
As we indicated above, it is not clear whether the limitation on the dimension in \textbf{(H2)} is necessary to obtain a sufficiently general result.  However, since both conditions in \textbf{(H2)}  are only needed to guarantee certain estimates for some special solutions to \eqref{1.1}  (see Lemma \ref{T.2.4} below), including that  the transition from values $u\sim 0$ to values $u\sim 1$   occurs over spatial distances that grow only sub-linearly in time, as we mentioned above (Lemma \ref{T.2.4} shows that in the case of \textbf{(H2)} these distances are in fact uniformly bounded), we can extend our results to more general settings as long as these estimates still hold there.  
In particular, 
this might be the case for  stationary ignition reactions in dimensions $d\ge 4$.

In order to state this alternative to hypothesis \textbf{(H2)}, let us define for any $0<\eta<\theta<1$ the {\it width of the transition zone} from $\eta$ to $\theta$  for a  solution $u:[0,\infty)\times \bbR^d\to [0,1]$ to \eqref{1.1} at some time $t\ge 0$ to be (see  \cite{ZlaInhomog})
\beq\lb{d.2.1}
L_{u,\eta,\theta}(t):=\inf \Big\{L>0\, \Big| \,\{x\in\bbR^d \,|\, u(t,x)\geq\eta\}\subseteq B_L\left(\left\{x\in\bbR^d \,|\, u(t,x)\geq \theta\right\} \right) \Big\}.
\eeq
The special solutions for which we need to assume certain bounds on these quantities will be essentially those evolving from  characteristic functions of 
the balls $B_k(0)\subseteq \bbR^d$, with $k\in\bbN$.  

It will however be more convenient to work with approximations $u_{0,k}$ of these characteristic functions that have two useful properties.  First, they are close to 1 on $B_k(0)$ but are strictly below 1 (which will allow us to treat general initial data from \eqref{1.77}), and are supported on $B_{k+R_0}(0)$ for some fixed $R_0$.  Specifically, we will require that 
\beq\lb{2.7'}
(1-\theta^*)\chi_{B_k(0)}\leq u_{0,k}\leq (1-\theta^*)\chi_{B_{k+R_0}(0)}
\eeq
holds 
with $\theta^*>0$ from \eqref{2.10} below.  We note that we could in fact replace $1-\theta^*$ in \eqref{2.7'} by 
any $\theta<1$ satisfying $\inf_{(x,u,\omega)\in\bbR^d\times[\theta,1-\theta_1]\times\Omega} f(x,u,\omega)>0$,
but we make our choice for the sake of convenience (Lemma~\ref{L.2.2} shows that  solutions $u:(0,\infty)\times\bbR^d\to[0,1]$ with $u(t,x)\ge 1-\theta^*$ for some $(t,x)\in [1,\infty)\times\bbR^d$ converge locally uniformly to 1).
The second property is that the corresponding solutions to \eqref{1.1} satisfy $u_t>0$.  For this, it suffices to have
\beq\lb{2.6}
\Delta u_{0,k}+{F}(u_{0,k})\geq 0
\eeq
with $F(u):=\inf_{(x,\omega)\in\bbR^d\times\Omega} f(x,u,\omega)$,
which yields
$\Delta u_{0,k}+f(\cdot, u_{0,k},\omega)\geq 0$
 for any $\omega\in\Omega$.  Then $u_t> 0$ follows for the corresponding solution $u$ at all positive times because $v:=u_t$ solves the linear equation $v_t=\Delta v+f_u(x,u(t,x),\omega)v$ with $v(0,\cdot)\geq 0$ and $v(0,\cdot)\not \equiv 0$ (due to \eqref{2.7'} and $F(1-\theta^*)>0$). 
 
It is  easy to construct radial functions satisfying \eqref{2.7'} and \eqref{2.6}, since then \eqref{2.6} becomes a simple ordinary differential inequality.
(This is in fact possible for any set $S\subseteq\bbR^d$, without radial symmetry but still with a uniform $R_0$, and we do so in Lemma \ref{L.2.4} below.)
Let us now pick one such $u_{0,k}$ for each $k\in\bbN$ (any one can be chosen), and denote by $\calU_f$ the set of all solutions $u$ to \eqref{1.1} obtained by choosing any $\omega\in\Omega$ and initial data $u(0,\cdot)=u_{0,k}$ for any $k\in\bbN$.
We can now replace \textbf{(H2)} by the following hypothesis.

\smallskip

\begin{itemize}
    \item[\textbf{(H2')}] $f$  satisfies \textbf{(H1)} and there are $\al_2<1$ and ${m_2}>0$ such that 
\beq\lb{2.2}
\begin{aligned}
&\limsup_{t\to\infty}\sup_{u\in\calU_{f}}\sup_{\eta>0} \,\frac{L_{u,\eta,1-\theta^*}(t)}{t^{\al_2} \eta^{-{m_2}}}  <\infty ,\\
&\liminf_{t\to\infty} \inf_{u\in \calU_{f}} \inf_{ u(t,x)\in [\theta^*,1-\theta^*]} \, u_t(t,x)  >0.
\end{aligned}
\eeq
Here $\calU_f$ is as above, with some $u_{0,k}$ satisfying \eqref{2.7'} and \eqref{2.6} for each $k\in\bbN$, and $\theta^*=\theta^*(M,\theta_1,m_1,\al_1)$ from \eqref{2.10} and $R_0=R_0(M,\theta_1,m_1,\al_1)$ are independent of $k$.
\end{itemize}
 
\smallskip

\noindent {\it Remarks.} 
1. The first statement in  \eqref{2.2} allows $L_{u,\eta,1-\theta^*}(t)$ to grow algebraically in both $\eta\to0$ and $t\to\infty$ (note that $\alpha_2<1$ is critical here because the scaling from \eqref{1.3} yields $L_{u_{\eps},\eta,1-\theta^*}(t)=\eps L_{u,\eta,1-\theta^*}\left(\eps^{-1} {t}\right)$, which will then vanish on any bounded time interval as we take $\eps\to 0$).  We note that Lemma~\ref{T.2.4} below shows that in the case of \textbf{(H2)}, the former growth is only logarithmic while the latter is non-existent.  
\smallskip

2.  Lemma \ref{T.2.4} shows that the second statement in \eqref{2.2}  holds as well if one assumes \textbf{(H2)}  (recall also that all $u\in \calU_f$ satisfy $u_t> 0$).  Nevertheless, we will further weaken this hypothesis in Theorem \ref{T.1.7} below.
\smallskip

3.  We could also replace $B_k(0)$ and $B_{k+R_0}(0)$ in \eqref{2.7'} by $B_{r_k}(0)$ and $B_{r_k+R_0}(0)$ for any sequence $r_k\to\infty$, without any change to our results.
\smallskip

%
%
%
%
%

After replacing \textbf{(H2)} by \textbf{(H2')}, we must also adjust \textbf{(H4)} in the extension of Theorem~\ref{T.1.2}, in order to ensure that the reactions $f_n$ will satisfy \textbf{(H2')} with uniform constants.  Note that when \textbf{(H2)} holds, we will show in Corollary \ref{C.2.5} that $f_n$ from \textbf{(H4)} can be perturbed so that this is the case, but we do not know whether this remains true when we only assume \textbf{(H2')}. 

In addition, we also state this new version of \textbf{(H4)} so that it applies to some $f$ that are not uniform limits of reactions with finite ranges of dependence (see Example \ref{E.1.5'} above).

\smallskip

\begin{itemize}
\item[\textbf{(H4')}] 
There are $m_4,m_4',n_4,\al_4>0$ such that for each $n\geq n_4$, there exists a stationary reaction $f_n$
with range of dependence $\le n$ and 
\[
\bbP\left( \sup_{|x|< n^{1+m_4'}} \, \sup_{u\in [0,1]} |f_n(x,u,\cdot) - f(x,u,\cdot) |  > \al_4 {n^{-m_4}}\right) \leq n^{-(2d+1+m_4')}.
\]
Moreover, \textbf{(H2')} holds uniformly in $n$
(i.e., reactions $f_n$ satisfy \textbf{(H1)} with the same $M,\theta_1,m_1,\al_1$, and \eqref{2.2} with $\calU_{f}$  replaced by $\bigcup_{n\ge n_4}\calU_{f_n}$).
\end{itemize}
\smallskip

We note that the initial data $u_{0,k}$ used in the definition of $\calU_{f_n}$ are in principle  allowed to be different for distinct $n$ (but $\theta^*$ and $R_0$ are uniform in $n$; also $r_k$ in Remark 3 above).

\begin{theorem}\lb{T.1.3}
Theorem \ref{T.1.1} holds for any $f$ that either satisfies \textbf{(H2')} and has a finite range of dependence, or satisfies \textbf{(H3)} and  \textbf{(H4')}.
\end{theorem}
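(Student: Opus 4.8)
The plan is to reduce Theorem \ref{T.1.3} to Theorem \ref{T.1.1} by showing that the hypotheses used in the proof of Theorem \ref{T.1.1} are in fact only ever invoked through two channels: (a) the ``soft'' input from \cite{zlatos2019}, namely that homogenization for \eqref{1.1} follows once one establishes deterministic, $\omega$-independent, exclusive (strong) front speeds $c^*(e)$ in all directions $e\in\bbS^{d-1}$, with $c^*$ Lipschitz; and (b) the quantitative machinery of Sections \ref{S3}--\ref{S6}, whose only use of \textbf{(H2)} is via the conclusions of Lemma \ref{T.2.4} for the solutions in $\calU_f$ — specifically, the sublinear (in fact bounded, under \textbf{(H2)}) growth of the transition widths $L_{u,\eta,1-\theta^*}(t)$ and the positivity of $u_t$ in the transition zone. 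Since \textbf{(H2')} posits exactly these two conclusions as hypotheses (with the weaker $O(t^{\al_2}\eta^{-m_2})$ growth permitted, $\al_2<1$), and since $\al_2<1$ is precisely what makes the rescaled widths $L_{u_\eps,\eta,1-\theta^*}(t)=\eps L_{u,\eta,1-\theta^*}(\eps^{-1}t)$ vanish on bounded time intervals as $\eps\to0$, every estimate in Sections \ref{S3}--\ref{S6} goes through verbatim with \textbf{(H2)} replaced by \textbf{(H2')}. I would therefore first run through those sections and isolate each appeal to Lemma \ref{T.2.4}, checking that the polynomial-in-$t$-and-$\eta^{-1}$ bound is harmless wherever a bound was previously used (this is the content of Remark 1 following \textbf{(H2')}).

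The second half of the proof handles the infinite-range case via approximation, paralleling the passage from Theorem \ref{T.1.1} to Theorem \ref{T.1.2} but now using \textbf{(H4')} in place of \textbf{(H4)}. The key points: for each $n\ge n_4$ the reaction $f_n$ has range of dependence $\le n$ and satisfies \textbf{(H2')} with constants uniform in $n$, so Theorem \ref{T.1.1} (in the \textbf{(H2')} form just established) applies to each $f_n$, producing Lipschitz front speeds $c_n^*:\bbS^{d-1}\to(0,\infty)$ with a Lipschitz constant uniform in $n$ (uniformity of the constants in \textbf{(H1)}--\textbf{(H2')} forces uniform bounds on $c_n^*$ and its modulus of continuity, via the a priori speed bounds from Section \ref{S2}). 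One then shows $c_n^*\to c^*$ uniformly on $\bbS^{d-1}$ for some Lipschitz $c^*$, using that the probabilistic bound in \textbf{(H4')}, $\bbP(\sup_{|x|<n^{1+m_4'}}\sup_u|f_n-f|>\al_4 n^{-m_4})\le n^{-(2d+1+m_4')}$, is summable in $n$ and localized to a ball large enough to contain the relevant arrival-time computations: by Borel--Cantelli, for a.e.\ $\omega$ one has $f_n=f+O(n^{-m_4})$ on $B_{n^{1+m_4'}}(0)$ for all large $n$, and the front speeds depend continuously (with the quantitative rate from Proposition \ref{P.3.1}/Proposition \ref{P.3.4}) on the reaction in the region swept out by the front over the relevant time scale. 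Since the arrival time at distance $\sim L$ is $O(L)$ and the front travels ballistically, the relevant region is a ball of radius $O(n^{1+m_4'})\gg$ the scale on which $f_n$ and $f$ may differ matters, so the difference in speeds is $O(n^{-m_4})+$ (exponentially small in $n^{m_4'}$), hence $c_n^*$ is Cauchy and the limit $c^*$ inherits the uniform Lipschitz bound. Finally, exclusivity/strongness of the limiting speeds $c^*(e)$ is obtained either by passing it through the limit from the $c_n^*(e)$ or, more robustly, by re-running the arguments of Sections \ref{S3}--\ref{S6} directly for $f$ using the now-established a.e.\ approximation — this is where \textbf{(H3)}'s uniform non-degeneracy of $f$ near $u=1$ is needed, to control the profiles of the approximating fronts independently of $n$.

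I expect the main obstacle to be the approximation step for speeds in the genuinely infinite-range, non-uniform-limit case covered by \textbf{(H4')} (as in Example \ref{E.1.5'}): unlike in \textbf{(H4)}, here $f_n$ need not converge to $f$ uniformly on all of $\bbR^d$, only on expanding balls and only with high probability, so one cannot simply quote continuous dependence of fronts on the reaction in $L^\infty(\bbR^d\times[0,1])$. The delicate point is to show that the event ``$f_n$ differs from $f$ somewhere in the ball $B_{n^{1+m_4'}}(0)$'' is (i) rare enough (summably so, to invoke Borel--Cantelli) and (ii) irrelevant enough — i.e., that a front propagating in direction $e$ up to the time/space scale governing the $n$-th speed estimate from Proposition \ref{P.3.1} stays, with overwhelming probability, inside $B_{n^{1+m_4'}}(0)$, so that the rare discrepancies outside cannot affect the measured speed. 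This requires carefully matching the exponent $m_4'$ against the ballistic rate and against the exponential-in-distance decay rate from Proposition \ref{P.3.1}, and tracking how that rate degrades as the range of dependence $n$ grows — exactly the ``tracking the dependence of the rate on the range of dependence'' that the introduction flags as the novel technical ingredient. Once the geometry is pinned down, the rest (uniform Lipschitz bounds on $c_n^*$, extraction of the limit, verification of exclusivity, and invoking Theorem 5.3-type results from \cite{zlatos2019} to conclude the stated convergence $u_\eps(\cdot,\cdot+y_\eps,\omega)\to\chi_{\Theta^{A,c^*}}$) follows the template of Theorem \ref{T.1.2}.
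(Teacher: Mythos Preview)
Your first paragraph has the logic inverted: in the paper, Theorem~\ref{T.1.1} is deduced \emph{from} Theorem~\ref{T.1.3} (via Corollary~\ref{C.2.5}), not the other way around. Sections~\ref{S3}--\ref{S6} are written from the outset under \textbf{(H2')} (or \textbf{(H3)}+\textbf{(H4')}), never under \textbf{(H2)}; there is no separate proof of Theorem~\ref{T.1.1} to ``run through.'' That said, your underlying point---that the only role of \textbf{(H2)} is to supply the estimates \eqref{2.1} via Lemma~\ref{T.2.4}, and that \textbf{(H2')} simply posits these estimates directly---is correct and is exactly how the paper is organized.

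For the \textbf{(H3)}+\textbf{(H4')} case your route genuinely diverges from the paper's, and the divergence creates a gap. You propose to build front speeds $c_n^*$ for each $f_n$, show $c_n^*\to c^*$, and then argue that $c^*$ is the front speed for $f$. The last step is where the trouble lies: the definition of a (strong, exclusive) front speed involves the solution $u(\cdot,\cdot,\omega;\calH_e^-)$ to \eqref{1.1} with reaction $f$, evaluated as $t\to\infty$ over regions growing like $\Lambda t$. For any \emph{fixed} $n$, the approximation $|f_n-f|\le\al_4 n^{-m_4}$ holds only on $B_{n^{1+m_4'}}(0)$, and the solution escapes that ball for large $t$. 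So you cannot pass the limits in \eqref{6.111}--\eqref{6.222} from $u_n$ to $u$ at fixed $n$; a diagonal choice $n=n(t)$ (or $n=n(x)$) is forced, and once you make it you are essentially reproducing the paper's argument at a less convenient stage.

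The paper avoids this entirely by never introducing $c_n^*$. Instead it first shows (Lemma~\ref{L.4.0}) that \textbf{(H4')} already implies \textbf{(H2')} for $f$ itself, and then proves the fluctuation estimate for the arrival times $T(x,\cdot\,;S)$ of $f$ \emph{directly} (Proposition~\ref{P.3.1}): for each $x$ it picks $n\sim d(x,S)^{\be_3}$, applies Proposition~\ref{P.3.4} to $f_n$ to bound the fluctuations of $T_n$, and transfers this to $T$ via the pointwise comparison $|T-T_n|\le C n^{-m_4/m_3}(1+d(x,S))$ furnished by Lemma~\ref{L.4.3}. Hypothesis \textbf{(H3)} enters only here, and only to make that comparison lemma work (it is not about ``controlling profiles of approximating fronts''). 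After Proposition~\ref{P.3.1}, the entire remainder (Proposition~\ref{P.4.5}, Propositions~\ref{P.5.3} and \ref{P.6.4}, and the final appeal to \cite[Theorems 4.4(i) and 5.4]{zlatos2019}) is carried out for $f$ alone, with no further reference to $f_n$ or to \textbf{(H3)}, \textbf{(H4')}. Your hedge ``or, more robustly, re-run the arguments directly for $f$'' is in fact the correct and only clean option, and it is what the paper does from the start.
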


Finally, we show that one can also allow a power decay in time in the second statement in \eqref{2.2}, at the expense of either having to extend this assumption to a slightly larger family of special solutions or obtaining the result for a smaller family of initial data.  

For each $a\in [0,\frac 12\theta^*]$, let $\calU_{f,a}$ be defined as $\calU_f$ above, but with \eqref{2.7'} replaced by
\beq\lb{7.3}
(1-a)(1- {\theta^*} ) \chi_{B_k(0)}+a\leq u_{0,k,a}\leq (1-a)(1- {\theta^*} ) \chi_{B_{k+R_0}(0)}+a
\eeq
for initial data denoted $u_{0,k,a}$ instead of $u_{0,k}$ (so now $u_{0,k,a}-a$ is supported in $B_{k+R_0}(0)$).
Obviously $\calU_{f,0}=\calU_f$, and one can find such initial data (for any $S\subseteq\bbR^d$ and with $R_0$ uniform in $a$) via Lemma~\ref{L.2.4} 
with $\alpha_1$ replaced by $\alpha_1(1-\frac18{\theta_1})^{m_1-1}$
(since $a\le\frac 18\theta_1$) and then applying the scaling $u_{0,S,a}:=(1-a)u_{0,S}+a$.

We can now replace \textbf{(H2')} and \textbf{(H4')} by the following hypotheses.

\smallskip

\begin{itemize}
    \item[\textbf{(H2'')}] $f$  satisfies \textbf{(H1)} and there are $\al_2<1$, $m_2> 0$, $a_2\in[0,\frac 12\theta^*]$, and $\alpha_2'<\min\{ \frac 1{m_1-1}, \frac {1-\alpha_2}{m_2} \}$
 such that
\beq \lb{7.000}
\begin{aligned}
&\limsup_{t\to\infty} \sup_{a\in[0,a_2]} \sup_{u\in\calU_{f,a}} \sup_{\eta>0}\frac{L_{u,\eta+a,1-\theta^*}(t)}{t^{\al_2} \eta^{-{m_2}}}<\infty, \\
&    \liminf_{t\to\infty} \inf_{a\in[0,a_2]} \inf_{u\in\calU_{f,a}} \inf_{ u(t,x)\in [\theta^*,1-\theta^*]} \, u_t(t,x)\,t^{\al_2'}>0.
\end{aligned}
\eeq
Here $\calU_{f,a}$ is as above, with some $u_{0,k,a}$ satisfying \eqref{7.3} and \eqref{2.6} for each $k\in\bbN$, and $\theta^*=\theta^*(M,\theta_1,m_1,\al_1)$ from \eqref{2.10} and $R_0=R_0(M,\theta_1,m_1,\al_1)$ are independent of $k$.
\end{itemize}

\smallskip

%

\begin{itemize}
    \item[\textbf{(H4'')}] 
$f$ satisfies \textbf{(H4')}  with \textbf{(H2'')} in place of \textbf{(H2')},
and also $\alpha_2'<\frac{m_4}{m_3}$. 
\end{itemize}

\smallskip

Of course, these hypotheses coincide with \textbf{(H2')} and \textbf{(H4')} when $a_2=0=\al_2'$.  
With them, we can now state our second generalization of Theorems \ref{T.1.1} and \ref{T.1.2}.

\begin{theorem}\lb{T.1.7}
Assume that $f$ either satisfies \textbf{(H2'')}  and has a finite range of dependence, or satisfies \textbf{(H3)} and  \textbf{(H4'')}.

(i) If $a_2>0$, then Theorem \ref{T.1.1} holds for such $f$.

(ii) If $a_2=0$, then Theorem \ref{T.1.1} holds for such $f$ with $A$ convex
 and \eqref{1.77} replaced by
\[
(1-\theta_1)\chi_{A^0_{\psi(\eps)}}\leq {u_\eps}(0,\cdot+y_\eps,\omega)\leq (1-\Lambda^{-1})\chi_{B_{\psi(\eps)}(A)}\,.
\]
\end{theorem}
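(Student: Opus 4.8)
The plan is to follow the route of Theorems \ref{T.1.1}--\ref{T.1.3}: by the result of \cite{zlatos2019} recalled above, once $f$ --- or, under \textbf{(H3)}+\textbf{(H4'')}, its approximants $f_n$ --- is shown to admit deterministic, almost surely $\omega$-independent, Lipschitz, and exclusive front speeds $c^*(e)$ for all $e\in\bbS^{d-1}$ (Definition \ref{D.5.0}), homogenization follows. So the work is to re-run the argument behind Propositions \ref{P.3.1} and \ref{P.3.4} with \textbf{(H2'')} in place of \textbf{(H2')}, carrying the exponents $a_2$ and $\alpha_2'$. I would set up arrival times for the enlarged family $\bigcup_{a\in[0,a_2]}\calU_{f,a}$: for $a\in[0,a_2]$ and $y\in\bbR^d$, let $T_a(y,\omega)$ be essentially the first time the solution started from a suitable translate of $u_{0,k,a}$ reaches level $\theta^*$ at $y$. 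The crux is that $T_a$ remains approximately subadditive (and monotone and Lipschitz in $y$) modulo errors that are now $o(t)$ rather than $O(1)$. There are two of these. The spatial one: to restart the propagation at an intermediate point the solution must be $\ge 1-\theta^*$ on a ball of radius $\gtrsim R_0$, which costs a spread of order $L_{u,\eta+a,1-\theta^*}(t)\lesssim t^{\alpha_2}\eta^{-m_2}$ with $\eta$ comparable to the level $\sim t^{-\alpha_2'}$ that can be reached in the time available --- this is $o(t)$ because $\alpha_2'<\tfrac{1-\alpha_2}{m_2}$. The temporal one: by the $u_t$-hypothesis in \textbf{(H2'')} the solution gains level at rate $\gtrsim t^{-\alpha_2'}$, so rising from $\theta^*$ up to $1-t^{-\alpha_2'}$ --- enough, via the comparison $\dot v\le-\alpha_1 v^{m_1}$ near $v=1-u=0$, to seed a fresh $u_{0,k,a}$ --- takes time $\lesssim t^{\alpha_2'}+t^{\alpha_2'(m_1-1)}$, which is $o(t)$ precisely because $\alpha_2'<\min\{\tfrac1{m_1-1},\tfrac{1-\alpha_2}{m_2}\}$. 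With these $o(t)$ defects the finite-range-of-dependence argument of Proposition \ref{P.3.4} goes through essentially verbatim --- the exponential-in-$d(y,0)$ decay of the fluctuations of $T_a$ degrades only by the subballistic factors $t^{\alpha_2},t^{\alpha_2'}$ --- yielding $\lim_{r\to\infty}r^{-1}\bbE[T_a(re,\cdot)]=:c^*(e)^{-1}$, its $\omega$-independence and exclusivity, its independence of $a\in[0,a_2]$ (by comparison), and Lipschitzness of $e\mapsto c^*(e)$.

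Under \textbf{(H3)}+\textbf{(H4'')} I would instead carry out the previous step for each $f_n$ (range $\le n$), uniformly in $n$ as the last clause of \textbf{(H4')} guarantees; the probabilistic bound in \textbf{(H4')}, together with a Borel--Cantelli argument over $n$ and the relevant lattice translates (the exponent $2d+1+m_4'$ being chosen so that the relevant series converge, cf.\ Example \ref{E.1.5'}), then upgrades convergence of the front speeds of $f_n$ to those of $f$. Comparing solutions of $f_n$ and $f$ near $u=1$ uses \textbf{(H3)}: a reaction error $\le\alpha_4 n^{-m_4}$ is absorbed by a downward shift of the solution of size $\lesssim n^{-m_4/m_3}$, which must be negligible against the $\sim t^{-\alpha_2'}$ climb-to-$1$ margin, whence the extra condition $\alpha_2'<\tfrac{m_4}{m_3}$ in \textbf{(H4'')}. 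Once the ball estimates are established I would upgrade them to half-space estimates exactly as in Theorems \ref{T.1.1}--\ref{T.1.3} (stationarity plus iteration of the ball estimate), obtaining the deterministic exclusive front speeds.

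Finally I would feed these speeds into the homogenization criterion of \cite{zlatos2019}. For part (i), $a_2>0$: the lifted solutions ($a\in(0,a_2]$) supply exactly the comparison barriers needed to treat the general initial data \eqref{1.77}, so the full conclusion of Theorem \ref{T.1.1} follows as in the earlier theorems. For part (ii), $a_2=0$: only the unlifted family $\calU_f$ is available, and its transition zone now degrades at the rate $t^{-\alpha_2'}$, so the resulting barriers are weaker; I would then (a) restrict to convex $A$, for which $\Theta^{A, c^*}$ is the explicit intersection of half-space-times recalled after \eqref{1.11}, so that matching $u_\eps$ to $\chi_{\Theta^{A, c^*}}$ reduces to a localized family of half-space front estimates, and (b) replace the upper bound in \eqref{1.77} by $(1-\Lambda^{-1})\chi_{B_{\psi(\eps)}(A)}$, which removes both the unit-valued region and the positive tail and thereby lets this datum be sandwiched by the available $\calU_f$-barriers.

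The step I expect to be the main obstacle is the first one: showing that the subadditivity defect of the arrival times is genuinely $o(t)$ when $u_t\gtrsim t^{-\alpha_2'}$ only and the transition zone widens like $t^{\alpha_2}$. Each restart forces the solution to climb, over a ball large enough to seed a fresh $u_{0,k,a}$, from the ignition level essentially all the way to $1$ within a time window that must stay $o(t)$ in both the spatial-spreading and the temporal-climb directions, and controlling these two effects simultaneously --- together with their interplay with the near-$u=1$ exponents $m_1,m_3$ and the approximation exponent $m_4$ --- is exactly what forces $\alpha_2'<\min\{\tfrac1{m_1-1},\tfrac{1-\alpha_2}{m_2}\}$ and $\alpha_2'<\tfrac{m_4}{m_3}$. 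Once that defect is under control, the Armstrong--Cardaliaguet machinery of \cite{armstrong2015} and the reduction from \cite{zlatos2019} run essentially as in Theorems \ref{T.1.1}--\ref{T.1.3}.
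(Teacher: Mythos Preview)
Your overall architecture matches the paper's: modify the comparison lemmas (Lemmas \ref{L.3.1}, \ref{L.4.3}) to carry a factor $(T+t_0)^{\alpha_2'}$ (these become Lemmas \ref{L.7.1}, \ref{L.7.2}), re-run Sections \ref{S3}--\ref{S5} with the new $\be_1$ from \eqref{7.111} (and $\be_3$ from \eqref{7.333}) to obtain deterministic \emph{strong} front speeds $c^*(e)$ (this is Proposition \ref{T.7.3}), then split on $a_2$. Your identification of where the constraints $\alpha_2'<\min\{\tfrac1{m_1-1},\tfrac{1-\alpha_2}{m_2}\}$ and $\alpha_2'<\tfrac{m_4}{m_3}$ enter is also correct.

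There is, however, a real gap around exclusivity. You assert that the fluctuation/subadditivity argument already delivers exclusivity and that $c^*(e)$ is ``independent of $a\in[0,a_2]$ (by comparison)''. Neither holds. Exclusivity concerns the solutions $w_{e,a}$ with initial datum $\chi_{\calH_e^-}+a\chi_{\calH_e^+}$, which are \emph{not} in $\bigcup_a\calU_{f,a}'$; nothing in the Azuma-based estimates touches them. The paper's route for part (i) is: (a) observe that each shifted reaction $f_a(x,u,\omega):=(1-a)^{-1}f(x,(1-a)u+a,\omega)$ itself satisfies \textbf{(H2'')} with $a_2=0$ and $\theta^*$ replaced by $2\theta^*$, so Proposition \ref{T.7.3} applied to $f_a$ yields front speeds $c_a^*(e)$; (b) prove $c_a^*(e)\to c^*(e)$ \emph{uniformly in $e$} (Proposition \ref{P.7.6}); (c) deduce exclusivity for $f$ by comparing $w_{e,a}$ with the lifted solution $u_{e,a}=(1-a)v_{e,a}+a$, where $v_{e,a}\in\calU_{f_a}'$ (Proposition \ref{P.7.4}). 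Step (b) is not ``by comparison'': the inequality $c_a^*(e)\ge c^*(e)$ is indeed easy (since $u_{e,a}\ge u_{e,0}$ after a bounded time shift), but the reverse bound requires applying Lemma \ref{L.7.1} with $\eta\sim l^{-\gamma}$ at scale $l=a^{-1/2\alpha_2'}$, together with the version of Proposition \ref{P.4.5} that is \emph{uniform in $a$}, to get $c^*(e)^{-1}\le c_a^*(e)^{-1}+Ca^{1/2}+\bar C_\rho' a^{(1-\beta)/4\alpha_2'}$. This is where $a_2>0$ is indispensable: without the $a>0$ family there is no barrier dominating $w_{e,a}$.

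Correspondingly, for part (ii) the paper does \emph{not} prove exclusivity at all. It stops at strong front speeds and invokes \cite[Theorem 1.4(iii)]{zlatos2019} rather than \cite[Theorem 5.4]{zlatos2019}; the former, for convex $A$ and initial data bounded above by $(1-\Lambda^{-1})\chi_{B_{\psi(\eps)}(A)}$, needs only strong speeds. Your explanation of why convexity helps (reduction to half-space fronts via the explicit formula for $\Theta^{A,c^*}$) is the right intuition, but you should recognise that this is the \emph{substitute} for the missing exclusivity, not an additional simplification on top of it.
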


%

\subsection{Organization of the Paper and Acknowledgements}
In Section \ref{S2} we collect most important notation and prove several preliminary results.  These include Corollary \ref{C.2.5}, which shows that Theorems \ref{T.1.1} and \ref{T.1.2} follow from Theorem \ref{T.1.3}.  It will therefore suffice to prove Theorems \ref{T.1.3} and \ref{T.1.7}.  We prove the first one in Section \ref{S6}, after obtaining crucial quantitative  estimates on long-time dynamics of solutions to \eqref{1.1} in Sections \ref{S3}--\ref{S5} (specifically, Propositions \ref{P.3.4}, \ref{P.3.1}, and \ref{P.4.5}, with the first two of these being essentially the same result but assuming \textbf{(H2')}+finite range  in the first and \textbf{(H3)}+\textbf{(H4')} in the second).  In Section~\ref{S7} we then show how to extend all these results to the cases considered in Theorem \ref{T.1.7}.

The authors thank Scott Armstrong and Jessica Lin for illuminating discussions.
AZ also acknowledges partial support by  NSF grants DMS-1652284 and DMS-1900943.

\section{Preliminaries and Notation}\lb{S2}

In this section we collect some previous results and preliminary lemmas, all of which hold uniformly in $\omega$ and without needing to assume stationarity of the reaction.  We will therefore use the following hypothesis.
\smallskip

\begin{itemize}
\item[\textbf{(H1')}] 
$f$ satisfies \textbf{(H1)} except possibly the stationarity hypothesis.
\end{itemize}
\smallskip

  At the end of the section we also collect all the important notations in one place.

 Let us start with a basic lower bound (see, e.g., \cite[Lemma~3.1]{ZlaInhomog}), which shows that general solutions to \eqref{1.1} propagate with speed no less than some $c_0=c_0(M,\theta_1,m_1,\al_1)>0$.  We will choose this to be the {\it unique front speed} for the homogeneous reaction $F_0:[0,1]\to[0,\infty)$ defined to be the largest $M$-Lipschitz function with $F_0(u)\le\al_1(1-u)^{m_1}\chi_{[1-\theta_1,1]}(u)$ (so clearly $F_0\le F$).
Hence $c_0$ is the unique number such that the PDE $u_t=u_{xx} + F_0(u)$ in one space dimension has a {\it traveling front} solution $u(t,x)=U(x-c_0t)$ with $U(-\infty)=1$ and $U(\infty)=0$.  

\begin{lemma}\lb{L.2.2}
There exists $\theta_2=\theta_2({M},\theta_1,m_1,\al_1)<1$ 
such that for each $c<c_0$ and $\theta<1$, there is $\kappa_0=\kappa_0({M},\theta_1,m_1,\al_1,c,\theta)\geq 1$ such that the following holds. If $u:(0,\infty)\times \bbR^d\to [0,1]$ is a solution to \eqref{1.1} with $f$ satisfying \textbf{(H1')} 
and with some $\omega\in\Omega$, 
and if $u(t_0,y)\geq \theta_2$ for some $t_0\ge 1$ and $y\in\bbR^d$, then for all $t\geq t_0+\kappa_0$, 
\[
\inf_{|x-y|\leq c(t-t_0) }u(t,x) \geq \theta.
\]
If also $u_t\ge 0$, then this clearly holds with any $t_0\ge 0$ (and $\kappa_0$ increased by 1).
\end{lemma}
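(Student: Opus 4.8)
The plan is to build a compactly supported, radially symmetric subsolution that starts below $u(t_0,\cdot)$ near $y$ and whose support expands at any speed $c<c_0$, eventually covering every ball $\{|x-y|\le c(t-t_0)\}$ with values exceeding $\theta$; then the comparison principle finishes the argument. The key geometric input is that the one-dimensional PDE $w_t = w_{xx}+F_0(w)$ propagates fronts at the deterministic speed $c_0$, so any speed $c<c_0$ leaves room to absorb the transversal Laplacian loss coming from the curvature of large balls.

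First I would fix $\theta_2<1$: by Lemma (the maximum-principle/convergence-to-$1$ type fact mentioned in the text, i.e.\ the statement around \eqref{2.10} that solutions reaching a value $\ge 1-\theta^*$ at some time $\ge 1$ converge locally uniformly to $1$), pick $\theta_2$ large enough that $u(t_0,y)\ge\theta_2$ forces, after a controlled waiting time, $u$ to be close to $1$ on a small fixed ball $B_{\rho_0}(y)$. Since $F_0\le f(\cdot,\cdot,\omega)$ pointwise, $u$ is a supersolution of $w_t=\Delta w+F_0(w)$, and it suffices to prove the claim for the reaction $F_0$, i.e.\ I may assume from now on that $u$ solves (or supersolves) the homogeneous equation with reaction $F_0$.

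Next, the core construction. Fix $c<c_0$ and $\theta<1$, and choose an intermediate speed $c<c'<c_0$. Let $U$ be the $1$D traveling front for $F_0$ with speed $c_0$, $U(-\infty)=1$, $U(+\infty)=0$. I would look for a radial subsolution of the form
\[
\underline u(t,x) := \max\Big\{ \phi\big(|x-y| - c'(t-t_0) - A\big) - \delta(t), \, 0 \Big\},
\]
where $\phi$ is a suitably truncated/modified front profile (e.g.\ $\phi = U$ shifted so that it equals $0$ for large argument and stays below $1$), $A$ is a large constant chosen so that $\underline u(t_0,\cdot)$ is supported in $B_{\rho_0}(y)$ and lies below $u(t_0,\cdot)$ there, and $\delta(t)\ge 0$ is a small correction that decays (or stays bounded) in $t$. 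Plugging into $\underline u_t - \Delta\underline u - F_0(\underline u)$ and using that in radial coordinates $\Delta = \partial_{rr} + \frac{d-1}{r}\partial_r$, the extra term is $\frac{d-1}{r}\phi'$; since $\phi'\le 0$ and $r\ge c'(t-t_0)+A \to\infty$, this transversal term is a nonnegative contribution of size $O(1/t)$ that actually \emph{helps} (it has the right sign for a subsolution once $\phi$ is decreasing). The genuine deficit comes from replacing $c_0$ by $c'$ in the transport term, which contributes $(c_0-c')\phi'\le 0$ — again the right sign for a subsolution. A $\delta(t)$ term of the form $\delta(t)=\delta_0 e^{-\mu(t-t_0)}$, together with the linear stability of $U$ near its endpoints ($F_0'(1^-)\le 0$ and $F_0$ vanishing near $0$), absorbs the remaining lower-order errors; this is the standard sub/supersolution-by-modified-front computation (as in, e.g., the proofs in \cite{ZlaInhomog}). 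One checks $\underline u(t_0,\cdot)\le u(t_0,\cdot)$ by the choice of $A$ and $\theta_2$, so the comparison principle gives $u(t,x)\ge\underline u(t,x)$ for all $t\ge t_0$.

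Finally, I would extract the conclusion: since $c<c'$, for $t\ge t_0+\kappa_0$ with $\kappa_0$ large we have $c(t-t_0) \le c'(t-t_0)+A - L_\theta$, where $L_\theta$ is chosen so that $\phi(s)-\delta(t)\ge\theta$ whenever $s\le L_\theta$ (possible because $\phi(-\infty)$ is close to $1$, above $\theta$, once the truncation level was chosen $>\theta$, and $\delta(t)\to 0$); hence $|x-y|\le c(t-t_0)$ implies $|x-y|-c'(t-t_0)-A \le -L_\theta$, so $u(t,x)\ge\underline u(t,x)\ge\theta$. Tracking constants shows $\kappa_0$ depends only on $M,\theta_1,m_1,\al_1,c,\theta$, as claimed. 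The last sentence of the lemma is immediate: if $u_t\ge 0$ then $u(t,\cdot)\ge u(1,\cdot)$ for $t\ge 1$, and the hypothesis $u(t_0,y)\ge\theta_2$ with $t_0\ge 0$ can be upgraded to time $1$ (costing one extra unit in $\kappa_0$), after which the above applies.

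I expect the main obstacle to be the sub/supersolution verification for large balls in dimension $d\ge 2$: one must choose the modified profile $\phi$, the speed gap $c_0-c'$, and the correction $\delta(t)$ compatibly so that the curvature term $\frac{d-1}{r}\phi'$ and the exponentially small errors are simultaneously controlled for all $t\ge t_0$ — and crucially, so that all constants remain uniform in $\omega$ and depend only on the reaction bounds, not on the solution $u$ or on $d$ beyond the explicit factor $d-1$.
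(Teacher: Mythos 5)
First, a point of reference: the paper does not actually prove this lemma --- it is quoted from \cite[Lemma 3.1]{ZlaInhomog} --- so there is no in-paper argument to compare against, and your proposal has to stand on its own. Your overall strategy (reduce to the homogeneous reaction $F_0\le f$ and slide a truncated radial version of the $1$D front at a speed $c'\in(c,c_0)$ underneath $u$) is indeed the classical route. But your first step is circular and hides the real content of the lemma. You choose $\theta_2$ by appealing to ``the statement around \eqref{2.10} that solutions reaching a value $\ge 1-\theta^*$ at some time $\ge 1$ converge locally uniformly to $1$''; in this paper that statement is explicitly presented as a \emph{consequence} of Lemma \ref{L.2.2}, and it is precisely the nontrivial point: for an ignition reaction, a value $\theta_2<1$ at a single point does not by itself force ignition (indeed the lemma is false for any $\theta_2\le 1-\theta_1$, by comparison with the constant solution). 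The missing ingredient is the parabolic Harnack inequality applied to $v:=1-u$, which solves $v_t=\Delta v+gv$ with $g=-f(x,1-v,\omega)/v\in[-M,0]$; it gives $\sup_{B_R(y)}v(t_0-\tau,\cdot)\le C_H(R,M)\,v(t_0,y)$, so taking $1-\theta_2\le \theta_1/(2C_H)$ yields $u\ge 1-\theta_1/2$, strictly above the ignition temperature of $F_0$, on a ball of \emph{prescribed} radius $R$ at time $t_0-\tau$. This is exactly where the hypothesis $t_0\ge 1$ and the dependence $\theta_2=\theta_2(M,\theta_1,\dots)$ enter, neither of which your write-up accounts for.

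The subsolution step also has two concrete problems. (i) You want $\underline u(t_0,\cdot)$ supported in a ``small fixed ball $B_{\rho_0}(y)$'' while also taking the shift $A$ large; but the support of $\phi(|x-y|-A)-\delta_0$ has radius comparable to $A$, and for ignition reactions it \emph{must} be large: data above the ignition temperature on a too-small ball are quenched, and the curvature term forces $A\gtrsim (d-1)/(c_0-c')$. So the preliminary step must produce largeness of $u$ on a \emph{large} ball, which is why the Harnack argument above is run on a ball of radius $R$ chosen in advance. (ii) The sign of the curvature term is backwards: with $\phi'\le 0$, the term $-\frac{d-1}{r}\phi'$ is a \emph{nonnegative} contribution to $\underline u_t-\Delta\underline u-F_0(\underline u)$, i.e.\ it works against the subsolution inequality; it is harmless only because $r\ge A$ makes $\left(c_0-c'-\frac{d-1}{r}\right)\phi'\le 0$, not because it ``helps''. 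Finally, since $F_0$ degenerates at $u=1$ when $m_1>1$, the exponential correction $\delta(t)$ does not close the estimate uniformly up to level $\theta$ near $1$; the clean fix is to prove the spreading first for one fixed level (say $\theta=1-\theta^*$, where $F_0$ is bounded below on the range that matters) and then upgrade to arbitrary $\theta<1$ by the ODE comparison $U'=\al_1(1-U)^{m_1}$, exactly as the paper does in Lemma \ref{Cor.2.1}.
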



Let now 
\beq\lb{2.10}
\theta^*:=\frac{1}{4}\min\{1-\theta_2,\,\theta_1\},
\eeq
where $\theta_2=\theta_2({M},\frac 12\theta_1,m_1,\al_1(1-\frac18{\theta_1})^{m_1-1})<1$.

\smallskip
\noindent {\it Remark.} Addition of the factors $\frac 12$ and  $(1-\frac18{\theta_1})^{m_1-1}$ here is due to the scaling $u\mapsto (1-a)u+a$ mentioned before \textbf{(H2'')}, as we shall see in Section \ref{S7}.  All arguments before Proposition \ref{P.7.6} 
will only require $\theta_2=\theta_2({M},\theta_1,m_1,\al_1)$ here, and also only that $\theta^*\le \frac{1}{2}\min\{1-\theta_2,\,\theta_1\}$.  So we could define $\theta^*$ this way in Theorem \ref{T.1.3} and its proof.  
\smallskip

In the rest of the paper we will primarily use Lemma \ref{L.2.2} with $c=\frac{c_0}{2}$ and $\theta=1-\theta^*$, 
and we will therefore define
\beq\lb{2.10a}
\kappa_0:=\kappa_0 \left({M},\theta_1,m_1,\al_1, \frac {c_0}2,1-\theta^* \right).
\eeq


Having defined this $\theta^*$, let us next  construct the initial data $u_{0,S}$ from the introduction, which are perturbations of the functions $\chi_S$ that also satisfy \eqref{2.6}.

\begin{lemma}\lb{L.2.4}
There is $R_0=R_0(M,\theta_1,m_1,\al_1)\ge 1$ such that for any $f$ satisfying \textbf{(H1')}  and $S\subseteq \bbR^d$, there is a smooth function $u_{0,S}$ satisfying \eqref{2.6} and
\beq\lb{2.7}
(1-\theta^*)\chi_{S}\leq u_{0,S}\leq (1-\theta^*)\chi_{B_{R_0}(S)}.
\eeq
\end{lemma}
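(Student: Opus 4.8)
The plan is to construct $u_{0,S}$ as a function of the single variable $s := d(x, S)$ (extended to be $0$ on $S$ itself), obtained by composing a fixed radial profile $\phi \colon [0,\infty) \to [0, 1-\theta^*]$ with the distance function. First I would build the profile $\phi$ by solving an ordinary differential inequality: I want $\phi(0) = 1-\theta^*$, $\phi$ nonincreasing, $\phi \equiv 0$ on $[R_0, \infty)$ for some $R_0$ depending only on $(M,\theta_1,m_1,\al_1)$, and $\phi$ chosen so that the composition with $d(\cdot, S)$ satisfies \eqref{2.6}. Since the distance function to an arbitrary set satisfies $|\nabla d| = 1$ a.e.\ and, in the viscosity/distributional sense, $\Delta d \ge -\tfrac{C}{d}$ is \emph{not} available without convexity, the key point is to use the one-sided bound that $d(\cdot,S)$ is globally Lipschitz with constant $1$ and is a viscosity \emph{supersolution} of $\Delta d \le (d-1)/d$-type inequalities only in nice cases; to avoid this, I would instead ensure $\Delta u_{0,S} \ge 0$ wherever $\phi' \ne 0$ is not forced, by taking $\phi$ \emph{convex} and nonincreasing on its support. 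For a convex nonincreasing $\phi$, the composition $\phi(d(x,S))$ is a viscosity supersolution of $\Delta (\phi \circ d) \ge \phi''(d)\,|\nabla d|^2 + \phi'(d)\Delta d \ge \phi'(d)\Delta d$, and since $\phi' \le 0$ while $\Delta d \le 0$ in the viscosity sense on $\bbR^d \setminus S$ when we use the signed distance away from a set is subharmonic-from-below — this is the delicate point and I address it below.

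More concretely, the clean route is: let $\rho(x) := d(x, S)$, note $\rho$ is $1$-Lipschitz, and recall that $-\rho$ is a viscosity \emph{subsolution} of $\Delta(-\rho) \le 0$ away from $S$ is false in general, so instead I use the elementary fact that for \emph{any} closed set the function $\rho^2$ satisfies $\Delta(\rho^2) \le 2d$ in the sense of distributions (this follows because $\rho$ is semiconcave with the right constant, or by a direct mollification argument), hence $\rho$ satisfies $\Delta \rho \le \tfrac{d-1}{\rho}$ weakly where $\rho>0$. Then choosing $\phi$ to solve the ODE $\phi'' + \tfrac{d-1}{r}\phi' + F(\phi) \le 0$ with $F(u) = \inf_{(x,\omega)} f(x,u,\omega) \le \al_1(1-u)^{m_1}\chi_{[1-\theta_1,1]}(u)$ and with the required boundary behavior would make $u_{0,S} := \phi \circ \rho$ a distributional (hence viscosity, hence classical after mollification) supersolution of \eqref{2.6}. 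I then fix the profile: since $F$ vanishes on $[0, \theta_1]$ and $u_{0,S} \le 1-\theta^* \le 1 - \theta_1$, actually $F(u_{0,S}) \equiv 0$, so \eqref{2.6} reduces to $\Delta u_{0,S} \ge 0$! This is the crucial simplification: I only need $\phi \circ \rho$ subharmonic where $\rho$ is smooth, plus a mollification near $S$. Take $\phi$ to be any smooth convex nonincreasing function with $\phi = 1-\theta^*$ near $0$, $\phi = 0$ on $[R_0,\infty)$ — wait, a convex nonincreasing function cannot be constant then drop then constant; instead take $\phi$ \emph{concave-then-convex}? No: the resolution is that $\phi\circ\rho$ need not be subharmonic, so I genuinely do need the $\tfrac{d-1}{r}\phi'$ term, and I solve $\phi'' + \tfrac{d-1}{r}\phi' = 0$, i.e.\ $\phi(r) = c_1 + c_2 r^{2-d}$ (or $c_1 + c_2\log r$ if $d=2$, $c_1 + c_2 r$ if $d=1$), which is exactly harmonic radially; gluing a constant $1-\theta^*$ on $[0,1]$ to this harmonic piece on $[1, R_0]$ and $0$ beyond, then mollifying the corners downward, gives a $C^\infty$ function with $\phi''+\tfrac{d-1}r\phi' \ge 0$ is false at the lower corner — so I instead only ask $\ge 0$ after smoothing by making the glue convex at $r=1$ and concave at $r=R_0$, which is compatible since we may pick $R_0$ large. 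Since $F(\phi)\equiv 0$ this suffices.

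The main obstacle, and where I would spend the most care, is the regularity of $d(\cdot,S)$ near $\partial S$ and near the cut locus: $\rho$ is only Lipschitz, and $\phi\circ\rho$ is a priori merely Lipschitz, not smooth, so the claimed smoothness of $u_{0,S}$ and the pointwise inequality \eqref{2.6} must be interpreted correctly. The fix is to first replace $S$ by $B_\delta(S)$ for a small fixed $\delta$, work with $\rho_\delta := d(\cdot, B_\delta(S))$, which vanishes on a neighborhood of $\overline S$ so that $\phi(\rho_\delta) \equiv 1-\theta^*$ there (killing all issues at $\partial S$), and then mollify $\phi \circ \rho_\delta$ at scale $\ll \delta$; because $\rho_\delta$ is a viscosity supersolution of the radial Laplacian inequality globally and $\phi$ is chosen with a strict margin in the harmonic ODE on $(\delta, R_0-\delta)$, the mollification preserves the (now non-strict) inequality \eqref{2.6}, and the support grows by at most $\delta + (\text{mollification scale})$, which I absorb by enlarging $R_0$ by $1$. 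All constants depend only on $(M,\theta_1,m_1,\al_1)$ through $\theta^*$ and through the choice of margin in the ODE, and $R_0 \ge 1$ can be fixed at the outset; the lower bound $(1-\theta^*)\chi_S \le u_{0,S}$ holds because $\phi \equiv 1-\theta^*$ on $[0,\delta]$ and $\rho_\delta = 0$ on $S$, while the upper bound $u_{0,S} \le (1-\theta^*)\chi_{B_{R_0}(S)}$ holds because $\phi \le 1-\theta^*$ everywhere and $\phi \circ \rho_\delta$ is supported where $\rho_\delta \le R_0 - 1$, i.e.\ inside $B_{R_0}(S)$ after accounting for the $\delta$-enlargement and mollification.
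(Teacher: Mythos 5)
There is a fatal sign error at the heart of your ``crucial simplification.'' You claim $u_{0,S}\le 1-\theta^*\le 1-\theta_1$, hence $F(u_{0,S})\equiv 0$, hence \eqref{2.6} reduces to $\Delta u_{0,S}\ge 0$. But $\theta^*=\tfrac14\min\{1-\theta_2,\theta_1\}<\theta_1$, so $1-\theta^*>1-\theta_1$: the top values of $u_{0,S}$ lie in $[1-\theta_1,1-\theta^*]$, where \textbf{(H1)} forces $F(u)\ge\al_1(1-u)^{m_1}\ge\al_1(\theta^*)^{m_1}>0$. Worse, the program you derive from this error is impossible: a nonnegative, compactly supported function with $\Delta u\ge 0$ on all of $\bbR^d$ is identically zero by the maximum principle. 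You in fact run into the concrete manifestation of this --- the corner where the profile leaves its plateau at height $1-\theta^*$ is necessarily concave, so no smoothing can make $\phi''+\tfrac{d-1}{r}\phi'\ge 0$ there --- and you acknowledge it but cannot resolve it, because the resolution is exactly the positive reaction term you discarded.

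The correct mechanism, which is what the paper's proof implements, is to demand $\Delta u_{0,S}\ge 0$ only where $u_{0,S}$ is small (below roughly $1-\tfrac23\theta_1$, where $F$ genuinely vanishes), to make $|\Delta u_{0,S}|+|\nabla u_{0,S}|^2$ uniformly small everywhere --- at most $\delta:=\min_{u\in[1-2\theta_1/3,\,1-\theta^*]}F_0(u)>0$ --- and to use $F(u_{0,S})\ge\delta$ on the plateau region to absorb the unavoidable negative Laplacian there. The paper does this by convolving $\chi_{B_{NR}(S)}$ with a rescaled kernel built from the truncated Newtonian potential (subharmonic away from the origin, which gives $\Delta u\ge 0$ far from $S$, i.e., exactly where $u$ is small), and then composing with a reparametrization $\psi$ that is affine on the range where subharmonicity is needed and maps the rest into the interval where $F_0\ge\delta$. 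Your distance-function idea could in principle be adapted along the same lines (radial profile satisfying $\phi''+\tfrac{d-1}{r}\phi'+F(\phi)\ge 0$ rather than $\ge 0$ without the reaction term, plus the semiconcavity estimate $\Delta\rho\le\tfrac{d-1}{\rho}$ and a careful mollification), but as written the argument does not close.
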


\noindent {\it Remark.}
Recall that \eqref{2.6} implies that the relevant solutions to \eqref{1.1} satisfy $u_t>0$.  Moreover, \eqref{2.7} yields the uniform bound $L_{u,\eta,1-\theta^*}(0)\leq R_0$ for all $\eta\in(0,1-\theta^*)$ and $S\subseteq \bbR^d$, which is relevant for the next result.  
\smallskip

Lemma \ref{L.2.4} is proved in Appendix A.

Let us now turn to the consequences of \textbf{(H2)} obtained in \cite{ZlaInhomog}.  In fact, these results hold for the following more general classes of functions.

\begin{definition} \lb{D.1.2}
For $M,\theta_1,m_1,\al_1$ from \textbf{(H1)} (and $F_0$ defined above), and for any $\zeta,\xi>0$, let $\calF(F_0,M,\theta_1,\zeta,\xi)$ be the class of all $f$ satisfying \textbf{(H1')} such that
 \[
 \inf_{\substack{{(x,\omega)\in \bbR^d\times\Omega}\\ u\in [\gamma_f(x,\omega;\zeta),1-\theta_1]}} f(x,u,\omega)\geq\xi,
 \]
    where  (with the convention $\inf\emptyset=\infty$)
\[
    \gamma_f(x,\omega;\zeta):=\inf\{u\geq 0\, | \, f(x,u,\omega)>\zeta u\}.
\]
\end{definition}

\noindent {\it Remarks.} 1.  Although we could instead write $\calF(M,\theta_1,m_1,\al_1,\zeta,\xi)$, we use notation  from \cite{ZlaInhomog}.\smallskip

2. Note that pure ignition reactions  belong to $\bigcup_{\xi>0} \calF(F_0,M,\theta_1,\zeta,\xi)$ for any $\zeta>0$.\smallskip

It was shown in  \cite{ZlaInhomog} that if $d\le 3$ and $f$ is from the class $\calF(F_0,M,\theta_1,\zeta,\xi)$ for some $F_0,M,\theta_1,\xi$ and $\zeta<c_0^2/4$, then transitions from values $u\sim 0$ to values $u\sim 1$ for fairly general solutions to \eqref{1.1}  occur over uniformly-in-time bounded distances.  In view of our interest in solutions from $\calU_f$,  with initial data satisfying Lemma \ref{L.2.4}, the following result will be relevant. 

\begin{lemma}\lb{T.2.4}
Let $d\le 3$, let $F_0,M,\theta_1$ be as in Definition \ref{D.1.2} and $\theta^*$ from \eqref{2.10}, and consider any $\xi>0$ and $\zeta<c_0^2/4$.
There is $\Lambda>0$
and for any $\eta\in(0,\frac 12)$ there are $\mu_\eta,\kappa_\eta>0$
such that if $f\in\calF(F_0,M,\theta_1,\zeta,\xi)$ and  
$u$ solves \eqref{1.1} with some $\omega\in\Omega$ and initial data satisfying Lemma~\ref{L.2.4} for some $S\subseteq\bbR^d$, then
\beq\lb{2.5}
\sup_{t\geq0 \,\&\, \eta\in (0,1-\theta^*)} \frac{ L_{u,\eta,1-\theta^*}(t) } { 1+ |\ln \eta| } \le \Lambda
\eeq
and for any $\eta>0$ we have
\beq \lb{2.5a}
    \inf_{\substack{(t,x)\in(\kappa_\eta,\infty)\times \bbR^d\\
    u(t,x)\in [\eta,1-\eta]}}u_t(t,x)\geq \mu_\eta.
\eeq
\end{lemma}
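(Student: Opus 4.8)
The plan is to derive Lemma~\ref{T.2.4} by combining the quantitative results from \cite{ZlaInhomog} with the specific structure of the initial data $u_{0,S}$ supplied by Lemma~\ref{L.2.4}. First I would note that by Lemma~\ref{L.2.4} the initial datum $u_{0,S}$ satisfies $(1-\theta^*)\chi_S\le u_{0,S}\le(1-\theta^*)\chi_{B_{R_0}(S)}$ and \eqref{2.6}, so the corresponding solution $u$ to \eqref{1.1} is monotone in time ($u_t>0$) and has $L_{u,\eta,1-\theta^*}(0)\le R_0$ for every $\eta\in(0,1-\theta^*)$, uniformly in $S$ and in $f\in\calF(F_0,M,\theta_1,\zeta,\xi)$. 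The hypotheses $d\le 3$ and $\zeta<c_0^2/4$ are exactly the ones under which \cite{ZlaInhomog} proves that transition zones from $u\sim0$ to $u\sim1$ stay bounded in time; the content of \eqref{2.5} is to make the $\eta$-dependence of that bound explicit (logarithmic) and uniform over the class $\calF(F_0,M,\theta_1,\zeta,\xi)$ and over all admissible $S$.

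For the width bound \eqref{2.5}, the strategy is: (i) apply the main boundedness estimate of \cite{ZlaInhomog} to get, for a fixed reference threshold pair such as $(\theta^*,1-\theta^*)$, a uniform bound $L_{u,\theta^*,1-\theta^*}(t)\le\Lambda_0$ for all $t\ge0$, with $\Lambda_0$ depending only on $F_0,M,\theta_1,\zeta,\xi$ and the initial width bound $R_0$; (ii) upgrade this to the logarithmic bound in $\eta$ for small $\eta$ by using the uniform lower bound $f(\cdot,u,\cdot)\ge\alpha_1(1-u)^{m_1}$ near $u=1$ together with a front-like comparison argument: once $u$ exceeds $\theta^*$ on a set, the reaction pushes it above $1-\eta$ within an additional distance of order $|\ln\eta|$ (this is the standard estimate for how far behind the $1-\theta^*$ level set the $1-\eta$ level set can lag for ignition-type reactions bounded below by $\alpha_1(1-u)^{m_1}$ near $1$, using $F_0$ as the comparison reaction). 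Combining (i) and (ii), and noting that for $\eta\ge\theta^*$ the bound $\Lambda_0$ already suffices while $1+|\ln\eta|$ is bounded below, gives \eqref{2.5} with a single constant $\Lambda$. I would also record that the same argument controls $L_{u,\eta,1-\theta^*}(t)$ uniformly for $\eta$ close to $1-\theta^*$ trivially, since $L$ is monotone in the lower threshold.

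For the time-derivative bound \eqref{2.5a}, the plan is a parabolic-regularity plus strong-maximum-principle argument, again leveraging $u_t>0$. Fix $\eta\in(0,\tfrac12)$. On the region where $u(t,x)\in[\eta,1-\eta]$ we want $u_t(t,x)\ge\mu_\eta$ for all $t>\kappa_\eta$. The function $v:=u_t\ge0$ solves the linear equation $v_t=\Delta v+f_u(x,u(t,x),\omega)v$; interior parabolic Harnack (with coefficients bounded by $M$) converts a lower bound for $v$ at one space-time point into a lower bound on a fixed parabolic neighborhood. So it suffices to produce, at each point $(t,x)$ with $u(t,x)\in[\eta,1-\eta]$ and $t$ large, a nearby earlier space-time point where $v$ is bounded below by a constant depending only on $\eta$ and the structural constants. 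This I would extract from \eqref{2.5}: the level sets $\{u(t,\cdot)\ge\eta\}$ and $\{u(t,\cdot)\ge 1-\theta^*\}$ are within distance $\Lambda(1+|\ln\eta|)$ of each other, and since $u$ is increasing in $t$ and eventually covers every compact set (by Lemma~\ref{L.2.2}, applied once $u$ somewhere exceeds $\theta_2$, which happens because $F(1-\theta^*)>0$), the level set $\{u\ge\theta^*\}$ sweeps out space at a definite linear rate; combined with the width bound this forces, near any point where $u\in[\eta,1-\eta]$ at a large time, the existence of an earlier nearby point where $u$ has passed through a fixed intermediate band $[\tfrac12\theta^*,1-\tfrac12\theta^*]$ over a bounded time interval, hence where $u_t$ is bounded below by a constant $\mu_\eta'$ (otherwise $u$ could not have traversed that band). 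Feeding $\mu_\eta'$ into Harnack yields $\mu_\eta$, and the waiting time $\kappa_\eta$ is whatever is needed (via Lemma~\ref{L.2.2} and $\kappa_0$) for the $\theta^*$-level set to have expanded far enough that the geometric argument applies at every such point.

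The main obstacle I anticipate is step (ii)/(iii): making the passage from the ``bounded width at a single threshold pair'' result of \cite{ZlaInhomog} to the \emph{uniform} (over $\calF(F_0,M,\theta_1,\zeta,\xi)$, over all $S\subseteq\bbR^d$, and over all $\omega$) logarithmic-in-$\eta$ bound, and especially deriving the uniform positive lower bound on $u_t$ inside the transition band. The delicate point is that $f_u$ need not be sign-definite, so one cannot directly bound $v=u_t$ from below pointwise; one must genuinely route the argument through Harnack for the linear equation satisfied by $v$, and the constants there must be traced to depend only on $M$ and the fixed geometry coming from \eqref{2.5}, not on $S$ or $\omega$. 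Establishing that the relevant parabolic neighborhoods are of a size independent of $\eta$ (with only $\mu_\eta$, not $\kappa_\eta$, carrying the $\eta$-dependence in the final statement), and verifying that the sweeping-out rate of the $\theta^*$-level set is genuinely uniform, are the points that will require care.
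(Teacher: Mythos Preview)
Your plan for \eqref{2.5} has a genuine gap in step (ii): you have misread which level set the logarithm controls. Recall from \eqref{d.2.1} that $L_{u,\eta,1-\theta^*}(t)$ is the distance the super-level set $\{u(t,\cdot)\ge\eta\}$ extends \emph{ahead of} $\{u(t,\cdot)\ge 1-\theta^*\}$. For small $\eta$ this is governed by the decay of $u$ \emph{in front of} the reaction zone (where $u\le\theta_1$ and $f\equiv 0$), not by how fast $u$ climbs toward $1$ behind it. Your proposed mechanism --- ``once $u$ exceeds $\theta^*$, the reaction pushes it above $1-\eta$,'' using $f\ge\alpha_1(1-u)^{m_1}$ near $u=1$ --- would at best control $L_{u,1-\theta^*,1-\eta}$, which is a different quantity; and a power-law lower bound on $f$ near $1$ does not produce a logarithmic estimate in any case. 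The paper instead pulls the logarithm out of (4.14) in \cite{ZlaInhomog}: there a radial function $\psi$ with $\psi''+\frac{d-1}{r}\psi'=\zeta\psi$ is used to quantify the exponential decay ahead of the front, and since $\psi(r)\gtrsim e^{\sqrt{\zeta}\,r/2}$ one gets $\psi^{-1}(1/\eta)\lesssim |\ln\eta|/\sqrt{\zeta}$, which is exactly where the $|\ln\eta|$ enters. This is also why the hypothesis $\zeta<c_0^2/4$ matters.

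For \eqref{2.5a} your Harnack-for-$v=u_t$ outline is reasonable, but note that the paper simply invokes \cite[Theorem~2.5(i)]{ZlaInhomog} with $(\eps',\eps)=(1-\theta^*,\eta)$ and $\eps_0=\theta^*$, together with the remark there that the waiting time is uniform in $u$. So rather than building this from scratch you should check that the initial data from Lemma~\ref{L.2.4} put you inside the hypotheses of that theorem (they do: $L_{u,\eta,1-\theta^*}(0)\le R_0$ uniformly) and then cite it directly.
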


\begin{proof}
Recall that we have $L_{u,\eta,1-\theta^*}(0)\leq R_0$ for all 
$\eta\in(0,1-\theta^*)$.  We then obtain \eqref{2.5a} from \cite[Theorem 2.5(i)]{ZlaInhomog} with $(\eps',\eps)=(1-\theta^*,\eta)$ (we can choose there $\eps_0=\theta^*$, and then $\eps'=1-\eps_0$), with  independence of $\kappa_\eta$ on 
$u$ due to Remark 1 after the theorem.

To obtain \eqref{2.5}, we instead use (4.14) in \cite{ZlaInhomog} with $(\eps_0,h,t_0)=(\theta^*,0,0)$ (where $\eps_0$ was used to define the function $Z_y$ in (4.14)).  Then the bound $Z_y(t_0)-Y_y^h(t_0)\le R_0$ for all $y\in\bbR^d$ follows from $L_{u,\eta,1-\theta^*}(0)\leq R_0$ for all  $\eta\in(0,1-\theta^*)$, so (4.14) yields $Z_y(t)-Y_y^h(t)\le \lambda$ for all $(t,y)\in[0,\infty)\times \bbR^d$ and some $\lambda=\lambda(F_0,M,\theta_1,\zeta,\xi)$.  The definition of $Y_y^h(t)$ shows that if $u(t,y)\ge \eta$ for some $(t,y)$, then $Y_y^h(t)\le \psi^{-1}(\frac 1\eta)$, with $\psi''(r)+\frac{d-1}r \psi'(r)=\zeta\psi(r)$ on $(0,\infty)$ and $\psi(0)=1$ (hence we have $\lim_{r\to\infty} r^{(d-1)/2} e^{-\sqrt\zeta r} \psi(r)\in(0,\infty)$).  Therefore $\psi(r)\ge e^{\sqrt\zeta r/2}$ for all large enough $r$, so $Z_y(t)\le \lambda + \frac 2{\sqrt\zeta} |\ln \eta|$ for all small enough $\eta$.  But since $Z_y(t)$ is the distance from $y$ to the nearest point $x$ with $u(t,x)\ge 1-\eps_0=1-\theta^*$, we obtain $L_{u,\eta,1-\theta^*}(t)\le \lambda + \frac 2{\sqrt\zeta} |\ln \eta|$ for all 
$t\ge 0$ and all small enough $\eta>0$.  This yields \eqref{2.5} with some $\Lambda=\Lambda(F_0,M,\theta_1,\zeta,\xi)$.
%
%
%
\end{proof}

The next result is a counterpart of Lemma \ref{L.2.2} (see \cite[Lemma 2.2]{zlatos2019}).  It shows that the speed of propagation of perturbations of solutions to \eqref{1.1} is bounded above by $2 \sqrt{Md\,}$ (in fact, the bound $2\sqrt M$ works as well, but we will not need it here).

\begin{lemma}\lb{L.2.1}
Let  $r>0$ and $y\in\bbR^d$, and let $u_1:[0,\infty)\times B_r(y)\to (-\infty,1]$ be a subsolution and $u_2:[0,\infty)\times B_r(y)\to [0,\infty)$ a supersolution to \eqref{1.1} with some $f$ satisfying \textbf{(H1')} and some $\omega\in\Omega$.  If $u_1(0,\cdot)\leq u_2(0,\cdot)$ on $B_r(y)$, then for all $(t,x)\in[0,\infty)\times B_r(y)$ we have
\[
u_1(t,x)\leq u_2(t,x)+2d\, e^{ \sqrt{M/{d}\,}\left(|x-y|-r+2 \sqrt{Md\,}\,t\right)}.
\]
\end{lemma}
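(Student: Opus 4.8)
\textbf{Proof proposal for Lemma \ref{L.2.1}.}

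The plan is to construct an explicit supersolution of the form $w(t,x) := u_2(t,x) + \phi(t,x)$ on the parabolic cylinder $[0,\infty)\times B_r(y)$, where $\phi(t,x) := 2d\, e^{\sqrt{M/d\,}(|x-y|-r+2\sqrt{Md\,}\,t)}$ is the stated error term, and then invoke the comparison principle for \eqref{1.1} on $B_r(y)$ to conclude $u_1 \le w$. There are three things to check: that $w$ is a supersolution in the interior, that $w$ dominates $u_1$ on the parabolic boundary (the lateral boundary $\partial B_r(y)$ for all times, and the bottom $\{t=0\}$), and that the comparison principle applies despite $u_1$ possibly being negative — this is why we only assume $u_1 \le 1$ and $u_2 \ge 0$, so that $f$ evaluated along these functions behaves monotonically in the right way (recall $f$ is extended by $0$ outside $[0,1]$, is non-increasing near $u=1$, and is Lipschitz with constant $M$).

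First, for the lateral boundary: when $|x-y| = r$ we have $\phi(t,x) = 2d\, e^{2\sqrt{Md\,}\,t} \ge 2d \ge 1 \ge u_1(t,x)$, using $d\ge 1$ and $u_1 \le 1 \le u_2 + \phi$ since $u_2 \ge 0$. For the bottom, at $t=0$ we have $\phi(0,x) \ge 0$, so $u_1(0,\cdot) \le u_2(0,\cdot) \le u_2(0,\cdot) + \phi(0,\cdot)$ by hypothesis. Second, for the supersolution property: with $\rho := |x-y|$ (away from $x=y$, where $\phi$ is smooth), a direct computation gives $\phi_t = 2\sqrt{Md\,}\cdot \sqrt{M/d\,}\,\phi = 2M\phi$ and $\Delta \phi = \left(\frac{M}{d} + \frac{d-1}{\rho}\sqrt{M/d\,}\right)\phi \le \left(\frac{M}{d} + (d-1)\frac{M}{d}\right)\phi = M\phi$ for $\rho \ge r \ge$ ... — more carefully, since $\sqrt{M/d\,}/\rho$ could be large for small $\rho$, one should note $\phi$ is only singular (in its Laplacian) at $x=y$, but there $\phi$ has a local minimum on each time slice, so the standard trick is to check that $w$ is a viscosity/distributional supersolution, or simply to observe $\Delta\phi \ge 0$ near $x=y$ which only helps. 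Then $\phi_t - \Delta\phi \ge 2M\phi - M\phi - (\text{correction}) = M\phi$ in the relevant region, and we need $w_t - \Delta w - f(x,w,\omega) \ge 0$, i.e. $(u_2)_t - \Delta u_2 - f(x,u_2,\omega) + \phi_t - \Delta\phi - [f(x,w,\omega) - f(x,u_2,\omega)] \ge 0$; the first group is $\ge 0$ since $u_2$ is a supersolution, and $|f(x,w,\omega) - f(x,u_2,\omega)| \le M|w - u_2| = M\phi$ by the Lipschitz bound, so $\phi_t - \Delta\phi - M\phi \ge 0$ closes the estimate.

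Third, with $w$ a supersolution, $u_1$ a subsolution, and $u_1 \le w$ on the parabolic boundary of $[0,T]\times B_r(y)$ for every $T>0$, the comparison principle (which holds for \eqref{1.1} because $f$ is Lipschitz in $u$) gives $u_1 \le w$ throughout, which is exactly the claimed inequality. The main obstacle — and the only genuinely delicate point — is handling the singularity of $\Delta\phi$ at the center $x=y$: one must either smooth $\phi$ slightly near the origin (replacing $|x-y|$ by $\sqrt{|x-y|^2 + \delta^2}$ and sending $\delta\to 0$), or argue that $w$ is a supersolution in the viscosity sense there because any smooth test function touching $w$ from below at $(t_0,y)$ must have $\Delta \le \Delta(u_2 + \text{smooth lower bound for }\phi)$. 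Either way this is a routine technical fix; I expect this to be the step the authors handle most briefly, perhaps by just citing the analogous computation in \cite{zlatos2019} since the lemma is attributed there. Everything else is elementary.
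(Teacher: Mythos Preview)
The paper does not give its own proof of this lemma; it simply cites \cite[Lemma~2.2]{zlatos2019} and remarks that the argument there extends unchanged to the present ranges of $u_1,u_2$. So there is no paper proof to compare against directly --- but your proposal has a genuine gap for $d\ge 2$.

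Your barrier $\phi(t,x)=2d\,e^{\sqrt{M/d}\,(|x-y|-r)+2Mt}$ does \emph{not} satisfy the needed inequality $\phi_t-\Delta\phi\ge M\phi$ near the center. With $\rho=|x-y|$ and $a=\sqrt{M/d}$ one has
\[
\phi_t-\Delta\phi-M\phi
=\phi\Bigl(2M-a^2-\tfrac{(d-1)a}{\rho}-M\Bigr)
=(d-1)\,a\,\phi\Bigl(a-\tfrac{1}{\rho}\Bigr),
\]
which is strictly negative throughout the open punctured ball $0<\rho<\sqrt{d/M}$, not merely at the single point $x=y$. Neither of your proposed fixes addresses this. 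Smoothing $|x-y|$ to $(|x-y|^2+\delta^2)^{1/2}$ makes things worse: a direct computation gives $\Delta\phi_\delta(t,y)=\phi_\delta(t,y)\cdot ad/\delta\to+\infty$ as $\delta\to 0$. And the viscosity argument at $x=y$ is irrelevant because away from that single point $\phi$ is smooth and the classical inequality fails on an open set. Your aside that ``$\Delta\phi\ge 0$ near $x=y$ only helps'' has the sign backwards: you want $\phi_t-\Delta\phi$ large, so large positive $\Delta\phi$ hurts.

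The constants $2d$ and $\sqrt{M/d}$ in the statement are traces of a different construction: the actual barrier is not the radial exponential itself but a smooth function built from one-dimensional exponentials in separate coordinate directions, which satisfies the linear inequality exactly and is then bounded above by the stated right-hand side. Producing that barrier is the content of the proof, and it is what your sketch is missing.
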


\noindent {\it Remark.}  This was stated in \cite{zlatos2019} with $u_1,u_2$ having values in $[0,1]$ only, but the proof applies to our case without change.
\smallskip

This yields the following corollary (as above, $c_1$ in this result could be just $2\sqrt{M\,}$).

\begin{corollary}\lb{C.2.1}
If $u:[0,\infty)\times\bbR^d\to [0,1]$ solves \eqref{1.1} with some $f$ satisfying \textbf{(H1')} and some $\omega\in\Omega$, then for any $t\geq 0$
we have 
\[
\{x\in\bbR^d\,|\, u(t,x)\geq 1-\theta_1\}\subseteq  B_{c_1t+\kappa_1} \big( \{ x\in\bbR^d\,|\, u(0,x)\geq \theta_1\} \big),
 \]
where
\[
c_1:=2\sqrt{Md\,}>c_0\qquad \text{ and } \qquad\kappa_1:=1+ \sqrt{d/{M}\,} \ln \frac{2d}{1-2\theta_1}.
\]
\end{corollary}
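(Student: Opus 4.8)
The plan is to reduce the set inclusion to a pointwise statement and feed it into Lemma~\ref{L.2.1} with the constant function $\theta_1$ as supersolution. Fix $t_0\ge 0$ and a point $x_0\in\bbR^d$ with $u(t_0,x_0)\ge 1-\theta_1$, and set $S_0:=\{x\in\bbR^d\,|\,u(0,x)\ge\theta_1\}$ (both sides of the asserted inclusion are readily seen to be empty if $S_0=\emptyset$, by running the argument below with $r\to\infty$, so assume $S_0\ne\emptyset$). Suppose toward a contradiction that $d(x_0,S_0)\ge c_1t_0+\kappa_1$. Since $c_1t_0+\kappa_1=c_1t_0+\sqrt{d/M}\,\ln\tfrac{2d}{1-2\theta_1}+1$, I can choose a radius $r$ with
\[
c_1t_0+\sqrt{d/M}\,\ln\tfrac{2d}{1-2\theta_1}\;<\;r\;<\;d(x_0,S_0),
\]
the interval being nonempty precisely because of the $+1$ in the definition of $\kappa_1$. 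The upper bound on $r$ gives $B_r(x_0)\cap S_0=\emptyset$, i.e.\ $u(0,\cdot)<\theta_1$ throughout $B_r(x_0)$.

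Next I would apply Lemma~\ref{L.2.1} on the ball $B_r(x_0)$ (so $y=x_0$) with $u_1:=u$, which is a solution — hence a subsolution — to \eqref{1.1} with values in $[0,1]\subseteq(-\infty,1]$, and with $u_2\equiv\theta_1$, which is a supersolution since $\partial_t\theta_1=\Delta\theta_1=0$ and $f(\cdot,\theta_1,\cdot)\equiv 0$ by \textbf{(H1)}, and which has values in $[0,\infty)$; moreover $u_1(0,\cdot)<\theta_1=u_2(0,\cdot)$ on $B_r(x_0)$. Lemma~\ref{L.2.1} then yields
\[
u(t,x)\le\theta_1+2d\,e^{\sqrt{M/d}\,(|x-x_0|-r+2\sqrt{Md}\,t)}
\]
for all $(t,x)\in[0,\infty)\times B_r(x_0)$, and evaluating at $(t_0,x_0)$, using $c_1=2\sqrt{Md}$ and the lower bound on $r$, gives
\[
u(t_0,x_0)\le\theta_1+2d\,e^{\sqrt{M/d}\,(c_1t_0-r)}<\theta_1+2d\cdot\tfrac{1-2\theta_1}{2d}=1-\theta_1,
\]
contradicting $u(t_0,x_0)\ge 1-\theta_1$. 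Hence $d(x_0,S_0)<c_1t_0+\kappa_1$, which, by the convention $B_\rho(A):=A+(B_\rho(0)\cup\{0\})$, is exactly the statement $x_0\in B_{c_1t_0+\kappa_1}(S_0)$; since $t_0$ and $x_0$ were arbitrary, the corollary follows.

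All the analytic content is already in Lemma~\ref{L.2.1}, so I expect no genuine obstacle. The only points needing any care are the radius bookkeeping — picking $r$ small enough that $B_r(x_0)$ misses $S_0$ yet large enough that the exponential error in Lemma~\ref{L.2.1} stays below $\tfrac{1-2\theta_1}{2d}$, which the unit of slack built into $\kappa_1$ makes automatic and also yields the strict distance inequality needed for the closed-neighborhood notation — and the trivial check that the constant $\theta_1$ is an admissible $u_2$, which is immediate from \textbf{(H1)}.
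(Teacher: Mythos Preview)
Your proof is correct and essentially identical to the paper's: both apply Lemma~\ref{L.2.1} with $u_1=u$ and $u_2\equiv\theta_1$ on a ball of radius roughly $c_1t+\kappa_1$ around a point outside the claimed neighborhood, obtaining $u(t,y)<1-\theta_1$. The only omission is the claim $c_1>c_0$, which the paper dispatches via the well-known bound $c_0<2\sqrt{M}$ for $M$-Lipschitz ignition reactions.
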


\begin{proof}
The claim $c_1>c_0$ follows from the well-known estimate $c_0<2\sqrt M$ for any $M$-Lipschitz ignition reaction $F_0$. 
If $t\ge 0$ and $y\notin B_{c_1t+\kappa_1} ( \{ x\,|\, u(0,x)\geq \theta_1\} )$, then  Lemma \ref{L.2.1} with $u_1=u$, $u_2 \equiv \theta_1$, and $r=c_1t+\kappa_1$ yields
\[
    u(t,y)\leq 
    \theta_1+2d\,e^{-\sqrt{{M}/{d}\,}\,\kappa_1}<1-\theta_1,
\]
finishing the proof.
\end{proof}

We can now use these results to show that Theorems \ref{T.1.1} and \ref{T.1.2} follow from Theorem \ref{T.1.3}, so it will suffice to prove the latter (and then Theorem \ref{T.1.7}).  This also means that we will  assume either  \textbf{(H2')} or  \textbf{(H4')} in the rest of Sections \ref{S2}--\ref{S6}.  Hence, then there will be
$\mu_*,\kappa_*>0$ such that for any $u\in\calU_f$ or $u\in\bigcup_{n\ge n_4}\calU_{f_n}$, respectively, we have
\beq
\begin{aligned}\lb{2.1}
    \sup_{t\ge0 \,\&\,\eta>0} \frac{L_{u,\eta,1-\theta^*}(t)}{(1+t^{\al_2}) \eta^{-{m_2}}} & \le \mu_*^{-1},
\\    \inf_{\substack{(t,x)\in[\kappa_*,\infty)\times \bbR^d\\
    u(t,x)\in [{\theta^*},1-{\theta^*}]}}  u_t(t,x) &\geq \mu_*.
    \end{aligned}
\eeq
Notice that while \eqref{2.2} only allows us to state the first of these claims for $t\ge\kappa_*$, one can extend this to all $t\ge 0$ (with a different $\mu_*$).   This is because Lemma \ref{L.2.1} with $x=y$ shows that if $u(t,y)\ge\eta$ for some $u\in \calU_f$ (or $u\in \bigcup_{n\ge n_4}\calU_{f_n}$), $(t,y)\in[0,\kappa^*]\times\bbR^d$, and $\eta>0$, then
\[
d(y,B_k(0))\le R_0+ \sqrt{Md\,} \left(2\kappa_* + M^{-1}\ln \frac{2d}\eta \right)
\]
(when $u(0,\cdot)=u_{0,k}$), so $L_{u,\eta,1-\theta^*}(t)$ satisfies the same upper bound because $u_t\ge 0$.  We note that we could  also include $t\in[0,\kappa_*)$ in the second claim of \eqref{2.1}, at the expense of some extra work, but this 
would not be as useful.


\begin{corollary} \lb{C.2.5}
 \textbf{(H2)} implies \textbf{(H2')}.  Also,  \textbf{(H2)} and \textbf{(H4)} imply \textbf{(H4')} for some sequence of reactions $g_n$ in place of $f_n$, with possibly different $M,\theta_1,n_4,\al_4$ and with $m_4'=\infty$.
 \end{corollary}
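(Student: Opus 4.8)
\textbf{Proof proposal for Corollary \ref{C.2.5}.} The plan is to verify the two implications in turn, using Lemma \ref{T.2.4} (the consequences of \textbf{(H2)} from \cite{ZlaInhomog}) for the first and a perturbation construction for the second.

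For the implication \textbf{(H2)}$\,\Rightarrow\,$\textbf{(H2')}: assuming \textbf{(H2)}, Remark 2 after Definition \ref{D.1.2} gives that a pure ignition reaction lies in $\bigcup_{\xi>0}\calF(F_0,M,\theta_1,\zeta,\xi)$ for every $\zeta>0$, so in particular we may fix some $\zeta<c_0^2/4$ and the corresponding $\xi>0$, and Lemma \ref{T.2.4} applies to every solution in $\calU_f$ (which by construction evolves from initial data satisfying Lemma \ref{L.2.4} with $S=B_k(0)$). The bound \eqref{2.5} then gives $L_{u,\eta,1-\theta^*}(t)\le\Lambda(1+|\ln\eta|)$ uniformly in $t\ge0$, $u\in\calU_f$, and $\eta\in(0,1-\theta^*)$, and since $1+|\ln\eta|=O(\eta^{-m_2})$ as $\eta\to0$ for any $m_2>0$ (and is bounded for $\eta$ bounded away from $0$, where $L$ is trivially bounded), this yields the first line of \eqref{2.2} with $\al_2=0<1$ and any $m_2>0$. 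The second line of \eqref{2.2} follows directly from \eqref{2.5a} with $\eta=\theta^*$: indeed $\inf_{u(t,x)\in[\theta^*,1-\theta^*]}u_t(t,x)\ge\mu_{\theta^*}>0$ for all $t>\kappa_{\theta^*}$, and taking $\liminf_{t\to\infty}$ gives a positive bound. Together with the fact that all $u\in\calU_f$ satisfy \textbf{(H1)} and $u_t>0$ (from \eqref{2.6}), this establishes \textbf{(H2')}.

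For the implication \textbf{(H2)}$\,\&\,$\textbf{(H4)}$\,\Rightarrow\,$\textbf{(H4')}: the reactions $f_n$ from \textbf{(H4)} have $\|f_n-f\|_\infty\le\al_4 n^{-m_4}$ and range of dependence $\le n$, but they need not themselves satisfy \textbf{(H1)} (e.g. they may be negative, or fail to vanish on $[0,\theta_1]$) nor \textbf{(H2')}. The idea is to replace $f_n$ by a corrected reaction $g_n$ obtained by first truncating below at $0$, then flattening to $0$ on a neighborhood of $[0,\theta_1]$ and near $u=1$, and finally adding back a small multiple of $F_0$ (or otherwise adjusting near $u=1$) to restore the lower bound $f\ge\al_1(1-u)^{m_1}$ on $[1-\theta_1,1)$ and the monotonicity there — all of this at a cost $O(n^{-m_4})$ in the sup norm, so that $\|g_n-f\|_\infty\le\al_4' n^{-m_4}$ still holds with a new constant, and $g_n$ has range of dependence $\le n$ (these operations are pointwise-in-$(x,u)$ postcompositions, hence preserve measurability, stationarity, and the range of dependence). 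The probabilistic tail bound in \textbf{(H4')} then holds trivially (indeed the sup-norm bound holds deterministically, so the probability in question is $0\le n^{-(2d+1+m_4')}$ for any $m_4'$, which is why we get $m_4'=\infty$). The substantive point is the uniform validity of \textbf{(H2')} for $\bigcup_{n\ge n_4}\calU_{g_n}$: here one checks that each $g_n$ lies in $\calF(F_0,M,\theta_1,\zeta,\xi')$ for a $\xi'>0$ independent of $n$ — this is where \textbf{(H2)} (pure ignition, giving a uniform lower bound on $f$ above $\theta_{x,\omega}$) together with the $O(n^{-m_4})$ closeness of $g_n$ to $f$ is used to pin down $\gamma_{g_n}$ and control $\inf g_n$ on $[\gamma_{g_n},1-\theta_1]$ uniformly in $n$ — and then Lemma \ref{T.2.4} applies to all of $\bigcup_{n\ge n_4}\calU_{g_n}$ at once with the same $\Lambda,\mu_\eta,\kappa_\eta$, exactly as in the first part.

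The main obstacle is the last point: making the perturbation $f\mapsto g_n$ precise so that $g_n$ genuinely satisfies \textbf{(H1)} with the \emph{same} constants $M,\theta_1,m_1,\al_1$ (or, if that is not quite possible, with controlled constants, which is why the statement allows "possibly different $M,\theta_1,n_4,\al_4$") while staying within $O(n^{-m_4})$ of $f$ and, crucially, landing in a single class $\calF(F_0,M,\theta_1,\zeta,\xi')$ uniformly in $n$. One has to be careful that flattening near $u=1$ does not destroy the quantitative lower bound $f\ge\al_1(1-u)^{m_1}$ that feeds into the definition of $c_0$ and $F_0$; the natural fix is to only modify $g_n$ on $[0,1-\theta_1]$ and handle the region $[1-\theta_1,1]$ separately (possibly shrinking $\theta_1$ slightly), using \textbf{(H3)}-type decay if available, or simply redefining $g_n$ to equal a fixed admissible profile there. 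Once the construction is fixed, the verification that \textbf{(H2')} transfers is routine given Lemma \ref{T.2.4}, since that lemma's constants depend only on $F_0,M,\theta_1,\zeta,\xi$, all of which are uniform in $n$ by construction.
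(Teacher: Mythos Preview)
Your first implication (\textbf{(H2)} $\Rightarrow$ \textbf{(H2')}) is correct and matches the paper's argument exactly.

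For the second implication, your high-level strategy is right --- modify $f_n$ into some $g_n$ that genuinely satisfies \textbf{(H1)} with uniform constants, then invoke Lemma~\ref{T.2.4} after checking the $g_n$ lie in a fixed class $\calF(F_0,M',\theta_1',\zeta',\xi')$ --- and this is also what the paper does. But your concrete construction has a gap: you propose truncation, flattening near $u\in[0,\theta_1]$, and adding multiples of $F_0$, all of which are pointwise postcompositions. These do preserve stationarity and the range of dependence, but they do \emph{not} address the Lipschitz constant in $x$. The reactions $f_n$ in \textbf{(H4)} are only assumed to be ``stationary reactions'' in the sense of Definition~\ref{D.1}, hence merely uniformly continuous in $(x,u)$; nothing prevents their Lipschitz constants from being unbounded in $n$, and no amount of pointwise adjustment in $u$ fixes that. (Incidentally, reactions are non-negative by definition, so your ``truncating below at $0$'' is vacuous.)

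The paper's device for this is mollification in $(x,u)$: one convolves $\tilde f_n$ (a version of $f_n$ made non-increasing on $[1-\theta_1,1]$) with a mollifier $\phi_n$ whose support shrinks like $n^{-m_4}$. The key estimate is
\[
\|\nabla(\tilde f_n*\phi_n)\|_\infty \le \|\nabla f*\phi_n\|_\infty + \|(\tilde f_n-f)*\nabla\phi_n\|_\infty \le M + \al_4 n^{-m_4}\|\nabla\phi_n\|_1 = M + \al_4\|\nabla\phi\|_1,
\]
which transfers the Lipschitz bound from $f$ to the mollified $f_n$ via their $L^\infty$-closeness, at the cost of enlarging $M$ by a fixed amount. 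The remaining \textbf{(H1)} conditions are then handled not by ad hoc flattening but by clamping: setting $\tilde g_n:=\min\{\max\{\tilde f_n*\phi_n,F\},\bar F\}$ with $F(u):=\inf_{x,\omega}f(x,u,\omega)$ and $\bar F(u):=\sup_{x,\omega}f(x,u,\omega)$ automatically forces vanishing on $[0,\theta_1/2]\cup\{1\}$ and the lower bound $\ge\al_1(1-u)^{m_1}$ on $[1-\theta_1,1)$, while preserving $\|\tilde g_n-f\|_\infty=O(n^{-m_4})$. The mollification increases the range of dependence by at most $2n^{-m_4}\le 2$, which is why one finally sets $g_n:=\tilde g_{n-2}$. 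After this, your argument that the $g_n$ land in a fixed $\calF$-class and that Lemma~\ref{T.2.4} gives uniform \textbf{(H2')} goes through exactly as you sketched.
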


\begin{proof}
Remark 2 after Definition \ref{D.1.2} and Lemma \ref{T.2.4} show that  \textbf{(H2)} implies \textbf{(H2')} 
with $\alpha_2=0$ and any $m_2>0$.


Let us now assume \textbf{(H2)} and \textbf{(H4)},  and with the convention that $[a,b]=\emptyset$ if $a>b$, let
\[
\tilde f_n(x,u,\omega):=\min\{f_n(x,v,\omega)\,|\, v\in \{u\}\cup [1-\theta_1,u]\}.
\]
These reactions are non-increasing in $u\in [1-\theta_1,1]$, and  still satisfy $\|\tilde f_n-f\|_\infty \leq {\al_4 n^{-m_4}}$, because $f$ is non-increasing in $u\in [1-\theta_1,1]$.  Also, each $\tilde f_n$ is obviously stationary with range of dependence no greater than that of $f_n$.

Next, let $\phi:\bbR^{d+1}\to [0,\infty)$ be a smooth function supported in $B_1(0)$ and with integral over $B_1(0)$ equal to 1.  Then let  
$\phi_n(x,u):=n^{(d+1)m_4}\phi(n^{m_4} x,n^{m_4} u)$, and consider $\tilde{f}_n*\phi_n$ (with the convolution in $(x,u)$; recall that all reactions are extended by 0 to $u\notin[0,1]$). 
With $\nabla$ being either $\nabla_x$ or $\partial_u$, we then have
\[
    \|\nabla \tilde{f}_n*\phi_n\|_\infty \leq \|\nabla f*\phi_n\|_\infty+\|(\tilde{f}_n-{f})*\nabla \phi_n\|_\infty 
    \leq M+\al_4 n^{-m_4} \|\nabla\phi_n\|_1
    =M+\al_4 \|\nabla\phi\|_1.
\]
Finally, recall $F$ from  \eqref{2.6} and let $\bar F(u):=\sup_{(x,\omega)\in\bbR^d\times \Omega} f(x,u,\omega)$, and 
\[
\tilde g_n(x,u,\omega):=\min\{\max\{(\tilde{f}_n*\phi_n)(x,u,\omega),F(u)\},\bar F(u)\}.
\]
It is not hard to see that for all large enough $n$, these functions satisfy \textbf{(H1)} with $(M,\theta_1)$ replaced by $(M+ \al_4 \|\nabla_x\phi\|_1,\frac 12 {\theta_1})$.  We also have $\|\tilde g_n-f\|_\infty \leq (2M+\al_4) n^{-m_4}$ for all large enough $n$ because $\|\tilde f_n-f\|_\infty \leq\al_4  {n^{-m_4}}$ and $F\le f(x,\cdot,\omega)\le \bar F$ for all $(x,\omega)$, and the range of dependence of $\tilde g_n$ is at most $n+2n^{-m_4}\le n+2$. 

Since $f\in\calF(F_0,M,\theta_1,\zeta,\xi)$ for some  $\zeta<{c_0^2}/{4}$ and $\xi>0$,  $\|\tilde g_n-f\|_\infty \leq (2M+\al_4) n^{-m_4}$ yields $\tilde g_n\in\calF(F_0,M+\al_4 \|\nabla_x\phi\|_1,\frac 12\theta_1,\frac 18(4\zeta+c_0^2),\frac 12\xi)$ for all large enough $n$.  Therefore, as at the start of this proof, we obtain that the $\tilde g_n$ satisfy \textbf{(H2')} uniformly in $n$, for all large enough $n$.
Hence, the reactions $g_n:=\tilde g_{n-2}$ satisfy \textbf{(H4')} with possibly different $M,\theta_1,n_4,\al_4$ and with $m_4'=\infty$.
%
\end{proof}

%
%

The next result uses Lemma \ref{L.2.1}  and the bound $f(\cdot,u,\cdot)\ge \alpha_1(1-u)^{m_1}$ for $u$ near 1 to essentially obtain an upper bound on $\kappa_0(M,\theta_1,m_1,\al_1,\frac{c_0}4,\theta)$ from Lemma \ref{L.2.2} as $\theta\to 1$ (we could similarly do this for any $c<c_0$ in place of $\frac{c_0}4$).  

\begin{lemma}\label{Cor.2.1}
Let $u:[0,\infty)\times\bbR^d\to [0,1]$ solve \eqref{1.1} with $f$ satisfying \textbf{(H1')} and some $\omega\in\Omega$.  There is $D_1=D_1(M,\theta_1,m_1,\al_1)$ such that if $u(t_0,y)\geq 1-\theta^*$ for some $t_0\ge 1$ and $y\in\bbR^d$, then for any  $\theta\in[1-\theta^*,1)$ and $t\geq t_0 + D_1 (1-\theta)^{1-m_1}$  we have
\[
\inf_{|x-y|\le c_0(t-t_0)/4} u(t,x)\geq \theta .
\]
\end{lemma}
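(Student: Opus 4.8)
The plan is to combine the linear-order lower bound near $u=1$ coming from the hypothesis $f(\cdot,u,\cdot)\ge\al_1(1-u)^{m_1}$ with the finite-speed propagation estimate of Lemma~\ref{L.2.2} (applied with $c=\frac{c_0}{2}$ and $\theta=1-\theta^*$) and the sub-solution construction of Lemma~\ref{L.2.1}. First, by Lemma~\ref{L.2.2} (with $\kappa_0$ as in \eqref{2.10a}), from $u(t_0,y)\ge 1-\theta^*\ge\theta_2$ we get $\inf_{|x-y|\le \frac{c_0}{2}(t-t_0)}u(t,x)\ge 1-\theta^*$ for all $t\ge t_0+\kappa_0$. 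So after a bounded waiting time the solution is already above $1-\theta^*$ on a ball expanding at speed $\frac{c_0}{2}$; it remains to push the value from $1-\theta^*$ up to an arbitrary $\theta\in[1-\theta^*,1)$ on a slightly smaller ball expanding at speed $\frac{c_0}{4}$, and to show this costs only an additional time $O\big((1-\theta)^{1-m_1}\big)$.

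For that last step I would use the ODE comparison: on the region where $u\ge 1-\theta_1$ we have $f(x,u,\omega)\ge \al_1(1-u)^{m_1}$, and $f$ is non-increasing there, so a spatially constant (in a moving frame) sub-solution can be built from the solution $w(t)$ of $\dot w=\al_1(1-w)^{m_1}$ with $w(0)=1-\theta^*$, possibly after subtracting the Lemma~\ref{L.2.1} correction term to absorb the fact that we only control $u$ on a ball, not on all of $\bbR^d$. Separating variables in $\dot w=\al_1(1-w)^{m_1}$ gives $1-w(s)=\big((1-\theta^*)^{1-m_1}+(m_1-1)\al_1 s\big)^{-1/(m_1-1)}$ (using $m_1>1$), so $w(s)\ge\theta$ once $(m_1-1)\al_1 s\ge (1-\theta)^{1-m_1}-(1-\theta^*)^{1-m_1}$, i.e.\ after time $s=O\big((1-\theta)^{1-m_1}\big)$ with a constant depending only on $M,\theta_1,m_1,\al_1$ (since $\theta^*$ depends only on these). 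Choosing the spatial radius of the moving ball to shrink from $\frac{c_0}{2}(t-t_0)$ down to $\frac{c_0}{4}(t-t_0)$ over this extra time interval leaves enough room for the exponentially small error term in Lemma~\ref{L.2.1}: concretely, apply Lemma~\ref{L.2.1} on a ball $B_r(z)$ with $r$ comparable to $\frac{c_0}{4}(t-t_0)$, with $u_1$ the shifted sub-solution $w$ and $u_2=u$, so that the error $2d\,e^{\sqrt{M/d}(|x-z|-r+2\sqrt{Md}\,s)}$ is smaller than, say, $\frac12(1-\theta^*)$ times the gap we need — this is where the precise bookkeeping lives, but it is routine.

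Assembling: set $t_1:=t_0+\kappa_0$, so $u(t_1,\cdot)\ge 1-\theta^*$ on $B_{\frac{c_0}{2}\kappa_0}(y)\supseteq B_{c_0(t_1-t_0)/4}(y)$ trivially, and more importantly on the ball of radius $\frac{c_0}{2}(t-t_0)$ for later $t$; then for $t\ge t_1$ run the sub-solution argument from time $t_1$ for a duration $D_1'(1-\theta)^{1-m_1}$ to conclude $\inf_{|x-y|\le c_0(t-t_0)/4}u(t,x)\ge\theta$. Finally absorb $\kappa_0$ into the bound by enlarging the constant: since we only need the statement for $\theta\ge 1-\theta^*$ and $1-\theta\le\theta^*$ is bounded, $(1-\theta)^{1-m_1}\ge (\theta^*)^{1-m_1}$ is bounded below, so $\kappa_0\le D_1''(1-\theta)^{1-m_1}$ for a suitable $D_1''$, and one can take $D_1=D_1'+D_1''$ depending only on $(M,\theta_1,m_1,\al_1)$.

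\textbf{Main obstacle.} The delicate point is the interplay between the shrinking of the spatial ball (from speed $\tfrac{c_0}{2}$ to $\tfrac{c_0}{4}$) and the exponential error term from Lemma~\ref{L.2.1}: one must verify that over the full time window $[t_1,t_1+D_1'(1-\theta)^{1-m_1}]$ the geometric slack $\tfrac{c_0}{4}(t-t_0)$ grows fast enough (or is large enough after the initial $\kappa_0$ wait) that the error $2d\,e^{\sqrt{M/d}(\cdots)}$ never exceeds the tiny margin we are trying to close, \emph{uniformly} as $\theta\to 1$ (when the target margin $1-\theta$ shrinks but the time window only grows polynomially in $(1-\theta)^{-1}$). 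A clean way around this is to not try to close the full gap in one shot, but to first reach a fixed intermediate level (say $1-\tfrac12\theta^*$) on a ball of radius $\tfrac{3c_0}{8}(t-t_0)$, observe that $u_t\ge 0$ is not available but that the set $\{u\ge 1-\theta_1\}$ only expands in the comparison sense, and then iterate; alternatively one simply demands $t-t_0\ge D_1(1-\theta)^{1-m_1}$ with $D_1$ large enough that $\tfrac{c_0}{4}(t-t_0)$ already dominates all the error terms, which is exactly what the statement allows.
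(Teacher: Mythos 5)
Your proposal is correct and follows essentially the same route as the paper: first Lemma \ref{L.2.2} to reach $1-\theta^*$ on a ball expanding at speed $\tfrac{c_0}{2}$, then the explicit ODE solution of $\dot w=\al_1(1-w)^{m_1}$ starting at $1-\theta^*$ as a spatially constant subsolution compared via Lemma \ref{L.2.1}, with the shrink from radius $\tfrac{c_0}{2}(t-t_0)$ to $\tfrac{c_0}{4}(t-t_0)$ absorbing the exponential error. The "main obstacle" you flag is resolved exactly as in your last sentence: the error only needs to beat $\tfrac{1-\theta}{2}$, which costs a margin of order $|\ln(1-\theta)|$ in the exponent, and this is dominated by the polynomial slack $(1-\theta)^{1-m_1}$ built into the hypothesis on $t-t_0$.
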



\begin{proof}
Lemma \ref{L.2.2} shows that with $\kappa_0$ from \eqref{2.10a} we have
\[
\inf_{|x-y|\leq c_0(t-t_0)/2 }u(t,x) \geq 1-\theta^*
\]
for any $t\ge t_0+\kappa_0$.  Since
\[
U(s)=1-((\theta^*)^{1-{m_1}}+({m_1}-1) \al_1 s)^{-{1}/{({m_1}-1)}}
\]
solves the ODE
$U'= \al_1 (1-U)^{m_1}$
with $U(0)=1-\theta^*$,
it follows from Lemma \ref{L.2.1} that 
\[
u(t+s,x)\ge U(s) - 2de^{\sqrt{M/d\,} \left( |x-y|-c_0(t-t_0)/2 + 2\sqrt{Md\,}s \right)}
\]
for all $(s,x)\in [0,\infty)\times\bbR^d$.  Picking $s:=\frac 1{(m_1-1)\al_1} [(\frac{1-\theta} 2)^{1-m_1} - (\theta^*)^{1-m_1}]$ makes $U(s)\ge \frac {1+\theta}2$.  The second term on the right will be no more than $\frac{1-\theta}2$ when $|x-y|\le \frac{c_0}4 (t+s-t_0)$, provided
\[
\frac{c_0(t+s-t_0)}4 - \frac{c_0(t-t_0)}2 +2\sqrt{Md\,}s\le \sqrt{d/M\,}\ln\frac{1-\theta}{4d}, 
\]
which holds as long as $t\ge t_0 +D_1(1-\theta)^{1-m_1}$ for some $D_1=D_1(M,\theta_1,m_1,\al_1)\ge \kappa_0$.
 Replacing $D_1$ by $D_1+\frac 1{(m_1-1)\al_1} 2^{m_1-1}$ now yields the claim for $t+s$ in place of $t$ whenever $t+s\ge t_0 +D_1(1-\theta)^{1-m_1}$.
\end{proof}

Finally, we will need  two results providing estimates on how much solutions to \eqref{1.1} may change when the reaction $f$ is perturbed.
The first one concerns the case when the reaction can change only where the solution is initially close to $1$.  In it, we will also use the definition
\[
T_u(x):=\inf\{t\geq 0\, | \, u(t,x)\geq 1-\theta^*\}.
\]

\begin{lemma}
\label{L.3.1}
Let  $f_1$ satisfy \textbf{(H2')} and $f_2$ satisfy \textbf{(H1')},  and let $M_*:=\frac{1+M}{\mu_*}$, with $\mu_*,\kappa_*$ from \eqref{2.1} for all $u\in\calU_{f_1}$.
Fix some $\omega\in\Omega$ and let $u_1,u_2:[0,\infty)\times\bbR^d\to [0,1]$ solve \eqref{1.1} with   $f_1,f_2$ in place of $f$, respectively.    If $u_1\in\calU_{f_1}$, $t_0\ge 0$, and for some $\eta\in [0,\frac{1}{2}\min\{{\theta^*},M_*^{-1}\} ]$ we have
\beq\lb{2.4}
f_1(x,u,\omega)=f_2(x,u,\omega)\qquad \text{whenever }u_1(t_0,x)<1-\eta\text{ and }u\in [0,1],
\eeq
then
\[
    u_+(t,x) :=u_1((1+{M_*}\eta)t+t_0,x)+\eta
\]
is a supersolution to \eqref{1.1} with $f_2$ in place of $f$ on $(\kappa_*,\infty)\times \bbR^d$, and
\[
u_{-}(t,x) :=u_1((1-{M_*}\eta)t+t_0,x)-\eta
\]
is a subsolution to \eqref{1.1} with $f_2$ in place of $f$ on $(2\kappa_*,\infty)\times \{ x\in\bbR^d \,|\,u_1(t_0,x)<1-\eta\}$.

Moreover, there is $D_2=D_2(M,\theta_1,m_1,\al_1)\ge 1$ 
such that if also
\[
\sup_{x\in B_R(y)} (u_2(0,x)- u_1(t_0,x))\le \eta 
\]
for some $y\in\bbR^d$ and
$R\ge D_2(1+{T}_{u_2}(y)),$
then
\beq \lb{2.888}
{T}_{u_2}(y)\ge  (1+ {M_*}\eta)^{-1} (T_{u_1}(y)-t_0- 2\kappa_*-\kappa_0).
\eeq
\end{lemma}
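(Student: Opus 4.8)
The plan is to prove the three assertions in order: first that $u_+$ is a supersolution, then that $u_-$ is a subsolution, and finally to derive the arrival time bound \eqref{2.888} by a comparison argument.

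For the supersolution claim, I would compute directly. Write $s:=(1+M_*\eta)t+t_0$, so $u_+(t,x)=u_1(s,x)+\eta$ and $\partial_t u_+ = (1+M_*\eta)(u_1)_t(s,x)$. Since $u_1$ solves \eqref{1.1} with $f_1$, we have $(u_1)_t(s,x)=\Delta u_1(s,x)+f_1(x,u_1(s,x),\omega)$, and $\Delta u_+ = \Delta u_1(s,x)$. Thus the supersolution inequality $\partial_t u_+ \ge \Delta u_+ + f_2(x,u_+,\omega)$ reduces to
\[
M_*\eta\,(u_1)_t(s,x) + f_1(x,u_1(s,x),\omega) \ge f_2(x,u_1(s,x)+\eta,\omega).
\]
Now split into two cases according to whether $u_1(t_0,x)<1-\eta$ or not. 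If $u_1(t_0,x)\ge 1-\eta$, then since $u_1\in\calU_{f_1}$ has $(u_1)_t\ge 0$, we get $u_1(s,x)\ge 1-\eta$, so $u_1(s,x)+\eta\ge 1$, hence $f_2(\cdot,u_1(s,x)+\eta,\cdot)=0$ (reactions extended by $0$ above $1$) and the inequality is trivial. If $u_1(t_0,x)<1-\eta$, then $u_1(s,x)<1-\eta$ as well is \emph{not} automatic, but here \eqref{2.4} gives $f_1(x,u_1(s,x),\omega)=f_2(x,u_1(s,x),\omega)$ only when $u_1(s,x)<1-\eta$; I would instead argue by cases on $u_1(s,x)$. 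When $u_1(s,x)<1-\eta$, \eqref{2.4} applies and it remains to absorb the Lipschitz error $|f_2(x,u_1(s,x)+\eta,\omega)-f_2(x,u_1(s,x),\omega)|\le M\eta$ using the lower bound $(u_1)_t(s,x)\ge\mu_*$ from \eqref{2.1} (valid since $s\ge t_0+\kappa_*\ge\kappa_*$ and — in the regime where $f_2$ does not yet vanish — $u_1(s,x)$ lies below $1-\theta^*$ or we are in the near-$1$ regime handled separately); the choice $M_*=(1+M)/\mu_*$ is exactly what makes $M_*\eta\mu_*\ge(1+M)\eta\ge M\eta$. When instead $u_1(s,x)\ge 1-\eta\ge 1-\theta^*$, one uses that $f_2$ is small (bounded by $M\eta$, since $f_2$ vanishes at $u=1$) while the gain term $M_*\eta(u_1)_t$ is nonnegative — actually I should use $(u_1)_t\ge 0$ together with the near-$1$ structure, but the cleaner route is: either $u_1(s,x)+\eta\ge 1$ (done) or $u_1(s,x)\in[1-\eta,1-\eta]$... so in fact $f_2(x,u_1(s,x)+\eta,\omega)\le M\eta$ and $f_1\ge 0$, and we need $M_*\eta(u_1)_t(s,x)\ge M\eta$; since $s\ge\kappa_*$ and $u_1(s,x)\ge\theta^*$, if $u_1(s,x)\le 1-\theta^*$ we get $(u_1)_t\ge\mu_*$ from \eqref{2.1}, while if $u_1(s,x)>1-\theta^*$ we can invoke that in $\calU_{f_1}$, $u_1$ stays monotone and a lower bound on $(u_1)_t$ near $1$ follows from $f_1\ge\alpha_1(1-u)^{m_1}$ — this near-$1$ bookkeeping is the fiddliest piece. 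The subsolution claim for $u_-$ is entirely analogous, with the sign of $M_*\eta$ reversed, and the spatial restriction to $\{u_1(t_0,x)<1-\eta\}$ is precisely so that \eqref{2.4} can be invoked throughout; one checks $(1-M_*\eta)t+2\kappa_*\ge\kappa_*$ to stay in the valid range of \eqref{2.1}, using $M_*\eta\le\frac12$.

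For the arrival-time bound, set $T_1:=T_{u_1}(y)$, $T_2:=T_{u_2}(y)$, and suppose for contradiction that \eqref{2.888} fails, i.e.\ $(1+M_*\eta)T_2 < T_1-t_0-2\kappa_*-\kappa_0$. The idea is to compare $u_2$ from below, on a large ball around $y$, with the subsolution $u_-$ (shifted to start at time $2\kappa_*$), run it until time slightly past $T_2$, conclude $u_2$ reaches $1-\theta^*$ at $y$ no later than... no — rather, the argument goes the other direction: use that $u_-$ is a subsolution lying below $u_2$ to transport the fast propagation of $u_1$ into $u_2$. Concretely: by hypothesis $u_2(0,\cdot)\ge u_1(t_0,\cdot)-\eta = u_-(0,\cdot)$ on $B_R(y)$ with $R\ge D_2(1+T_2)$; Lemma \ref{L.2.1} (finite speed, applied to the boundary error term over the ball $B_R(y)$ on a time interval of length $\lesssim T_2$) shows $u_2\ge u_- - (\text{tiny})$ on a smaller ball around $y$ up to time $\sim T_2$, provided $R$ exceeds $c_1 T_2 + \kappa_1$ plus a logarithmic correction — this is where $R\ge D_2(1+T_2)$ enters, with $D_2$ chosen larger than $c_1$ plus the constants controlling the exponential tail in Lemma \ref{L.2.1} and the amount $\theta^*$ we can afford to lose. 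Then at $x=y$: $u_1$ reaches $1-\theta^*$ at time $T_1$, but actually we want $u_1$ near $1$; here we use Lemma \ref{Cor.2.1} — once $u_1(T_1,y')\ge 1-\theta^*$ for $y'$ near $y$ (propagation via Lemma \ref{L.2.2} on the $\kappa_0$-timescale accounts for the $\kappa_0$ in \eqref{2.888}), within an extra bounded time $u_1$ exceeds $1-\eta$ near $y$, at which point... Hmm, I realize the cleanest formulation is: $u_-(t,y)=u_1((1-M_*\eta)t+t_0,y)-\eta\ge 1-\theta^*$ as soon as $(1-M_*\eta)t+t_0\ge$ (the time $u_1$ exceeds $1-\theta^*-\eta\ge 1-\frac32\theta^*$ at $y$), which by Lemma \ref{L.2.2} and Lemma \ref{Cor.2.1} is $\le T_1 + O(1)$ after the $\kappa_0$ correction; combined with $u_2\ge u_-$ near $y$ this forces $T_2\le (1-M_*\eta)^{-1}(T_1 - t_0 + O(\kappa_0+\kappa_*))$, and rearranging with the elementary inequality $(1-M_*\eta)^{-1}\le 1+2M_*\eta \le$ ... — I would instead get the bound in the form $(1+M_*\eta)T_2 \ge T_1 - t_0 - 2\kappa_* - \kappa_0$ directly by noting $u_-$ starts at physical time $2\kappa_*$ and using $(1+M_*\eta)(1-M_*\eta)\le 1$. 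The main obstacle I anticipate is the near-$u=1$ bookkeeping in the supersolution/subsolution verification: reconciling the Lipschitz perturbation $M\eta$ against the monotonicity gain $M_*\eta(u_1)_t$ requires knowing $(u_1)_t$ is bounded below not just on $\{u_1\in[\theta^*,1-\theta^*]\}$ (where \eqref{2.1} gives it) but implicitly controlling behavior right up to $u=1$, and making sure the restriction in \eqref{2.4} to $\{u_1(t_0,\cdot)<1-\eta\}$ together with $(u_1)_t\ge0$ is exploited correctly so that wherever $f_1\ne f_2$ the added $\eta$ has pushed us to where $f_2$ vanishes. The finite-speed propagation bookkeeping giving the precise constant $D_2$ in terms of $c_1,\kappa_1$ from Corollary \ref{C.2.1} is routine but needs care to ensure $D_2$ depends only on $M,\theta_1,m_1,\al_1$.
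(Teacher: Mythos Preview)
Your supersolution/subsolution verification has the right skeleton but the near-$1$ bookkeeping is handled by the wrong mechanism. When $u_1(\tau_+(t),x)\in[1-\theta^*,1-\eta)$, you try to extract a lower bound on $(u_1)_t$ from $f_1\ge\alpha_1(1-u)^{m_1}$; this cannot work, since $(u_1)_t$ may vanish as $u_1\to 1$. The correct observation is that \textbf{(H1)} already includes ``$f$ is non-increasing in $u\in[1-\theta_1,1)$.'' Since $u_1(\tau_+(t),x)<1-\eta$ and $(u_1)_t\ge 0$ force $u_1(t_0,x)<1-\eta$, hypothesis \eqref{2.4} gives $f_1(x,\cdot,\omega)=f_2(x,\cdot,\omega)$ at this $x$, and then monotonicity yields $f_1(x,u_1)=f_2(x,u_1)\ge f_2(x,u_1+\eta)$ directly, with no need to invoke $(u_1)_t$ at all in this regime. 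The same monotonicity handles the subsolution case. The only place where \eqref{2.1} is used is when $u_1(\tau_\pm(t),x)\in(\theta^*,1-\theta^*)$, exactly the range where the $\mu_*$ lower bound is available.

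Your arrival-time argument has a more serious error: you use $u_-$ and claim ``by hypothesis $u_2(0,\cdot)\ge u_1(t_0,\cdot)-\eta$,'' but the hypothesis is $u_2(0,x)-u_1(t_0,x)\le\eta$, which gives only the \emph{upper} bound $u_2(0,\cdot)\le u_1(t_0,\cdot)+\eta=u_+(0,\cdot)$. The comparison must therefore go through $u_+$, not $u_-$. Concretely: $u_+$ is a supersolution on $(\kappa_*,\infty)\times\bbR^d$ and $u_2(0,\cdot)\le u_+(\kappa_*,\cdot)$ on $B_R(y)$; Lemma~\ref{L.2.1} then gives $u_2(t,y)\le u_+(t+\kappa_*,y)+\tfrac{\theta^*}{2}$ for $t\le T_{u_2}(y)$ once $R\ge D_2(1+T_{u_2}(y))$ with $D_2:=2\sqrt{Md}\ln\tfrac{4d}{\theta^*}$. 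Evaluating at $t=T_{u_2}(y)$ forces $u_1(\tau_+(T_{u_2}(y)+\kappa_*),y)\ge 1-2\theta^*$, and Lemma~\ref{L.2.2} (using $1-2\theta^*\ge\theta_2$) then bounds $T_{u_1}(y)\le\tau_+(T_{u_2}(y)+\kappa_*)+\kappa_0$, which rearranges to \eqref{2.888}. Your subsolution route would produce an inequality in the wrong direction.
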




The proof of Lemma \ref{L.3.1} appears in Appendix B.

Our last preliminary lemma concerns the case when the reaction may be perturbed anywhere, although not by a lot. 

\begin{lemma}\lb{L.4.3}
Let  $f_1$ satisfy \textbf{(H2')} and $f_2$ satisfy \textbf{(H1')}, with at least one of them satisfying \textbf{(H3)} with $\al_3\le 1$, and let $M_*,D_2$ be from Lemma \ref{L.3.1}. 
Fix some $\omega\in\Omega$ and let $u_1,u_2:[0,\infty)\times\bbR^d\to [0,1]$ solve \eqref{1.1} with $f_1,f_2$ in place of $f$, respectively. If $u_1\in\calU_{f_1}$, for some  $y\in\bbR^d$, $R\ge D_2(1+{T}_{u_2}(y))$, and  $\eta\in [0,\frac{1}{2}\min\{{\theta^*},M_*^{-1}\} ]$ we have 
\beq\lb{3.3} 
f_1(x,u,\omega)\geq f_2(x,u,\omega)-\al_3 \eta^{m_3}\quad \text{ for all } (x,u)\in B_R(y)\times [0,1],
\eeq
and $u_2(0,x)\leq u_1(t_0,x)$ for some $t_0\ge 0$ and all $x\in B_R(y)$, 
then \eqref{2.888} holds.
\end{lemma}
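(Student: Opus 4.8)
The plan is to reduce this to Lemma \ref{L.3.1} by a comparison argument, using the hypothesis \textbf{(H3)} to absorb the (small, global) perturbation of the reaction into a shift of $u_1$. The key observation is that \eqref{3.3} says $f_1 \ge f_2 - \al_3\eta^{m_3}$ everywhere on $B_R(y)\times[0,1]$, which is weaker than the condition \eqref{2.4} (where $f_1$ and $f_2$ agree except near $u=1$). To compensate for the deficit $\al_3\eta^{m_3}$, I would introduce an intermediate reaction $\tilde f$ and an intermediate solution, exploiting that \textbf{(H3)} provides, for $u$ near $1$, a strict monotonicity gap $f(x,u-\eta',\omega)-f(x,u,\omega)\ge \al_3(\eta')^{m_3}$. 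Concretely, define $\tilde u(t,x):=u_1(t,x)$ run with $f_1$ and compare $u_1(t+\cdot,\cdot)+\eta'$ (for a suitable $\eta'$ comparable to $\eta$) against a solution with $f_2$: where $u_1(t_0,x)$ is \emph{not} close to $1$, the two reactions differ by at most $\al_3\eta^{m_3}$, and this can be dominated by the ``room'' created by the time-dilation factor $1+M_*\eta$ exactly as in the proof of Lemma \ref{L.3.1} (the extra slack there is $M_*\eta\cdot u_t \ge (1+M)\eta$ by \eqref{2.1}, which beats $M\eta + \al_3\eta^{m_3}$ since $m_3\ge 1$ and $\al_3\le 1$, at least for $\eta$ small — and $\eta$ is already restricted to $[0,\frac12\min\{\theta^*,M_*^{-1}\}]$). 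Where $u_1(t_0,x)$ \emph{is} close to $1$, I would instead invoke \textbf{(H3)}: the monotonicity gap there means that shifting up by $\eta$ and evaluating $f$ only decreases the reaction, so $f(x,u_1+\eta,\omega)\le f(x,u_1,\omega)-\al_3\eta^{m_3}$, which precisely cancels the deficit in \eqref{3.3} and makes $u_1((1+M_*\eta)t+t_0,x)+\eta$ a supersolution for $f_2$ there as well.

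The steps, in order, would be: (1) Set up the two regions $\{u_1(t_0,\cdot)<1-\eta\}$ and $\{u_1(t_0,\cdot)\ge 1-\eta\}$ inside $B_R(y)$. (2) On the first region, redo the supersolution computation from Lemma \ref{L.3.1}, now carrying the extra term $\al_3\eta^{m_3}$ and checking it is absorbed by $M_*\eta\, (u_1)_t$ using the second line of \eqref{2.1} (this needs $t>\kappa_*$, hence the shift $t_0\mapsto t_0+2\kappa_*$ or so appears, as in Lemma \ref{L.3.1}). (3) On the second region, use \textbf{(H3)} to show $\Delta u_+ + f_2(x,u_+,\omega) \le (u_+)_t$ directly, since there $u_1\ge 1-\eta \ge 1-\frac12\theta_1$, so \textbf{(H3)} applies with the shift $\eta\le\frac12\theta_1$, giving $f_2(x,u_1+\eta,\omega)\le f_1(x,u_1+\eta,\omega)+\al_3\eta^{m_3}\le f_1(x,u_1,\omega)$ after \eqref{3.3} and \textbf{(H3)} combine (careful: \textbf{(H3)} must hold for whichever of $f_1,f_2$ it was assumed for, and one uses \eqref{3.3} to transfer it; this bookkeeping is a small subtlety). (4) Having established that $u_+(t,x)=u_1((1+M_*\eta)t+t_0,x)+\eta$ is a supersolution for $f_2$ on $(\kappa_*,\infty)\times B_R(y)$, and that $u_2(0,\cdot)\le u_1(t_0,\cdot)\le u_+(0,\cdot)$ on $B_R(y)$, apply Lemma \ref{L.2.1} (finite speed of propagation for the supersolution comparison) to conclude $u_2\le u_+$ on a space-time cone emanating from $B_R(y)$ — this is where $R\ge D_2(1+T_{u_2}(y))$ enters, guaranteeing the cone is wide enough to control $u_2$ near $y$ up to time $T_{u_2}(y)$, for an appropriate $D_2$ (the same one as in Lemma \ref{L.3.1}, possibly enlarged). (5) Translate the pointwise bound $u_2(T_{u_2}(y),y)\ge 1-\theta^*$ together with $u_2\le u_+$ into the arrival-time inequality: if $u_2(t,y)\ge 1-\theta^*$ then $u_+(t,y)\ge 1-\theta^*$, i.e. $u_1((1+M_*\eta)t+t_0,y)\ge 1-\theta^*-\eta\ge 1-2\theta^*$; then Lemma \ref{L.2.2} (with $c=\frac{c_0}2$, boosting $1-2\theta^*$ back up to $1-\theta^*$ at a cost of $\kappa_0$, plus the $2\kappa_*$ from the supersolution region restriction) yields $(1+M_*\eta)T_{u_2}(y)+t_0+2\kappa_*+\kappa_0\ge T_{u_1}(y)$, which rearranges to exactly \eqref{2.888}.

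The main obstacle I anticipate is step (2)/(3): verifying that the perturbation deficit $\al_3\eta^{m_3}$ is genuinely absorbed in \emph{both} regions with a \emph{single} dilation factor $1+M_*\eta$ and without needing $\eta$ smaller than the stated range $[0,\frac12\min\{\theta^*,M_*^{-1}\}]$. In the region where $u_1$ is away from $1$ this is a clean inequality $M\eta+\al_3\eta^{m_3}\le M_*\eta\,\mu_*$ (using $\al_3\le 1$, $m_3\ge 1$, $\eta\le\frac12$), but in the near-$1$ region one must be sure the \textbf{(H3)} gap $\al_3\eta^{m_3}$ exactly matches the deficit in \eqref{3.3} — here the hypothesis is carefully phrased with the \emph{same} exponent $m_3$ and constant $\al_3$ in both \textbf{(H3)} and \eqref{3.3}, which is clearly deliberate, so the cancellation should be exact; still, one has to be attentive about which reaction \textbf{(H3)} is assumed for and ensure the monotonicity in $u\in[1-\theta_1,1)$ from \textbf{(H1)} is available for whichever intermediate reaction is used. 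A secondary technical point is keeping track of all the additive time-shifts ($t_0$, $2\kappa_*$, $\kappa_0$) so that they assemble into precisely the constant in \eqref{2.888}; this is routine but must be done with care to match the statement of Lemma \ref{L.3.1} verbatim, since the point of Lemma \ref{L.4.3} is to deliver the identical conclusion \eqref{2.888} under the weaker but quantitatively controlled hypothesis \eqref{3.3}.
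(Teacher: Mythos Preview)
Your overall strategy is correct and matches the paper's proof: show that $u_+(t,x):=u_1((1+M_*\eta)t+t_0,x)+\eta$ is a supersolution to \eqref{1.1} with reaction $f_2$ on $(\kappa_*,\infty)\times B_R(y)$, then deduce \eqref{2.888} via Lemmas \ref{L.2.1} and \ref{L.2.2} exactly as at the end of Appendix B. Your identification of the two absorption mechanisms --- the slack $M_*\eta\,(u_1)_t\ge(1+M)\eta\ge M\eta+\al_3\eta^{m_3}$ in the transition zone, and the \textbf{(H3)} monotonicity gap near $u=1$ --- is also right.

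However, your case split is mis-stated and, if followed literally, leaves a gap. The supersolution inequality must be checked pointwise in $(t,x)$, and the relevant split is on the value $u_1(\tau_+(t),x)$, not on the static value $u_1(t_0,x)$. More importantly, you put \textbf{(H3)} to work where $u_1\ge 1-\eta$; but there $u_+=u_1+\eta\ge 1$, so $f_2(\cdot,u_+)=0$ and the supersolution inequality is trivial (this is \eqref{8.2} --- no \textbf{(H3)} needed). The range where \textbf{(H3)} is actually required is $u_1(\tau_+(t),x)\in[1-\theta^*,1-\eta]$: here the second line of \eqref{2.1} gives no lower bound on $(u_1)_t$, so your step (2) does not cover it, and instead one shows directly that $f_1(x,u)\ge f_2(x,u+\eta)$ for such $u$. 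If $f_1$ satisfies \textbf{(H3)}, then $f_1(x,u)\ge f_1(x,u+\eta)+\al_3\eta^{m_3}\ge f_2(x,u+\eta)$ by \eqref{3.3}; if instead $f_2$ does, then $f_1(x,u)\ge f_2(x,u)-\al_3\eta^{m_3}\ge f_2(x,u+\eta)$. (Note $u+\eta\in[1-\theta^*+\eta,1]\subseteq[1-\tfrac12\theta_1,1]$ since $\theta^*\le\tfrac14\theta_1$, so \textbf{(H3)} applies.) This is the replacement for \eqref{8.3} from the proof of Lemma \ref{L.3.1}; once this case is handled, your steps (4)--(5) go through verbatim.
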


The proof of Lemma \ref{L.4.3} appears in Appendix C.

\subsection{Notations}
Since Sections \ref{S3}--\ref{S6} are  just the proof of Theorem \ref{T.1.3}, we will assume either \textbf{(H2')} or \textbf{(H3)}+\textbf{(H4')} in them.
Any constants $C,C_0,C_\delta,C(\delta),\dots$ may depend on
\beq\lb{const}
{M},\theta_1,m_1,\al_1,{m_2},\al_2,m_3,\al_3,m_4, m_4',\al_4,\mu_*,\kappa_*
\eeq
(except for $m_3,\al_3,m_4,m_4',\al_4$ when \textbf{(H2')} is assumed; dependence on $d$ is implicit in the whole paper).  Any other dependence will be explicitly declared, for instance, ``for some $C_\delta$'' or ``for some $C=C(\delta)$'' will mean that this constant depends on $\delta$ as well as \eqref{const}.  These constants may also vary from line to line. We recall that  $M,\theta_1,m_1,\al_1$ are from \textbf{(H1)}; $m_2,\al_2$ from \textbf{(H2')}; $m_3,\al_3$ from \textbf{(H3)};
$m_4,m_4',\al_4$ from \textbf{(H4')}; and $\mu_*,\kappa_*$ from \eqref{2.1}. 

The constants 
\[
\theta_2,\theta^*,c_0,\kappa_0,c_1,\kappa_1,D_1,D_2,
R_0, M_*
\]
also {only depend on subsets of \eqref{const}}, with 
$\theta_2,c_0,\kappa_0$ from Lemma \ref{L.2.2}; $\theta^*$ from \eqref{2.10}; $R_0$ from Lemma~\ref{L.2.4}; $c_1,\kappa_1$ from Corollary \ref{C.2.1};    $D_1$ from Lemma \ref{Cor.2.1}; and $M_*,D_2$ from Lemma \ref{L.3.1}.

For $A\subseteq\bbR^d$ and $r\ge 0$ we let $B_r(A):=A+(B_r(0)\cup\{0\})$ and $A^0_r:=A\backslash\overline{B_r(\partial A)}$ (so $A^0_0$ is the interior of $A$).
For sets $U,V\subseteq \bbR^d$, we let $d(U,V):=\inf_{x\in U\,\&\,y\in V} |x-y|$ and
\[
{d}_H(U,V):=\max\left\{\sup_{x\in U}d(x,V),\,\sup_{y\in V}d(y,U)\right\}
\]
be their standard and Hausdorff distances. 

Widths $L_{u,\eta,\theta}(t)$ of transition zones of solutions are defined in \eqref{d.2.1}, and the special sets $\calU_f$ of solutions  evolving from approximate characteristic functions of balls are defined in \textbf{(H2')}.

Finally, we recall that $\calE(U)$ is the $\sigma$-algebra generated by 
the family of random variables 
$
\{f(x,u,\cdot)  \,|\, (x,u)\in U\times [0,1]\}.
$
Further important notation related to the dependence of reactions and solutions on $\omega$ appears below, particularly early in Sections \ref{S3} and \ref{S5}.





\section{Fluctuations for Reactions with Finite Ranges of Dependence}\lb{S3}

The proof of Theorem \ref{T.1.3} will be based on the analysis of the dynamics of special solutions, starting from the approximate characteristic functions $u_{0,S}$ of sets $S\subseteq \bbR^d$ satisfying Lemma~\ref{L.2.4}.  
For each $S$ and $\omega\in\Omega$, we therefore let $u(\cdot,\cdot,\omega;S)$ be the solution to
\begin{equation} \lb{e.3.1}
\begin{aligned}
  &u_t=\Delta u+f(x,u,\omega)\qquad && \text{ on }(0,\infty)\times \bbR^d,\\
& u(0,\cdot,\omega;S)=u_{0,S} \qquad && \text{ on } \bbR^d.
\end{aligned}
\end{equation}
Recall that $u_t(\cdot,\cdot,\omega;S)\ge 0$ because $u_{0,S}$ satisfies \eqref{2.6}.  
Let us also define for any $x\in \bbR^d$,
\beq\lb{d.3.2}
T(x,\omega;S):=\inf\{t\geq 0\,|\, u(t,x,\omega;S)\geq 1-\theta^*\} \ge 0,
\eeq
with $\theta^*$ from \eqref{2.10}, which can be thought of as the time when this solution reaches $x$.

Our goal is now to estimate the stochastic fluctuations of $T(x,\omega;S)$.  
In this section we will consider the first case in Theorem \ref{T.1.3}, when the reaction satisfies \textbf{(H2')} and has a finite range of dependence.  
We will only need to treat the cases when $S$ is either a ball or a half-space. 
We start with $S$ being a ball, when the solution in \eqref{e.3.1} will be precisely  the one from $\calU_f$ corresponding to $(S,\omega)$.  Hence, below always $u(\cdot,\cdot,\omega;S)\in\calU_f$ and \eqref{2.1} holds for it.
\smallskip

\noindent {\it Remark.}  We note that if we enlarge $\calU_f$ (or each $\calU_{f_n}$) to include solutions with initial data $u_{0,S}$ from Lemma \ref{L.2.4} for all $S$ from some family $\calS$ of bounded subsets of $\bbR^d$, and \eqref{2.1} still continues to hold with some $\mu_*,\kappa_*>0$, then everything in this section and the next holds without change (and with uniform constants) for either all $S\in\calS$ (the results involving balls only) or for all local limits in Hausdorff distance of translations of sets from $\calS$ (the results involving half-spaces).  In particular, Remark 2 after Definition \ref{D.1.2} and Lemma \ref{T.2.4} show that if we assume \textbf{(H2)}, then we can let $\calS$ be the family of all bounded subsets of $\bbR^d$.

\begin{proposition}\lb{P.3.3}
Let $f$  satisfy \textbf{(H2')} and let
\beq\lb{3.13}
\be_1:=\max\left\{\frac{m_1-1}{m_1},\,\frac{m_2+\al_2}{m_2+1}\right\}  \qquad(\in (0,1)).
\eeq
There is $C_0\geq 1$ such that if $f$ has range of dependence at most $\rho\in [1,\infty)$ and 
$S=B_k(0)$ for some $k\in\bbN$,
then for all $x\in\bbR^d$ and $\lambda>0$ we have
\beq\lb{3.4}
\bbP\left( \big| T(x,\cdot\,;S)-\bbE [T(x,\cdot\,;S)] \big| \geq \lambda\right)\leq 2\exp \left( \frac{-\lambda^2}{C_0(1+ d(x,S))(\rho+d(x,S)^{\be_1})} \right).
\eeq
\end{proposition}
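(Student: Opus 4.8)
The plan is to obtain the concentration estimate \eqref{3.4} via Azuma--Hoeffding applied to a suitable martingale decomposition of $T(x,\cdot\,;S)$ along an exhaustion of $\bbR^d$ by cubes, exploiting the finite range of dependence $\rho$. First I would fix $x$ and $S=B_k(0)$, write $D:=d(x,S)$, and introduce a partition of $\bbR^d$ into a countable collection of (closed, essentially disjoint) cubes $\{Q_i\}_{i\ge 1}$ of a common side length $\ell$ to be chosen later (roughly $\ell\sim \rho + D^{\beta_1}$ or a fixed comparable scale), ordered so that cubes closer to the segment between $S$ and $x$ come first; let $\calG_i$ be the $\sigma$-algebra generated by $\calE(Q_1\cup\dots\cup Q_i)$, and set $V_i:=\bbE[T(x,\cdot\,;S)\mid \calG_i]-\bbE[T(x,\cdot\,;S)\mid \calG_{i-1}]$, so that $T(x,\cdot\,;S)-\bbE[T(x,\cdot\,;S)]=\sum_i V_i$ is a martingale difference sum (modulo a routine integrability/convergence check, which follows from the a priori bounds: $T(x,\cdot\,;S)$ is finite and in fact bounded above by $C(1+D)$ by Lemma~\ref{L.2.2}/Corollary~\ref{C.2.1}).

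The crux is a deterministic estimate on the ``influence'' of a single cube $Q_i$ on $T(x,\omega;S)$, which will bound $|V_i|$ in sup-norm. The key point: if $\omega,\omega'$ agree outside $Q_i$, then by finite range of dependence $f(\cdot,\cdot,\omega)$ and $f(\cdot,\cdot,\omega')$ agree outside $B_\rho(Q_i)$, and the two solutions of \eqref{e.3.1} agree until the front reaches near $Q_i$; after that, one compares them using Lemma~\ref{L.3.1} (the perturbation-where-the-solution-is-near-1 lemma) together with the width estimate \eqref{2.1}. Concretely, the front passes through the enlarged cube $B_\rho(Q_i)$ in a time comparable to its diameter $\sim \ell+\rho$ (speed is bounded above and below by $c_1,c_0$), and the transition-zone width controlled by \eqref{2.1} limits how far ``behind the front'' the reaction-altered region extends; feeding this into Lemma~\ref{L.3.1} (with $\eta$ chosen appropriately and $R\sim D$, using $R\ge D_2(1+T_{u_2}(y))$) shows that changing $f$ only inside $Q_i$ shifts $T(x,\omega;S)$ by at most some $b_i$, with $b_i\lesssim \ell + \rho$ for the $O(D/\ell)$ cubes that the front actually crosses on its way to $x$, and $b_i$ exponentially small (via Lemma~\ref{L.2.1}, the upper speed bound) for cubes far from the $S$-to-$x$ corridor. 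This gives $\sum_i b_i^2 \lesssim (D/\ell)(\ell+\rho)^2 + (\text{exponentially small tail})$. The exponents $\beta_1$ and the $\eta$-dependence $\eta^{-m_2}$ in \eqref{2.1}, together with the requirement $\alpha_2'<\frac1{m_1-1}$-type balancing and the near-$1$ ODE decay rate $(1-u)^{1-m_1}$ from Lemma~\ref{Cor.2.1}, are what force the precise value $\beta_1=\max\{\frac{m_1-1}{m_1},\frac{m_2+\al_2}{m_2+1}\}$: optimizing $\ell$ to balance $(D/\ell)(\ell+\rho)^2$ against the cost of ``waiting out'' the transition zone and the near-$1$ convergence yields $\sum b_i^2 \lesssim (1+D)(\rho+D^{\beta_1})$.

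Then Azuma--Hoeffding gives $\bbP(|\sum_i V_i|\ge \lambda)\le 2\exp(-\lambda^2/(2\sum_i b_i^2))$, and substituting the bound on $\sum b_i^2$ produces \eqref{3.4} with an appropriate $C_0$. The main obstacle I anticipate is the deterministic single-cube influence bound: one must quantify, using only Lemmas~\ref{L.2.1}, \ref{L.2.2}, \ref{Cor.2.1}, and \ref{L.3.1} together with \eqref{2.1}, how a localized perturbation of $f$ inside one cube propagates to a shift in the arrival time $T(x,\omega;S)$ at a far-away point, and to track carefully how this shift depends on $\rho$, on the distance of the cube from the $S$-to-$x$ corridor, and on the (possibly growing in $t$) transition-zone width — this last dependence being precisely what degrades the rate from the linear $(\rho+D)$ one would get in the Hamilton--Jacobi setting to the sublinear $\rho+D^{\beta_1}$ here. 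A secondary technical point is justifying the martingale convergence $\sum_i V_i = T(x,\cdot\,;S)-\bbE[T(x,\cdot\,;S)]$ a.s.\ and in the needed sense, which should follow from $\calE(\bbR^d)$-measurability of $T(x,\cdot\,;S)$ and the uniform bound $T(x,\cdot\,;S)\le C(1+D)$.
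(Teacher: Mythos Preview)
Your high-level strategy---a martingale plus Azuma, with finite range of dependence providing the independence---matches the paper's, but the filtration you propose is spatial whereas the paper's is temporal, and your spatial decomposition has a genuine gap in dimensions $d\ge 2$.

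The paper does not reveal the environment cube by cube. Instead $\calG_t$ is generated by the environment inside the (random) region $K_{\izero(t,\omega)}\approx\Gamma_{u,\theta^*}(t,\omega)$ already swept by the front at time $t$, so that $\calG_t|_{E_j(t)}=\calE(K_j)|_{E_j(t)}$. The martingale is $X_t=\bbE[T(x,\cdot)\mid\calG_t]$, and the increment bound comes from a near-additivity estimate (Proposition~\ref{P.3.0}): for $\omega\notin F_{t,x}$ one has $|T(x,\omega)-t-T_{\izero(t,\omega)}(x,\omega)|\le C(\rho+d(x,S)^{\be_1})$, where $T_i$ is an arrival time from $K_i$ for a reaction modified on $B_{\rho+1}(K_i)$ so that $T_i(x,\cdot)$ is \emph{independent} of $\calE(K_i)$. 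This gives $\bbE[T_i(x,\cdot)\chi_{E_i(s)}\mid\calG_s]=\bbE[T_i(x,\cdot)]\chi_{E_i(s)}$, and the increments collapse to $|X_t-X_s|\le C(\rho+|t-s|+d(x,S)^{\be_1})$. Taking $N\sim d(x,S)/(\rho+d(x,S)^{\be_1})$ steps of size $\tau=\rho+d(x,S)^{\be_1}$ in Azuma yields \eqref{3.4}. The exponent $\be_1$ arises entirely from the additivity error in Proposition~\ref{P.3.0} (balancing the $\eta^{1-m_1}$ cost from Lemma~\ref{Cor.2.1} against the $t^{\al_2}\eta^{-m_2}$ width from \eqref{2.1} via $\eta\sim d(x,S)^{-\gamma}$), not from optimizing a cube side length.

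The gap in your sketch is the claim that only ``$O(D/\ell)$ cubes that the front actually crosses on its way to $x$'' carry a non-negligible $b_i$. In dimensions $d\ge 2$ the solution starting from $B_k(0)$ is a spreading $(d-1)$-dimensional front, not a path: by the time it reaches $x$ it has filled a region of volume $\sim D^d$, i.e.\ $\sim(D/\ell)^d$ cubes, and Lemma~\ref{L.2.1} only rules out cubes at distance $\gtrsim c_1 T(x,\omega)\sim D$ from $x$. Which of the remaining cubes are ``far from the corridor'' is itself $\omega$-dependent, so you cannot assign deterministic $b_i$ on that basis; and if every cube in this ball gets $b_i\sim\ell+\rho$, then $\sum b_i^2\gtrsim(D/\ell)^{d}(\ell+\rho)^2$, which no choice of $\ell$ brings down to $D(\rho+D^{\be_1})$ once $d\ge 2$. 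You would need a sharp estimate saying cubes off the $S$--$x$ axis have small influence on $T(x,\cdot)$, and nothing in Lemmas~\ref{L.2.1}--\ref{L.3.1} provides that. The temporal filtration sidesteps this entirely: at each instant only a single snapshot $K_{\izero(t,\omega)}$ enters, and near-additivity bounds the increment uniformly in its shape.
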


\noindent {\it Remarks.} 1.  The point here is that by choosing $\lambda\sim d(x,S)^\gamma$ for some $\gamma\in(\frac{1+\be_1}2,1)$, one obtains a fast-decreasing bound on the probability of $O(d(x,S)^\gamma)$ fluctuations of $T(x,\cdot\,;S)$ from its mean (the latter is of course $\sim d(x,S)$ by Lemma \ref{L.2.2} and Corollary \ref{C.2.1}). 
\smallskip

2.  Note that Remark 2 after Definition \ref{D.1.2} and Lemma \ref{T.2.4} show that  \textbf{(H2)} implies \textbf{(H2')} with $\alpha_2=0$ and any $m_2>0$, so then $\be_1=\frac{m_1-1}{m_1}$.  If there is also $\theta<1$ such that $\inf_{(x,u,\omega)\in\bbR^d\times(\theta,1)\times\Omega} (1-u)^{-1} f(x,u,\omega)>0$, then this means that $\be_1>0$ can be made arbitrarily small.
\smallskip

The rest of this section is devoted to proving Proposition \ref{P.3.3} (and then extending it from balls to 
half-spaces in Proposition \ref{P.3.4}). We will therefore assume \textbf{(H2')} and the range of dependence of $f$ being at most $\rho\in[1,\infty)$.  We will also fix $S=B_k(0)$ and drop it from the notation (so the functions from \eqref{e.3.1} and \eqref{d.3.2} are $u(t,x,\omega)$ and $T(x,\omega)$, respectively) but all estimates will be independent of $S$ (i.e., of $k$).  Recall that all constants with $C$ in them 
 depend on \eqref{const}, with any other dependence explicitly stated, and may vary from line to line.

\subsection{Construction of a martingale}

Let $\calK$ be the set of all non-empty compact subsets of $\bbR^d$, and endow it with Hausdorff distance ${d}_H$.  For each  $a=(a_1,...,a_d)\in \bbZ^d$, let
\[
\calC_a:=[a_1,a_1+1)\times ... \times[a_d,a_d+1),
\]
and for each finite $\emptyset \neq A\subseteq \bbZ^d$, let
\[
P_A:=\left\{K\in\calK\,\bigg|\,K\cap \frac{1}{\sqrt{d}}\calC_a\neq \emptyset \text{ if and only if }a\in A\right\}.
\]
Let us label all such $A$ as $A_0,A_1,\dots$, and denote $P_i:=P_{A_i}$.   Then $\{{P}_i\}_{i\in\bbN_0}$ is a (pairwise disjoint) partition of the metric space $(\calK,{d}_H)$, and $\diam_H({P}_i)= 1$ for each $i\in\bbN_0$.

For each $i$, let
\[
 K_i:=\bigcup_{a\in A_i}\frac{1}{\sqrt{d}}{\calC_a}=\bigcup_{K\in P_i}K
\]
(note that $K_i$ is not compact), so that for each $K\in P_i$ we have
\[
K\subseteq K_i\subseteq B_1(K).
\]
Notice also that for each $i,j\in\bbN_0$ we have
\beq\lb{3.12}
\text{if $P_i \ni K\subseteq K'\in P_j$, then $K_i\subseteq K_j$.}
\eeq

Next, for any $(t,\omega)\in[0,\infty)\times \Omega$ and $\theta\in(0,1)$, we let
\[
\Gamma_{u,\theta}(t,\omega):=\{x\in\bbR^d\,|\,u(t,x,\omega)\geq \theta\}.
\]
Since $S$ is bounded, Lemma \ref{L.2.1} shows that all these sets are compact. 
Then for all $i\in\bbN_0$ and $t\ge 0$ we let
\[
E_i(t):=\{\omega\in\Omega\,|\, \Gamma_{u,{{\theta^*}}}(t,\omega)\in P_i\} \subseteq\Omega,
\]
with ${{\theta^*}}$ from \eqref{2.10}.  Then $\{E_i(t)\}_{i\in\bbN_0}$ is a pairwise disjoint partition of $\Omega$ for each $t\ge 0$ because $\Gamma_{u,{{\theta^*}}}(t,\omega)\neq\emptyset$ (due to $\theta^*\le 1-\theta^*$).  Also note that $\alpha_2<1$ allows us to only track the evolution of one of the sets $\Gamma_{u,\theta}$ (see the remark after \textbf{(H2')}), and in this section we choose it to be $\Gamma_{u,\theta^*}$.  Finally, for $(t,\omega)\in[0,\infty)\times \Omega$, let
\[
{\izero}(t,\omega):=i\qquad  \text{when $\omega\in E_i(t)$},
\]
and for each $(t,x)\in[0,\infty)\times \bbR^d$ let
\[
F_{t,x}:=\{\omega\in\Omega \,|\, x\in K_{{\izero}(t,\omega)} \}.
\]
The latter is a slightly different version of the set of $\omega$ for which the solution $u$ has reached $x$ by time $t$ (we have $F_{t,x} \supseteq \{\omega\in\Omega\,|\, T(x,\omega)\le t\} \supseteq F_{t-C,x}$ for some $C>0$ and all large enough $t$, due to \eqref{2.1}).
Let us also pick $A_0$ so that ${\izero}(0,\omega)=0$ for each $\omega\in\Omega$ (note that ${\izero}(0,\omega)$ does not depend on $\omega$).

%
%

Since $u_t\ge 0$ and $f(\cdot,u,\cdot)\equiv 0$ for $u\in[0,\theta^*]$, the solution $u(\cdot,\cdot,\omega)$  only depends on the reaction inside $K_{{\izero}(t,\omega)}$ up to time $t$. Hence, we have the following lemma.

\begin{lemma}\lb{L.3.7}
For any $(t,x)\in[0,\infty)\times \bbR^d$ and $s\in[0,t]$, the set $E_i(t)$ and the function
$u(s,x,\cdot)\chi_{E_i(t)}(\cdot)$ are $\calE(K_i)$-measurable for each  $i\in\bbN_0$. 
\end{lemma}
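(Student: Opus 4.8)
The plan is to realize, for each fixed $i\in\bbN_0$, the solution $u(\cdot,\cdot,\omega)$ on the time interval $[0,t]$ as an object that manifestly depends only on the reaction in $K_i\times[0,1]$, on a large enough event. First I would introduce the \emph{localized} solution $\bar u^{(i)}(\cdot,\cdot,\omega)$ solving
\[
\bar v_t=\Delta \bar v+f(x,\bar v,\omega)\,\chi_{K_i}(x)\quad\text{on }(0,\infty)\times\bbR^d,\qquad \bar v(0,\cdot)=u(0,\cdot),
\]
whose right-hand side is bounded, measurable in $x$, and $M$-Lipschitz in $\bar v$, so the Cauchy problem is well posed; since its solution is unchanged when $f$ is modified off $K_i\times[0,1]$, the map $\omega\mapsto\bar u^{(i)}(s,x,\omega)$ is $\calE(K_i)$-measurable for every $(s,x)$. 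As $f\ge0$, $\bar u^{(i)}$ is a subsolution of \eqref{1.1} with the same initial data, so $\bar u^{(i)}\le u$ by comparison, and hence by \eqref{2.1} the sets $\{\bar u^{(i)}(s,\cdot,\omega)\ge\theta^*\}$ with $s\le t$ lie in a fixed ball. I would then set
\[
G_i:=\big\{\omega\in\Omega\;\big|\;\bar u^{(i)}(s,x,\omega)<\theta^*\ \text{ for all }(s,x)\in[0,t]\times(\bbR^d\setminus K_i)\big\}.
\]
By continuity of $\bar u^{(i)}$ in $(s,x)$ and the uniform ball bound on its super-level sets, $G_i$ is a countable intersection of events in $\calE(K_i)$, so $G_i\in\calE(K_i)$.

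The core step is to show $u(\cdot,\cdot,\omega)=\bar u^{(i)}(\cdot,\cdot,\omega)$ on $[0,t]\times\bbR^d$ for every $\omega\in G_i$, and that $E_i(t)\subseteq G_i$. The mechanism is the one flagged before the lemma: since $f(\cdot,v,\cdot)\equiv0$ for $v\in[0,\theta_1]$ and $u_t\ge0$ (by \eqref{2.6}), whenever $u(s,\cdot,\omega)<\theta_1$ on $\bbR^d\setminus K_i$ the two PDEs have the same right-hand side along $u$, so uniqueness forces $u=\bar u^{(i)}$. To obtain agreement on all of $[0,t]$ I would let $\tilde\tau:=\inf\{s\ge0\mid u(s,x,\omega)\ge\theta_1\text{ for some }x\in\bbR^d\setminus K_i\}$; the two solutions coincide on $[0,\tilde\tau]$ by the above, and if $\omega\in G_i$ I claim $\tilde\tau>t$. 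Otherwise, pick $x_n\in\bbR^d\setminus K_i$ and $s_n\downarrow\tilde\tau$ with $u(s_n,x_n,\omega)\ge\theta_1$ and pass to $x_n\to x_\infty$ (inside the fixed ball); continuity gives $\bar u^{(i)}(\tilde\tau,x_\infty,\omega)=u(\tilde\tau,x_\infty,\omega)\ge\theta_1>\theta^*$, and since $x_\infty\in\overline{\bbR^d\setminus K_i}$, continuity of $\bar u^{(i)}$ then forces $\bar u^{(i)}(\tilde\tau,\cdot,\omega)\ge\theta^*$ at a genuine point of $\bbR^d\setminus K_i$, contradicting $\omega\in G_i$ (here the gap $\theta^*<\theta_1$ absorbs the fact that $K_i$, a union of half-open cubes, is not closed). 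Hence $u<\theta_1$ off $K_i$ on $[0,t]$ and $u=\bar u^{(i)}$ there. Conversely, if $\Gamma_{u,\theta^*}(t,\omega)\in P_i$ then $\Gamma_{u,\theta^*}(t,\omega)\subseteq K_i$, so $u<\theta^*$ off $K_i$ on $[0,t]$ by monotonicity, whence $u=\bar u^{(i)}$ on $[0,t]$ and $\omega\in G_i$; that is, $E_i(t)\subseteq G_i$.

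The lemma then follows quickly. On $G_i$ we have $u(s,x,\cdot)=\bar u^{(i)}(s,x,\cdot)$ for all $s\le t$, so $u(s,x,\cdot)\chi_{G_i}$ is $\calE(K_i)$-measurable and $\Gamma_{u,\theta^*}(t,\cdot)=\Gamma_{\bar u^{(i)},\theta^*}(t,\cdot)$ on $G_i$; therefore
\[
E_i(t)=G_i\cap\big\{\omega\mid\Gamma_{\bar u^{(i)},\theta^*}(t,\omega)\in P_i\big\},
\]
and the second event, being a Borel function of the continuous profile $\bar u^{(i)}(t,\cdot,\cdot)$, lies in $\calE(K_i)$; hence $E_i(t)\in\calE(K_i)$, and $u(s,x,\cdot)\chi_{E_i(t)}=\bar u^{(i)}(s,x,\cdot)\chi_{E_i(t)}$ is a product of $\calE(K_i)$-measurable functions (using $E_i(t)\subseteq G_i$ and $s\le t$). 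The case $t=0$ is immediate from the choice of $A_0$. I expect the main obstacle to be the continuation argument in the second paragraph --- upgrading ``$u$ is controlled by the reaction in $K_i$ until it first exits $K_i$'' to ``$u=\bar u^{(i)}$ on all of $[0,t]$ whenever $\bar u^{(i)}$ stays in $K_i$'' --- because it is precisely this that makes $E_i(t)$, which superficially could be sensitive to the reaction outside $K_i$ (through whether the front of $u$ pokes out of $K_i$ by time $t$), actually $\calE(K_i)$-measurable; the half-open-cube geometry of the $K_i$ adds some bookkeeping, but it is harmless thanks to the gap $\theta^*<\theta_1$.
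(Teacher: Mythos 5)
Your proof is correct and rests on the same mechanism as the paper's: since $f$ vanishes below $\theta_1>\theta^*$ and $u_t\ge 0$, the solution up to time $t$ on $E_i(t)$ is determined by the reaction on $K_i$ alone, which is exactly how the paper argues (it compares $u$ with the solution for an arbitrary reaction $g$ agreeing with $f$ on $K_i\times[0,1]$, whereas you compare with the explicitly localized reaction $f\chi_{K_i}$ and spell out the first-exit-time continuation argument that the paper leaves implicit). The extra care with the half-open cubes and the gap $\theta^*<\theta_1$ is sound and only makes explicit what the paper's terser proof takes for granted.
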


\begin{proof}
Fix any  $i\in\bbN_0$, and let $g$ be any reaction satisfying \textbf{(H1')} and
\[
    f(x,u,\omega)=g(x,u,\omega)\qquad \text{for all $(x,u,\omega)\in K_i\times [0,1]\times\Omega$}.
\]
Then for each $\omega\in\Omega$, let $v$ be the solution to  
\[
  v_t=\Delta v+g(x,v,\omega)
\]
with the same initial data $v(0,\cdot,\omega)=u_{0,S}$ as $u$, and fix any $t\ge 0$. 
Since $u_t,v_t\ge 0$ and $f(\cdot,u,\cdot)\equiv 0$ for $u\in[0,\theta^*]$, it follows that  $v(s,\cdot,\omega)=u(s,\cdot,\omega)$ whenever $\omega\in E_i(t)$ and $s\le t$.  Similarly, we have $v(s,\cdot,\omega)=u(s,\cdot,\omega)$ whenever $\Gamma_{v,{{\theta^*}}}(t,\omega)\in P_i$ and $s\le t$ (with $\Gamma_{v,\theta}$ defined analogously to $\Gamma_{u,\theta}$).  
In particular, $\omega\in E_i(t)$ if and only if $\Gamma_{v,{{\theta^*}}}(t,\omega)\in P_i$.

Since this holds for each $g$ as above, it follows that $E_i(t)\in \calE(K_i)$, and then also that $u(s,x,\cdot)\chi_{E_i(t)}(\cdot)$ is $\calE(K_i)$-measurable for any $(s,x)\in[0,t]\times \bbR^d$.
%
%
%
%
%
\end{proof}

For each $t\geq 0$, let $\calG_t$ be the $\sigma$-algebra on $\Omega$ generated by
\[
\bigcup_{i\in\bbN_0\,\&\, s\in [0,t]}\calE(K_i)|_{E_i(s)}.
\]
That is, 
$\calG_t$ is generated by all events  $F\cap E_i(s)$ with $i\in\bbN_0, F\in\calE(K_i), s\in[0,t]$.  
Then $\{\calG_t\}_{t\geq 0}$ is a filtration on $(\Omega,\calF)$, and $\calG_0=\calE(K_{0})$ because $E_0(0)=\Omega$ and $E_i(0)=\emptyset$ for $i>0$.
Lemma \ref{L.3.7} then shows that  $u(s,x,\cdot)$ is $\calG_t$-measurable for any  $(t,x)\in[0,\infty)\times \bbR^d$ and $s\le t$.
Since $E_i(s)\in \calE(K_i)$ for all $i\in\bbN_0$ and $s\ge 0$ (so also $F\cap E_i(s)\in \calE(K_i)$ above), ${\izero}(s,\cdot)$ and $F_{s,x}$ are also $\calG_t$-measurable for all $(t,x)\in[0,\infty)\times\bbR^d$ and $s\le t$.

We note that $\calG_t$ is actually simpler than its definition suggests, and for each $t\ge 0$ and $j\in\bbN_0$ we in fact have
\beq \lb{3.333}
\calG_t |_{E_j(t)} = \calE(K_j)|_{E_j(t)}
\eeq
(recall also that $\{E_j(t)\}_{j\in\bbN_0}$ is a partition of $\Omega$ for each $t\ge 0$, and note that only finitely many of these sets are non-empty due to Lemma \ref{L.2.1}).
Indeed, let us consider any $i\in\bbN_0$ and $s\in[0,t]$ such that $E_i(s)\cap E_j(t)\neq \emptyset$.  Then there is $\omega\in\Omega$ such that
\[
{\izero}(t,\omega)=j \qquad \text{ and }\qquad {\izero}(s,\omega)=i.
\]
From \eqref{3.12} and 
\[
\Gamma_{u,{{\theta^*}}}(r,\omega)\subseteq \Gamma_{u,{{\theta^*}}}(s,\omega)
\]
for any such $\omega$, we obtain $K_i\subseteq K_j$.  But then $E_i(s)\in \calE(K_i)\subseteq \calE(K_j)$ and
\[
 \calE(K_i) |_{E_i(s)\cap E_j(t)} \subseteq  \calE(K_j) |_{E_i(s)\cap E_j(t)} \subseteq \calE(K_j) |_{E_j(t)}.
 \]
Since this clearly also holds when $E_i(s)\cap E_j(t)= \emptyset$, the definition of $\calG_t$ proves \eqref{3.333}.

Similarly to \cite{armstrong2014,armstrong2015}, we will prove Proposition \ref{P.3.3} by studying the $\calG_t$-adapted martingale $\{X_t:=\bbE\left[T(x,\cdot)\,|\,\calG_t\right]\}_{t\ge 0}$ for any $x\in\bbR^d$ and estimating its increments, which will then allow us to apply Azuma's inequality to bound the fluctuations of $T(x,\cdot)$.

\begin{lemma}[\hskip0.01mm Azuma's inequality]\lb{L.3.5} 
Let $\left\{X_{k}\right\}_{k\in\bbN_0}$  be a martingale on $(\Omega,\calF,\bbP)$. If for each $k\in\bbN$ there is $c_k\ge 0$ such that $|X_k-X_{k-1}|\leq c_k$ almost surely, 
then for all $\lambda>0$ and $n\in\bbN$,
\[
 \bbP[|X_{n}-X_{0}|\geq \lambda]\leq 2\exp \left(-{\frac {\lambda^{2}}{2\sum _{k=1}^{n}c_{k}^{2}}}\right).
 \]
\end{lemma}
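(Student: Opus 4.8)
The plan is the standard exponential-moment (Chernoff) argument, in which the martingale hypothesis is used only through a conditional Hoeffding bound on one increment. Since the asserted inequality is invariant under $\{X_k\}\mapsto\{-X_k\}$, a union bound reduces it to the one-sided estimate $\bbP[X_n-X_0\geq\lambda]\leq \exp(-\lambda^2/(2\sum_{k=1}^{n}c_k^2))$ for all $\lambda>0$. To prove that, I would fix a parameter $s>0$ and apply Markov's inequality to the nonnegative random variable $e^{s(X_n-X_0)}$, which gives $\bbP[X_n-X_0\geq\lambda]\leq e^{-s\lambda}\,\bbE[e^{s(X_n-X_0)}]$; it then remains to bound the moment generating function on the right and to optimize in $s$.

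For the moment generating function I would peel off one increment at a time. Writing $D_k:=X_k-X_{k-1}$ and letting $\{\calF_k\}_{k\in\bbN_0}$ be a filtration with respect to which $\{X_k\}$ is a martingale (for instance the natural one), the tower property yields $\bbE[e^{s(X_n-X_0)}]=\bbE\big[e^{s(X_{n-1}-X_0)}\,\bbE[e^{sD_n}\mid\calF_{n-1}]\big]$, since $e^{s(X_{n-1}-X_0)}$ is $\calF_{n-1}$-measurable. The crucial step is the conditional Hoeffding lemma: because $\bbE[D_n\mid\calF_{n-1}]=0$ and $|D_n|\leq c_n$ almost surely, convexity of $t\mapsto e^{st}$ on any interval of length at most $2c_n$ containing the (conditional) range of $D_n$ shows that $\bbE[e^{sD_n}\mid\calF_{n-1}]\leq e^{s^2c_n^2/2}$ almost surely; equivalently, the random function $\phi(s):=\ln\bbE[e^{sD_n}\mid\calF_{n-1}]$ satisfies $\phi(0)=0$, $\phi'(0)=\bbE[D_n\mid\calF_{n-1}]=0$, and $\phi''(s)\leq c_n^2$ (a conditional variance of a bounded variable), so $\phi(s)\leq \tfrac12 s^2c_n^2$ by Taylor's theorem. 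Iterating this bound for $k=n,n-1,\dots,1$ gives $\bbE[e^{s(X_n-X_0)}]\leq \exp\big(\tfrac12 s^2\sum_{k=1}^{n}c_k^2\big)$.

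Combining the two bounds, $\bbP[X_n-X_0\geq\lambda]\leq \exp\big(-s\lambda+\tfrac12 s^2\sum_{k=1}^{n}c_k^2\big)$, and choosing $s=\lambda/\sum_{k=1}^{n}c_k^2$ (the minimizer) produces the one-sided estimate; the same reasoning applied to $\{-X_k\}$ bounds $\bbP[X_0-X_n\geq\lambda]$, and adding the two yields the stated inequality with the factor $2$. I do not anticipate any real obstacle here: the only ingredient beyond elementary manipulations is the conditional Hoeffding lemma, which is a brief convexity/Taylor computation, and the martingale property enters solely to kill the first-order term $\phi'(0)$ in the conditional log-moment generating function — without it one would pick up an extra $s\,\bbE[D_n\mid\calF_{n-1}]$ and the exponent would not close up.
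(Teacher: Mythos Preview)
Your proof is the standard Chernoff--Hoeffding argument and is correct. The paper does not actually prove this lemma: Azuma's inequality is stated as a classical tool and used without proof, so there is no ``paper's own proof'' to compare against.
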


We first show that for any 
$\omega\in F_{t,x}$
the difference 
$\bbE[T(x,\cdot)\,|\,\calG_t](\omega)- T(x,\omega)$ is uniformly bounded, and 0 if $\omega\in F_{s,x}$ when $t-s$ is large enough (note that $F_{s,x}\subseteq F_{t,x}$ if $s\le t$ because $u_t\ge 0$).
This is due to $F_{t,x}\in\calG_t$ and the above-mentioned relationship of $F_{t,x}$ and $\{\omega\in\Omega\,|\, T(x,\omega)\le t\}$.


\begin{lemma}\lb{L.3.3}
 There is $C>0$ such that for each
 $(t,x)\in[0,\infty)\times \bbR^d$ we have
\[
   \left|\bbE[T(x,\cdot)\chi_{F_{t,x}}\,|\,\calG_{t}]-T(x,\cdot)\chi_{F_{t,x}}\right|\leq C \qquad \text{ on }\Omega,
\]
and if also $s\in[0,t-C]$, then
\beq
\lb{3.17}
\bbE[T(x,\cdot)\chi_{ F_{s,x}} \,|\,\calG_{t}]=T(x,\cdot)\chi_{ F_{s,x}} \qquad \text{ on }\Omega.
\eeq

\end{lemma}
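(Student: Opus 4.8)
The plan is to reduce both assertions to two measure-theoretic facts together with one quantitative estimate. The two measure-theoretic facts are that $F_{t,x}\in\calG_t$ (already established above) and that the truncated arrival time $\min\{T(x,\cdot),t\}$ is $\calG_t$-measurable. For the latter, note that since $u(\cdot,x,\omega)$ is continuous and nondecreasing (because $u_t\ge 0$ for $u\in\calU_f$), we have $\{T(x,\cdot)\le r\}=\{u(r,x,\cdot)\ge 1-\theta^*\}$ for every $r\ge 0$; as $u(r,x,\cdot)$ is $\calG_t$-measurable whenever $r\le t$ (noted after Lemma \ref{L.3.7}), it follows that $\{\min\{T(x,\cdot),t\}\le r\}\in\calG_t$ for all $r$, so $\min\{T(x,\cdot),t\}$ is $\calG_t$-measurable. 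Observe also that $T(x,\cdot)$ is bounded — by Lemma \ref{L.2.2} applied with $t_0=0$ (allowed since $u_t\ge 0$), using $u_{0,S}\ge(1-\theta^*)\chi_S$ and $1-\theta^*\ge\theta_2$ — hence integrable, so all conditional expectations below are defined.

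The quantitative estimate is the inclusion $F_{s,x}\subseteq\{\,\omega\in\Omega\,|\,T(x,\omega)\le s+C\,\}$ for a suitable $C>0$ and all $s\ge 0$. Indeed, if $\omega\in F_{s,x}$ then $x\in K_{\izero(s,\omega)}\subseteq B_1(\Gamma_{u,\theta^*}(s,\omega))$, so there is $y$ with $|x-y|\le 1$ and $u(s,y,\omega)\ge\theta^*$; the width bound in \eqref{2.1} then provides $z$ with $|y-z|\le L_{u,\theta^*,1-\theta^*}(s)$ and $u(s,z,\omega)\ge 1-\theta^*$; and since $1-\theta^*\ge\theta_2$, Lemma \ref{L.2.2} (with $c=\tfrac{c_0}{2}$ and $\theta=1-\theta^*$) forces $u(s',x,\omega)\ge 1-\theta^*$ for all $s'\ge s+\kappa_0+\tfrac{2}{c_0}\big(1+L_{u,\theta^*,1-\theta^*}(s)\big)$, i.e.\ $T(x,\omega)\le s+C$. (The reverse inclusion $\{T(x,\cdot)\le t\}\subseteq F_{t,x}$ is immediate, since $u(t,x,\omega)\ge 1-\theta^*\ge\theta^*$ puts $x$ into $\Gamma_{u,\theta^*}(t,\omega)\subseteq K_{\izero(t,\omega)}$.)

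With these in hand, set $g:=\big(T(x,\cdot)-t\big)^+\chi_{F_{t,x}}=\big(T(x,\cdot)-\min\{T(x,\cdot),t\}\big)\chi_{F_{t,x}}$. Since $F_{t,x}\in\calG_t$ and $\min\{T(x,\cdot),t\}\,\chi_{F_{t,x}}$ is $\calG_t$-measurable, we get $\bbE[T(x,\cdot)\chi_{F_{t,x}}\mid\calG_t]-T(x,\cdot)\chi_{F_{t,x}}=\bbE[g\mid\calG_t]-g$, and the inclusion above gives $0\le g\le C$ pointwise, hence $|\bbE[g\mid\calG_t]-g|\le C$, which is the first bound. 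For the second, let $s\le t-C$; then on $F_{s,x}$ the inclusion gives $T(x,\cdot)\le s+C\le t$, so $T(x,\cdot)\chi_{F_{s,x}}=\min\{T(x,\cdot),t\}\,\chi_{F_{s,x}}$, and this is $\calG_t$-measurable because $F_{s,x}\in\calG_s\subseteq\calG_t$ and $\min\{T(x,\cdot),t\}$ is $\calG_t$-measurable; therefore $\bbE[T(x,\cdot)\chi_{F_{s,x}}\mid\calG_t]=T(x,\cdot)\chi_{F_{s,x}}$, proving \eqref{3.17}. The only substantive step is the width-plus-finite-speed estimate in the second paragraph: it is the sole place where \eqref{2.1} (and hence the sublinear-width hypothesis of \textbf{(H2')}, in particular $\alpha_2<1$) enters, and it is the reason the exponent $\beta_1$ in Proposition~\ref{P.3.3} carries an $\alpha_2$; all the rest is bookkeeping with the filtration $\{\calG_t\}$.
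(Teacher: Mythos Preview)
Your overall structure is sound and matches the paper's: replace $T(x,\cdot)$ by a $\calG_t$-measurable proxy (you use $\min\{T(x,\cdot),t\}$, the paper uses the auxiliary stopping time $\tau(x,\omega):=\inf\{t\ge 0\mid x\in B_1(\Gamma_{u,\theta^*}(t,\omega))\}$), and then show that on $F_{t,x}$ the proxy differs from $T(x,\cdot)$ by at most a universal constant. The measurability steps are correct.

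The gap is in your ``width-plus-finite-speed'' estimate: it does not give a universal constant $C$. From $u(s,y,\omega)\ge\theta^*$ you pass via the first bound in \eqref{2.1} to a point $z$ with $u(s,z,\omega)\ge 1-\theta^*$ and $|y-z|\le L_{u,\theta^*,1-\theta^*}(s)$, then apply Lemma~\ref{L.2.2}. But \eqref{2.1} only gives $L_{u,\theta^*,1-\theta^*}(s)\le \mu_*^{-1}(\theta^*)^{-m_2}(1+s^{\al_2})$, so your bound reads $T(x,\omega)\le s+\kappa_0+\tfrac{2}{c_0}\big(1+\mu_*^{-1}(\theta^*)^{-m_2}(1+s^{\al_2})\big)$, i.e.\ $C=C(s)\sim s^{\al_2}$, not a universal constant. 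Consequently you only establish $|\bbE[T\chi_{F_{t,x}}\mid\calG_t]-T\chi_{F_{t,x}}|\le C(1+t^{\al_2})$, which is strictly weaker than the lemma as stated.

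The fix is easy: use the \emph{second} inequality in \eqref{2.1} instead of the first. Since $u(s,y,\omega)\ge\theta^*$ and $u_t\ge\mu_*$ on $\{u\in[\theta^*,1-\theta^*]\}$ for times $\ge\kappa_*$, one gets $u(s+\kappa_*+\mu_*^{-1},y,\omega)\ge 1-\theta^*$ directly, and then Lemma~\ref{L.2.2} gives $T(x,\omega)\le s+\kappa_*+\mu_*^{-1}+\kappa_0+2c_0^{-1}$, a genuine universal constant. This is exactly what the paper does. Your version with $C(1+t^{\al_2})$ is in fact what the paper proves later (Section~\ref{S7}) under \textbf{(H2'')}, where the lower bound on $u_t$ degenerates; and since $\al_2<\be_1$, that weaker bound would still be enough for Proposition~\ref{P.3.3}. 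But it is not the statement of Lemma~\ref{L.3.3}, and your closing remark that the $\al_2$ in $\be_1$ enters through this lemma is incorrect in the \textbf{(H2')} setting --- there it enters through Proposition~\ref{P.3.0}.
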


\begin{proof}
For any $(x,\omega)\in\bbR^d\times\Omega$, let
\[
\tau(x,\omega):=\inf \left\{t\geq 0\,|\,  x\in B_1(\Gamma_{u,{{\theta^*}}}(t,\omega))
\right\} \le T(x,\omega).
\]
If $\omega\in F_{t,x}$
for some $t\ge 0$, then $\tau(x,\omega)\leq t$ due to $K_{{\izero}(t,\omega)}\subseteq B_1(\Gamma_{u,\theta^*}(t,\omega))$.  
And since  $u(s,x,\cdot)$ is $\calG_t$-measurable for all $s\le t$, we see that $\tau(x,\cdot)\chi_{ F_{t,x}}(\cdot)$ is $\calG_{t}$-measurable.

For each $(x,\omega)\in\bbR^d\times\Omega$, there is
$y\in \overline{B_1(x)}$ with $u(\tau(x,\omega),y,\omega)\geq \theta^*$,
so \eqref{2.1} shows that
\beq\lb{e.3.4}
u\left( \tau(x,\omega) +\kappa_*+ \mu_*^{-1},y,\omega \right)\geq 1-\theta^*,
\eeq
and then Lemma \ref{L.2.2} yields
\[
u(\tau(x,\omega) +\kappa_*+ \mu_*^{-1}+\kappa_0+2c_0^{-1},x,\omega)\geq 1-\theta^*.
\]
Therefore there is $C$ such that $T(x,\omega)\leq  \tau(x,\omega)+C$, and hence $|T(x,\omega)- \tau(x,\omega)|\leq C$.
%
Hence for any $(t,x)\in[0,\infty)\times \bbR^d$ we obtain using
$\calG_{t}$-measurability of $\tau(x,\cdot)\chi_{ F_{t,x}}$, 
\[
  \left|\bbE[T(x,\cdot)\chi_{ F_{t,x}}\,|\,\calG_{t}]-T(x,\cdot)\chi_{ F_{t,x}}\right|
\leq \left|\bbE[\tau(x,\cdot)\chi_{ F_{t,x}}\,|\,\calG_{t}]-\tau(x,\cdot)\chi_{ F_{t,x}}\right|+2C
= 2C,
\]
yielding the first claim.
If also $s\le t-C$, then for all $\omega\in F_{s,x}$ we have
\[
T(x,\omega)\le \tau(x,\omega)+C\leq s +C\le t.
\]
Since $u(t,x,\cdot)$ is $\calG_t$-measurable, this shows that so is $T(x,\cdot)\chi_{ F_{s,x}}$, and \eqref{3.17} follows.
\end{proof}

When $\omega\in\Omega\setminus F_{t,x}$ (that is, essentially, when the solution $u$ has not yet reached $x$ by the time $t$), we will obtain a different kind of estimate.

Let $\rho\geq 1$ be from Proposition \ref{P.3.3}, and for each $i\in\bbN_0$ let
\begin{align*}
    g_i(x,u,\omega):=\, \psi_i(x)\bbE[f(x,u,\cdot)]+(1-\psi_i(x))f(x,u,\omega),
\end{align*}
where $0\le \psi_i\le 1$ is Lipschitz with a uniform-in-$i$ constant, with $\psi_i\equiv 1$ on $B_\rho(K_i)$ and $\psi_i \equiv 0$ on $\bbR^d\setminus B_{\rho+1}(K_i)$. 
Then $g_i$ is Lipschitz in $(x,u)$ (with a uniform $M$-dependent constant $\ge M$, which we will call $M$ from now on), and 
$g_i(x,u,\cdot)$ is independent of $\calE(K_i)$ for all $(x,u)\in\bbR^d\times [0,1]$ because $f$ has range of dependence at most $\rho$.
Of course, 
\beq\lb{3.24}
g_i\equiv f\quad \text{ on }(\bbR^d \setminus B_{\rho+1} (K_i))\times [0,1]\times\Omega.
\eeq
For each $\omega\in\Omega$, let now $v_i$ be the solution to
\[ 
\begin{aligned}
& (v_i)_t=\Delta v_i+g_i(x,v_i,\omega)\qquad && \text{ on }(0,\infty)\times \bbR^d,\\
& v_i(0,\cdot,\omega)=u_{0,K_i}\qquad && \text{ on } \bbR^d, 
\end{aligned}
\]
with $u_{0,K_i}$ from Lemma \ref{L.2.4}.
Then $v_i(t,x,\cdot)$ is independent of $\calE(K_i)$ for each $(t,x)$, and so is
\[
{T}_i(x,\cdot):=\inf\{t\ge 0\,|\, v_i(t,x,\cdot)\geq 1-\theta^*\}.
\]

\begin{proposition}\lb{P.3.0}
There is $C\geq 1$ such that for each
 $(t,x)\in[0,\infty)\times \bbR^d$ and $\omega\in\Omega\setminus F_{t,x}$ we have
\[
\left| T(x,\omega) - t -{T}_{{\izero}(t,\omega)}(x,\omega)\right|\leq C(\rho+ d (x,S)^{\be_1}).
\]
\end{proposition}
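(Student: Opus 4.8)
\textbf{Proof strategy for Proposition \ref{P.3.0}.}
The plan is to compare the true solution $u(\cdot,\cdot,\omega)$ with the modified solution $v_{{\izero}(t,\omega)}(\cdot,\cdot,\omega)$ and track the propagation of the region where $u\ge\theta^*$ after time $t$. Fix $(t,x)$ and $\omega\in\Omega\setminus F_{t,x}$, and write $i:={\izero}(t,\omega)$, so that $\Gamma_{u,\theta^*}(t,\omega)\in P_i$ and $x\notin K_i$. The key observation is that $u$ and $v_i$ evolve under reactions $f$ and $g_i$ that agree outside $B_{\rho+1}(K_i)$ by \eqref{3.24}, and that up to time $t$ the solution $u$ has only ``used'' the reaction inside $K_i\subseteq B_1(\Gamma_{u,\theta^*}(t,\omega))$ (since $u_t\ge0$ and $f(\cdot,u,\cdot)\equiv 0$ for $u\le\theta^*$). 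So the difference between $u(t,\cdot,\omega)$ and $v_i(0,\cdot,\omega)=u_{0,K_i}$ is confined, in a suitable sense, to the ball $B_{\rho+1}(K_i)$, i.e., the two solutions agree (or are close) far from $S$.

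First I would establish a two-sided sandwich. For the upper bound on $T(x,\omega)$: since $v_i(0,\cdot,\omega)=u_{0,K_i}$ and $u(t,\cdot,\omega)\ge (1-\theta^*)\chi_{\Gamma_{u,\theta^*}(t,\omega)}\ge$ something comparable to $u_{0,K_i}$ on a slightly larger set (using $\Gamma_{u,\theta^*}(t,\omega)\subseteq K_i$, plus Lemma~\ref{Cor.2.1} to upgrade $u(t+C_1,\cdot,\omega)$ to be $\ge 1-\theta^*$ on $\Gamma_{u,\theta^*}(t,\omega)$ so it dominates $u_{0,K_i}$ after a bounded time shift), the comparison principle gives $u(t+C_1+s,\cdot,\omega)\ge v_i(s,\cdot,\omega)$ wherever the two reactions coincide, which by Lemma~\ref{L.2.1} propagates to all of $\bbR^d$ up to an exponentially small error. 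Evaluating at $x$ and using \eqref{2.1} to absorb the error into a bounded time shift yields $T(x,\omega)\le t + T_i(x,\omega)+ C(\rho + \cdots)$. The $\rho$-dependence and the $d(x,S)^{\be_1}$ term enter here: wherever $g_i\ne f$ (inside $B_{\rho+1}(K_i)$) we lose nothing for the upper bound, but the error term from Lemma~\ref{L.2.1} that must be compensated depends on how far $x$ is from $S$, and the time needed to overcome an $O(\eta)$ additive perturbation near $u=1$ is $O(\eta^{1-m_1})$ by Lemma~\ref{Cor.2.1}, together with the $L_{u,\eta,1-\theta^*}$ growth bound from \eqref{2.1}; balancing these (an $\eta$ chosen as a small power of $d(x,S)$) produces the exponent $\be_1=\max\{\frac{m_1-1}{m_1},\frac{m_2+\al_2}{m_2+1}\}$.

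For the reverse bound on $T(x,\omega)$: here I would use that $v_i(0,\cdot,\omega)=u_{0,K_i}\ge (1-\theta^*)\chi_{K_i}\ge u(t,\cdot,\omega)$ on $\bbR^d\setminus K_i$ trivially (where $u(t,\cdot,\omega)<\theta^*$ since $\Gamma_{u,\theta^*}(t,\omega)\subseteq K_i$), while on $K_i$ we have $u(t,\cdot,\omega)\le 1$; after a bounded time-shift making $v_i\ge 1-\theta^*$ on $K_i$ (again via Lemma~\ref{Cor.2.1}, since $v_i(0,\cdot,\omega)\ge 1-\theta^*$ on $K_i$ already, so in fact $v_i(s,\cdot,\omega)\ge u(t,\cdot,\omega)$ on $K_i$ for $s\ge C_1$) one gets $v_i(C_1+s,\cdot,\omega)\ge u(t+s,\cdot,\omega)$ wherever $g_i=f$, again extended to all of $\bbR^d$ by Lemma~\ref{L.2.1}. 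This gives $T_i(x,\omega)\ge T(x,\omega)-t-C(\rho+\cdots)$. Combining the two directions gives the claim.

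\textbf{Main obstacle.} The delicate point is the propagation of the comparison across the ``bad'' region $B_{\rho+1}(K_i)$ where $f$ and $g_i$ differ, since in that region the two solutions are both already close to $1$ but not equal, and naive comparison fails there; one must use that both $u(t,\cdot,\omega)$ and $u_{0,K_i}$ are $\ge 1-\theta^*$ on (essentially) $\Gamma_{u,\theta^*}(t,\omega)$, invoke the monotone-in-$u$ lower bound $f(\cdot,u,\cdot)\ge\al_1(1-u)^{m_1}$ near $u=1$ through Lemma~\ref{Cor.2.1} to force both solutions to $1$ on $K_i$ within a bounded time, and only then start the comparison on the exterior where $g_i\equiv f$. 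Quantifying the time lost to the exponentially small Lemma~\ref{L.2.1} error as a function of $d(x,S)$, and showing that the optimal choice of the auxiliary parameter $\eta$ yields exactly the exponent $\be_1$ (rather than something worse), is the computational heart of the argument; the linear-in-$d(x,S)$ factor is separated out later in Proposition~\ref{P.3.3} via Azuma, so here only the sub-linear $d(x,S)^{\be_1}$ correction needs to be controlled.
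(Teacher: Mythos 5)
Your overall strategy is the paper's: compare $u$ with $v_i$ for $i=\izero(t,\omega)$, exploit that $f\equiv g_i$ off $B_{\rho+1}(K_i)$, push the solutions up to $1-\eta$ on the bad region at cost $D_1\eta^{1-m_1}$ via Lemma \ref{Cor.2.1}, and balance $\eta$ against a power of $d(x,S)$; the forward inequality $T(x,\omega)\le t+T_i(x,\omega)+C(\rho+d(x,S)^{\be_1})$ is essentially right in outline. Two points are under-specified there, though: you cannot "force both solutions to $1$ on $K_i$ within a bounded time" (only to $1-\eta$), and the residual $\eta$-discrepancy is not absorbed by Lemma \ref{L.2.1}'s exponentially small error or by a bounded time shift. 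It is absorbed by the time-dilation/vertical-shift supersolution of Lemma \ref{L.3.1}, $u_+(s,\cdot)=u((1+M_*\eta)s+t_0+t_3,\cdot)+\eta$, whose hypotheses only require the reactions to agree where $u<1-\eta$; the dilation contributes $M_*\eta\,T_i(x,\omega)\sim \eta\, d(x,S)$, and it is this term (balanced against $\eta^{1-m_1}$ and against the transition width $\eta^{-m_2}t_0^{\al_2}$) that produces $\be_1$. Also, the time shift needed to make $u$ dominate $u_{0,K_i}$ is not bounded but $O(d(x,S)^{\al_2})$, since $K_i$ is only within distance $L_1\sim t_0^{\al_2}(\theta^*)^{-m_2}$ of $\Gamma_{u,1-\theta^*}(t_0,\omega)$; this is harmless because $\al_2\le\be_1$, but it must be tracked.

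The reverse direction as you wrote it has a genuine gap. Both pointwise dominations you assert are false: $v_i(0,\cdot,\omega)=u_{0,K_i}$ vanishes outside $B_{R_0}(K_i)$ while $u(t,\cdot,\omega)>0$ everywhere (it is merely $<\theta^*$ off $K_i$, not $\le 0$), so $v_i(0,\cdot,\omega)\ge u(t,\cdot,\omega)$ fails on $\bbR^d\setminus B_{R_0}(K_i)$; and on $K_i$ the solution $u(t,\cdot,\omega)$ can exceed $1-\delta$ for every $\delta>0$ (it has been burning there for time up to $O(d(x,S))$), while $v_i(s,\cdot,\omega)<1$ for every finite $s$, so $v_i(s,\cdot,\omega)\ge u(t,\cdot,\omega)$ on $K_i$ also fails. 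Since Lemma \ref{L.2.1} requires the ordering at the initial time on the whole ball, the comparison never gets started. The paper's fix is precisely the vertical shift you omit: one works with the subsolution $u_-(s,\cdot)=u((1-M_*\eta)s+t_0,\cdot)-\eta$ from Lemma \ref{L.3.1}, uses the transition-width bound \eqref{2.1} to confine $\{u(t_0+2\kappa_*,\cdot,\omega)\ge\eta\}$ to $B_{L_3}(K_i)$ with $L_3\sim\eta^{-m_2}t_0^{\al_2}+\rho$ (so that $u_-\le 0\le v_i$ outside $B_{L_3}(K_i)$ and $u_-\le1-\eta\le v_i(t_4,\cdot)$ inside it after $t_4\sim L_3+\eta^{1-m_1}$), and then compares $u_-$ and $v_i$ on the exterior domain $\bbR^d\setminus B_{L_3}(K_i)$, where both solve the same equation. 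The $-\eta$ shift and the width bound are not optional refinements: they are what makes the exterior comparison legitimate, and the resulting terms $\eta\,d(x,S)$, $\eta^{1-m_1}$, and $\eta^{-m_2}d(x,S)^{\al_2}$ are exactly what force $\be_1=\max\{\tfrac{m_1-1}{m_1},\tfrac{m_2+\al_2}{m_2+1}\}$ in this direction as well.
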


\noindent {\it Remark.}  This shows that the difference of the time it takes to reach $x$ from $S$ and the sum of any smaller time $t$ and the time it takes to reach $x$ from $K_{{\izero}(t,\omega)}$ (which approximates $\Gamma_{u,{{\theta^*}}}(t,\omega)$) is sublinear in $d (x,S)$.  Hence, this result yields a certain additive structure (up to lower order errors) for the arrival times of solutions with initial data from Lemma \ref{L.2.4}.
\smallskip

\begin{proof}
We will use $(t_0,x_0)$ in place of $(t,x)$ in the proof.  
Fix any $\omega\in\Omega\setminus F_{t_0,x_0}$  and  let $i:={\izero}(t_0,\omega)$.
Note that since $\omega\in E_i(t_0)$, we have
\beq\lb{3.8}
 \Gamma_{u,{{\theta^*}}}(t_0,\omega)\subseteq K_i\subseteq B_1(\Gamma_{u,{{\theta^*}}}(t_0,\omega)).
\eeq
Moreover, since $x_0 \notin K_i$,
Lemma~\ref{L.2.2} shows that $t_0\leq \kappa_0+ \frac 2{c_0}d(x_0,S)$.

Let us first show that
\[
T(x_0,\omega)-t_0 -{T}_{i}(x_0,\omega) \leq C(\rho+ d (x_0,S)^{\be_1}).
\]
 From \eqref{2.1} we know that  with $L_1:=\mu_*^{-1} (1+t_0^{\al_2}) (\theta^*)^{-m_2}+1$ we have
\[
\Gamma_{u,{{\theta^*}}}(t_0,\omega) \subseteq B_{L_1-1} (\Gamma_{u,1-\theta^*}(t_0,\omega)).
\]
Hence we obtain
\[
K_i \subseteq {B}_{L_1} (\Gamma_{u,1-\theta^*}(t_0,\omega)).
\]
This and Lemma \ref{L.2.2}  now show
\beq\lb{3.33}
B_{R_0}(K_i)\subseteq \Gamma_{u,1-\theta^*}(t_0+t_1,\omega)
\eeq
for
\[
t_1:=\kappa_0+2c_0^{-1}(R_0+{L_1}),
\]
hence
\beq\lb{3.28}
u(t,\cdot,\omega)\geq u_{0,K_i}=v_i(0,\cdot,\omega)
\eeq
for any $t\ge t_0+t_1$.
Since $t_0\leq \kappa_0+\frac 2{c_0}d(x_0,S)$, there is $C>0$ such that
\[
t_1\leq C( 1+d(x_0,S)^{\al_2}).
\]

Now take
\beq\lb{3.26}
\eta:= \min\left\{\frac{\theta^*}{2},\,\frac{1}{2M_*},\, d(x_0,S)^{-\gamma}\right\}>0,
\eeq
where 
\[
\gamma:=\min\left\{ \frac{1}{m_1} , \frac{1-{\al_2}}{m_2+1} \right\}\in(0,1).
\]
Note that \eqref{3.13} shows that
\beq \lb{3.13'}
\max\{\gamma({m_1}-1), {\al_2}+\gamma m_2,1-\gamma \} = \be_1 <1.
\eeq

It follows from \eqref{3.33} and Lemma \ref{Cor.2.1} that for $t_2:=\frac 4{c_0}(\rho+1)+ D_1\eta^{1-{m_1}}$ we have 
\beq
\label{3.21}u(t_0+t_1+t_2,\cdot ,\omega)\geq 1-\eta\qquad \text{on } B_{\rho+1}(K_i).
\eeq
Moreover, 
from \eqref{3.26} and \eqref{3.13'} we see that there is $C>0$ such that
\beq\lb{3.2}
    t_3:=t_1+t_2\leq  C( \rho+d (x_0,S)^{\al_2}+d(x_0,S)^{ \gamma ({m_1}-1)}) \leq C(\rho+d(x_0,S)^{\be_1}).
\eeq

We now apply Lemma \ref{L.3.1} with $(f,g_i,u,v_i,t_0+t_3,\infty)$ in place of
$(f_1, f_2,u_1, u_2,t_0,R)$.  Its hypotheses are satisfied due to \eqref{3.24}, \eqref{3.28} and \eqref{3.21}, and it yields
\[
T(x_0,\omega)-t_0-t_3\leq (1+{M_*}\eta){T}_i(x_0,\omega) +2\kappa_*+\kappa_0.
\]
This, \eqref{3.2}, and ${T}_i(x_0,\omega)\leq C(1+d(x_0,K_i))\leq C(1+d(x_0,S))$ (which follows from Lemma~\ref{L.2.2}, with some $C>0$) show that there is indeed some $C>0$ such that
\beq\lb{3.9}
T(x_0,\omega)- t_0 -{T}_i(x_0,\omega) \leq 
    {M_*}\eta {T}_i(x_0,\omega) +2\kappa_*+\kappa_0+t_3
    \leq C(\rho+d(x_0,S)^{\be_1}).
\eeq

Let us now turn to 
\[
t_0 + {T}_{i}(x_0,\omega) - T(x_0,\omega) \leq C(\rho+ d (x_0,S)^{\be_1}),
\]
and let $\eta$ be again from \eqref{3.26}. We will now need to estimate $u$ from above in terms of some time-shift of $v_i$.
It follows from Lemma \ref{L.3.1} that
\[
u_-(t,x,\omega):= u((1-M_*\eta)t+t_0,x,\omega)-\eta \qquad (\le 1-\eta)
\]
is a subsolution to \eqref{1.1} with reaction $f$ on $(2\kappa_*,\infty)\times (\bbR^d\setminus \Gamma_{u,1-\eta}(t_0,\omega))$.  We also know that $v_i$ is a solution of the same equation on $(0,\infty)\times (\bbR^d\setminus B_{\rho+1}(K_i))$.  In order to be able to compare them, we need some more estimates involving these sets.


Since $\theta^*\le\theta_1$, Corollary \ref{C.2.1} and \eqref{3.8} yield
\[
\Gamma_{u,1-\theta^*}(t_0+2\kappa_*,\omega)\subseteq B_{2c_1\kappa_*+\kappa_1}(\Gamma_{u,\theta^*}(t_0,\omega)) \subseteq B_{2c_1\kappa_*+\kappa_1}(K_i).
\]
This and \eqref{2.1} show that with $L_2:= \mu_{*}^{-1}(1+(t_0+2\kappa_*)^{\al_2})\eta^{-{m_2}}$ and
\[
L_3:=\max\{L_2+2c_1\kappa_*+\kappa_1,\rho+1\},
\]
we have
\beq\lb{3.31}
\Gamma_{u_-,0}(2\kappa_*,\omega)\subseteq\Gamma_{u,\eta}(t_0+2\kappa_*,\omega)\subseteq B_{L_2}(\Gamma_{u,1-\theta^*}(t_0+2\kappa_*,\omega))\subseteq B_{L_3}(K_i).
\eeq
We note that this also implies
\beq\lb{3.32}
\Gamma_{u,1-\eta}(t_0,\omega)\subseteq\Gamma_{u,\eta}(t_0+2\kappa_*,\omega)\subseteq B_{L_3}(K_i).
\eeq
Moreover,  Lemma \ref{Cor.2.1} shows that
\beq \lb{3.20}
B_{L_3}(K_i)\subseteq \Gamma_{v_i,1-\eta}(t_4,\omega),
\eeq
with $t_4:= \frac 4{c_0}L_3 +D_1\eta^{1-{m_1}}$.
 Then \eqref{3.26}, \eqref{3.13'}, and  $t_0\leq \kappa_0+\frac 2{c_0}d(x_0,S)$ show that there is $C>0$ such that
\beq \lb{3.2a}
     t_4 \leq C(\rho+t_0^{\al_2}\eta^{-m_2}+\eta^{1-m_1})
    \leq C(\rho+d(x_0,S)^{\be_1}).
\eeq

Using  \eqref{3.20} and \eqref{3.31}, we find that
\beq\lb{3.5}
\begin{aligned}
    v_i(t_4+\cdot,\cdot,\omega) &\geq 1-\eta\geq u_-(2\kappa_*+\cdot,\cdot,\omega)\qquad &&\text{on }(0,\infty)\times B_{L_3}(K_i),\\
   v_i(t_4,\cdot,\omega) &\geq 0\geq u_-(2\kappa_*,\cdot,\omega)\qquad &&\text{on }\bbR^d\setminus B_{L_3}(K_i).
\end{aligned}
\eeq
From \eqref{3.24}, \eqref{3.32}, and Lemma \ref{L.3.1} we also see that $v_i$ and $u_-$ are, respectively, a solution and a subsolution to \eqref{1.1} with reaction $f$ on $(2\kappa_*,\infty)\times (\bbR^d\setminus B_{L_3}(K_i))$. So the second claim in \eqref{3.5} and the comparison principle yield
\[
u_-(2\kappa_*+\cdot,\cdot,\omega)\leq v_i(t_4+\cdot,\cdot,\omega) \qquad\text{on }[0,\infty)\times (\bbR^d\setminus B_{L_3}(K_i)).
\] 

If $x_0\notin B_{L_3}(K_i)$, then this 
shows that
\[
T(x_0,\omega)\geq (1-{M_*}\eta)({T}_i(x_0,\omega)-t_4+2\kappa_*)+t_0.
\]
Using again ${T}_i(x_0,\omega)\leq C(1+d(x_0,S))$ (as we did above) and \eqref{3.2a}, we obtain
\beq\lb{3.10}
  {T}_i(x_0,\omega)+t_0-T(x_0,\omega) \leq t_4+{M_*}\eta  {T}_i(x_0,\omega)
\leq   C(\rho+ d (x_0,S)^{\be_1})
\eeq
for some $C>0$.
If instead $x_0\in B_{L_3}(K_i)$,  the first claim in \eqref{3.5} and \eqref{3.2a} again yield
\[
 {T}_i(x_0,\omega)+t_0-T(x_0,\omega)\leq {T}_i(x_0,\omega)\le t_4 \le C(\rho+ d (x_0,S)^{\be_1})
\]
because $x_0 \notin K_i$ (and hence $T(x_0,\omega)\ge t_0$).
\end{proof}

The last ingredient in the proof of Proposition \ref{P.3.3} is an estimate on the difference of ${T}_{{\izero}(t,\omega)}(x,\omega)$ for two different times $t$.


\begin{lemma}\lb{L.3.6}
There is $C>0$ such that for all $(x,\omega)\in\bbR^d\times\Omega$ and $t_0,t_1\ge 0$ we have
\[
\left| {T}_{{\izero}(t_1,\omega)}(x,\omega)- {T}_{{\izero}(t_0,\omega)}(x,\omega) \right| \leq C\left(\rho+|t_1-t_0|+d(x,S)^{\be_1}\right).
\]
\end{lemma}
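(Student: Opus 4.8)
The plan is to derive Lemma~\ref{L.3.6} essentially for free from Proposition~\ref{P.3.0}, after handling separately the degenerate situation in which the solution $u$ has already ``reached'' $x$. Since the claimed inequality is symmetric in $t_0$ and $t_1$, I would assume $t_0\le t_1$. The key structural fact I would use at the outset is that $u_t\ge 0$ makes the family $\{F_{t,x}\}_{t\ge 0}$ increasing in $t$: indeed $\Gamma_{u,\theta^*}(t_0,\omega)\subseteq\Gamma_{u,\theta^*}(t_1,\omega)$, so \eqref{3.12} gives $K_{\izero(t_0,\omega)}\subseteq K_{\izero(t_1,\omega)}$, hence $F_{t_0,x}\subseteq F_{t_1,x}$. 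This leaves exactly three cases: $\omega\notin F_{t_1,x}$, $\omega\in F_{t_0,x}$, and $\omega\in F_{t_1,x}\setminus F_{t_0,x}$.

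Before the case analysis I would record two small observations. First, whenever $x\in K_i$ one has ${T}_i(x,\omega)=0$, because $v_i(0,x,\omega)=u_{0,K_i}(x)\ge(1-\theta^*)\chi_{K_i}(x)=1-\theta^*$. Second, whenever $\omega\in F_{t,x}$ the estimates in the proof of Lemma~\ref{L.3.3} give $\tau(x,\omega)\le t$ and hence $T(x,\omega)\le t+C$ for a constant $C$ depending only on \eqref{const}.

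With these in hand the three cases are routine. If $\omega\notin F_{t_1,x}$ then also $\omega\notin F_{t_0,x}$, so Proposition~\ref{P.3.0} applies at both $t_0$ and $t_1$ and subtracting the two bounds yields
\[
\left|{T}_{\izero(t_1,\omega)}(x,\omega)-{T}_{\izero(t_0,\omega)}(x,\omega)\right|\le|t_1-t_0|+2C\left(\rho+d(x,S)^{\be_1}\right).
\]
If $\omega\in F_{t_0,x}$, then also $\omega\in F_{t_1,x}$, so $x$ lies in both $K_{\izero(t_0,\omega)}$ and $K_{\izero(t_1,\omega)}$ and the first observation makes both arrival times zero. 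If $\omega\in F_{t_1,x}\setminus F_{t_0,x}$, the first observation gives ${T}_{\izero(t_1,\omega)}(x,\omega)=0$, while Proposition~\ref{P.3.0} at $t_0$ together with $T(x,\omega)\le t_1+C$ (second observation, since $\omega\in F_{t_1,x}$) bounds $0\le{T}_{\izero(t_0,\omega)}(x,\omega)\le(t_1-t_0)+C(\rho+d(x,S)^{\be_1}+1)$. Combining the three cases and absorbing additive constants into $C\rho$ (recall $\rho\ge1$) gives the asserted bound.

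I do not anticipate a real obstacle: all the analytic work has been done in Proposition~\ref{P.3.0}, and the only point needing care is the bookkeeping around $F_{t,x}$ just described --- that monotonicity of $u$ in $t$ forces the ``not yet reached $x$'' hypothesis of Proposition~\ref{P.3.0} to fail first at the smaller time, and that on $F_{t,x}$ the relevant ${T}_i(x,\omega)$ is literally $0$ because the initial datum $u_{0,K_i}$ is already at least $1-\theta^*$ on $K_i$.
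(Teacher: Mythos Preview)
Your proof is correct and follows the same three-case split as the paper (which phrases the cases as $x\notin K_{i_1}$, $x\in K_{i_0}$, and $x\in K_{i_1}\setminus K_{i_0}$, exactly your $\omega\notin F_{t_1,x}$, $\omega\in F_{t_0,x}$, $\omega\in F_{t_1,x}\setminus F_{t_0,x}$). Cases~1 and~2 are handled identically.

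The only genuine difference is Case~3. The paper does \emph{not} reuse Proposition~\ref{P.3.0} there; instead it bounds $T_{i_0}(x,\omega)$ directly via Lemma~\ref{L.2.2} by $C(1+d_H(K_{i_0},K_{i_1}))$, and then controls the Hausdorff distance of the two superlevel sets by combining Corollary~\ref{C.2.1} with the first bound in \eqref{2.1}, obtaining $d_H(K_{i_0},K_{i_1})\le C(1+t_1^{\al_2}+|t_1-t_0|)$; finally $t_1^{\al_2}$ is absorbed using $t_0\le\kappa_0+\tfrac{2}{c_0}d(x,S)$ (since $x\notin K_{i_0}$) and $\al_2<\be_1$. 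Your route---apply Proposition~\ref{P.3.0} at $t_0$ and couple it with the Lemma~\ref{L.3.3} estimate $T(x,\omega)\le t_1+C$ on $F_{t_1,x}$---is shorter and avoids the Hausdorff-distance detour entirely, at the cost of invoking the heavier Proposition~\ref{P.3.0} one extra time. Both arguments are valid; yours is the more economical bookkeeping.
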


\begin{proof}
Let $i_0:={\izero}(t_0,\omega)$ and $i_1:={\izero}(t_1,\omega)$, and then  $\omega\in E_{i_0}(t_0)\cap E_{i_1}(t_1)$.
Without loss of generality, let us assume $t_1>t_0$.  Then $u_t\ge 0$ and  \eqref{3.12} show that $K_{i_0}\subseteq K_{i_1}$.

If $x\notin  K_{i_1}$, Proposition \ref{P.3.0} yields
\[
  \left|{T}_{i_1}(x,\omega)- {T}_{i_0}(x,\omega)\right| 
  \leq |t_1-t_0|+C(\rho+d(x,S)^{\be_1}).
\]
If $x\in K_{i_0}$, then ${T}_{i_1}(x,\omega)={T}_{i_0}(x,\omega)=0$.  The result follows in either case.

Let us now assume that $x\in  K_{i_1}\backslash K_{i_0}$.  Then ${T}_{i_1}(x,\omega)=0$, while  Lemma \ref{L.2.2} shows that
\beq\lb{3.29}
{T}_{i_0}(x,\omega)\leq \kappa_0+2c_0^{-1}d(x,K_{i_0})\leq  C(1+{d}_H(K_{i_0},K_{i_1}))
\eeq
for some $C>0$. From $t_1>t_0$ and  \eqref{2.1} we also have
\[
\Gamma_{u,{{\theta^*}}}(t_0,\omega)\subseteq \Gamma_{u,{{\theta^*}}}(t_1,\omega)\subseteq B_{ \mu_*^{-1}(\theta^*)^{-m_2}(1+t_1^{\al_2})}( \Gamma_{u,1-\theta^*}(t_1,\omega) ),
\]
and Corollary \ref{C.2.1} yields
\[
\Gamma_{u,1-\theta^*}(t_1,\omega)\subseteq B_{ c_1(t_1-t_0)+\kappa_1} (\Gamma_{u,{{\theta^*}}}(t_0,\omega)).
\]
Hence there is $C>0$ such that
\[
{d}_H(\Gamma_{u,{{\theta^*}}}(t_0,\omega),\,\Gamma_{u,{{\theta^*}}}(t_1,\omega))\leq C(1+t_1^{\al_2}+t_1-t_0).
\]
Since also 
$ {d}_H(K_{{\izero}(t,\omega)},\Gamma_{u,{{\theta^*}}}(t,\omega))\leq 1$ for all $t\ge 0$ (because $\Gamma_{u,{{\theta^*}}}(t,\omega)\in P_{{\izero}(t,\omega)}$), this implies
\begin{align*}
    {d}_H(K_{i_0},K_{i_1})&\leq {d}_H(K_{i_0},\Gamma_{u,{{\theta^*}}}(t_0,\omega))+{d}_H(\Gamma_{u,{{\theta^*}}}(t_0,\omega),\Gamma_{u,{{\theta^*}}}(t_1,\omega))+{d}_H(\Gamma_{u,{{\theta^*}}}(t_1,\omega),K_{i_1})\\
    &\leq C(1+t_1^{\al_2}+t_1-t_0).
\end{align*}
This and \eqref{3.29} yields the claim.
\end{proof}

\subsection{Proof of Proposition \ref{P.3.3}}

If $d(x,S)\le \rho$, then \eqref{3.4} holds for all $\lambda>0$ as long as $C_0\ge 2 (\kappa_0+\frac {2}{c_0})^2 (\ln 2)^{-1}$.
This is because $T(x,\cdot)\le \kappa_0+\frac {2}{c_0}d(x,S)$ by Lemma \ref{L.2.2}, so one only needs to consider $\lambda\le (\kappa_0+\frac {2}{c_0})(1+d(x,S))$, for which the right-hand side of \eqref{3.4} with this $C_0$ is at least 1 due to $\rho\ge \frac 12(1+d(x,S))$.  


It therefore suffices to consider the case $d(x,S)> \rho$.  In particular, we have $x\notin K_0$ due to $\rho\ge 1$.
Let us fix any such $x$
and consider the $\calG_t$-adapted martingale $\{X_t\}_{t\geq 0}$ defined by
\[
X_t=X_t(\omega):=\bbE[T(x,\cdot\,)\,|\,\calG_t](\omega).
\]
We want to apply Azuma's inequality to it, which means that we need to obtain a suitable $\omega$-independent bound on $|X_t-X_s|$ for any  $t> s>0$ (which we fix).
Using
\begin{align*}
    &X_t=\bbE[T(x,\cdot\,)\chi_{ F_{s,x}}\,|\,\calG_t]+\bbE[T(x,\cdot\,)\chi_{ F_{s,x}^c}\,|\,\calG_t],\\
    &X_s=\bbE[T(x,\cdot\,)\chi_{ F_{s,x}}\,|\,\calG_s]+\bbE[T(x,\cdot\,)\chi_{ F_{s,x}^c}\,|\,\calG_s],
\end{align*}
we find from Lemma \ref{L.3.3} (recall that $F_{s,x}\subseteq F_{t,x}$) that there is $C>0$ such that
\beq\lb{3.22}
\begin{aligned}
    |X_t-X_s|&\leq  |\bbE[T(x,\cdot\,)\chi_{ F_{s,x}^c}\,|\,\calG_t]-\bbE[T(x,\cdot\,)\chi_{ F_{s,x}^c}\,|\,\calG_s]|+C\\
    &=  \left|\Sigma_{x\notin K_i}\bbE[T(x,\cdot\,)\chi_{E_i(s)} \,|\,\calG_t]-\Sigma_{x\notin K_i}\bbE[T(x,\cdot\,)\chi_{E_i(s)}\,|\,\calG_s]\right|+C.
\end{aligned}
\eeq
Here we used that $\omega\notin F_{s,x}$ precisely when $x\notin K_{{\izero}(s,\omega)}$, and the sums are over all $i\in\bbN_0$ such that $x\notin K_i$.
From Proposition \ref{P.3.0} with $s$ in place of $t$ we have
\[
\begin{aligned}
& \left|\Sigma_{x\notin K_i}\bbE[T(x,\cdot\,)\chi_{E_i(s)}\,|\,\calG_t]-\Sigma_{x\notin K_i}\bbE[T(x,\cdot\,)\chi_{E_i(s)}\,|\,\calG_s]\right|\\
& \leq \left|\Sigma_{x\notin K_i}\bbE[{T}_i(x,\cdot\,)\chi_{E_i(s)}\,|\,\calG_t]-\Sigma_{x\notin K_i}\bbE[{T}_i(x,\cdot\,)\chi_{E_i(s)}\,|\,\calG_s]\right|+C\left(\rho+d(x,S)^{\be_1}\right)\\
& = \left|\Sigma_{i\in\bbN_0}\bbE[{T}_i(x,\cdot\,)\chi_{E_i(s)}\,|\,\calG_t]-\Sigma_{i\in\bbN_0}\bbE[{T}_i(x,\cdot\,)\chi_{E_i(s)}\,|\,\calG_s]\right|+C\left(\rho+d(x,S)^{\be_1}\right).
\end{aligned}
\]
The last equality holds because ${T}_i(x,\omega)=0$ when $x\in K_i$. 
Since
\[
\bbE[{T}_i(x,\cdot\,)\chi_{E_i(s)}\,|\,\calG_t]=\Sigma_{j\in\bbN_0} \bbE[{T}_i(x,\cdot\,)\chi_{E_i(s)\cap E_j(t)}\,|\,\calG_t]
\]
(recall that $E_i(s),E_j(t)\in \calG_t$) and  Lemma \ref{L.3.6} yields
\[
   |\Sigma_{i,j}\bbE[{T}_i(x,\cdot\,)\chi_{E_i(s)\cap E_j(t)}\,|\,\calG_t]-\Sigma_{i,j}\bbE[{T}_j(x,\cdot\,)\chi_{E_i(s)\cap E_j(t)}\,|\,\calG_t]|
    \leq C(\rho+|t-s|+d(x,S)^{\be_1}),
\]
it follows that with some $C>0$ and $C_{t,s,x}^\rho:=C(\rho+|t-s|+d(x,S)^{\be_1})$, we have
\beq\label{3.30}
\begin{aligned}
    |X_t-X_s|
    &\leq \left|\Sigma_{i,j}\bbE[{T}_j(x,\cdot\,)\chi_{E_i(s)\cap E_j(t)}\,|\,\calG_t]-\Sigma_{i}\bbE[{T}_i(x,\cdot\,)\chi_{ E_i(s)}\,|\,\calG_s]\right|+C_{t,s,x}^\rho\\
    &= \left|\Sigma_{j}\bbE[{T}_j(x,\cdot\,)\chi_{ E_j(t)}\,|\,\calG_t]-\Sigma_{i}\bbE[{T}_i(x,\cdot\,)\chi_{ E_i(s)}\,|\,\calG_s]\right|+C_{t,s,x}^\rho.
\end{aligned}
\eeq

We now claim that for any $i\in\bbN_0$ we have 
\beq\lb{3.15}
\bbE[{T}_i(x,\cdot\,)\chi_{ E_i(s)}\,|\,\calG_s]=\bbE[{T}_i(x,\cdot\,)]\chi_{ E_i(s)}.
\eeq
Since $ E_i(s)\in \calG_s$, to prove this,
we only need to show that
\beq\lb{3.16}
\bbE[{T}_i(x,\cdot\,)\chi_{A}]=\bbE[{T}_i(x,\cdot\,)]\bbP(A)
\eeq
for each $A\in\calG_s$ such that $A\subseteq E_i(s)$.  But then $A\in \calE(K_i)$ by \eqref{3.333}, so \eqref{3.16} follows from   ${T}_i(x,\cdot)$ being independent of $\calE(K_i)$.
 
Similarly to \eqref{3.15},  we also have
\[
\bbE[{T}_j(x,\cdot\,)\chi_{ E_j(t)}\,|\,\calG_t]=\bbE[{T}_j(x,\cdot\,)]\chi_{ E_j(t)}
\]
for any $j\in\bbN_0$. Then \eqref{3.30} becomes
\begin{align*}
|X_t-X_s|&\leq  \left|\Sigma_{j}\bbE[{T}_j(x,\cdot\,)]\chi_{ E_j(t)}-\Sigma_{i}\bbE[{T}_i(x,\cdot\,)]\chi_{ E_i(s)}\right|+C_{t,s,x}^\rho\\
&\leq \Sigma_{i,j}\left|\bbE[{T}_j(x,\cdot\,)]-\bbE[{T}_i(x,\cdot\,)]\right|\chi_{ E_j(t)\cap E_i(s)}+C_{t,s,x}^\rho.
\end{align*}
Lemma \ref{L.3.6} now shows that there is $C>0$ such that for all $\omega\in\Omega$ we have
\beq\lb{3.18}
|X_t(\omega)-X_s(\omega)|\leq C(\rho+|t-s|+d(x,S)^{\be_1}).
\eeq

By Lemma \ref{L.2.2}, we have $F_{\tau_x,x}=\Omega$ when $\tau_x:=\kappa_0+\frac{2}{c_0}d(x,S)$. So with $C$ from Lemma~\ref{L.3.3},
\beq\lb{3.18a}
X_{t}=T(x,\cdot)  \qquad\text{for all $t\geq \tau_x+C$}.
\eeq
Let 
$
\tau:=\rho+d(x,S)^{\be_1}$ and let $N$ be the smallest integer such that $N\tau\ge \tau_x+C$.
Then there 
is $C_1>0$ such that $N\le  C_1 d(x,S)(\rho+d(x,S)^{\be_1})^{-1}$ (recall that $d(x,S)>\rho\ge 1$).
It follows from \eqref{3.18} that for  $k=0,\dots, N-1$ we have (uniformly in $\omega\in\Omega$)
\beq \lb{3.555}
    |X_{(k+1)\tau}-X_{k\tau}|\leq C(\rho+d(x,S)^{\be_1}).
\eeq

Now Azuma's inequality (Lemma \ref{L.3.5}), $X_0=\bbE[T(x,\cdot\,)\,|\,\calG_0]$, and  \eqref{3.18a} with $t=N\tau$ yield for any $\lambda\geq 0$ (with $C$ changing from line to line),
\begin{equation}\label{e.3.7}
\begin{aligned}
    \bbP \left[ \big| T(x,\cdot)-\bbE[T(x,\cdot\,)\,|\,\calG_0 ] \big| \geq \lambda \right]
    &\leq 2\exp\left(\frac{-\lambda^2}{2CN (\rho+d(x,S)^{\be_1})^2}\right)\\
    &\leq 2\exp\left(\frac{-\lambda^2}{C d(x,S) (\rho+d(x,S)^{\be_1})}\right).
\end{aligned}
\end{equation}
Since $F_{0,x}=\emptyset$ (because $x\notin K_0$),
Proposition \ref{P.3.0} with $t=0$ (and ${\izero}(0,\cdot)\equiv 0$) yields
\[
|T(x,\omega)-{T}_0(x,\omega)|\leq C'(\rho+d(x_0,S)^{\be_1})
\]
for some $C'>0$ and all $\omega\in\Omega$.  This and ${T}_0(x,\cdot)$ being independent of  $\calE(K_0)=\calG_0$ yield
\begin{align*}
    \left|\bbE[T(x,\cdot\,)\,|\,\calG_0]-\bbE[T(x,\cdot\,)]\right|
    &\leq \left|\bbE[{T}_0(x,\cdot\,)\,|\,\calG_0]-\bbE[{T}_0(x,\cdot\,)]\right|+C'(\rho+d(x,S)^{\be_1})\\
    &=C'(\rho+d(x,S)^{\be_1}).
\end{align*}
Hence, from \eqref{e.3.7} we obtain for any $\lambda> 0$,
\[
\bbP  \left[ \big| T(x,\cdot\,)-\bbE[T(x,\cdot\,)] \big| \geq\lambda + C'(\rho+d(x,S)^{\be_1}) \right]  
  \leq 2\exp  \left(\frac{-\lambda^2}{C d(x,S)(\rho+d(x,S)^{\be_1})}\right).
\]
So for all $\lambda\geq C'(\rho+d(x,S)^{\be_1})$ we have
\[
\bbP\left[ \big| T(x,\cdot\,)-\bbE[T(x,\cdot\,)]\big|\geq 2\lambda\right]
    \leq 2\exp  \left(\frac{-\lambda^2}{C d(x,S)(\rho+d(x,S)^{\be_1})}\right),
\]
which yields \eqref{3.4} for all $\lambda\geq 2C'(\rho+d(x,S)^{\be_1})$ as long as $C_0\ge 4C$. But \eqref{3.4} also holds for all $\lambda\leq 2C'(\rho+d(x,S)^{\be_1})$ as long as $C_0\ge 8(C')^2(\ln 2)^{-1}$ because $d(x,S)>\rho\ge 1>\be_1$ (and so the right-hand side of  \eqref{3.4} is $\ge 1$).
This finishes the proof.

\subsection{Extension to half-spaces.}  We now extend Proposition \ref{P.3.3} to half-spaces, denoting  
\[
\calH_e^-:=\{x\in\bbR^d\, | \,x\cdot e\le  0\}
\] 
for $e\in \bbS^{d-1}$.
This means that we need to enlarge $\calU_f$ to include solutions initially approximating characteristic functions of half-spaces, with \textbf{(H2')} extending as well.

\begin{lemma}\lb{L.4.0'}
\textbf{(H2')} implies \textbf{(H2')} with $\calU_f'$ in place of $\calU_f$, with unchanged values of all the $\sup_{u\in\calU_f}$  and $\inf_{u\in\calU_f}$ in \eqref{2.2}, and with
$\calU_f'$ defined as $\calU_f$ but including the initial functions $u_{0,k}$ from $\calU_f$ as well as all locally uniform limits of their translations.
(These are then functions $u_{0,S}$  satisfying Lemma \ref{L.2.4} for all balls $S=B_k(y)$ and all half-spaces $S=\calH_e^-+le$, due to well-known elliptic regularity estimates.)

In particular, Lemmas \ref{L.3.1} and  \ref{L.4.3} hold with $\calU_{f_1}$ replaced by $\calU_{f_1}'$.
\end{lemma}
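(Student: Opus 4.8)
The plan is to realize every $u'\in\calU_f'$ as a locally uniform limit of translates of solutions from $\calU_f$ and to push the uniform estimates \eqref{2.2} through that limit. Fix $u'\in\calU_f'$: it solves \eqref{1.1} for some $\omega\in\Omega$ with initial datum $u_{0,S}=\lim_j u_{0,k_j}(\cdot-y_j)$ locally uniformly (a constant sequence when $u'$ already has datum $u_{0,k}$). Choosing the $u_{0,k}$ radial with a fixed transition profile and uniform $R_0$ (permitted since any choice is allowed), the translates $u_{0,k_j}(\cdot-y_j)$ have locally uniformly bounded $C^2$ norms, so $u_{0,S}$ is smooth, inherits \eqref{2.6}, and inherits the two-sided bound \eqref{2.7} with $S$ the limiting set (a ball $B_k(y)$, a half-space $\calH_e^-+le$, or a degenerate case $\emptyset$ or $\bbR^d$ that will be harmless); by interior parabolic regularity and uniqueness, the solutions $v_j$ of \eqref{1.1} with $\omega$ and data $u_{0,k_j}(\cdot-y_j)$ converge to $u'$ in $C^{2,1}_{\loc}((0,\infty)\times\bbR^d)$. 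Stationarity of $f$ gives that $w_j(t,x):=v_j(t,x+y_j)$ solves \eqref{1.1} with $\Upsilon_{y_j}\omega$ and datum $u_{0,k_j}$, so $w_j\in\calU_f$ and obeys \eqref{2.2} with the constants of $\calU_f$; also $u'_t=\lim_j v_{j,t}\ge 0$ since each $v_j$ has datum satisfying \eqref{2.6}.

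For the width bound, note $L_{v_j,\cdot,\cdot}(t)=L_{w_j,\cdot,\cdot}(t)$ by translation invariance. Given $\eta'<\eta$ and $x_0$ with $u'(t,x_0)\ge\eta$, we get $v_j(t,x_0)\ge\eta'$ for large $j$, hence $z_j$ with $|z_j-x_0|\le L_{w_j,\eta',1-\theta^*}(t)+1$ and $v_j(t,z_j)\ge 1-\theta^*$; a convergent subsequence of the $z_j$ together with locally uniform convergence produces $z_\infty$ with $u'(t,z_\infty)\ge 1-\theta^*$ and $|z_\infty-x_0|\le\liminf_j L_{w_j,\eta',1-\theta^*}(t)$. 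Thus $L_{u',\eta,1-\theta^*}(t)\le\liminf_j L_{w_j,\eta',1-\theta^*}(t)\le t^{\al_2}(\eta')^{-m_2}\sup_{w\in\calU_f}\sup_{\eta''>0}\frac{L_{w,\eta'',1-\theta^*}(t)}{t^{\al_2}(\eta'')^{-m_2}}$; letting $\eta'\uparrow\eta$, dividing by $t^{\al_2}\eta^{-m_2}$, and taking the supremum over $u'\in\calU_f'$ and $\eta>0$ shows the first supremum in \eqref{2.2} over $\calU_f'$ equals that over $\calU_f$ (the reverse inequality being trivial from $\calU_f\subseteq\calU_f'$), so its $\limsup$ as $t\to\infty$ is unchanged.

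For the derivative bound, observe that $u'_t$ solves the linear parabolic equation $(u'_t)_t=\Delta u'_t+f_u(x,u'(t,x),\omega)u'_t$ on $(0,\infty)\times\bbR^d$ with $u'_t\ge 0$, and $u'_t\not\equiv 0$: when the limiting set is nonempty it has nonempty interior, so $u'(0,\cdot)\ge 1-\theta^*$ on an open set and Lemma \ref{L.2.2} forces $u'\to 1$ locally uniformly (when it is empty, $u'\equiv 0$ and contributes nothing to \eqref{2.2}). The strong maximum principle then gives $u'_t>0$ on $(0,\infty)\times\bbR^d$. Let $g(t):=\inf_{w\in\calU_f}\inf_{w(t,x)\in[\theta^*,1-\theta^*]}w_t(t,x)$, so $\mu_0:=\liminf_{t\to\infty}g(t)>0$ by \eqref{2.2}; fix $\mu<\mu_0$ and $T_\mu$ with $g(t)>\mu$ for $t\ge T_\mu$. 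If $t_0>T_\mu$ and $u'(t_0,x_0)\in(\theta^*,1-\theta^*)$, then $w_j(t_0,x_0-y_j)\in(\theta^*,1-\theta^*)$ for large $j$, so $u'_t(t_0,x_0)=\lim_j w_{j,t}(t_0,x_0-y_j)\ge g(t_0)>\mu$; if instead $u'(t_0,x_0)\in\{\theta^*,1-\theta^*\}$, then $u'_t(t_0,x_0)>0$ and continuity force $u'(t,x_0)\in(\theta^*,1-\theta^*)$ for $t$ in a one-sided punctured neighborhood of $t_0$ (still above $T_\mu$), so $u'_t(t,x_0)\ge g(t)>\mu$ there and hence $u'_t(t_0,x_0)=\lim_{t\to t_0}u'_t(t,x_0)\ge\mu$. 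Therefore $\liminf_{t\to\infty}\inf_{u'\in\calU_f'}\inf_{u'(t,x)\in[\theta^*,1-\theta^*]}u'_t(t,x)\ge\mu_0$, with equality from $\calU_f\subseteq\calU_f'$. This proves \textbf{(H2')} with $\calU_f'$ and the stated unchanged values.

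Finally, \eqref{2.1} is derived from \textbf{(H2')} together with Lemma \ref{L.2.1} uniformly for $u\in\calU_f$, so the identical derivation yields \eqref{2.1} for all $u\in\calU_f'$ with the same $\mu_*,\kappa_*$; since the proofs of Lemmas \ref{L.3.1} and \ref{L.4.3} use $\calU_{f_1}$ only through \textbf{(H2')} and \eqref{2.1}, they hold verbatim with $\calU_{f_1}'$ in place of $\calU_{f_1}$. The step requiring most care is the derivative bound at points where $u'$ equals $\theta^*$ or $1-\theta^*$: there the limit of the $w_{j,t}$ need not lie in the open interval $(\theta^*,1-\theta^*)$, so one must first upgrade $u'_t\ge 0$ to $u'_t>0$ via the strong maximum principle (which uses $u'_t\not\equiv 0$, i.e. that the limiting initial set is nontrivial) and then transfer the interior lower bound to the boundary by continuity in $t$.
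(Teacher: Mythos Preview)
Your proof is correct and follows the same two-step route as the paper: stationarity identifies each translated solution with an element of $\calU_f$ (for a shifted $\omega$), and interior parabolic regularity then passes the bounds in \eqref{2.2} to locally uniform limits; the paper states both steps in one sentence each, while you spell out the limiting arguments for $L_{u',\eta,1-\theta^*}$ and $u'_t$ in detail. One harmless slip: the $u_{0,k}$ are fixed as part of \textbf{(H2')} and cannot be ``chosen radial'' after the fact, but your argument never actually uses that structure---only the uniform interior $C^{2,1}$ estimates on the $v_j$, which follow from $0\le v_j\le 1$ and the Lipschitz bound on $f$ regardless of the initial data.
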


\begin{proof}
Stationarity of $f$ again shows that adding translations of the $u_{0,k}$ to $\calU_f$ does not change any of the $\sup$ or $\inf$.
Well known parabolic regularity estimates now show that the $\sup$ and $\inf$ also remain unchanged when we add locally uniform limits of these translations  to $\calU_f$.
The proofs of Lemmas \ref{L.3.1} and  \ref{L.4.3} then extend to $\calU_{f_1}'$ in place of $\calU_{f_1}$ without change.

We note that  the elliptic and parabolic regularity  (Krylov-Safonov and Schauder) estimates used here can be found in \cite[Theorem 4.6]{gilbarg2015elliptic},  \cite[Theorem 4.1]{krylov1980certain}, and  \cite[Theorem 8.6.1]{krylov1996lectures}.
\end{proof}

\begin{proposition}\lb{P.3.4}
Proposition \ref{P.3.3} holds for
$S$ being either any ball $B_k(y)$ with $(k,y)\in\bbN\times\bbR^d$ or any half-space $\calH_e^-+le$ with $(e,l)\in \bbS^{d-1}\times\bbR$ (with the functions $u(\cdot,\cdot,\cdot;S)$ from $\calU_f'$).
\end{proposition}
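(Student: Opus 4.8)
The plan is to reduce Proposition~\ref{P.3.4} to Proposition~\ref{P.3.3} in two moves: first promote the latter from $B_k(0)$ to an arbitrary ball $B_k(y)$ using stationarity, and then obtain the half-space case by exhausting $\calH_e^-+le$ with balls and passing the fluctuation bound to the limit. For the ball $B_k(y)$ I would take $u_{0,B_k(y)}$ to be the translate $z\mapsto u_{0,B_k(0)}(z-y)$ (allowed by Lemma~\ref{L.2.4} and Remark~3 after \textbf{(H2')}). Uniqueness for \eqref{e.3.1} together with $f(z,u,\Upsilon_y\omega)=f(z+y,u,\omega)$ then gives $u(t,x,\omega;B_k(y))=u(t,x-y,\Upsilon_y\omega;B_k(0))$, hence $T(x,\omega;B_k(y))=T(x-y,\Upsilon_y\omega;B_k(0))$. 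Since $\Upsilon_y$ is measure preserving, $d(x,B_k(y))=d(x-y,B_k(0))$, and the range of dependence of $f$ is translation invariant, \eqref{3.4} for $B_k(y)$ follows at once from Proposition~\ref{P.3.3}, with the same $C_0$ and $\be_1$.

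For a half-space, fix $e\in\bbS^{d-1}$ and $l\in\bbR$ and set $y_k:=le-ke$. A direct computation shows that the balls $B_k(y_k)$ are \emph{increasing} in $k$ with union the open half-space $\{x\cdot e<l\}$. Choosing the radial $u_{0,B_k(0)}$ with one fixed nonincreasing profile makes the translates $u_{0,B_k(y_k)}$ nondecreasing in $k$, so they converge locally uniformly, and by Lemma~\ref{L.4.0'} the limit is an admissible initial datum $u_{0,\calH_e^-+le}$, which I fix (so $u(\cdot,\cdot,\cdot;\calH_e^-+le)\in\calU_f'$). By the comparison principle the solutions $u(\cdot,\cdot,\omega;B_k(y_k))$ are nondecreasing in $k$, and interior parabolic estimates show they converge locally uniformly to $u(\cdot,\cdot,\omega;\calH_e^-+le)$ for every $\omega$. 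The trivial regime $d(x,\calH_e^-+le)\le\rho$ is handled exactly as in the opening paragraph of the proof of Proposition~\ref{P.3.3}, so one may assume $d(x,\calH_e^-+le)>\rho\ge1$, whence $d(x,B_k(y_k))\downarrow D:=d(x,\calH_e^-+le)$. The crucial point, which I expect to be the only non-soft step, is to upgrade local uniform convergence of the solutions to convergence of the arrival times $T(x,\omega;B_k(y_k))\to T(x,\omega;\calH_e^-+le)$: here one uses that $u_t>0$ for all $t>0$ (as noted after \eqref{2.6}, and quantified by the second line of \eqref{2.1}), so the level $1-\theta^*$ is crossed transversally and arrival times are stable under the perturbation; the monotonicity $u(\cdot,\cdot,\omega;B_k(y_k))\uparrow u(\cdot,\cdot,\omega;\calH_e^-+le)$ makes the corresponding lower semicontinuity automatic.

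Once this convergence is in hand, the rest is routine. Writing $T_k:=T(x,\cdot\,;B_k(y_k))$, $T_\infty:=T(x,\cdot\,;\calH_e^-+le)$, $m_k:=\bbE[T_k]$, $m_\infty:=\bbE[T_\infty]$, the deterministic bound $0\le T_k\le \kappa_0+\tfrac2{c_0}d(x,B_k(y_k))$ from Lemma~\ref{L.2.2} is uniform in $k$, so dominated convergence gives $m_k\to m_\infty<\infty$. For fixed $\lambda>0$, the a.s.\ convergence $T_k\to T_\infty$ and $m_k\to m_\infty$ yield $\{|T_\infty-m_\infty|>\lambda\}\subseteq\liminf_k\{|T_k-m_k|\ge\lambda\}$ up to a null set, so Fatou's lemma for events and the ball case give
\[
\bbP\big(|T_\infty-m_\infty|>\lambda\big)\;\le\;\liminf_{k\to\infty}\,2\exp\!\left(\frac{-\lambda^2}{C_0\,(1+d(x,B_k(y_k)))\,(\rho+d(x,B_k(y_k))^{\be_1})}\right).
\]
Since $d(x,B_k(y_k))\to D$ and the right side is continuous in this quantity, the last limit equals the right-hand side of \eqref{3.4} for $S=\calH_e^-+le$; applying this with $\lambda$ replaced by $\lambda-\eps$ and letting $\eps\downarrow0$ delivers \eqref{3.4} for the half-space, with the $S$-independent constants $C_0,\be_1$ and the same $\rho$. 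To summarize, the one genuinely delicate ingredient is the stability of the arrival times $T(x,\cdot\,;B_k(y_k))$ under the passage to the half-space, which rests squarely on the strict monotonicity $u_t>0$; everything else is stationarity, comparison, and a soft limiting argument.
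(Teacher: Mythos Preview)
Your overall strategy is correct but takes a different route from the paper in the half-space step. Both arguments approximate $\calH_e^-+le$ by the balls $B_k(le-ke)$, but the paper bypasses any convergence-of-solutions argument: it shows directly that $\sup_{\omega}|T(x,\omega;\calH_e^-)-T(x,\omega;B_k(-ke))|\le C$ for all large $k$, using Lemma~\ref{L.2.2} for one inequality and the localized comparison Lemma~\ref{L.3.1} with $\eta=0$ for the other. That uniform bound lets one pass \eqref{3.4} from balls to the half-space with no measure-theoretic work. Your route---local uniform convergence of solutions, transversality of the $1-\theta^*$ level set via $u_t>0$, then Fatou---is a legitimate alternative and arguably more conceptual; the paper's is shorter and yields a quantitative (not merely asymptotic) comparison.

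One step needs repair. You claim monotonicity of $u_{0,B_k(y_k)}$ in $k$ by ``choosing the radial $u_{0,B_k(0)}$ with one fixed nonincreasing profile,'' but under \textbf{(H2')} the $u_{0,k}$ are \emph{given}, not chosen (neither Lemma~\ref{L.2.4} nor Remark~3 after \textbf{(H2')} grants this freedom, and the bounds \eqref{2.1} are only assumed for those fixed data). The fix is painless: either (i) drop monotonicity and extract a subsequence along which the initial data converge locally uniformly (elliptic regularity, as in Lemma~\ref{L.4.0'}); parabolic estimates then give convergence of solutions along the same subsequence, and your transversality/Fatou argument only needs \emph{some} sequence along which $T_k\to T_\infty$ and $m_k\to m_\infty$; or (ii) observe that Lemma~\ref{L.2.2} already gives $u(\kappa_0+2R_0c_0^{-1},\cdot,\omega;B_{k'}(y_{k'}))\ge u_{0,B_k(y_k)}$ for $k'>k$, hence $T_{k'}\le T_k+C$, which is enough near-monotonicity for your purposes and is in fact the paper's first inequality.
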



\begin{proof}
The claim for balls is immediate from stationarity.  For the same reason, in the half-space case we only need to consider $l=0$.  Hence let $S=\calH_e^-$ for some $e\in \bbS^{d-1}$.

For each $k\in\bbN$, let $S_k:=B_k(-ke)$.  Then $\lim_{k\to\infty} d(x,S_k)= d(x,S)$ for each $x\in\bbR^d$,
so by Proposition \ref{P.3.3}  (with $C_0$ independent of $S$), it suffices to show
\[
\limsup_{k\to\infty}\, \sup_{\omega\in \Omega} |T(x,\omega;S)-T(x,\omega;S_{k})|\leq C
\]
for some $C>0$ (depending only on \eqref{const}, as always) and any $x\in\bbR^d$. 

Let $u_\omega:=u(\cdot,\cdot,\omega;S)$ and $u_{k,\omega}:=u(\cdot,\cdot,\omega;S_k)$ for each $(k,\omega)\in\bbN\times\Omega$, and $C:=\kappa_0+ \frac{2R_0}{ c_0}$.   We then have $u_\omega(C,\cdot)\ge u_{k,\omega} (0,\cdot)$ by Lemma \ref{L.2.2} and \eqref{2.7}, so $T(\cdot,\omega;S)\leq T(\cdot,\omega;S_k) + C$.


Similarly, we have $u_{k,\omega}(C+1,\cdot)\ge u_{\omega} (0,\cdot)$ in $B_{k^{1/2}}(0)$.  Then from the last claim in Lemma~\ref{L.3.1} with $(\eta,R)=(0,k^{1/2}-|x|)$ we obtain $T(x,\omega;S_k)\leq T(x,\omega;S) + C+1+2\kappa_*+\kappa_0$ whenever
$k^{1/2}\ge |x|+ D_2(1+\kappa_0+ \frac 2{ c_0} (d(x,S)+1))$,
because then $d(x,S_k)\le d(x,S)+1$ due to $|x|\le k^{1/2}$, so $k^{1/2}-|x|\ge D_2(1+T(x,\omega;S_n))$ by Lemma \ref{L.2.2}.
\end{proof}

\section{Fluctuations for General Reactions}\lb{S4}

First, we show that \textbf{(H4')} implies  \textbf{(H2')}.

\begin{lemma}\lb{L.4.0}
\textbf{(H4')} implies that $f$ also satisfies \textbf{(H2')}  (possibly after removing from $\Omega$ a measure-zero set that is invariant with respect to the group $\{\Upsilon_y\}_{y\in\bbR^d}$, which we then do).  
\end{lemma}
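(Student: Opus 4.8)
The plan is to deduce the two statements in \eqref{2.2} for $f$ from the fact that they hold uniformly for the reactions $f_n$ supplied by \textbf{(H4')}, using the probabilistic closeness bound in \textbf{(H4')} together with the stability estimates of Lemmas \ref{L.3.1} and \ref{L.4.3}. First I would fix, for each $n\ge n_4$, the family $\calU_{f_n}$ and the approximating reactions $f_n$, and observe that since $f_n$ satisfies \textbf{(H2')} uniformly, there are $\mu_*,\kappa_*>0$ (independent of $n$) so that \eqref{2.1} holds for every $u\in\bigcup_{n\ge n_4}\calU_{f_n}$. I would then fix a solution $u\in\calU_f$, say with $u(0,\cdot)=u_{0,k}$ and some $\omega\in\Omega$, and compare it to the solution $u_n\in\calU_{f_n}$ with the same (or an appropriately chosen) initial datum and the same $\omega$. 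The key point is that $\|f_n-f\|_\infty$ is small with high probability on the ball $B_{n^{1+m_4'}}(0)$; by a Borel–Cantelli argument (the probabilities $n^{-(2d+1+m_4')}$ are summable in $n$), for a.e.\ $\omega$ there is $N(\omega)$ such that for all $n\ge N(\omega)$ the reactions $f$ and $f_n$ differ by at most $\al_4 n^{-m_4}$ on $B_{n^{1+m_4'}}(0)\times[0,1]$. Removing the complementary null set — which can be taken $\{\Upsilon_y\}$-invariant by replacing it with its orbit, a null set by measure-preservation — gives the $\Omega$ on which we work.

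Next I would feed this into Lemma \ref{L.4.3}: on $B_{n^{1+m_4'}}(y)$ (for $y$ in a suitably large ball) we have $f_n\ge f-\al_3\eta^{m_3}$ and $f\ge f_n-\al_3\eta^{m_3}$ with $\eta=\eta(n)\to 0$ chosen so that $\al_3\eta^{m_3}\gtrsim \al_4 n^{-m_4}$, i.e.\ $\eta(n)\sim n^{-m_4/m_3}$, provided the relevant radius $R\ge D_2(1+T_{u_2}(y))$ is available — which it is for $|y|$ up to roughly $c_0 n^{1+m_4'}$ since arrival times grow linearly. Applying Lemma \ref{L.4.3} in both directions (swapping the roles of $f$ and $f_n$) yields a two-sided bound of the form $|T_u(y)-T_{u_n}(y)|\le M_*\eta(n)\,(1+T(y)) + C(1+\kappa_*+\kappa_0)$ on the relevant region. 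Since $T(y)\sim |y|$ and $\eta(n)\to0$, for $y$ with $|y|$ bounded by a fixed multiple of $n$ this gives $T_u(y)=T_{u_n}(y)+o(|y|)+O(1)$. Transferring this to the transition-width functionals $L_{u,\eta,1-\theta^*}(t)$ via the standard relation between arrival times and level sets (as in Lemma \ref{T.2.4} and Corollary \ref{C.2.1}), the uniform algebraic bound on $L_{u_n,\cdot,1-\theta^*}$ from \textbf{(H4')} passes to $u$, with $\alpha_2,m_2$ unchanged and a possibly larger implied constant; likewise the lower bound $u_t\ge\mu_*$ on $\{u(t,x)\in[\theta^*,1-\theta^*]\}$ transfers because $u_t$ solves a linear parabolic equation whose coefficient $f_u$ is uniformly close to $(f_n)_u$ and one can use interior parabolic estimates plus the convergence $u_n\to u$ (locally uniformly in space-time, with derivatives, by Schauder) to pass the strict positivity to the limit along $n\to\infty$.

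The main obstacle I anticipate is the second statement in \eqref{2.2}, the uniform lower bound on $u_t$: closeness of reactions in sup norm does not immediately control derivatives of solutions, and $u_t$ is governed by $f_u(x,u(t,x),\omega)$, which is only Lipschitz-bounded, not close to $(f_n)_u$ in any strong sense. I would handle this by not differentiating $f$ directly but instead using a compactness/stability argument: take $n\to\infty$ along the good sequence, use that $u_n\to u$ in $C^{1,2}_{\loc}$ on $(0,\infty)\times\bbR^d$ (interior Schauder estimates, since the $f_n$ are uniformly Lipschitz and $u_n$ are uniformly bounded), and note that the set where $u\in[\theta^*,1-\theta^*]$ is approximated by the corresponding sets for $u_n$; the uniform positivity $\inf u_{n,t}\ge\mu_*$ there then forces $u_t\ge\mu_*$ on the limiting set. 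A minor technical care is needed because the $u_{0,k}$ for $f$ and the $u_{0,k}$ used to define $\calU_{f_n}$ may differ, but since both satisfy \eqref{2.7'} and \eqref{2.6} with the same $\theta^*,R_0$, one can sandwich $u$ between time-shifts of two members of $\calU_{f_n}$ using Lemma \ref{L.2.2} and the comparison principle, which suffices for all the estimates above. Finally, I would record that the exceptional null set removed is $\{\Upsilon_y\}$-invariant, so the resulting $f$ on the reduced $\Omega$ is still stationary and now satisfies \textbf{(H2')} outright.
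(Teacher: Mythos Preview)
Your primary strategy has two genuine gaps. First, Lemma~\ref{L.4.3} requires that at least one of $f_1,f_2$ satisfy \textbf{(H3)}, but Lemma~\ref{L.4.0} assumes only \textbf{(H4')}; you cannot invoke $\al_3,m_3$ here. Second, and more seriously, Lemma~\ref{L.4.3} requires $f_1$ (the reaction for which $u_1\in\calU_{f_1}$) to satisfy \textbf{(H2')}. You can apply it with $f_1=f_n$, but to get the reverse inequality you would need $f_1=f$ to satisfy \textbf{(H2')}, which is precisely what you are trying to prove. Your ``sandwiching'' workaround via time shifts does not escape this: those shifts themselves rely on the bounded-width and $u_t\ge\mu_*$ estimates for the $f$-solution. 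The quantitative route through arrival-time comparisons is therefore circular.

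The paper's proof bypasses all of this with a short compactness argument, and in fact the idea you sketch at the very end (Schauder convergence $u_n\to u$ in $C^{1,2}_{\loc}$) \emph{is} the whole proof, not just the fix for the $u_t$ bound. The key observation you are missing is that one is free to \emph{choose} the initial data defining $\calU_f$. The paper picks, for each $k$, a subsequential uniform limit $u_{0,k}$ of the $u_{0,k,n}$ (elliptic regularity gives compactness; \eqref{2.6} and \eqref{2.7'} pass to the limit). Then Borel--Cantelli applied to the bound in \textbf{(H4')} yields, for a.e.\ $\omega$, a further subsequence of $f_{n_j}$ converging locally uniformly to $f$; along that subsequence the corresponding $u_n\in\calU_{f_n}$ converge in $C^{1,2}_{\loc}$ to the $f$-solution $u$ with datum $u_{0,k}$, and both statements in \eqref{2.2} survive the limit directly from their uniform validity for the $u_n$. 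No stability lemmas, no \textbf{(H3)}, no arrival-time comparisons are needed.
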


\begin{proof}
For each  $k\in\bbN$, let $u_{0,k,n}$ be the initial datum for $B_k(0)$ that enters in the definition of $\calU_{f_n}$ (these might in principle be different for distinct $n$, as we do not assume them to be those from the proof of Lemma~\ref{L.2.4}).  By \eqref{2.6}, \eqref{2.7'},  and elliptic regularity estimates, there is a subsequence $\{n_j\}_{j\in\bbN}$ such that $u_{0,k,n_j}$ converge uniformly to some $u_{0,k}$ satisfying \eqref{2.6} and \eqref{2.7'} as $j\to\infty$.  Since also \textbf{(H2')} holds uniformly for $f_{n}$,
and the Borel-Cantelli Lemma shows that some subsequence of $f_{n_j}$ converges to $f$ locally uniformly on $\bbR^d$ for almost every $\omega\in\Omega$ (and if this holds for some $\omega$, then it obviously also holds for $\Upsilon_y \omega$ with any $y\in\bbR^d$),
it follows that \textbf{(H2')} also holds for $f$, with the above $u_{0,k}$ for each $k\in\bbN$ and  after removal of a $\{\Upsilon_y\}$-invariant measure-zero set from $\Omega$.
\end{proof}

We now extend Proposition \ref{P.3.4} to reactions $f$ satisfying \textbf{(H3)} and \textbf{(H4')}.
Recall \eqref{d.3.2} and that constants with $C$ in them only depend on \eqref{const} unless explicitly stated otherwise.

\begin{proposition}\lb{P.3.1}
Let $f$ satisfy \textbf{(H3)} and \textbf{(H4')}, and with $\be_1$ from \eqref{3.13} let
\beq\lb{3.1}
\be_3:=\max\left\{ \be_1 , \, \frac{m_3}{m_3+2m_4} ,\frac{2d+2}{2d+2+m_4'} \right\}
 \qquad (\in(0,1))
\eeq
and
\beq\lb{3.1q}
\nu(a):= \sup_{n\ge \max\{ \lceil a \rceil,n_4\}} \bbP\left( \sup_{|z|< n^{1+m_4'}} \, \sup_{u\in [0,1]} |f_{n}(z,u,\cdot) - f(z,u,\cdot) |  > \al_4 n^{-m_4} \right)
\eeq
for all $a\ge 0$   (then $\nu(a)\le \max\{ \lceil a \rceil,n_4\}^{-(2d+1+m_4')}$ by \textbf{(H4')}).
There is $C_0'\geq 1$ such that  for any $S$ (and $u$) from Proposition \ref{P.3.4}, $x\in\bbR^d$, and $\lambda>0$ we have
\[
\bbP\left( |T(x,\cdot\,;S)-\bbE [T(x,\cdot\,;S)]|\geq \lambda\right)\leq 2\exp \left( \frac{ -\lambda^2}{C_0'(1+d(x,S)^{1+\be_3})} \right) 
+ \nu \left(d(x,S)^{\beta_3} \right).
\]
\end{proposition}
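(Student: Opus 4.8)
The plan is to transfer the estimate of Proposition \ref{P.3.4} from the finite-range approximations $f_n$ supplied by \textbf{(H4')} to $f$ itself, controlling the error by the perturbation bound of Lemma \ref{L.4.3}. Note first that $f$ satisfies \textbf{(H2')} by Lemma \ref{L.4.0}, so the reduction of general balls and half-spaces to $S=B_k(0)$ proceeds exactly as in the proof of Proposition \ref{P.3.4} (using also that $\nu$ from \eqref{3.1q} is non-increasing). Fix such an $S$ and $x\notin S$, write $R:=d(x,S)$, and assume, as in the proof of Proposition \ref{P.3.3}, that $R$ exceeds a constant, since otherwise the right-hand side of the claim is $\ge 1$. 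The crucial preliminary move is to translate by $-x$: since $f$ and $f_n$ are stationary and the ball/half-space families are translation-closed, $T(x,\cdot\,;S)-\bbE[T(x,\cdot\,;S)]$ has the same law as $T(0,\cdot\,;S-x)-\bbE[T(0,\cdot\,;S-x)]$, with $d(0,S-x)=R$, so we may assume $x=0$. The point is that every ball of radius $\le C(1+R)$ about $x=0$ then lies inside $B_{n^{1+m_4'}}(0)$ once $n^{1+m_4'}\ge C(1+R)$, so that $R$ (not $|x|$, which could be huge for balls of large radius) controls all the relevant scales.

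Choose $n:=\max\{\lceil R^{\be_3}\rceil,n_4\}$ and let $T_n(\cdot\,;S-x)$ be the arrival time \eqref{d.3.2} for the reaction $f_n$ and a solution in $\calU_{f_n}'$ from the appropriate datum for $S-x$. Since $f_n$ has range of dependence $\le n$ and satisfies \textbf{(H2')} uniformly (apply Lemma \ref{L.4.0'} to $f_n$), Proposition \ref{P.3.4} applied to $f_n$ with $\rho=n$ gives
\[
\bbP\left(\big|T_n(0,\cdot\,;S-x)-\bbE[T_n(0,\cdot\,;S-x)]\big|\ge\lambda\right)\le 2\exp\left(\frac{-\lambda^2}{C_0(1+R)(n+R^{\be_1})}\right),
\]
and $\be_3\ge\be_1$ together with $n\le R^{\be_3}+1$ makes the denominator $\le C R^{1+\be_3}$. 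It remains to pass from $T_n$ to $T$. Let $G:=\{\sup_{|z|<n^{1+m_4'}}\sup_{u\in[0,1]}|f_n(z,u,\cdot)-f(z,u,\cdot)|\le\al_4 n^{-m_4}\}$; by \textbf{(H4')} and the definition of $\nu$ we have $\bbP(G^c)\le\nu(R^{\be_3})$, while $\be_3\ge\frac{2d+2}{2d+2+m_4'}$ (and $\be_3<1$) guarantees $n^{1+m_4'}\ge C(1+R)$, so on $G$ the two reactions are $\al_4 n^{-m_4}$-close on every ball that will enter Lemma \ref{L.4.3}.

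On $G$ I run Lemma \ref{L.4.3} twice, with $(f_1,f_2)=(f,f_n)$ and with $(f_1,f_2)=(f_n,f)$, in both cases with $y=0$, with $\eta$ the least admissible value satisfying $\al_3\eta^{m_3}\ge\al_4 n^{-m_4}$ (so $\eta\sim n^{-m_4/m_3}$), and with $t_0$ a constant of order $\kappa_0+R_0/c_0$ chosen — via Lemma \ref{L.2.2} and \eqref{2.7} — so that the solution of each equation at time $t_0$ dominates the initial datum of the other; the requirement $R\ge D_2(1+T)$ in Lemma \ref{L.4.3} holds on $G$ because $T_n(0,\cdot\,;S-x),\,T(0,\cdot\,;S-x)\le\kappa_0+\tfrac2{c_0}(R+R_0)$ by Lemma \ref{L.2.2}. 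Each application yields via \eqref{2.888} an inequality of the shape $T\le(1+M_*\eta)T_n+C$ or $T_n\le(1+M_*\eta)T+C$, so, using $T,T_n\le C(1+R)$,
\[
\big|T(0,\cdot\,;S-x)-T_n(0,\cdot\,;S-x)\big|\le C(1+\eta R)\le CR^{\be_3}\qquad\text{on }G.
\]
The same pointwise bounds give $|\bbE T-\bbE T_n|\le\bbE|T-T_n|\le CR^{\be_3}+C(1+R)\bbP(G^c)\le CR^{\be_3}$, using $\nu(R^{\be_3})\le R^{-\be_3(2d+1+m_4')}$ and $\be_3(2d+2+m_4')\ge 1$. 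Combining these with the displayed bound for $T_n$ through $\{|T-\bbE T|\ge\lambda\}\subseteq G^c\cup\big(\{|T_n-\bbE T_n|\ge\lambda-CR^{\be_3}\}\cap G\big)$, and absorbing the shift $CR^{\be_3}$ into the Gaussian exponent exactly as at the end of the proof of Proposition \ref{P.3.3} (valid for $\lambda$ above a multiple of $R^{\be_3}$, trivial below it since then the right-hand side is $\ge1$), gives the claim with $C_0'$ depending only on \eqref{const}; the half-space case then follows as in Proposition \ref{P.3.4}.

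The main obstacle is the simultaneous calibration of the scale $n$ and the parameter $\eta$ against four competing constraints, which is what forces the three quantities in the maximum defining $\be_3$: the range term $n$ in Proposition \ref{P.3.4} must be absorbed by $R^{\be_1}$; the ball $B_{n^{1+m_4'}}(0)$ on which $f_n\approx f$ must contain the radius-$\asymp(1+R)$ balls used in Lemma \ref{L.4.3}; the perturbation $\al_4 n^{-m_4}$ must be dominated by $\al_3\eta^{m_3}$ while still $\eta R$ stays of order $R^{\be_3}$; and $\bbP(G^c)\le\nu(n)$ must be small enough to enter only as the stated additive error and not spoil the control of $\bbE T-\bbE T_n$. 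The stationarity translation placing $x$ at the origin is precisely what makes the second constraint compatible with $n$ being a mere power of $R$; without it the argument would break for balls $B_k(0)$ with $k$ large.
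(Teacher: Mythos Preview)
Your approach is essentially the same as the paper's: approximate $f$ by $f_n$, apply Proposition~\ref{P.3.4} to $f_n$ with $\rho=n$, and use Lemma~\ref{L.4.3} to control $|T-T_n|$ on the high-probability event. The paper handles the location issue by observing that stationarity leaves $\nu(a)$ unchanged when $|z|<n^{1+m_4'}$ is replaced by $|z-x|<n^{1+m_4'}$, whereas you translate the whole problem to put $x$ at the origin; these are equivalent.

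There is one computational slip. You assert $\eta R\le CR^{\be_3}$, but with $\eta\sim n^{-m_4/m_3}\sim R^{-\be_3 m_4/m_3}$ this gives $\eta R\sim R^{1-\be_3 m_4/m_3}$, and the definition of $\be_3$ only guarantees $1-\be_3 m_4/m_3\le\tfrac{1+\be_3}{2}$ (with equality when $\be_3=\tfrac{m_3}{m_3+2m_4}$), not $\le\be_3$. The correct bound is therefore $|T-T_n|\le C(1+R^{(1+\be_3)/2})$ on $G$, which is exactly what the paper obtains. This does not damage your argument: the ``trivial below'' step still works, since $\lambda\le C(1+R^{(1+\be_3)/2})$ gives $\lambda^2/(C_0'(1+R^{1+\be_3}))\le C/C_0'$, which is $\le\ln 2$ for $C_0'$ large; and the bound on $|\bbE T-\bbE T_n|$ is likewise $\le C(1+R^{(1+\be_3)/2})$. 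With this corrected exponent your proof matches the paper's.
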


\noindent {\it Remark.}  Having this  and Lemma \ref{L.4.0}, we will not need to use \textbf{(H3)} and \textbf{(H4')} again.

\begin{proof}
Let us assume without loss that $\al_3\le 1$ in \textbf{(H3)}.
Lemma \ref{L.4.0} shows that $f$ also satisfies \textbf{(H2')}, and then Lemma \ref{L.4.0'} shows that \textbf{(H2')} holds 
with $\calU_f'\cup\bigcup_{n\ge n_4} \calU_{f_n}'$ in place of $\calU_f$.

For each $\omega\in\Omega$ and $S$ either a ball or a half-space, let $u(\cdot,\cdot,\omega,S)$ be the solution from $\calU_f'$ corresponding to $(\omega,S)$, and for each $n\geq n_4$, let $u_n(\cdot,\cdot,\omega,S)$ be the analogous solution from $\calU_{f_n}'$. (Note that we do not require the initial data for $u$ and $u_n$ to be the same, although they may be.)
Also let
\[
T_n(x,\omega;S):=\inf\{t\geq 0\,|\, u_n(t,x,\omega;S)\geq 1-\theta^*\}.
\]
Then \textbf{(H4')} and Proposition \ref{P.3.4} show for all $n\ge n_4$,
\begin{equation}
    \label{4.7}\bbP\left( |T_n(x,\cdot\,;S)-\bbE [T_n(x,\cdot\,;S)]|\geq \lambda\right)\leq 2\exp \left( -\frac{\lambda^2}{C_0 (1+d(x,S)) (n+d(x,S)^{\be_1})} \right).
\end{equation}
Let 
$C_1\geq n_4$ be such that $(\frac{\al_4}{\al_3})^{1/m_3}C_1^{-m_4/m_3}\leq \frac{1}{2}\min\{\theta^*,M_*^{-1}\}$.

Stationarity of $f,f_n$ shows that the definition of $\nu(a)$ above is unchanged when in it we replace $|z|$ by $|z-x|$, for any $x\in\bbR^d$.  So for each $x$ and $n\ge n_4$, there is $\Omega_{n,x}\subseteq\Omega$ such that $\bbP(\Omega_{n,x})\ge  1-\nu(n)$ and 
\[
|f(z,u,\omega)-f_n(z,u,\omega)|\leq \al_4 n^{-m_4}\qquad\text{for all }(z,u,\omega)\in B_{n^{1+m_4'}}(x)\times[0,1]\times \Omega_{n,x}.
\]
Since $f$ satisfies \textbf{(H3)}, and $u(\cdot,\cdot,\omega;S)\in \calU_f$ and $u_n(\cdot,\cdot,\omega;S)\in \calU_{f_n}$  for each $\omega\in\Omega_{n,x}$, Lemma~\ref{L.4.3} (see Lemma \ref{L.4.0'}) applied twice with $\eta=(\frac{\al_4}{\al_3} n^{-m_4})^{1/m_3}$,  $y=x$, $R=n^{1+m_4'}$, $t_0=\kappa_0+2c_0^{-1}R_0$, and $(f_1,f_2,u_1,u_2)$ being  
\[
(f,f_n,u(\cdot,\cdot,\omega;S),u_n(\cdot,\cdot,\omega;S))
\qquad
\text{ and }\qquad
(f_n,f,u_n(\cdot,\cdot,\omega;S),u(\cdot,\cdot,\omega;S)),
\] 
respectively, 
yields for some $C>0$ and  all $n\ge \max\{C_1, [D_2(1+\kappa_0+\frac{2}{c_0}d(x,S))]^{1/(1+m_4')}\}$,
\beq\lb{4.40}
|T_n(x,\cdot;S)-T(x,\cdot;S)|\leq Cn^{-\frac{m_4}{m_3}}\left(1+d(x,S)\right) + C.
\eeq
Here we also used Lemma \ref{L.2.2} to show that $T_n(x,\cdot;S)$ and $T(x,\cdot;S)$ are at most $\kappa_0 + \frac{2}{c_0}d(x,S)$.

Let now
 $n$ be the smallest integer such that 
\beq\lb{4.333}
n\geq \max\left\{C_1,\,d(x,S)^{\be_3}, [D_2(1+\kappa_0+2c_0^{-1}d(x,S))]^{1/(1+m_4')} \right\}.
\eeq
Since $\be_3\ge\frac 1{1+m_4'}$,  there is $C_2>0$ such that with $C$ from \eqref{4.40} we have (uniformly in $x,S$)
\begin{align*}
(1+d(x,S))(n+ d(x,S)^{\be_1}) &\leq C_2(1+d(x,S)^{1+\be_3}),\\
4C{n}^{-\frac{m_4}{m_3}} (1+d(x,S)) + 4C&\leq C_2(1+d(x,S)^{1-\be_3\frac{m_4}{m_3}})\leq C_2(1+d(x,S)^{\frac{1+\be_3}2}).
\end{align*}
If now $\lambda\geq C_2(1+d(x,S)^{(1+\be_3)/2})$, then \eqref{4.7} and \eqref{4.40} imply
\begin{align*}
    \bbP\left( |T(x,\cdot\,;S)-\bbE [T(x,\cdot\,;S)]|\geq\lambda\right)&\leq\bbP\left( |T_n(x,\cdot\,;S)-\bbE [T_n(x,\cdot\,;S)]|\geq \frac{\lambda}{2}\right) + \bbP(\Omega\setminus \Omega_{n,x}) \\
    &\leq  2\exp \left(\frac{-\lambda^2}{4C_0C_2(1+d(x,S)^{1+\be_3})} \right) + \nu \left(d(x,S)^{\be_3} \right).
 \end{align*}
Hence the result holds with $C_0':=\max\{ 4C_0C_2, 2C_2^2 (\ln 2)^{-1}\}$, because then it obviously also holds for any 
$\lambda\leq C_2(1+d(x,S)^{(1+\be_3)/2})$.
\end{proof}
   
\section{Convergence of the Mean Propagation Speeds} \lb{S5}

We now consider \eqref{e.3.1} with $S=\calH_e^-+le$ for any $e\in \bbS^{d-1}$ and $l\in\bbR$, that is,
\begin{equation} \lb{4.1}
\begin{aligned}
& u_t=\Delta u+f(x,u,\omega)\qquad && \text{ on }(0,\infty)\times \bbR^d,\\
& u(0,\cdot,\omega;\calH_e^-+le)=u_{0,\calH_e^-+le}\qquad && \text{ on } \bbR^d.
\end{aligned}
\end{equation} 
Here $u_{0,\calH_e^-+le}$ 
is the initial data used in the definition of $\calU_f'$ in Lemma \ref{L.4.0'} (note that $f$ satisfies \textbf{(H2')} in both cases under consideration, due to Lemma \ref{L.4.0}).  Hence, $u(\cdot,\cdot,\omega;\calH_e^-+le)\in\calU_f'$
 for all $(e,l,\omega)\in \bbS^{d-1}\times\bbR\times\Omega$ and \eqref{2.1} holds for it.

We will now prove that $\frac 1l \bbE \left[ T(le,\cdot\,;\calH_e^-)\right]$
converges as $l\to\infty$, with $T(le,\cdot\,;\calH_e^-)$ from \eqref{d.3.2} (stationarity shows that the expectation is the same if $le$ is replaced by any $y_l$ with $y_l\cdot e=l$).  Note that the reciprocal of this limit can then be considered the asymptotic mean speed of propagation of the solutions $u$ in direction $e$
(this mean is technically harmonic).

We also note that all constants in this section will be uniform in $e$, and
recall that all constants with $C$ in them 
 depend on \eqref{const}, with any other dependence explicitly indicated, and may vary from line to line.


\begin{proposition}\lb{P.4.5}
For each $e\in \bbS^{d-1}$ there is $ \bar{T}(e)\in [\frac 1{c_1},\frac 1{c_0}]$ (depending also on $f$) and for each $\delta>0$ there is $C_\delta\geq 1$  such that the following hold.
If $f$ satisfies \textbf{(H2')} and has range of dependence at most $\rho\in [1,\infty)$, 
then with $\be_1$ from \eqref{3.13} 
we have for all $l\geq 1$,
\[
\left|\frac {\bbE[T(le,\cdot\,;\calH^-_e)]} {l} -\bar{T}(e) \right|\leq C_\delta\sqrt\rho\, l^{-1+\frac12 {(1+\be_1)}+\delta}.
\]
If instead $f$ satisfies \textbf{(H3)} and  \textbf{(H4')},
then with $\be_3<1$ from \eqref{3.1} we have for all $l\geq 1$, 
\[
\left|\frac {\bbE[T(le,\cdot\,;\calH^-_e)]} {l} -\bar{T}(e)\right|\leq C_\delta\, l^{-1+\frac12 {(1+\be_3)}+\delta}.
\]
%
%
\end{proposition}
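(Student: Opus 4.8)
The plan is to establish convergence of the sequence $a_l := \frac{1}{l}\bbE[T(le,\cdot\,;\calH^-_e)]$ via a quasi-subadditivity argument, exploiting the exponential concentration estimates from Propositions \ref{P.3.4} and \ref{P.3.1} together with the additive structure of arrival times furnished by Proposition \ref{P.3.0} (and its analog in the general case). First I would fix $e$ and abbreviate $T_l:=T(le,\cdot\,;\calH^-_e)$. The key heuristic is that to reach the point $le$ starting from the half-space $\calH^-_e$, the solution can first reach the intermediate hyperplane $\{x\cdot e = l_1\}$ (taking time $\approx T_{l_1}$), and then, starting from an approximate characteristic function of a ball near that hyperplane, travel the remaining distance $l-l_1$ in direction $e$; by Proposition \ref{P.3.0}, the arrival time of this second leg differs from the arrival time of a fresh solution started from a ball $B_k(y)$ with $y\cdot e = l_1$ by only a sublinear error $O(d(x,S)^{\be_1})$, and by the half-space extension (Proposition \ref{P.3.4}) the latter has the same mean as $T_{l-l_1}$ up to an $O(1)$ error. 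Taking expectations, this gives approximate subadditivity $\bbE[T_l]\le \bbE[T_{l_1}]+\bbE[T_{l-l_1}] + C(\rho + l^{\be_1})$ (in the finite-range case; $C(1+l^{\be_3})$ in the general case), with a matching lower bound obtained the same way using the comparison estimates of Lemmas \ref{L.3.1} and \ref{L.4.3}. A standard Fekete-type argument for sequences that are subadditive up to a sublinear error then yields existence of $\bar T(e):=\lim_{l\to\infty} a_l$, and the bounds $\bar T(e)\in[\frac1{c_1},\frac1{c_0}]$ follow from Lemma \ref{L.2.2} (lower bound $c_0$ on propagation speed, giving $T_l\le \kappa_0+\frac 2{c_0}l$ hence $\bar T(e)\le \frac 1{c_0}$) and Corollary \ref{C.2.1} (upper bound $c_1$ on speed, giving the reverse).

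The quantitative rate is where the concentration estimates enter, and this is the heart of the proof. For the convergence rate I would iterate the approximate subadditivity dyadically: writing $l=2^N l_0$ and splitting repeatedly, one accumulates the error terms $\sum_{j} 2^j \cdot C(\rho + (2^{-j}l)^{\be_1})$, but the naive bound is too lossy because of the fluctuations of $T_{l_1}$ around its mean at each splitting. Instead, following Armstrong--Cardaliaguet, I would use the concentration bound to control the \emph{typical} value of $T_{l_1}$: on an event of overwhelming probability, $T_{l_1}$ is within $O(\sqrt{\rho\, l_1^{1+\be_1}}\cdot (\log l_1)^{1/2})$ of its mean (this is exactly what Proposition \ref{P.3.4} gives when we take $\lambda \sim \sqrt{\rho\,l_1^{1+\be_1}\log l_1}$). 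Pairing this with Proposition \ref{P.3.0} applied pathwise at the intermediate time $t=T_{l_1}(\omega)$, we get that $T_l(\omega) = T_{l_1}(\omega) + \tilde T_{l-l_1}(\omega) + O(\rho + l^{\be_1})$ where $\tilde T_{l-l_1}$ is distributed like $T_{l-l_1}$ and is (by the finite range of dependence / the $\calG_t$-measurability machinery of Section \ref{S3}) nearly independent of $T_{l_1}$. Taking expectations and being careful about the small-probability bad event (whose contribution is controlled because $T_l$ is bounded by $\kappa_0+\frac 2{c_0}l$ deterministically, so even a probability-$l^{-p}$ event contributes only $O(l^{1-p})$), one obtains $|a_l - a_{l/2}| \le C_\delta \sqrt\rho\, l^{-1+\frac12(1+\be_1)+\delta}$, and summing the geometric-type series over the dyadic scales (the exponent $-1+\frac12(1+\be_1)<0$ since $\be_1<1$, so the series converges) yields both the existence of the limit and the stated rate $C_\delta\sqrt\rho\, l^{-1+\frac12(1+\be_1)+\delta}$. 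The $\delta$ absorbs the logarithmic factors from the concentration bound. In the general-reaction case one repeats the argument verbatim with Proposition \ref{P.3.1} in place of Proposition \ref{P.3.4}: the extra term $\nu(d(x,S)^{\be_3})\le d(x,S)^{-(2d+1+m_4')\be_3}$ is summable over dyadic scales and contributes a lower-order error, and $\be_3$ replaces $\be_1$ throughout; note the $\sqrt\rho$ disappears since there is no finite range parameter.

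The main obstacle I anticipate is making the ``near-independence'' of the two legs rigorous in a way that survives taking expectations with the right rate, rather than just soft convergence. Concretely: Proposition \ref{P.3.0} is a pathwise statement comparing $T(x,\omega;S)$ to $t + T_{\izero(t,\omega)}(x,\omega)$, but $T_{\izero(t,\omega)}$ depends on $\omega$ both through the index $\izero(t,\omega)$ (which depends on the ``first leg'' and hence on $\calG_t$) and through the reaction $g_{\izero}$ in the region past $K_{\izero}$ (which is $\calE(K_{\izero})$-independent). To convert the pathwise identity into an inequality between $\bbE[T_l]$ and $\bbE[T_{l_1}] + \bbE[T_{l-l_1}]$ I would condition on $\calG_{T_{l_1}}$ (using the optional-stopping/freezing structure already developed around \eqref{3.15}--\eqref{3.16}: conditionally on $\{\izero = i\}\in\calG_t$, the law of $\tilde T$ is that of $T_i$, which has the same mean as $T_{l-l_1}$ up to the $O(1)$ error from Proposition \ref{P.3.4}) and then integrate, being careful that $\izero(T_{l_1}(\omega),\omega)$ corresponds to a ball whose distance to $le$ is $l - l_1 + O(\sqrt{\rho\,l_1^{1+\be_1}\log l_1} + l^{\be_1})$ on the good event — so I need the mean arrival time $\bbE[T(\,\cdot\,;B_k(y))]$ to be Lipschitz (indeed $\frac 2{c_0}$-Lipschitz, by Lemma \ref{L.2.2}) in the relevant distance, which lets me replace it by $\bbE[T_{l-l_1}]$ at the cost of another error of the same order. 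Threading these error terms so that the worst one is $O(\sqrt\rho\, l^{(1+\be_1)/2 + \delta})$ — in particular checking that $\be_1$ dominates $\al_2$ and $\gamma(m_1-1)$ as recorded in \eqref{3.13'}, and that $\frac12(1+\be_1) > \be_1$ so the fluctuation term, not the deterministic additive error, sets the rate — is the delicate bookkeeping, but it is bookkeeping of quantities all of which have already been estimated in Sections \ref{S3}--\ref{S4}.
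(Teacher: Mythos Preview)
Your overall architecture---approximate (sub-/super-)additivity of $l\mapsto\bbE[T(le,\cdot;\calH_e^-)]$ with a sublinear error, followed by a Fekete-type iteration over dyadic scales---is exactly what the paper does, and your identification of the rate $\frac12(1+\be_1)$ (resp.\ $\be_3$) as coming from the fluctuation scale is correct. However, the paper establishes the approximate additivity (its Proposition~\ref{P.4.4}) by a quite different and substantially simpler mechanism than the conditioning-on-$\calG_{T_{l_1}}$ route you sketch. Rather than invoking Proposition~\ref{P.3.0} pathwise at a random time and then untangling the dependence of $T_{\izero(t,\omega)}$ on both legs, the paper introduces the cylinder times $T^\pm_e(R,l,\omega)=\sup/\inf\{T(x,\omega;\calH_e^-):x\in\calC^\pm_e(R,l)\}$, proves via the concentration bounds (this is Lemma~\ref{L.4.2}, and is the only place the fluctuation estimates enter) that their expectations differ from $\bbE[T(le,\cdot)]$ by $O(\bar C_\rho l^{\beta+\delta})$, and then uses these \emph{deterministic} ordering times together with Lemma~\ref{L.3.1} to compare $u(\cdot,\cdot,\omega;\calH_e^-)$ directly with a \emph{shifted} half-space solution $u(\cdot,\cdot,\omega;\calH_e^-+le)$; taking expectations and using stationarity gives $\bbE[T((l+m)e;\calH_e^-+le)]=\bbE[T(me;\calH_e^-)]$ without any conditioning. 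This neatly sidesteps the obstacle you yourself flag: there is no need to match $K_{\izero}$ to a half-space or to argue near-independence, because stationarity does that work for free once the comparison is pathwise. Your route via $\calG_t$ and Proposition~\ref{P.3.0} could likely be pushed through (after extending Proposition~\ref{P.3.0} from balls to half-spaces, which the paper does not do), but it would require exactly the delicate bookkeeping you anticipate, whereas the paper's $T^\pm_e$ device converts the random splitting point into a deterministic one at the cost of the same fluctuation-size error.
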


We will fix $e$ in the rest of this section and, for the sake of convenience, we will sometimes (but not always) drop $\calH_e^-$ from the notation in \eqref{e.3.1} and \eqref{d.3.2} when $S=\calH_e^-$.  Hence we let
\beq\lb{5.444}
T(x,\omega):=T(x,\omega;\calH_e^-)\qquad\text{and}\qquad u(t,x,\omega):=u(t,x,\omega;\calH_e^-) .
\eeq
We will also prove both claims in Proposition \ref{P.4.5} at the same time, with the notation
\beq\lb{b.1'}
    \beta:=\frac{1+\be_1}{2} \qquad\text{and}\qquad {\bar C}_\rho:=C\sqrt\rho \qquad\text{and}\qquad \phi(l):=0
\eeq
if $f$ satisfies \textbf{(H2')} and has range of dependence at most $\rho\in[1,\infty)$, and
\beq\lb{b.1''}
    \beta :=\frac{1+\be_3}{2}  \qquad\text{and}\qquad {\bar C}_\rho:=C \qquad\text{and}\qquad \phi(l):=\nu(l^{\be_3})
\eeq
 if $f$ satisfies \textbf{(H3)} and \textbf{(H4')}, with $\nu$ from \eqref{3.1q}.  Here again, $C\ge 1$ will be a constant depending only on \eqref{const}, which may vary from line to line.  In particular, Propositions \ref{P.3.4} and \ref{P.3.1} show that in both cases we have for all $e\in\bbS^{d-1}$, $\lambda>0$, $x\in\bbR^d$ with $x\cdot e\geq 1$ (and some $C>0$ defining ${\bar C}_\rho$),
\beq\lb{5.7}
\bbP\left( \big| T(x,\cdot\,;\calH_e^-)-\bbE [T(x,\cdot\,;\calH_e^-)] \big| \geq \lambda\right)\leq 2\exp \left( -{\bar C}_\rho^{-2}\,\lambda^2 \,(x\cdot e)^{-2\beta} \right) + \phi(x\cdot e).
\eeq

We start with the following simple result.

\begin{lemma}\label{L.4.1}
If $x\cdot e\le l$ for some $(e,l,x)\in \bbS^{d-1}\times\bbR\times\bbR^d$, then
\[
\bbE \left[ T(x,\cdot\,;\calH_e^-)\right] \le \bbE \left[ T(l e,\cdot\,;\calH_e^-)\right] + \kappa_0+2c_0^{-1}R_0. 
\]
\end{lemma}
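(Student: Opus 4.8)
The plan is to compare the solution started from $\calH_e^-$ with the solution started from the ``raised'' half-space $\calH':=\calH_e^-+(l-s)e$, where $s:=x\cdot e$, and then to transport this raised solution back onto $\calH_e^-$ via stationarity. First I would dispose of the trivial case $s\le 0$: then $x\in\calH_e^-$, so $u(0,x,\omega;\calH_e^-)=u_{0,\calH_e^-}(x)\ge 1-\theta^*$ by \eqref{2.7}, hence $T(x,\omega;\calH_e^-)=0$ for every $\omega$ and the asserted inequality holds since its right side is nonnegative. So assume $0<s\le l$, so that $l>0$ and $l-s\ge 0$. Next I would record two consequences of stationarity of $f$ together with translation covariance of the half-space initial data (the same invariance already used in the proof of Proposition \ref{P.3.4}): since $x$ and $se$ have the same $e$-component, $\bbE[T(x,\cdot;\calH_e^-)]=\bbE[T(se,\cdot;\calH_e^-)]$; and translating the whole configuration by $(s-l)e$ carries $\calH'$ to $\calH_e^-$ and the point $le$ to $se$, so $\bbE[T(le,\cdot;\calH')]=\bbE[T(se,\cdot;\calH_e^-)]$. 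Thus it suffices to prove the pointwise bound
\[
T(le,\omega;\calH')\le T(le,\omega;\calH_e^-)+\kappa_0+2c_0^{-1}R_0\qquad\text{for every }\omega\in\Omega,
\]
since taking expectations then gives $\bbE[T(x,\cdot;\calH_e^-)]=\bbE[T(le,\cdot;\calH')]\le\bbE[T(le,\cdot;\calH_e^-)]+\kappa_0+2c_0^{-1}R_0$.

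To prove the pointwise bound, fix $\omega$ and write $v:=u(\cdot,\cdot,\omega;\calH')$ and $t_0:=\kappa_0+2c_0^{-1}R_0$. Since $u_{0,\calH'}$ satisfies \eqref{2.6} we have $v_t\ge 0$, and by \eqref{2.7}, \eqref{2.10} we have $v(0,\cdot)\ge 1-\theta^*>\theta_2$ on $\calH'=\{y\cdot e\le l-s\}$. The speed-$\tfrac{c_0}2$ lower bound of Lemma \ref{L.2.2} (applied, as in the $u_t\ge 0$ case, to each base point of $\calH'$) therefore spreads the set $\{v(t,\cdot)\ge 1-\theta^*\}$ outward from $\calH'$; because $l-s\ge 0$ this is exactly the computation carried out in the proof of Proposition \ref{P.3.4}, and it gives $v(t_0,\cdot)\ge 1-\theta^*$ on $B_{R_0}(\calH_e^-)$, which contains the support of $u_{0,\calH_e^-}$. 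Using also $u_{0,\calH_e^-}\le 1-\theta^*$ everywhere, we get $v(t_0,\cdot)\ge u_{0,\calH_e^-}=u(0,\cdot,\omega;\calH_e^-)$ on all of $\bbR^d$, so the comparison principle for $w_t=\Delta w+f(x,w,\omega)$ yields $v(t_0+t,\cdot)\ge u(t,\cdot,\omega;\calH_e^-)$ for all $t\ge 0$. Evaluating this at $y=le$ and $t=T(le,\omega;\calH_e^-)$ (which is finite by Lemma \ref{L.2.2}, as $0\in\calH_e^-$) gives $v(t_0+T(le,\omega;\calH_e^-),le)\ge u(T(le,\omega;\calH_e^-),le,\omega;\calH_e^-)\ge 1-\theta^*$, i.e.\ $T(le,\omega;\calH')\le t_0+T(le,\omega;\calH_e^-)$, which is the desired bound.

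I do not expect a genuine obstacle in this argument: it is short and relies only on the finite-propagation-speed lower bound (Lemma \ref{L.2.2}), the comparison principle, and stationarity. The one place deserving a little care is the choice of the raised half-space $\calH_e^-+(l-s)e$ in the comparison step, so that the speed-$\tfrac{c_0}2$ lower bound lets its solution dominate the $R_0$-supported datum $u_{0,\calH_e^-}$ within precisely the allotted delay $\kappa_0+2c_0^{-1}R_0$; this works exactly because $l-s\ge 0$, and the bookkeeping is identical to that already performed in the proof of Proposition \ref{P.3.4}.
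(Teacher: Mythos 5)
Your proof is correct and follows essentially the same route as the paper: compare $u(\cdot,\cdot,\omega;\calH_e^-)$ at time $0$ with the solution started from the raised half-space $\calH_e^-+(l-x\cdot e)e$ at time $\kappa_0+2c_0^{-1}R_0$ via Lemma \ref{L.2.2} and the comparison principle, then use stationarity to identify the expectations. The only cosmetic difference is that the paper translates by $le-x$ in a single step where you use two translations (and you separate out the trivial case $x\cdot e\le 0$, which is not needed), but the substance is identical.
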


\begin{proof}
Since $u(0,\cdot,\omega;\calH_e^-)\leq u(\tau_0,\cdot,\omega;\calH_e^-+le-x)$ for $\tau_0:=\kappa_0+\frac {2R_0}{c_0}$ by Lemma \ref{L.2.2} and $x\cdot e\le l$,
we have 
\[
T(le,\cdot\,;\calH_e^-)\geq T( le,\cdot\,;\calH_e^-+le-x)-\tau_0.
\]
Therefore,
\[
\bbE[T(le,\cdot\,;\calH_e^-)]\geq \bbE[ T( le,\cdot\,;\calH_e^-+le-x)] -\tau_0=\bbE[ T( x,\cdot\,;\calH_e^-)]-\tau_0
\]
because $f$ is stationary (if we assume \textbf{(H4')}, this follows from Lemma \ref{L.4.0})
\end{proof}


The next result is an immediate consequence of Lemma \ref{L.2.2} and Corollary \ref{C.2.1}.

\begin{lemma} \lb{L.5.3}
There is $t_0\geq 1$, depending only on \eqref{const}, such that for all $t\geq t_0$ and $\omega\in\Omega$,
\[
u(t,x,\omega;\calH_e^-)\geq 1-\theta^* \qquad \text{when $x\cdot e\leq \frac{c_0t}{2}$},
\]
\[
u(t,x,\omega;\calH_e^-)< 1-\theta^* \qquad \text{when $x\cdot e\geq {2c_1t}$}.
\]
In particular, for all $x\in\bbR^d$ with $x\cdot e\geq l_0:=2c_1 t_0$ and $\omega\in\Omega$ we have
\beq\lb{4.18}
T(x,\omega;\calH_e^-)\in \left[ \frac{x\cdot e}{2c_1}, \frac{2\,x\cdot e}{c_0} \right].
\eeq
\end{lemma}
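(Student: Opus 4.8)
The plan is to read off both one-sided statements directly from the propagation bounds already in hand --- Lemma~\ref{L.2.2} for the lower bound on the speed and Corollary~\ref{C.2.1} for the upper bound --- using only two structural facts about the solution in \eqref{4.1}: its initial datum is squeezed as in \eqref{2.7} between $(1-\theta^*)\chi_{\calH_e^-}$ and $(1-\theta^*)\chi_{B_{R_0}(\calH_e^-)}$, and it satisfies $u_t\ge 0$ (by \eqref{2.6}). I would first record that $1-\theta^*\ge\theta_2$ and $1-\theta^*\ge 1-\theta_1$, since $\theta^*\le\tfrac14\min\{1-\theta_2,\theta_1\}$ by \eqref{2.10}, and that $B_{R_0}(\calH_e^-)=\{x\in\bbR^d\,|\,x\cdot e\le R_0\}$.

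For the first displayed inequality: if $x\cdot e\le 0$ it is immediate from $u_t\ge0$ and $u(0,x,\omega;\calH_e^-)\ge 1-\theta^*$; if $0<x\cdot e\le\tfrac{c_0t}2$, set $y:=x-(x\cdot e)e$, so $y\cdot e=0$ and $u(0,y,\omega;\calH_e^-)\ge 1-\theta^*\ge\theta_2$ while $|x-y|=x\cdot e\le\tfrac{c_0}2 t$, and Lemma~\ref{L.2.2} with $c=\tfrac{c_0}2$, $\theta=1-\theta^*$, $t_0=0$ (permitted since $u_t\ge0$, with $\kappa_0$ from \eqref{2.10a} increased by $1$) gives $u(t,x,\omega;\calH_e^-)\ge 1-\theta^*$ once $t\ge\kappa_0+1$. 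For the second: since $u_{0,\calH_e^-}\le(1-\theta^*)\chi_{B_{R_0}(\calH_e^-)}$ we have $\{x\,|\,u(0,x,\omega;\calH_e^-)\ge\theta_1\}\subseteq\{x\cdot e\le R_0\}$, so Corollary~\ref{C.2.1} yields $\{x\,|\,u(t,x,\omega;\calH_e^-)\ge 1-\theta_1\}\subseteq\{x\cdot e\le R_0+c_1t+\kappa_1\}$, and since $1-\theta^*\ge1-\theta_1$ the same bound holds for the super-level set at height $1-\theta^*$; hence whenever $x\cdot e\ge 2c_1t$ and $c_1t\ge R_0+\kappa_1$ one gets $x\cdot e>R_0+c_1t+\kappa_1$ and therefore $u(t,x,\omega;\calH_e^-)<1-\theta^*$. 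Setting $t_0:=\max\{1,\ \kappa_0+1,\ (R_0+\kappa_1)/c_1\}$, which depends only on \eqref{const}, establishes both displayed bounds for all $t\ge t_0$.

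To deduce \eqref{4.18} for $x\cdot e\ge l_0:=2c_1t_0$: taking $t:=\tfrac{2\,x\cdot e}{c_0}$ we have $t\ge\tfrac{4c_1}{c_0}t_0\ge t_0$ (as $c_1>c_0$) and $x\cdot e=\tfrac{c_0t}2$, so the first bound gives $u(t,x,\omega;\calH_e^-)\ge1-\theta^*$ and hence $T(x,\omega;\calH_e^-)\le\tfrac{2\,x\cdot e}{c_0}$. Conversely, for $t\in[t_0,\tfrac{x\cdot e}{2c_1}]$ --- a non-empty interval because $x\cdot e\ge l_0$ --- the second bound gives $u(t,x,\omega;\calH_e^-)<1-\theta^*$, and for $t<t_0$ the second bound at time $t_0$ (applicable since $x\cdot e\ge 2c_1t_0$) together with $u_t\ge0$ gives $u(t,x,\omega;\calH_e^-)\le u(t_0,x,\omega;\calH_e^-)<1-\theta^*$ as well; hence $T(x,\omega;\calH_e^-)\ge\tfrac{x\cdot e}{2c_1}$.

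Since all the ingredients are already available, there is no genuine obstacle here; the only thing needing care --- and what I would check explicitly --- is the bookkeeping of thresholds: a single $t_0$ must simultaneously make Lemma~\ref{L.2.2} usable (forcing $t\ge\kappa_0+1$) and push the upper-speed cone strictly past the plane $\{x\cdot e=2c_1t\}$ (forcing $c_1t\ge R_0+\kappa_1$), and $l_0=2c_1t_0$ must then be large enough for the window $[t_0,\tfrac{x\cdot e}{2c_1}]$ used in the lower bound on $T$ to be non-degenerate.
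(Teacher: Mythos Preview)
Your proof is correct and is precisely the approach the paper intends: the paper states only that the lemma ``is an immediate consequence of Lemma~\ref{L.2.2} and Corollary~\ref{C.2.1}'' without giving details, and your argument fills these in exactly as expected.
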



In order to prove Proposition \ref{P.4.5}, it will be necessary to simultaneously prove it for $T(x,\cdot\,;\calH^-_e)$ with other points $x$ satisfying $x\cdot e=l$.  The Infinite Monkey ``Theorem'' shows that in dimensions $d\ge 2$, this cannot involve all the points in the unbounded set $\{x\cdot e=l\}$, but we will be able to include all such points with $|x|\le O(l)$ (i.e., within a ball centered at $le$ and with linearly-in-time growing radius due to \eqref{4.18}).  This will be sufficient thanks to the speed of propagation of perturbations of solutions being finite (see Lemma \ref{L.2.1}).

This and Lemma \ref{L.5.3} motivate the definitions of the cylinders
\begin{align*}
\calC_e^-(R,l) &:=\left\{x\in\bbR^d\, \bigg| \, x\cdot e\in \left[\frac{c_0}{4c_1}l, l\right] \text{ and }\, |x-(x\cdot e)e|\leq R\right\}, \\
\calC_e^+(R,l) &:=\left\{x\in\bbR^d\, \bigg| \, x\cdot e\in \left[l, \frac{4c_1}{c_0}l \right] \text{ and }\, |x-(x\cdot e)e|\leq R\right\}
\end{align*}
and of the corresponding times
\[
\begin{aligned}
   T^-_e(R,l,\omega):&=\inf\left\{t\geq 0\, | \, u(t,\cdot,\omega;\calH_e^-)\geq 1-\theta^*\, \text{ on }\calC_e^-(R,l)\right\}\\
   &=\sup\left\{ T(x,\omega;\calH_e^-)\, | \, x\in \calC_e^-(R,l)\right\},\\
   T^+_e(R,l,\omega):&=\sup\left\{t\geq 0\, | \, u(t,\cdot,\omega;\calH_e^-)< 1-\theta^*\, \text{ on } \calC_e^+(R,l)\right\}\\
     &=\inf\left\{ T(x,\omega;\calH_e^-)\, | \, x\in \calC_e^+(R,l)\right\}.
\end{aligned}
\]
Obviously {$T^+_e(R,l,\cdot)\leq T^-_e(R,l,\cdot)$} because $u_t(t,\cdot,\cdot;\calH_e^-)> 0$ for all $t>0$.

Our next result shows that means of these times are sufficiently close to $\bbE\left[ T(le,\cdot\,;\calH_e^-)\right]$.

\begin{lemma}\lb{L.4.2}
There is $C>0$, and for each $\delta>0$ there is $C_\delta\geq 1$, such that with $l_0\geq 1$ from Lemma \ref{L.5.3} 
we have for all $ l\geq l_0$ and $R\ge 0$ (with ${\bar C}_\rho$ from \eqref{b.1'} resp.~\eqref{b.1''}),
\beq \lb{4.2}
\begin{aligned}
   \bbE\left[ T^-_e(R,l,\cdot\,) \right] 
   & \leq \bbE\left[ T(le,\cdot\,;\calH_e^-)\right]+ {\bar C}_\rho (C_\delta+\max\{R,l\}^{\delta}\,l^{\beta})  +C\max\{R,l\}^{d-1}l^2\, \phi \left( \frac{c_0}{4c_1} l \right), \\
    \bbE\left[T^+_e(R,l,\cdot\,) \right] 
    & \geq \bbE\left[ T(le,\cdot\,;\calH_e^-)\right] - {\bar C}_\rho (C_\delta+\max\{R,l\}^{\delta}\,l^{\beta}) -C\max\{R,l\}^{d-1}l^2 \, \phi(l).
\end{aligned}
\eeq
\end{lemma}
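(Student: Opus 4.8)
The plan is to prove both lines of \eqref{4.2} at once, in the unified notation \eqref{b.1'}--\eqref{b.1''} (so that \eqref{5.7} is available and ${\bar C}_\rho\ge1$), and for $l\ge l_0$ with $l_0$ taken large enough, as we may, that $\tfrac{c_0}{4c_1}l_0\ge1$. The strategy has three parts: replace the extrema $T^-_e(R,l,\cdot)$, $T^+_e(R,l,\cdot)$ over the cylinders $\calC_e^\pm(R,l)$ by extrema over finite $1$-nets; compare the means of the net arrival times to $\bbE[T(le,\cdot\,;\calH_e^-)]$ via Lemma~\ref{L.4.1}; and control the fluctuations by a union bound fed with \eqref{5.7}, after splitting off the ``$\phi$'' term.

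\emph{Net reduction.} I would first record two deterministic facts, both from Lemma~\ref{L.2.2}. Since $u_{0,\calH_e^-}\ge(1-\theta^*)\chi_{\calH_e^-}$ by \eqref{2.7} and $1-\theta^*>\theta_2$ by \eqref{2.10}, applying Lemma~\ref{L.2.2} from the foot of the perpendicular from $x$ to $\{z\cdot e=0\}\subseteq\calH_e^-$ (with $c=\tfrac{c_0}2$, $\theta=1-\theta^*$, and $u_t\ge0$ so that $t_0=0$ is allowed) gives $T(x,\omega;\calH_e^-)\le\kappa_0+1+\tfrac2{c_0}(x\cdot e)\le Cl$ for every $x\in\calC_e^-(R,l)\cup\calC_e^+(R,l)$, uniformly in $\omega$ and $R$ (using $x\cdot e\le\tfrac{4c_1}{c_0}l$ and $l\ge1$); and the same lemma shows $x\mapsto T(x,\omega;\calH_e^-)$ changes by at most a fixed constant $C_0$ over unit distances. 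The second fact lets me choose, inside each of $\calC_e^-(R,l)$ and $\calC_e^+(R,l)$, a $1$-dense finite set $\{x_1,\dots,x_N\}$ with $N\le C\max\{R,l\}^{d-1}l$ (each cylinder has $e$-extent $O(l)$ and a $(d-1)$-ball cross-section of radius $R$), whence $T^-_e(R,l,\cdot)\le\max_jT(x_j,\cdot\,;\calH_e^-)+C_0$ and $T^+_e(R,l,\cdot)\ge\min_jT(x_j,\cdot\,;\calH_e^-)-C_0$.

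\emph{Means and fluctuations.} For $\calC_e^-(R,l)$ every $x_j$ has $x_j\cdot e\le l$, so Lemma~\ref{L.4.1} gives $\max_j\bbE[T(x_j,\cdot\,;\calH_e^-)]\le\bbE[T(le,\cdot\,;\calH_e^-)]+C$; for $\calC_e^+(R,l)$ every $x_j$ has $x_j\cdot e\ge l$, so Lemma~\ref{L.4.1} and stationarity (which makes $\bbE[T(z,\cdot\,;\calH_e^-)]$ depend only on $z\cdot e$; if \textbf{(H4')} holds, first invoke Lemma~\ref{L.4.0}) give $\min_j\bbE[T(x_j,\cdot\,;\calH_e^-)]\ge\bbE[T(le,\cdot\,;\calH_e^-)]-C$. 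It then remains to bound $\bbE[\max_ja_j^+]$, where $a_j:=\pm(T(x_j,\cdot\,;\calH_e^-)-\bbE[T(x_j,\cdot\,;\calH_e^-)])$ with sign $+$ for $\calC_e^-$ and $-$ for $\calC_e^+$. By the deterministic bound, $0\le a_j^+\le Cl$; and a union bound plus \eqref{5.7}, using $x_j\cdot e\in[\tfrac{c_0}{4c_1}l,\tfrac{4c_1}{c_0}l]$ and monotonicity of $\phi$, gives
\[
\bbP\Big(\max_ja_j^+>\lambda\Big)\le N\Big(2\exp\big(-c\,{\bar C}_\rho^{-2}\lambda^2l^{-2\beta}\big)+\phi(\ell_\ast)\Big)\qquad(\lambda>0),
\]
with $\ell_\ast=\tfrac{c_0}{4c_1}l$ for $\calC_e^-$ and $\ell_\ast=l$ for $\calC_e^+$. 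Integrating $\bbP(\max_ja_j^+>\lambda)$ over $\lambda\in[\lambda_0,Cl]$ with $\lambda_0:={\bar C}_\rho l^\beta\max\{R,l\}^\delta$, the Gaussian contribution is $\le\tfrac{N{\bar C}_\rho l^\beta}{c\max\{R,l\}^\delta}e^{-c\max\{R,l\}^{2\delta}}\le C_\delta{\bar C}_\rho$ (because $N\le C\max\{R,l\}^{d-1}l$ and $m\mapsto m^{d+\beta}e^{-cm^{2\delta}}$ is bounded on $[1,\infty)$), while the $\phi$ contribution is $\le CNl\,\phi(\ell_\ast)\le C\max\{R,l\}^{d-1}l^2\phi(\ell_\ast)$; hence $\bbE[\max_ja_j^+]\le{\bar C}_\rho(C_\delta+\max\{R,l\}^\delta l^\beta)+C\max\{R,l\}^{d-1}l^2\phi(\ell_\ast)$. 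Plugging this into $\bbE[T^-_e(R,l,\cdot)]\le C_0+\max_j\bbE[T(x_j,\cdot\,;\calH_e^-)]+\bbE[\max_ja_j^+]$ and into the analogous lower bound for $T^+_e$, and absorbing the additive constants into ${\bar C}_\rho C_\delta$ (legitimate as ${\bar C}_\rho\ge1$), yields \eqref{4.2}; all constants are uniform in $e$.

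\emph{Main obstacle.} The delicate step is the integration: since $\int^\infty\phi(\ell_\ast)\,d\lambda=\infty$, the ``$\phi$'' error in \eqref{5.7} cannot be integrated over all $\lambda$, which is exactly why the $R$-independent, $\omega$-uniform deterministic bound $T(x,\cdot\,;\calH_e^-)\le Cl$ on the cylinders is needed --- it lets the $\lambda$-integral stop at $Cl$. Beyond that, the work is in calibrating $\lambda_0$: it has to sit a power $\max\{R,l\}^\delta$ above the natural fluctuation scale ${\bar C}_\rho l^\beta$, so that the factor $e^{-c\max\{R,l\}^{2\delta}}$ beats the net cardinality $N\lesssim\max\{R,l\}^{d-1}l$ produced by the union bound, while still fitting $\lambda_0$ itself inside the target bound ${\bar C}_\rho(C_\delta+\max\{R,l\}^\delta l^\beta)$.
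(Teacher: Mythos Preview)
Your proof is correct and follows essentially the same approach as the paper's: net reduction via the Lipschitz bound on $T$ from Lemma~\ref{L.2.2}, mean comparison via Lemma~\ref{L.4.1}, a union bound fed with \eqref{5.7}, and integration of the tail split at the threshold ${\bar C}_\rho\max\{R,l\}^\delta l^\beta$, with the deterministic upper bound $T\le Cl$ used to cap the $\phi$-contribution. The only cosmetic difference is that the paper first reduces to $R\ge l$ (noting $T^-_e$ is nondecreasing and $T^+_e$ nonincreasing in $R$, while the right-hand sides depend only on $\max\{R,l\}$) and then writes $R$ in place of your $\max\{R,l\}$ throughout.
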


\begin{proof}
The definition of $T^\pm_e(R,l,\omega)$ shows that it suffices to consider the case $R\ge l\ge l_0$.

For any $\lambda>0$ we have
\[
     \bbP\left[ T^-_e(R,l,\omega)-\bbE\left[ T(le,\cdot\,) \right]\geq \lambda\right]
    = \bbP\left[ \sup\left\{T(x,\omega)-\bbE\left[ T(le,\cdot\,) \right]\, | \, x\in \calC_e^-(R,l)\right\}\geq \lambda\right].
\]
Since Lemma \ref{L.2.2} yields
\[
|T(x,\omega)-T(y,\omega)|\leq \kappa_0+\frac{2|x-y|}{c_0},
\]
for any $\lambda\geq 4\kappa_0+\frac{4(\sqrt{d}+R_0)}{c_0}=:C_1$ 
we obtain
\begin{align*}
\bbP\left[ T^-_e(R,l,\omega)-\bbE\left[ T(le,\cdot\,) \right]\geq \lambda\right]
    & \leq \bbP\left[ \sup\left\{T(x,\omega)-\bbE\left[ T(le,\cdot\,) \right]\, | \, x\in \calC_e^-(R,l)\cap \bbZ^d\right\}\geq \frac{\lambda}{2}\right]\\
     &\leq  \sum_{x\in \calC_e^-(R,l)\cap \bbZ^d}\bbP\left[ T(x,\omega)-\bbE\left[ T(le,\cdot\,) \right]\geq \frac{\lambda}{2} + \kappa_0+\frac {2R_0}{c_0}\right].
\end{align*}
Since $x\in \calC_e^-(R,l)\cap \bbZ^d$ implies $x\cdot e\leq l$ and $f$ is stationary, Lemma \ref{L.4.1} now yields
\[
\bbP\left[ T^-_e(R,l,\omega)-\bbE\left[ T(le,\cdot\,) \right]\geq \lambda\right] \le \sum_{x\in \calC_e^-(R,l)\cap \bbZ^d}\bbP\left[ T(x,\omega)-\bbE\left[ T(x,\cdot\,) \right]\geq \frac{\lambda}{2}\right].
\]
The number of terms in this sum is bounded by $C_2 R^{d-1}l$ for some $C_2>0$.
Since each $x\in \calC_e^-(R,l)$ satisfies $d(x,\calH_e^-)\geq \frac{c_0}{4c_1}l$, by \eqref{5.7} we have for each $\lambda\geq C_1$,
\[
\bbP\left[ T^-_e(R,l,\omega)-\bbE\left[ T(l e,\cdot\,) \right]\geq \lambda\right]\leq C_2R^{d-1}l\exp \left( -{\bar C}_\rho^{-2}\,{\lambda^2}\,l^{-2\beta} \right) + C_2R^{d-1}l \,\phi \left( \frac{c_0}{4c_1} l \right).
\]
Moreover, this probability is clearly 0 when $\lambda> \kappa_0 + \frac{2}{c_0}l$.
Thus,
 for each $\delta> 0$ we obtain
\begin{align*}
\bbE & \left[ T^-_e(R,l,\cdot\,)  -\bbE\left[ T(le,\cdot\,) \right]\right]
=\int_0^\infty \bbP\left[ T^-_e(R,l,\omega)-\bbE\left[ T(le,\cdot\,) \right]\geq \lambda\right] d\lambda\\
&\leq \max\{C_1, {\bar C}_\rho R^\delta l^{\beta} \} +C_2R^{d-1}l \int_{{\bar C}_\rho R^\delta l^{\beta}}^\infty \exp \left( -{\bar C}_\rho^{-2}{\lambda^2}{l^{-2\beta}} \right)d\lambda + C_3R^{d-1}l^2 \,\phi \left( \frac{c_0}{4c_1} l \right) \\
&\leq   \max\{C_1, {\bar C}_\rho R^\delta l^{\beta} \} +C_2 {\bar C}_\rho R^{d-1}  l^{1+\beta}\int_{ R^\delta}^\infty e^{ -s^2}d s + C_3R^{d-1}l^2 \,\phi \left( \frac{c_0}{4c_1} l \right) ,
\end{align*}
with some $C_3>0$.
Since
\[
\int_r^\infty e^{-s^2}ds=e^{-r^2}\int_0^\infty e^{-s^2-2sr}ds\leq Ce^{-r^2},
\]
it follows from $R\geq l$ that there is $C_{\delta}\ge 1$ such that
\[
    R^{d-1}  l^{1+\beta}\int_{ R^{\delta}}^\infty e^{ -s^2} d s \leq   R^{d+\beta}e^{-R^{2\delta}}\leq C_{\delta}.
\]
This proves the first inequality in \eqref{4.2}.  The proof of the second is analogous.
\end{proof}

We can now show that $\bbE\left[ T(le,\cdot\,;\calH_e^-) \right]$ is close to being linear in $l$.

\begin{proposition}\lb{P.4.4}
There is $C>0$, and for each $\delta>0$ there is $C_\delta\geq 1$, such that for all $l, m \geq 0$ we have (with ${\bar C}_\rho$ from \eqref{b.1'} resp.~\eqref{b.1''})
\[
\big|\bbE\left[ T((l+m)e,\cdot\,;\calH_e^-) \right]-\bbE\left[ T(le,\cdot\,;\calH_e^-) \right]-\bbE\left[ T(me,\cdot\,;\calH_e^-) \right]\big|\leq {\bar C}_\rho (C_{\delta}+(l+m)^{\beta+\delta}).
\]
\end{proposition}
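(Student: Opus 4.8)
The plan is to prove the two inequalities separately, treating the \textbf{(H2')}+finite-range and \textbf{(H3)}+\textbf{(H4')} cases at once through the notation \eqref{b.1'}--\eqref{b.1''}. The statement is symmetric in $l$ and $m$, so I may assume $0\le m\le l$. Moreover, Lemma \ref{L.2.2} gives $|T((l+m)e,\cdot\,;\calH_e^-)-T(le,\cdot\,;\calH_e^-)|\le \kappa_0+\tfrac2{c_0}m$ and $\bbE[T(me,\cdot\,;\calH_e^-)]\le \kappa_0+\tfrac2{c_0}m$, so the claim is trivial unless $l$ exceeds a suitable constant depending only on \eqref{const}; I assume this, so in particular $l\ge1$. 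Finally I fix a radius $R:=C_0(l+1)$ with $C_0$ (depending only on \eqref{const}) large enough for the finite-speed arguments below, and note $l\le R\le C_0'(l+m)$.

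\emph{Upper bound (subadditivity).} Put $t_1:=T^-_e(R,l,\omega)$. Since $le\in\calC_e^-(R,l)$, Lemma \ref{L.5.3} gives $t_1\ge \tfrac l{2c_1}$ and hence $u(t_1,\cdot\,,\omega;\calH_e^-)\ge1-\theta^*$ on $\{x\cdot e\le\tfrac{c_0t_1}2\}$; together with $\calC_e^-(R,l)$ this shows $u(t_1,\cdot\,,\omega;\calH_e^-)\ge1-\theta^*$ on $\{x\,|\,x\cdot e\le l,\ |x-(x\cdot e)e|\le R\}$. As $1-\theta^*\ge\theta_2$, Lemma \ref{L.2.2} (with $c=\tfrac{c_0}2$, $\theta=1-\theta^*$) gives, after a further time $\tau_0:=\kappa_0+\tfrac{2R_0}{c_0}$, that the solution is $\ge1-\theta^*$ on the $R_0$-neighborhood of this set, which contains $B_R(le)\cap\{x\cdot e\le l+R_0\}$, hence the support of $\tilde v_0:=u_{0,\calH_e^-+le}\,\chi_{B_R(le)}$, on which also $1-\theta^*\ge u_{0,\calH_e^-+le}$. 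Thus $u(t_1+\tau_0,\cdot\,,\omega;\calH_e^-)\ge\tilde v_0$ everywhere, so by comparison $u(t_1+\tau_0+\cdot\,,\cdot\,,\omega;\calH_e^-)\ge\tilde v$, where $\tilde v$ solves \eqref{1.1} with $f(\cdot,\cdot,\omega)$ from $\tilde v_0$. Since $u_{0,\calH_e^-+le}-\tilde v_0$ is supported at distance $\ge R-m$ from $(l+m)e$ while $T((l+m)e,\omega;\calH_e^-+le)\le\kappa_0+\tfrac2{c_0}m$, Lemma \ref{L.2.1} and the choice of $C_0$ force $\tilde v(T((l+m)e,\omega;\calH_e^-+le),(l+m)e)\ge1-\theta^*-\epsilon_0>\theta_2$ for some constant $\epsilon_0$ depending only on \eqref{const}, and then one more use of Lemma \ref{L.2.2} gives $\tilde v$ the value $1-\theta^*$ at $(l+m)e$ after an additional $\kappa_0$. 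Altogether $T((l+m)e,\omega;\calH_e^-)\le T^-_e(R,l,\omega)+T((l+m)e,\omega;\calH_e^-+le)+\tau_0+\kappa_0$. Taking $\bbE$, using stationarity of $f$ (Lemma \ref{L.4.0} under \textbf{(H4')}) to get $\bbE[T((l+m)e,\cdot\,;\calH_e^-+le)]=\bbE[T(me,\cdot\,;\calH_e^-)]$, and the first inequality of Lemma \ref{L.4.2}, yields the upper half of the claim; here the term $C\max\{R,l\}^{d-1}l^2\,\phi(\tfrac{c_0}{4c_1}l)$ is bounded because $\max\{R,l\}^{d-1}l^2\le Cl^{d+1}$, $\phi(\tfrac{c_0}{4c_1}l)\le Cl^{-\be_3(2d+1+m_4')}$ (or $\phi\equiv0$ in the finite-range case), and $\be_3(2d+1+m_4')\ge2d+1\ge d+1$ by \eqref{3.1}, while $\max\{R,l\}^\delta l^\beta\le C(l+m)^{\beta+\delta}$.

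\emph{Lower bound (superadditivity).} Now $(l+m)e\in\calC_e^+(R,l)$ (as $l+m\le2l<\tfrac{4c_1}{c_0}l$), so $t^+:=T^+_e(R,l,\omega)\le T((l+m)e,\omega;\calH_e^-)$, and the point of $T^+_e$ is that $u(t^+,\cdot\,,\omega;\calH_e^-)\le1-\theta^*$ throughout $\calC_e^+(R,l)$, i.e.\ the front has not yet crossed the level-$l$ cylinder. Having chosen $R\asymp l+m$ large, Corollary \ref{C.2.1} together with $t^+\le\tfrac2{c_0}(l+m)$ (from \eqref{4.18}) forbids the solution from having ``raced around'' this cylinder to warm up a neighborhood of $(l+m)e$ before time $t^+$, so $u(t^+,\cdot\,,\omega;\calH_e^-)$ stays below the ignition level on $\{x\cdot e\ge l\}\cap B_{R/2}((l+m)e)$. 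From this one obtains, by a comparison of $u(t^+ +\cdot\,,\cdot\,,\omega;\calH_e^-)$ with a bounded time-shift of $u(\cdot\,,\cdot\,,\omega;\calH_e^-+le)$ run on the exterior region $\{x\cdot e>l\}$ near $(l+m)e$ (using Lemma \ref{L.2.1}-type estimates to absorb the uncontrolled part of $u$ lying behind $\{x\cdot e=l\}$, where $u(\cdot\,,\cdot\,,\omega;\calH_e^-+le)\ge1-\theta^*$), the bound $T((l+m)e,\omega;\calH_e^-)\ge T^+_e(R,l,\omega)+T((l+m)e,\omega;\calH_e^-+le)-C$. Taking $\bbE$, using stationarity and the second inequality of Lemma \ref{L.4.2} (its error terms controlled exactly as above), gives the lower half. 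I expect this last comparison to be the main obstacle: the solution has overshot past level $l$ and is close to $1$ behind $\{x\cdot e=l\}$ by the time it crosses the cylinder, so it is not directly dominated by a freshly restarted half-space solution at level $l$; the fix — choosing $R$ proportional to $l+m$ so that finite speed of propagation prevents the front from reaching near $(l+m)e$ other than by genuinely crossing the controlled cylinder, and then comparing only on $\{x\cdot e>l\}$ — is the delicate step.
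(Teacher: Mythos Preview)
Your upper bound argument is essentially correct and close in spirit to the paper's, though the paper uses Lemma~\ref{L.3.1} (with $\eta=0$) rather than your truncation trick with $\tilde v_0$ and Lemma~\ref{L.2.1}; both routes work here.

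The lower bound, however, has a genuine gap that you have correctly sensed but not resolved. Your key claim --- that $u(t^+,\cdot,\omega;\calH_e^-)$ ``stays below the ignition level on $\{x\cdot e\ge l\}\cap B_{R/2}((l+m)e)$'' --- is not justified. All that $T^+_e(R,l,\omega)$ gives you is $u(t^+,\cdot)<1-\theta^*$ on $\calC_e^+(R,l)$; the solution could still be well above $\theta_1$ there. Corollary~\ref{C.2.1} only controls where $u\ge1-\theta_1$, not where $u\ge\theta_1$, so finite speed of propagation alone cannot convert ``$<1-\theta^*$'' into ``$<\theta_1$''. Consequently your pointwise inequality $T((l+m)e,\omega;\calH_e^-)\ge T^+_e(R,l,\omega)+T((l+m)e,\omega;\calH_e^-+le)-C$ with a \emph{bounded} $C$ is too strong; the correct error is $O(l^{\be_1})$, not $O(1)$.

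The fix the paper uses relies essentially on the first estimate in \eqref{2.1} (bounded width of transition zones), together with the $\eta$-perturbed comparison of Lemma~\ref{L.3.1}. One chooses $\eta:=\min\{\tfrac{\theta^*}{2},\tfrac{1}{2M_*},l^{\be_1-1}\}$ and enlarges the cylinder radius to $R_l+R_m$ with $R_l:=\mu_*^{-1}(1+(\tfrac{2l}{c_0})^{\al_2})\eta^{-m_2}$. Then \eqref{2.1} upgrades ``$u_1'(0,\cdot)<1-\theta^*$ on $\calC_e^+$'' to ``$u_1'(0,\cdot)\le\eta$ for $x\cdot e\ge l+R_l$ in the cylinder''. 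Meanwhile one advances the restarted solution $u_2'=u(\cdot,\cdot,\omega;\calH_e^-+le)$ by $\tau_0^l:=\kappa_0+2R_lc_0^{-1}+D_1\eta^{1-m_1}$ so that, via Lemma~\ref{Cor.2.1}, $u_2'(\tau_0^l,\cdot)\ge1-\eta$ on $\calH_e^-+(l+R_l)e$. Now $u_2'(\tau_0^l,\cdot)\ge u_1'(0,\cdot)-\eta$ on the relevant strip, and Lemma~\ref{L.3.1} (used via Lemma~\ref{L.4.0'}) yields the comparison with an $M_*\eta$-multiplicative and $\tau_0^l$-additive error. The definition \eqref{3.13} of $\be_1$ is precisely what makes both $R_l$ and $\tau_0^l$ (and hence the total error) bounded by $Cl^{\be_1}\le Cl^{\beta}$.
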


\noindent {\it Remark.}  We will in fact only need the weaker upper bound ${\bar C}_\rho C_{\delta}(l+m)^{\beta+\delta}$.

\begin{proof}
Without loss of generality, we can assume that $l\geq m \geq 0$, and we also let $l_0$ be from Lemma \ref{L.5.3}. By Lemma \ref{L.2.2}, for all $m\leq \frac{4c_1}{c_0}l_0$ we have $ T(me,\cdot;\calH_e^-)\le\kappa_0+8c_1l_0$ and
\[
u( \tau_0,x,\cdot\,;\calH_e^--me)\geq u(0,x,\cdot\,;\calH_e^-),
\]
with $\tau_0:= \kappa_0+8c_1l_0 +2c_0^{-1}R_0$.
This and stationarity of $f$ yield
\[
\bbE\left[ T((l+m)e,\cdot\,;\calH_e^-) \right]=\bbE\left[ T(le,\cdot\,;\calH_e^--me) \right]\leq \tau_0+ \bbE\left[ T(le,\cdot\,;\calH_e^-) \right].
\]
All this and Lemma \ref{L.4.1} with $(l+m,le)$ in place of $(l,x)$ yield the claim  for all $m\leq \frac{4c_1}{c_0}l_0$ and $l\ge m$, with any $C_\delta\ge 2\tau_0$.

Now assume that $l\geq m\ge \frac{4c_1}{c_0}l_0$, and let us first prove the direction
\[
\bbE\left[ T((l+m)e,\cdot\,) \right]-\bbE\left[ T(le,\cdot\,) \right]-\bbE\left[ T(me,\cdot\,) \right] \leq {{\bar C}_\rho}(C_\delta+ l^{\beta+\delta}).
\]
Pick $R_m:= D_2(1+ \frac{2m}{c_0})$, with $D_2$ from Lemma \ref{L.3.1}, and denote $T_1(\omega):=T^-_e(R_m,l,\omega)$.
Then Lemma \ref{L.5.3} shows that for each $\omega\in\Omega$ we have ${T_1(\omega)}\in [\frac{l}{2c_1},\frac{2l}{c_0}]$ and hence also
\[
u_1(T_1(\omega),\cdot)\ge 1-\theta^* \qquad\text{on } \calH_e^- + (4c_1)^{-1}{c_0}l e 
\]
holds with $u_1:=u(\cdot,\cdot,\omega;\calH_e^-)$.  Therefore,
\[
u_1(T_1(\omega),\cdot)\ge 1-\theta^* \qquad\text{on }  \calC_e^-(R_m,l)\cup \left (\calH_e^- + (4c_1)^{-1}{c_0}l e \right). 
\]

Hence Lemma \ref{L.2.2} shows that with $u_2:=u(\cdot,\cdot,\omega;\calH^{-}_e+le)$ and ${\tau_0}:=\kappa_0+\frac{2R_0}{c_0}$, we have
\[
u_1({T_1(\omega)}+{\tau_0},\cdot)\geq (1-\theta^*)\chi_{\calH^{-}_e+(l+R_0)e} \geq u_2(0,\cdot) 
\qquad \text{ on } \big\{x\in\bbR^d \,\big|\, |x-(x\cdot e)e|\leq R_m \big\}
\]
%
Since the set above contains
$ B_{R_m}((l+m)e)$
and we also have
$T((l+m)e,\omega;\calH^{-}_e+le)\le \frac{2m}{c_0}$ by Lemma \ref{L.5.3}, we can apply  Lemma \ref{L.3.1} (see Lemma \ref{L.4.0'}) with $f_1=f_2=f$ and
\[
(u_1,u_2,\eta,y,t_0,R)=(u_1,u_2,0,(l+m)e,{T_1(\omega)}+{\tau_0},R_m)
\]
to obtain 
\beq\lb{5.5}
T((l+m)e,\omega;\calH^{-}_e)\leq T((l+m)e,\omega;\calH^{-}_e+le)+{T_1(\omega)}+{\tau_0}+2\kappa_*+\kappa_0.
\eeq
Taking expectations on both sides of this inequality and using stationarity of $f$ yields
\begin{align*}
    \bbE \left[T((l+m)e,\cdot\,;\calH^{-}_e)\right]&\leq \bbE \left[T((l+m)e,\cdot\,;\calH^{-}_e+le)\right]+\bbE \left[{T_1}(\cdot)\right]+{\tau_0}+2\kappa_*+\kappa_0\\
    &=\bbE \left[T(me,\cdot;\calH^{-}_e)\right]+\bbE \left[T_1(\cdot)\right]+{\tau_0}+2\kappa_*+\kappa_0.
\end{align*}
Since $R_m\le Cl$ and $ l^{d+1}\phi(\frac{c_0}{4c_1} l)\le C$ for some $C>0$ due to \textbf{(H4')} and $\be_3\ge\frac{d+1}{2d+1+m_4'}$, it now follows from Lemma \ref{L.4.2} that for any $\delta>0$ we indeed have
\[
\bbE \left[T((l+m)e,\cdot\,;\calH^{-}_e)\right]
\leq \bbE \left[T(me,\cdot\,;\calH^{-}_e)\right]+\bbE\left[T(le,\cdot\,;\calH^{-}_e)\right]+{\bar C}_\rho( C_\delta+l^{\beta+\delta}),
\]
with some $C,C_\delta$ (and ${\bar C}_\rho$ from \eqref{b.1'} resp.~\eqref{b.1''}).

Let us now turn to the other direction (again assuming $l\geq m\ge \frac{4c_1}{c_0}l_0$)
\[
\bbE\left[ T(le,\cdot\,;\calH^{-}_e) \right]+\bbE\left[ T(me,\cdot\,;\calH^{-}_e)-\bbE\left[ T((l+m)e,\cdot\,;\calH^{-}_e) \right] \right] \leq {\bar C}_\rho(C_\delta + l^{\beta+\delta}),
\]
the proof of which is a little more involved.  With $\be_1$ from \eqref{3.13}, let
\beq\lb{4.4}
\eta:=\min \left\{\frac{\theta^*}2,\,\frac1{2M_*},\,l^{\be_1-1} \right\} >0,
\eeq
and then (see Subsection 2.1 for the other constants)
\[ 
R_{l}:= \frac 1{\mu_*\eta^{m_2}} \left(1+\left(\frac{2l}{c_0}\right)^{\al_2}\right) \qquad\text{and}\qquad R_m:= D_2\left(1+\frac {2m}{c_0}\right).
\]
For any $\omega\in\Omega$, denote $T_1'(\omega):=T_e^{+}(R_{l}+R_m,l,\omega)$
and
\begin{align*}
    u'_1(t,x)&:=u(t+T_1'(\omega),x,\omega;\calH^{-}_e),\\
u'_2(t,x)&:=u(t,x,\omega;\calH^{-}_e+le).
\end{align*} 
Lemma \ref{L.5.3} yields $T_1'(\omega)\in \left[\frac{l}{2c_1},\frac{2l}{c_0}\right]$ and hence also
\[
u'_1(0,\cdot)\le 1-\theta^* \qquad\text{on } \calH_e^+ + {4c_1} c_0^{-1}l e, 
\]
where $\calH_e^+:=\bbR^d\setminus \calH_e^-$.  Therefore,
\[
u'_1(0,\cdot)\le 1-\theta^* \qquad\text{on } \calC_e^+(R_{l}+R_m,l) \cup (\calH_e^+ + {4c_1}c_0^{-1}l e). 
\]
From \eqref{2.1} and ${T_1'(\omega)}\leq \frac{2l} {c_0}$ we have
$R_l\geq  L_{u,\eta,1-\theta^*}({T_1'(\omega)})$,
and so
\beq\lb{4.16} 
u'_1(0,\cdot)\le \eta 
\qquad \text{ on } \big\{x\in\bbR^d \,\big|\, x\cdot e\ge l+R_l \text{ and } |x-(x\cdot e)e|\leq R_m \big\}.
\eeq

As for $u'_2$, Lemma \ref{L.2.2} shows that
\[
u'_2(\kappa_0+{2R_l}{c_0 }^{-1},\cdot)\ge 1-\theta^* \qquad\text{on } \calH_e^- + (l+R_l) e, 
\]
and Lemma \ref{Cor.2.1} then shows that for ${\tau_0^l}:= \kappa_0+{2R_l}{c_0 }^{-1} + D_1\eta^{1-{m_1}}$
we have
\beq\label{e.4.1}
u'_2(\tau_0^l,\cdot)\ge 1-\eta \qquad\text{on } \calH_e^- + (l+R_l) e.
\eeq 
Note also that there is $C>0$ such that
\beq\lb{5.333}
{\tau_0^l}\le C(l^{\al_2}\eta^{-{m_2}}+\eta^{1-m_1}) \le C l^{\be_1}
\eeq
because  \eqref{3.13} 
shows that
\[
\max\{{(1-\be_1)({m_1}-1)},{{\al_2}+(1-\be_1) {m_2}}\}= \be_1.
\]

From \eqref{4.16} and \eqref{e.4.1} we now have that
\[
u'_2({\tau_0^l},\cdot)\geq u'_1(0,\cdot)-\eta
\qquad \text{ on } \big\{x\in\bbR^d \,\big|\, |x-(x\cdot e)e|\leq R_m \big\}.
\]
Since the set above contains
$ B_{R_m}((l+m)e)$,
 we can apply  Lemma \ref{L.3.1} (see Lemma \ref{L.4.0'}) with $f_1=f_2=f$ and
\[
(u_1,u_2,\eta,y,t_0,R)=(u'_2,u'_1,\eta,(l+m)e, \tau_0^l,R_m)
\]
to obtain
\beq\lb{5.5'}
T((l+m)e,\omega;\calH^{-}_e+le) \le (1+{M_*}\eta) \big[T((l+m)e,\omega;\calH^{-}_e)  - T_1'(\omega) \big] +{\tau_0^l}+2\kappa_*+\kappa_0,
\eeq
provided we also have
\beq\lb{5.5''}
T((l+m)e,\omega;\calH^{-}_e)- T_1'(\omega)\le {2m}{c_0}^{-1}
\eeq
(notice that $T((l+m)e,\omega;\calH^{-}_e) \ge T_1'(\omega)$ because $l+m\le 2l\le \frac {4c_1}{c_0}l$).
But since Lemma \ref{L.2.2}  yields $T((l+m)e,\omega;\calH^{-}_e+le)\le \kappa_0+ \frac {2m}{c_0}$, 
\eqref{5.5'} obviously holds even if \eqref{5.5''} fails.
%
%
Since $T((m+l)e,\omega;\calH^{-}_e)\leq \frac{2(m+l)}{c_0}\leq \frac{4l}{c_0}$ by Lemma \ref{L.5.3}, we get from \eqref{4.4} and  \eqref{5.333},
\begin{align*}
    T((l+m)e,\omega;\calH^{-}_e+le)&\leq T((l+m)e,\omega;\calH^{-}_e) - {T_1'}(\omega) +{4l}{c_0}^{-1}{M_*}\eta +C l^{\be_1}\\
    &\leq T((l+m)e,\omega;\calH^{-}_e) - {T_1'}(\omega) +C l^{\be_1},
\end{align*}
for some $C>0$.
Taking expectations, using stationarity of $f$, $\be_1\le\beta$, 
as well as $R_l+R_m\le Cl$ and $ l^{d+1}\phi(l)\le C$ for some $C>0$ (due to \textbf{(H4')}, \eqref{3.13}, and $\be_3\ge\frac{d+1}{2d+1+m_4'}$),
and applying Lemma \ref{L.4.2} shows that for any $\delta>0$ we indeed have
\[ 
\begin{aligned}
    \bbE \left[T((l+m)e,\cdot\,;\calH^{-}_e)\right]&\geq \bbE \left[T((l+m)e,\cdot\,;\calH^{-}_e+le)\right]+\bbE \left[{T_1'}(\cdot)\right] -C l^{\be_1}\\
    &\ge \bbE \left[T(me,\cdot;\calH^{-}_e)\right]+\bbE\left[T(le,\cdot\,;\calH^{-}_e)\right]-{\bar C}_\rho( C_\delta+l^{\beta+\delta}),
\end{aligned}
\]
with some $C,C_\delta$ (and ${\bar C}_\rho$ from \eqref{b.1'} resp.~\eqref{b.1''}).
%
%
%
\end{proof}

Now we are ready to prove the main result of this section. 

\subsection{Proof of Proposition \ref{P.4.5}}
Let $G(l):=\frac{1}{l}\bbE[T(le,\cdot\,)]\ge 0$ and $\gamma:=\beta+\delta$. It follows from Lemma \ref{L.2.2} and Corollary \ref{C.2.1} that 
\[
\frac{1}{c_1}\leq \liminf_{l\to\infty} G(l)\le \limsup_{l\to\infty} G(l) \leq \frac{1}{c_0}.
\]
It therefore suffices to show that there is $C_\delta>0$ such that with either ${\bar C}_{\delta,\rho}:=C_\delta\sqrt\rho$ (when $f$ satisfies \textbf{(H2')}) or  ${\bar C}_{\delta,\rho}:=C_\delta$ (when $f$ satisfies \textbf{(H3)} and \textbf{(H4')}), we have
 for all $l\ge m\geq 2$,
\beq\lb{4.12}
|G(l)-G(m)|\leq  {\bar C}_{\delta,\rho}  m^{\gamma-1}.
\eeq
Since $\beta<1$, we only need to consider $\delta>0$ such that $\gamma<1$.

By Lemma \ref{L.2.2}, $G(l)$ is no more than
\begin{equation}\lb{4.20}
    \frac{1}{l}\left(\frac{2l}{c_0}+\kappa_0\right)\leq \frac{2}{c_0}+\kappa_0 
\end{equation}
for all $l\ge 1$,
and we also have $|T(le,\cdot)-T(me,\cdot)|\leq \frac{4}{c_0}+\kappa_0$ when $|l-m|\leq 2$. Therefore, there exists $C_0>0$ such that for all $l,m$ satisfying $m+2\geq l\geq m\geq 2$ we have
\begin{equation}\lb{4.28}
    |G(l)-G(m)|\leq \left|\frac{m-l}{m} G(l)\right|+\frac{1}{m} \bbE[|T(le,\cdot)-T(me,\cdot)|]\leq C_0m^{-1}.
\end{equation}
Using Proposition \ref{P.4.4}, we also see that there is $C_\delta\ge 1$ such that for any $l\ge m\geq 2$ (and with ${\bar C}_{\delta,\rho} \geq 1$ given above),
\beq\lb{4.11}
\begin{aligned}
    |G(l)-G(m)|&= \frac{1}{l}\left|\bbE[T(le,\cdot\,)]-\bbE[T(me,\cdot\,)]-\frac{l-m}{m}\bbE[T(me,\cdot\,)]\right|\\
    &\leq \frac{1}{l}\Big|\bbE[T(le,\cdot\,)]-\bbE[T(me,\cdot\,)]-\bbE[T((l-m)e,\cdot\,)]\Big|\\
    &\qquad +\frac{l-m}{l}\left|\frac{1}{m}\bbE[T(me,\cdot\,)]-\frac{1}{l-m}\bbE[T((l-m)e,\cdot\,)]\right|\\
    &\leq  {\bar C}_{\delta,\rho} l^{\gamma-1}+\frac{l-m}{l}|G(m)-G(l-m)|.
\end{aligned}
\eeq

 Now assume that for some $p\in \{2,3,4,...\}$, there is $N_p\ge 2C_0$ such that for all $m\in[2,2^p]$ and $l\geq m$ we have
\beq \lb{4.29}
|G(l)-G(m)|\leq N_p m^{\gamma-1}.
\eeq
This is in fact true for $p=2$, because  \eqref{4.20} shows that \eqref{4.29} holds for all $m\in[2,4]$ and $l\geq m$ with $N_2=\max\{4(\frac{2}{c_0}+\kappa_0),2C_0\}$.  We will then extend \eqref{4.29} to all $m\in [2,2^{p+1}]$ and $l\ge m$, with a relevant new constant $N_{p+1}\ge N_p$.

First, note that for any $m\in[2,2^{p+1}]$ and $l\in[m, \frac{3}{2}m]$ we have
\beq \lb{4.13}
    |G(l)-G(m)|\leq ({\bar C}_{\delta,\rho}+3^{-\gamma}N_p)l^{\gamma-1}.
\eeq
Indeed, this holds for $l\in[m,m+2] $ due to \eqref{4.28} and $\frac{C_0}m\le \frac{N_p}{m+2}\le 3^{-\gamma} N_p (m+2)^{\gamma-1}$.
And if instead $l\in [m+2,\frac{3}{2}m]$, then $l-m\in[2,\min\{2^p,\frac l3\}]$, so it follows from \eqref{4.11} and the induction hypothesis \eqref{4.29} that
\[
    |G(l)-G(m)| \leq {\bar C}_{\delta,\rho} l^{\gamma-1}+\frac{l-m}{l}N_p(l-m)^{\gamma-1}
    \leq ({\bar C}_{\delta,\rho}+3^{-\gamma}N_p) l^{\gamma-1}.
\]

Let us now consider $m\in (2^p,2^{p+1}]$ and $l\geq \frac{3}{2}m$.
Let $l_k:=2^{-k}l$ for $k=0,1,...,j$, with $j$ chosen so that $l_j\in (\frac{3}{4}m,\frac{3}{2}m]$. Since $l_k=l_{k-1}-l_k$, from \eqref{4.11} we obtain
\[
    |G(l)-G(l_j)|\leq \sum_{k=1}^j |G(l_{k-1})-G(l_{k})| \leq \sum_{k=1}^j {\bar C}_{\delta,\rho} \,l_{k-1}^{\gamma-1} \leq  {\bar C}_{\delta,\rho} \,l_j^{\gamma-1}\sum_{k=1}^j \left(2^{j-k+1}\right)^{\gamma-1} \leq {\bar C}_{\delta,\rho}' m^{\gamma-1},
\]
where ${\bar C}_{\delta,\rho}' :={\bar C}_{\delta,\rho}(\frac{4}{3})^{1-\gamma}\frac{2^{\gamma-1}}{1-2^{\gamma-1}}<\infty$ (recall that $\gamma<1$).

If  $l_j\in [m,\frac{3}{2}m]$, then \eqref{4.13} yields
\[
|G(l_j)-G(m)|\leq ({\bar C}_{\delta,\rho}+3^{-\gamma} N_p)l_j^{\gamma-1}\leq ({\bar C}_{\delta,\rho}+3^{-\gamma} N_p)m^{\gamma-1}.
\]
If instead $  l_j\in \left(\frac{3}{4}m,m\right)$, then $l_j\in [2,2^{p+1}]$ and $m\in [l_j,\frac{4}{3}l_j]$.
Hence \eqref{4.13} again yields
\[
\begin{aligned}
|G(m)-G(l_j)|
   \leq  ({\bar C}_{\delta,\rho}+3^{-\gamma}N_p)m^{\gamma-1}.
\end{aligned}
\]
In either case we obtain 
\[
|G(l)-G(m)|\leq  |G(l)-G(l_j)|+ |G(l_j)-G(m)|
\leq \left({\bar C}_{\delta,\rho}+{\bar C}_{\delta,\rho}' +3^{-\gamma}N_p\right) m^{\gamma-1}.
\]

This, \eqref{4.29}, and \eqref{4.13} now prove \eqref{4.29} with $p+1$ in place of $p$ (so it holds for all  $m\in[2,2^{p+1}]$ and $l\geq m$) and with 
\[
N_{p+1}={\bar C}_{\delta,\rho}+{\bar C}_{\delta,\rho}' +3^{-\gamma}N_p \le \max \left\{N_p,\frac{3^\gamma}{3^\gamma-1}({\bar C}_{\delta,\rho}+{\bar C}_{\delta,\rho}' ) \right\}.
\]
  Since \eqref{4.29} holds for $p=2$ with $N_2=\max\{4(\frac{2}{c_0}+\kappa_0),2C_0\}$, it follows that it holds for any $p=2,3,\dots$ with  $N_p= \max \left\{4(\frac{2}{c_0}+\kappa_0),2C_0,\frac{3^\gamma}{3^\gamma-1}({\bar C}_{\delta,\rho}+{\bar C}_{\delta,\rho}' ) \right\} =: {\bar C}_{\delta,\rho}''$.  This proves \eqref{4.12}.

\section{Deterministic Front Speeds and Proof of Theorem \ref{T.1.3}} \lb{S6}

We are now ready to prove Theorem \ref{T.1.3}.  This is because it was shown in \cite{zlatos2019} that such homogenization results for reaction-diffusion equations and related models follow from appropriate estimates on the dynamics of the solutions to \eqref{4.1} for all vectors $e\in \bbS^{d-1}$.  We will be able to obtain these estimates using the main results from Sections \ref{S3}--\ref{S5}.

Specifically, we will use Proposition \ref{P.4.5}, and either Proposition \ref{P.3.4} (when we assume \textbf{(H2')})  or Proposition \ref{P.3.1}  (when we assume \textbf{(H3)} and \textbf{(H4')}) in the proof.  We will handle both cases at once, using that either of the latter two propositions yields \eqref{5.7} above, with the notation from either \eqref{b.1'} in the first case or \eqref{b.1''} in the second. 

For us, the key result from \cite{zlatos2019} will be Theorem 5.4, which applies when for almost all $\omega\in\Omega$, the reaction $f(\cdot,\cdot,\omega)$ has  deterministic strong exclusive front speeds in all directions $e\in\bbS^{d-1}$.  We will first define these, following Definitions 1.3, 1.6, and Remark 3 after Hypothesis H' in \cite{zlatos2019}, and then prove their existence.  


\begin{definition}\lb{D.5.0}
Let $f$ satisfy \textbf{(H1)} and let $e\in \bbS^{d-1}$.  If there is $c^*(e)\in\bbR$ and $\Omega_e\subseteq\Omega$ with $\bbP(\Omega_e)=1$ such that for each $\omega\in\Omega_e$ and compact $K\subseteq\calH^{+}_e=\{x\in \bbR^d\,|\, x\cdot e>0\}$, 
\begin{align*}
&\lim_{t\to\infty} \,  \inf_{x\in(c^*(e)e-K)t}u(t,x,\omega;\calH_e^-)=1,\\
&\lim_{t\to\infty} \, \sup_{x\in(c^*(e)e+K)t}u(t,x,\omega;\calH_e^-)=0
\end{align*}
holds for the solution to \eqref{4.1} with $l=0$  and some $u_{0,\calH^{-}_e}$ satisfying \eqref{2.6} and \eqref{2.7} with $S=\calH^{-}_e$,
then we say that $c^*(e)$ is a \textit{deterministic front speed} in direction $e$ for \eqref{1.1}.

This speed is {\it strong} if for each such $\omega$ and  $K$, and each $\Lambda\ge 0$, we have
\beq\lb{6.111}
\begin{aligned}
&\lim_{t\to\infty}\inf_{|y|\leq \Lambda t} \, \inf_{x\in(c^*(e)e-K)t}u(t,x,\Upsilon_y\omega;\calH_e^-)=1,\\
&\lim_{t\to\infty}\sup_{|y|\leq \Lambda t} \, \sup_{x\in(c^*(e)e+K)t}u(t,x,\Upsilon_y\omega;\calH_e^-)=0.
\end{aligned}
\eeq

And if, in addition, for each such $\omega$ and  $K$ there is $\lambda_{K,\omega,e}:(0,1]\to (0,1]$ satisfying  $\lim_{a\to 0}\lambda_{K,\omega,e}(a)=0$ such that for each $\Lambda>0$ and $a\in (0,1]$ we have
\beq\lb{6.222}
\limsup_{t\to\infty} \sup_{|y|\leq \Lambda t} \, \sup_{x\in(c^*(e)e+K)t} w_{e,a}(t,x,\Upsilon_y\omega)\leq \lambda_{K,\omega,e}(a),
\eeq
where $w_{e,a}(\cdot,\cdot, \omega)$ solves \eqref{1.1} with initial data
\[
w_{e,a}(0,\cdot,\omega)=\chi_{\calH_e^-}+a \chi_{\calH_e^+},
\]
then $c^*(e)$ is a \textit{deterministic  strong exclusive front speed} in direction $e$ for \eqref{1.1}.
\end{definition}

\noindent {\it Remarks.}  
1.  Lemma \ref{L.2.2} and the comparison principle show that all these definitions are independent of the choice of $u_{0,\calH^{-}_e}$ satisfying \eqref{2.6} and \eqref{2.7}.  We could equivalently choose $u_{0,\calH^{-}_e}:=(1-\theta^*)\chi_{\calH^{-}_e}$ here, but having solutions with $u_t\ge 0$ will be more convenient.
\smallskip

2.  We will show that in Theorem \ref{T.1.3},  $\lambda_{K,\omega,e}(a)=a$ for all $(K,\omega,e)$ as above and all small enough $a>0$ (depending on $ M, \theta_1,m_1,\al_1, \mu_*,K$).
\smallskip

Let us first show that the reactions we consider here have deterministic strong front speeds, and then we will show that all these speeds are also exclusive.

\begin{proposition}\lb{P.5.3}
Assume that $f$ either satisfies \textbf{(H2')} and has a finite range of dependence, or satisfies \textbf{(H3)} and  \textbf{(H4')}. For each $e\in\bbS^{d-1}$, let $\bar{T}(e)$ be from Proposition \ref{P.4.5}. Then
$c^*(e):=\bar{T}(e)^{-1}\in[c_0,c_1]$ is the deterministic strong front speed in direction $e$ for \eqref{1.1}.
\end{proposition}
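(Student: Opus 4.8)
The plan is to show that $c^*(e) := \bar T(e)^{-1}$ is the deterministic strong front speed by translating the fluctuation estimate \eqref{5.7} together with the near-linearity of $\bbE[T(le,\cdot\,;\calH_e^-)]$ (Proposition \ref{P.4.5}) into almost sure statements about the arrival time function $T(x,\omega;\calH_e^-)$, and then upgrading these from individual points to the uniform statements in Definition \ref{D.5.0}. First I would record that by Proposition \ref{P.4.5}, for any fixed $x$ with $x\cdot e = l$ large we have $|\bbE[T(x,\cdot\,;\calH_e^-)] - \bar T(e)\, l| \le {\bar C}_{\rho}\, l^{\beta'}$ for some $\beta'<1$ (using Lemma \ref{L.4.1} to replace $le$ by a general $x$ on the hyperplane, at the cost of an $O(1)$ error). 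Combining this with \eqref{5.7}, a Borel--Cantelli argument along a sequence $l_n\to\infty$ (say $l_n = n$) gives, for a.e.\ $\omega$, that $T(x_n,\omega;\calH_e^-) = \bar T(e)(x_n\cdot e) + o(x_n\cdot e)$ whenever $x_n\cdot e = l_n$ and $|x_n|\le \Lambda l_n$; the point is that the Gaussian tail $2\exp(-{\bar C}_\rho^{-2}\lambda^2 (x\cdot e)^{-2\beta})$ with $\lambda \sim (x\cdot e)^{\gamma}$, $\gamma\in(\beta,1)$, is summable even after taking a union bound over the $O(l_n^{d})$ lattice points in the relevant ball (and, in the \textbf{(H4')} case, $\phi(l_n^{\beta_3})$ is also summable by the bound $\nu(a)\le \lceil a\rceil^{-(2d+1+m_4')}$).

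Next I would interpolate between the $l_n$: using Lemma \ref{L.2.2} and Corollary \ref{C.2.1} (which give $T$ Lipschitz-like control and the two-sided linear bounds $x\cdot e/2c_1 \le T \le 2(x\cdot e)/c_0$ from Lemma \ref{L.5.3}), the estimate at $l_n$ and $l_{n+1}$ transfers to all $t$ in between, so that a.s.
\[
\lim_{t\to\infty}\ \sup_{|x|\le \Lambda t,\ x\cdot e = t} \left| \frac{T(x,\omega;\calH_e^-)}{t} - \bar T(e) \right| = 0
\]
holds simultaneously for all rational $\Lambda$, hence all $\Lambda$. This is exactly the statement that the level set $\Gamma_{u,1-\theta^*}(t,\omega;\calH_e^-)$ has, to leading order, the halfspace boundary $\{x\cdot e = c^*(e) t\}$ within any linearly growing ball. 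From here the first (inner) limit in Definition \ref{D.5.0} — that $u(t,x,\omega;\calH_e^-)\to 1$ for $x\in (c^*(e)e - K)t$ — follows because such $x$ satisfy $x\cdot e \le (c^*(e) - \delta_K)t$ for some $\delta_K>0$, so $T(x,\omega) \le (\bar T(e) + o(1))x\cdot e < t$ eventually, combined with the fact (Lemma \ref{L.2.2}, applied after $u$ reaches $1-\theta^*$, or Lemma \ref{L.2.2} with $\theta$ close to $1$) that once $u(T(x,\omega),x) \ge 1-\theta^*$ the solution at $x$ converges locally uniformly to $1$. The second (outer) limit, $u(t,x,\omega;\calH_e^-)\to 0$ for $x\in(c^*(e)e+K)t$, follows from $T(x,\omega) > t$ eventually together with the width bound \eqref{2.1}: points well ahead of the $1-\theta^*$ level set are also ahead of the $\eta$ level set for any fixed $\eta$ (since $L_{u,\eta,1-\theta^*}(t) = o(t)$), hence $u(t,x,\omega) < \eta$, and $\eta$ is arbitrary. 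To get the \emph{strong} version \eqref{6.111}, with the extra $\sup_{|y|\le \Lambda t}$, I would use stationarity: $u(t,x,\Upsilon_y\omega;\calH_e^-)$ relates to $u(t,x+y,\omega;\calH_e^-+ (y\cdot e) e)$ up to shifting the hyperplane, and Lemma \ref{L.4.1} / Proposition \ref{P.4.5} give control of $\bbE[T]$ uniformly over such shifts; so the same Borel--Cantelli union bound, now taken over the larger (still polynomially many) set of lattice points $x$ with $|x|\le \Lambda t$ together with lattice shifts $y$ with $|y|\le \Lambda t$, works with the same summable tails. This is where tracking the uniformity of all constants in $e$ (noted throughout Section \ref{S5}) and the $\rho$-dependence pays off.

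The main obstacle, I expect, is the interpolation/uniformity step: \eqref{5.7} and Proposition \ref{P.4.5} are pointwise-in-$x$ (and in $l$) statements, whereas Definition \ref{D.5.0} demands control that is uniform over a ball of radius $\sim \Lambda t$ and over the shift parameter $y$ with $|y|\le \Lambda t$, at \emph{all} times $t$, not just along a subsequence. Handling this requires carefully choosing the fluctuation scale $\lambda = \lambda(t) \sim t^\gamma$ so that, after multiplying the Gaussian bound by the polynomial factor counting lattice points and shifts (roughly $t^{2d}$) and summing over $t\in\bbN$, the series still converges — which forces $\gamma > \beta$ but $\gamma < 1$, available precisely because $\beta<1$ — and then using the deterministic Lipschitz estimate from Lemma \ref{L.2.2} to fill the gaps between consecutive integer times and between lattice points. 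A secondary technical point is that one must verify the definition is independent of the choice of $u_{0,\calH_e^-}$ (Remark 1 after Definition \ref{D.5.0}, already granted) and that the convergence $u\to 1$ behind the front is genuinely locally uniform, for which Lemma \ref{L.2.2} with $\theta \to 1$ (or the stated convergence of solutions reaching $1-\theta^*$ to $1$) suffices. Exclusivity — the bound \eqref{6.222} on $w_{e,a}$ — I would treat afterwards using Lemma \ref{L.3.1} (or its analogue) to compare $w_{e,a}$ with the time-shifted $u(\cdot,\cdot,\omega;\calH_e^-)$: since $w_{e,a}(0,\cdot) \le \chi_{\calH_e^-} + a$ and the $a$-perturbation only matters where $u$ has not yet arrived, the supersolution $u_+$ from Lemma \ref{L.3.1} with $\eta = O(a)$ dominates $w_{e,a}$, giving $\limsup w_{e,a} \le O(a)$ ahead of the front, i.e.\ $\lambda_{K,\omega,e}(a) = a$ for small $a$ as claimed in Remark 2.
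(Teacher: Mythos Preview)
Your proposal is correct and follows essentially the same route as the paper: combine Proposition~\ref{P.4.5} with the fluctuation bound \eqref{5.7}, take a union bound over $O(t^{2d})$ lattice points in $(x,y)$, apply Borel--Cantelli along integer times, and use the width bound \eqref{2.1} for the outer limit. The paper's interpolation from integer times to all $t$ is slightly slicker than yours --- rather than Lipschitz control of $T$, it enlarges the compact set ($K'\subseteq K^0$) so that $(c^*(e)e-K')t\subseteq (c^*(e)e-K)\lfloor t\rfloor$ for large $t$ and then invokes $u_t\ge 0$ --- but this is cosmetic; also note that exclusivity is a separate proposition (\ref{P.6.4}), not part of the present statement.
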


\begin{proof}
Fix any $e\in\bbS^{d-1}$, $\Lambda\ge 0$, and compact $K'\subseteq \calH_e^+$.  Let $K\subseteq \calH_e^+$ be compact and such that $K'\subseteq K^0$,  let ${d_K}:=d(K,\calH_e^-)>0$, and let
$A_K:=1+\diam (K)^d<\infty$.
Let us also use the notation \eqref{5.444},
and for any $\eta\in (0,\theta^*]$ and $t\ge 0$, let
\[
I_t^\eta(K,\Lambda):= \left\{ \omega\in\Omega\,\bigg|\, \inf_{|y|\leq \Lambda (t+1)} \, \inf_{x\in(c^*(e)e-K)t}u(t,x,\Upsilon_y\omega)< 1-\eta \right\}.
\]

Assume that $u(t,x,\Upsilon_y\omega)\geq 1-\theta^*$ for some $(t,x,y,\omega)\in [0,\infty)\times\bbR^{2d}\times\Omega$. Since for all $y'\in\bbR^d$ we have $u(\cdot,\cdot,\Upsilon_{y'}\omega )=u(\cdot,\cdot+y',\omega ;\calH_e^-+(y'\cdot e)e)$, Lemma \ref{L.2.2} and comparison principle yield for all $y'\in B_{\sqrt{d}}(y)$,
\[
u(\cdot+\tau_0,\cdot-y',\Upsilon_{y'}\omega )\geq u(\cdot,\cdot-y,\Upsilon_y\omega ),
\]
with $\tau_0:=\kappa_0+2c_0^{-1}(R_0+\sqrt{d})$. Applying Lemma \ref{L.2.2} again, we obtain for all $y'\in B_{\sqrt{d}}(y)$,
\[
u(t+2\tau_0,x,\Upsilon_{y'}\omega )\geq u(t+\tau_0,x+y'-y,\Upsilon_y\omega)\geq 1-\theta^*.
\]
Then Lemma \ref{Cor.2.1} shows that if $\eta\in (0,\theta^*]$ and $\tau_\eta:=2\tau_0+4c_0^{-1}\sqrt{d}+D_1\eta^{1-{m_1}}$, then
\beq\lb{6.2}
u(t+\tau_\eta,\cdot,\Upsilon_{y'}\omega)\geq (1-\eta)\chi_{B_{\sqrt{d}}(x)}
\eeq
for all $y'\in B_{\sqrt{d}}(y)$.
This shows that we can only have $\omega\in I_t^\eta(K,\Lambda)$ for some  $t\geq \tau_\eta$ if
\begin{align*}
\inf_{(x,y)\in Z_{K,\Lambda,t}}u(t-\tau_\eta,x,\Upsilon_y\omega)< 1-{\theta^*},
\end{align*}
where 
$
Z_{K,\Lambda,t}:=((c^*(e)e-K)t\cap \bbZ^d)\times (B_{\Lambda (t+1)}(0)\cap\bbZ^d).
$
From this we obtain
\begin{align*}
\bbP[I_t^\eta(K,\Lambda)]
   & \leq  \sum_{(x,y)\in Z_{K,\Lambda,t}}\bbP\left[ u(t-{\tau_\eta},x,\Upsilon_y\omega)< 1-\theta^* \right]\\
   & =  \sum_{(x,y)\in Z_{K,\Lambda,t}}\bbP\left[ T(x,\Upsilon_y\omega)> t-{\tau_\eta} \right] .
\end{align*}
If $t\ge 2(\tau_\eta+\kappa_0+\frac 2{c_0})$, then from Lemma \ref{L.2.2} we have $T(x,\Upsilon_y\omega)\le t-\tau_\eta$ whenever $x\cdot e\le 1 + \frac{c_0t}4$.  Hence, with $Z_{K,\Lambda,t}':=\{(x,y)\in Z_{K,\Lambda,t}\,|\, x\cdot e> 1 + \frac{c_0t}4 \}$ we obtain
\beq\lb{5.3}
\bbP[I_t^\eta(K,\Lambda)] \leq  \sum_{(x,y)\in Z_{K,\Lambda,t}'}\bbP\left[ T(x,\Upsilon_y\omega)> t-{\tau_\eta} \right] .
\eeq
Note also that  there is $C>0$ such that this sum has at most $C(1+\Lambda^d)A_K t^{2d}$ terms.

Consider any $(x,y)\in Z_{K,\Lambda,t}'$, where $t\ge 2(\tau_\eta+\kappa_0+\frac 2{c_0})$.
With the notation from \eqref{b.1'} resp.~\eqref{b.1''}, and $\beta':=\frac{1+\beta}{2}\in (0,1)$, Proposition \ref{P.4.5} and stationarity of $f$ yield
\[
\left|\frac{\bbE[T(x,\cdot\,)]}{x\cdot e}-\frac 1{c^*(e)} \right|\leq \bar{C}_\rho (x\cdot e)^{\beta'-1}.
\]
Since $x\cdot e\leq (c^*(e)-{d_K})t$  and $c^*(e)={\bar{T}(e)}^{-1}\leq c_1$, we obtain
\[
\bbE[T(x,\cdot\,)] \leq \frac{x\cdot e}{c^*(e)}+\bar{C}_\rho (x\cdot e)^{\beta'}
\leq t-\frac{{d_K}t}{c^*(e)}+\bar{C}_\rho((c^*(e)-{d_K})t)^{\beta'}
    \leq  t-\frac{{d_K}t}{2c_1}
\]
whenever 
\[
t\geq 
\max \left\{ ({2\bar{C}_\rho c_1^{1+\beta'} d_K^{-1}} )^{\frac{1}{1-{\beta'}}}, 2(\tau_\eta+\kappa_0+ 2{c_0}^{-1}),  4c_1d_K^{-1}\tau_\eta \right\}.
\]
Hence for such $t$, \eqref{5.7} yields $C>0$ (defining $\bar C_\rho$ via \eqref{b.1'} resp.~\eqref{b.1''}) such that
\begin{align*}
    \bbP\left[ T(x,\Upsilon_y\omega) > t-{\tau_\eta} \right]&\leq
    \bbP\left[ \left|T(x,\Upsilon_y\omega)-\bbE[T(x,\cdot\,)]\right|\geq \frac{ {d_K}t}{2c_1}-{\tau_\eta} \right]\\
&\leq 2\exp\left(-\bar{C}_\rho^{-2}{\left( \frac{ {d_K}t}{2c_1}-{\tau_\eta}\right)^2} (x\cdot e)^{-2{\beta}} \right) + \phi(x\cdot e)\\
& \leq 2\exp\left(-\bar{C}_\rho^{-2} d_K^2 t^{2-2{\beta}}\right) + C t^{-\be_3(2d+1+m_4')}
\end{align*}
when  $(x,y)\in Z_{K,\Lambda,t}'$, where we also used that $\frac{c_0t}4\le x\cdot e\leq c_1t$ and $\tau_\eta\le \frac{d_K}{4c_1}t$ (recall that $C$ can change from line to line).  This and \eqref{5.3} show that for all large enough $t$ we have 
\[
\bbP\left[I_t^\eta(K,\Lambda)\right]\leq C(1+\Lambda^d) A_K t^{2d} \left( \exp\left(-\bar{C}_\rho^{-2}d_K^2 t^{2-2{\beta}}\right) +  t^{-\be_3(2d+1+m_4')} \right) .
\]

Then $\sum_{n\geq 1}\bbP[I_n^\eta(K,\Lambda)]<\infty$ since $\be_3> \frac{2d+1}{2d+1+m_4'}$, so the Borel-Cantelli Lemma shows that for a.e.~$\omega\in\Omega$, there is $N_\omega$ such that $\omega\notin \bigcup_{n\ge N_\omega} I_n^\eta(K,\Lambda)$.  
But since $K'\subseteq K^0$ means there is $\tau$ such that 
\[
(c^*(e)e-K')t \subseteq (c^*(e)e-K)\lfloor t \rfloor
\]
 for all $t\ge \tau$, from $u_t\ge 0$ and the definition of $I_t^\eta(K,\Lambda)$ we obtain 
\[
\inf_{|y|\leq \Lambda t} \, \inf_{x\in(c^*(e)e-K')t}u(t,x,\Upsilon_y\omega)\ge  1-\eta
\]
for all such $\omega$ and all $t\ge\max\{N_\omega,\tau\}$.  Applying this argument with $\eta=\frac 1n$, $\Lambda=n$, and $K'=\{x\in\bbR^d\,|\, x\cdot e\in[\frac 1n,n]\text{ and }|x-(x\cdot e)e|\le n\}$ for each $n\in\bbN$ yields $\Omega_1\subseteq\Omega$ with $\bbP(\Omega_1)=1$ for which the first statement in \eqref{6.111} holds.


%

It  remains to prove the the second statement for some $\Omega_2$ with $\bbP(\Omega_2)=1$, as we can then take $\Omega_e:=\Omega_1\cup\Omega_2$.  The proof is similar to that of the first statement.  With the setup from the start of its proof, let now
\[
I_t^\eta(K,\Lambda):=\bigg\{\omega\in\Omega\,\bigg|\, \sup_{|y|\leq \Lambda t}\sup_{x\in(c^*(e)e+K)t}u(t,x,\Upsilon_y\omega)> \eta \bigg\}.
\]

Assume that $u(t,x,\Upsilon_y\omega)> \eta$ for some $(t,x,y,\omega)\in [0,\infty)\times\bbR^{2d}\times\Omega$.
Then \eqref{2.1} shows that with $L_{t,\eta}:= \mu_{*}^{-1}(1+t^{\al_2} )\eta^{-{m_2}}$, there is $x'\in B_{L_{t,\eta}}(x)$ such that
\[
u(t,x',\Upsilon_y\omega)\geq 1-\theta^*.
\] 
Lemma \ref{L.2.2} now shows that 
\[
u(t+ \kappa_0+2c_0^{-1}L_{t,\eta},x ,\Upsilon_y\omega)\geq 1-\theta^*.
\]
In the same way as we obtained \eqref{6.2} (but using Lemma \ref{L.2.2} instead of Lemma \ref{Cor.2.1}), we now get for all $y'\in B_{\sqrt{d}}(y)$ and with $\tau_{t,\eta}:=3\kappa_0+\frac 2{c_0}(L_{t,\eta}+R_0+2\sqrt d)$,
\[
u(t+\tau_{t,\eta},\cdot,\Upsilon_{y'}\omega )\geq (1-\theta^*)\chi_{B_{\sqrt d}(x)}.
\]
So similarly to \eqref{5.3}, with $Z_{K,\Lambda,t}:=((c^*(e)e+K)t\cap  \bbZ^d )\times (B_{\Lambda t}(0)\cap\bbZ^d)$
 we get for all $t\ge 0$,
\beq\lb{6.3}
\bbP[I_t^\eta(K,\Lambda)] \leq  \sum_{(x,y)\in Z_{K,\Lambda,t}} \bbP\left[ T(x,\Upsilon_y\omega)\leq t+\tau_{t,\eta} \right].
\eeq
And again, there is $C>0$ such that this sum has at most $C(1+\Lambda^d)A_K t^{2d}$ terms.


Let $d_K':=d_H(K,\calH_e^-)$ and consider any $t\ge \frac 1{c_0}$ and $(x,y)\in Z_{K,\Lambda,t}$. Then $c^*(e)t\ge 1$ and 
\beq\lb{6.5}
x\cdot e\in [  (c^*(e)+{d_K})t,\,(c^*(e)+d_K') t].
\eeq
So Proposition \ref{P.4.5} and $c^*(e)=\bar{T}(e)^{-1}$ imply as above (with $\beta'=\frac{1+{\beta}}{2}$), 
\begin{align*}
\bbE[T(x,\cdot\,)]\geq  \frac{x\cdot e}{c^*(e)}-\bar{C}_\rho (x\cdot e)^{{\beta'}} \ge t+\frac {d_Kt}{c^*(e)} - 
\bar{C}_\rho ((c^*(e)+d_K') t)^{{\beta'}} \ge t+\frac {d_K t}{2c_1}
\end{align*}
whenever 
\[
t\geq 
\max \left\{ ({2\bar{C}_\rho (c_1+d_K')^{1+\beta'} d_K^{-1}} )^{\frac{1}{1-{\beta'}}}, c_0^{-1}, C_{K,\eta} \right\},
\]
where $C_{K,\eta}$ is such that $\tau_{t,\eta}\le \frac {d_K t}{4c_1}$ for all $t\ge C_{K,\eta}$ (this exists because $\alpha_2<1$, and will be used next).
Hence for such $t$, \eqref{5.7} yields $C>0$ (defining $\bar C_\rho$ via \eqref{b.1'} resp.~\eqref{b.1''}) such that
\begin{align*}
    \bbP\left[ T(x,\Upsilon_y\omega)\leq t+\tau_{t,\eta} \right]&\leq
    \bbP\left[ \left|T(x,\Upsilon_y\omega)-\bbE[T(x,\cdot\,)]\right|\geq\frac{d_K t}{2c_1}-{\tau}_{t,\eta} \right]\\
&\leq 2\exp\left(-\bar{C}_\rho^{-2}{ \left(\frac{d_K t}{2c_1}-{\tau}_{t,\eta} \right)^2} (x\cdot e)^{-2{\beta}}\right) + \phi(x\cdot e) \\
&\leq 2\exp\left(-\bar{C}_\rho^{-2}d_K^2(1+d_K')^{-2\beta}t^{2-{2\beta}} \right) + C t^{-\be_3(2d+1+m_4')}
\end{align*}
when  $(x,y)\in Z_{K,\Lambda,t}$, where  we  also used  \eqref{6.5} and $\tau_{t,\eta}\le \frac {d_K t}{4c_1}$ in the last inequality.
This and \eqref{6.3} show that for all large enough $t$ we have
\[
\bbP[I_t^\eta(K,\Lambda)]\leq C(1+\Lambda^d) A_K t^{2d} \left( \exp\left(-\bar{C}_\rho^{-2}d_K^2(1+d_K')^{-2\beta} t^{2-2{\beta}}\right) + t^{-\be_3(2d+1+m_4')} \right) .
\]

We can now conclude the proof of the second statement in \eqref{6.111} as we did the proof of the first statement, this time using that
\[
(c^*(e)e+K')t \subseteq (c^*(e)e+K)\lceil t \rceil
\]
for all large enough $t$.
\end{proof}

\noindent {\it Remark.}  This proof shows that \eqref{6.111} holds with $\Lambda t$ replaced by $\exp(t^\gamma)$ for any $\gamma<2-2\beta$.



\begin{proposition}\lb{P.6.4}
Under the hypotheses of Proposition \ref{P.5.3}, for each $e\in\bbS^{d-1}$, the speed $c^*(e)$ is also a deterministic strong  exclusive front speed in direction $e$ for \eqref{1.1}.
\end{proposition}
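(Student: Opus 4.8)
By Proposition~\ref{P.5.3} the number $c^*(e)=\bar T(e)^{-1}\in[c_0,c_1]$ is already a deterministic strong front speed in direction $e$, so it remains only to verify the exclusivity estimate of Definition~\ref{D.5.0}: for a.e.\ $\omega$ and each compact $K\subseteq\calH_e^+$ one must produce $\lambda_{K,\omega,e}:(0,1]\to(0,1]$ with $\lambda_{K,\omega,e}(a)\to 0$ as $a\to0$ such that, for every $\Lambda>0$ and $a\in(0,1]$, $\limsup_{t\to\infty}\sup_{|y|\le\Lambda t}\sup_{x\in(c^*(e)e+K)t}w_{e,a}(t,x,\Upsilon_y\omega)\le\lambda_{K,\omega,e}(a)$. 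Write $d_K:=d(K,\calH_e^-)>0$. Since $w_{e,a}(0,\cdot)\le 1$ and $f(\cdot,1,\cdot)\equiv0$ force $w_{e,a}\le1$ at all times, it suffices to exhibit $a_0=a_0(M,\theta_1,m_1,\al_1,\mu_*,d_K)\in(0,\theta^*]$ so that for every $a\le a_0$ the above $\limsup$ equals $0$; one then puts $\lambda_{K,\omega,e}(a):=a$ for $a\le a_0$ and $\lambda_{K,\omega,e}(a):=1$ for $a>a_0$, which matches Remark~2 after Definition~\ref{D.5.0}. Fix such an $a\le a_0$.

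The starting point is the identity $w_{e,a}=\bar w+a$, where $\bar w(\cdot,\cdot,\omega)$ solves \eqref{1.1} with reaction $g(x,u,\omega):=f(x,u+a,\omega)$ (extended by $0$) and initial datum $\bar w(0,\cdot)=(1-a)\chi_{\calH_e^-}$: indeed $(v+a)_t=\Delta(v+a)+f(x,v+a,\omega)$ is exactly $v_t=\Delta v+g(x,v,\omega)$, and the initial data agree. As $a<\theta_1$, the reaction $g$ satisfies \textbf{(H1')} with $(M,\tfrac12\theta_1)$ in place of $(M,\theta_1)$, vanishes on $[0,\theta_1-a]\cup[1-a,1]$, has the same range of dependence as $f$, obeys $\|g-f\|_\infty\le Ma$, and is a pure ignition reaction whenever $f$ is. Hence it suffices to show $\limsup_{t\to\infty}\sup_{|y|\le\Lambda t}\sup_{x\in(c^*(e)e+K)t}\bar w(t,x,\Upsilon_y\omega)=0$ for a.e.\ $\omega$ (note $\bar w\ge0$ decays to $0$ at spatial infinity, so its transition widths are meaningful). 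Two deterministic comparisons (valid for all $\omega$) will be used: by Lemmas~\ref{L.2.2}, \ref{Cor.2.1} and \eqref{2.7} one has $u(\tau_a,\cdot,\Upsilon_y\omega;\calH_e^-)\ge(1-a)\chi_{\calH_e^-}=\bar w(0,\cdot)$ with $\tau_a:=\kappa_0+1+D_1a^{1-m_1}$, where $u(\cdot,\cdot,\cdot;\calH_e^-)$ is the $\calU_f'$-solution for reaction $f$; and, again by Lemmas~\ref{L.2.2}, \ref{Cor.2.1}, after a bounded time $\bar w$ is sandwiched between time-shifts of $\calU_g'$-solutions, e.g.\ $\bar w(\kappa_0+1+t,\cdot,\omega)\ge u^g(t,\cdot,\omega;\calH_e^--R_*e)$ for all $t\ge0$ and some $R_*=R_*(M,\theta_1,m_1,\al_1)$.

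Next we confine the burnt set $\Gamma_t:=\{x:\bar w(t,x,\Upsilon_y\omega)\ge1-\theta^*\}$. Combining the first comparison above with the reaction-perturbation estimate for the pair $(f,g)$ — Lemma~\ref{L.4.3} with $\eta_a:=(Ma/\al_3)^{1/m_3}$ when $f$ satisfies \textbf{(H3)}, and otherwise the analogous finite-range comparison carried out for $(f,g)$ as in Sections~\ref{S3}--\ref{S4} (for the pure ignition reactions to which Theorems~\ref{T.1.1}--\ref{T.1.2} reduce, one may instead run Proposition~\ref{P.5.3} for $g$ and bound $|\bar T_g(e)^{-1}-c^*(e)|$ via the perturbation estimates of Section~\ref{S4}) — one obtains $\eta_a\to0$ as $a\to0$ and, for a.e.\ $\omega$, $T_{\bar w}(x,\Upsilon_y\omega)\ge(1+M_*\eta_a)^{-1}\big(T(x,\Upsilon_y\omega;\calH_e^-)-\tau_a-2\kappa_*-\kappa_0\big)$ uniformly over $|y|\le\Lambda t$ and $x\cdot e\ge1$, where $T_{\bar w}(x,\cdot):=\inf\{t:\bar w(t,x,\cdot)\ge1-\theta^*\}$. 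Inserting the strong front speed of $u(\cdot,\cdot,\cdot;\calH_e^-)$ from Proposition~\ref{P.5.3}, which gives for each $\delta>0$ and a.e.\ $\omega$ the bound $T(x,\Upsilon_y\omega;\calH_e^-)\ge(c^*(e)+\delta)^{-1}\,x\cdot e-C_\delta$ for all large $x\cdot e$ and $|y|\le\Lambda t$, and then choosing $\delta$ and $a_0$ small in terms of $d_K$, we get $\Gamma_t\subseteq\{x:x\cdot e\le(c^*(e)+\tfrac12 d_K)t\}$ for all $t\ge T_0(\omega)$.

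Finally, for any $\eta>0$ the transition-width bound \eqref{2.1} — applied to the $\calU_g'$-solution dominating $\bar w$ from above, or equivalently transferred from $u(\cdot,\cdot,\cdot;\calH_e^-)$ through the sub/supersolution sandwich underlying the perturbation lemmas — gives $\{x:\bar w(t,x,\Upsilon_y\omega)\ge\eta\}\subseteq B_{L(\eta,t)}(\Gamma_t)$ with $L(\eta,t)\le C(1+t^{\al_2})\eta^{-m_2}=o(t)$; together with the containment of $\Gamma_t$ this yields $\{x:\bar w(t,x,\Upsilon_y\omega)\ge\eta\}\subseteq\{x:x\cdot e<(c^*(e)+d_K)t\}$ for all $t$ large, so $\bar w(t,x,\Upsilon_y\omega)<\eta$ whenever $x\in(c^*(e)e+K)t$ and $|y|\le\Lambda t$. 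Letting $\eta\to0$ finishes the proof. I expect the real difficulty to be in the third paragraph: the natural shifted reaction $g=f(\cdot,\cdot+a,\cdot)$ necessarily violates \textbf{(H3)} near $u=1$ (it vanishes on $[1-a,1]$), so Lemma~\ref{L.4.3} is unavailable unless $f$ itself satisfies \textbf{(H3)}, and handling the \textbf{(H2')}+finite-range case requires either re-deriving the fluctuation and perturbation bounds of Sections~\ref{S3}--\ref{S5} for $g$, or exploiting that $g$ differs from $f$ only through the shift $u\mapsto u+a$ to obtain a direct speed comparison $c^*_g(e)\le c^*(e)+\epsilon(a)$ with $\epsilon(a)\to0$; tracking the $o(t)$ transition-width errors against the $\tfrac12 d_K$ margin is the other place where some care is needed.
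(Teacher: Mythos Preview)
Your approach has a genuine gap that you yourself identify in the last paragraph, and it stems from an unnecessary detour. You rewrite $w_{e,a}=\bar w+a$ with $\bar w$ solving \eqref{1.1} for the shifted reaction $g(x,u,\omega)=f(x,u+a,\omega)$, and then try to compare $\bar w$ with $u(\cdot,\cdot,\cdot;\calH_e^-)$ via perturbation lemmas. But Lemma~\ref{L.4.3} needs \textbf{(H3)}, which is \emph{not} assumed in the \textbf{(H2')}+finite-range case; and even when \textbf{(H3)} holds, your last step requires transition-width bounds \eqref{2.1} for $\bar w$ (or for a $\calU_g'$-solution dominating it), yet \textbf{(H2')} is only assumed for $f$, not for $g$. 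Your suggested fix --- re-deriving Sections~\ref{S3}--\ref{S5} for $g$ --- is not carried out and would be substantial work.

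The paper's proof avoids all of this by observing that $w_{e,a}$ itself solves \eqref{1.1} with the \emph{original} reaction $f$, so one should seek a supersolution to that same equation. Lemma~\ref{L.3.1} with $f_1=f_2=f$ applies (condition \eqref{2.4} is vacuous), and with $\eta=a$ and $t_0=\tau_a:=1+D_1a^{1-m_1}$ it gives that
\[
u_+(t,x):=u\big((1+M_*a)t+\tau_a,\,x,\,\omega;\calH_e^-\big)+a
\]
is a supersolution to \eqref{1.1} on $(\kappa_*,\infty)\times\bbR^d$ for all $a\le\tfrac12\min\{\theta^*,M_*^{-1}\}$. Lemma~\ref{Cor.2.1} and $u_t\ge0$ give $u_+(\kappa_*,\cdot)\ge w_{e,a}(0,\cdot,\omega)$, so the comparison principle yields $w_{e,a}(t,\cdot,\omega)\le u_+(t+\kappa_*,\cdot)$ for all $t\ge0$. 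Since $u_+$ is just $u$ after a time dilation by $(1+M_*a)$, a bounded time shift, and addition of $a$, the second line of \eqref{6.111} (already proved in Proposition~\ref{P.5.3}) immediately gives \eqref{6.222} with $\lambda_{K,\omega,e}(a)=a$ whenever $K\subseteq\calH_e^+ + M_*a\,c^*(e)e$, i.e., whenever $a<d_K/(M_*c^*(e))$. No shifted reaction, no new transition-width bounds, and no case split between the two hypothesis sets are needed.
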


\begin{proof}
Having Proposition \ref{P.5.3}, this proof is now similar to the one of  \cite[Theorem 1.7(i)]{zlatos2019}. 

For any $(e,a,\omega)\in\bbS^{d-1}\times(0,1)\times\Omega$, let $u(\cdot,\cdot,\omega;\calH_e^-)$ and $ w_{e,a}(\cdot,\cdot,\omega)$ be from Definition \ref{D.5.0}, and let $\tau_a:=1+D_1a^{1-m_1}$. 
Lemma \ref{L.3.1} (see Lemma \ref{L.4.0'}) shows that if $a\in (0,\frac{1}{2}\min\{\theta^*,M_*^{-1}\}]$, then
\[
u_+(t,x):=u((1+M_*a)t+\tau_a,x,\omega;\calH_e^-)+a
\]
is a supersolution to \eqref{1.1} on $(\kappa_*,\infty)\times\bbR^d$. 
Moreover, $u_t\ge 0$ and Lemma \ref{Cor.2.1}  show that
\[
u_+(\kappa_*,\cdot)\ge u(\tau_a,\cdot,\omega;\calH_e^-)+a \geq w_{e,a}(0,\cdot,\omega).
\]
The comparison principle now yields for all $t\ge 0$,
\[
u_+(t+\kappa_*+\tau_a,\cdot)\geq w_{e,a}(t,\cdot,\omega).
\]

It now follows from Proposition \ref{P.5.3} that \eqref{6.222} holds with $\lambda_{K,\omega,e}(a)=a$ for all $\Lambda\ge 0$ and all compact $K\subseteq \calH_e^++M_*a\,c^*(e)e$.  This is true for all $a\in (0,\frac{1}{2}\min\{\theta^*,M_*^{-1}\}]$, so the result follows after letting $\lambda_{K,\omega,e}(a):=1$ for all $a\in (\frac{1}{2}\min\{\theta^*,M_*^{-1}\},1]$.
\end{proof}

\subsection{Proof of Theorem \ref{T.1.3}}
 
 For any $e\in\bbS^{d-1}$, let $\Omega_e\subseteq\Omega$ with $\bbP(\Omega_e)=1$ be the set from Definition \ref{D.5.0} and let $c^*(e)$ be the corresponding deterministic strong exclusive front speed for \eqref{1.1} from Proposition \ref{P.6.4}. Let $A\subseteq\bbS^{d-1}$ be a dense countable set and let $\Omega_0:=\bigcap_{e\in A} \Omega_e$.  Then $\bbP(\Omega_0)=1$, and for each $\omega\in\Omega_0$, \eqref{1.1} with this fixed $\omega$ has a strong exclusive front speed $c^*(e)$ in each direction $e\in A$ (i.e., \eqref{6.111} and \eqref{6.222} hold for this fixed $\omega$ and each $e\in A$, $\Lambda\ge 0$, and compact $K\subseteq\calH_e^+$).  
 
 Then \cite[Theorem 4.4(i)]{zlatos2019} shows that \eqref{1.1} with this fixed $\omega$ has a strong exclusive front speed $c^*_\omega(e)$ in each direction $e\in \bbS^{d-1}$, and $c^*_\omega$ is Lipschitz with Lipschitz constant only depending on $M$.  But then $c^*_\omega(e)$ must be independent of $\omega\in\Omega_0$ for each $e\in\bbS^{d-1}$ (instead of just all $e\in A$), and hence equals $c^*(e)$ from Proposition \ref{P.6.4} because $\bbP(\Omega_0)=1$.
 
Theorem \ref{T.1.3} now follows directly from \cite[Theorem 5.4]{zlatos2019} applied separately to each $\omega\in \Omega_0$ (see also the remarks after Hypothesis H' in \cite{zlatos2019}).

\section{Proof of Theorem \ref{T.1.7}} \lb{S7}

In this section we will show how to extend the above analysis to the cases considered in Theorem \ref{T.1.7}.
We can obviously assume  $\alpha_2'> 0$ without loss, and all constants with $C$ in them may depend on \eqref{const} as well as on $\al_2'$.

%
%

Since we now replace \textbf{(H2')} by \textbf{(H2'')}, the estimates \eqref{2.1} instead become 
\beq
\begin{aligned}\lb{7.0}
    \sup_{t\ge0 \,\&\,\eta>0} \frac{L_{u,\eta+a,1-\theta^*}(t)}{(1+t^{\al_2}) \eta^{-{m_2}}} & \le \mu_*^{-1},
\\    \inf_{\substack{(t,x)\in[\kappa_*,\infty)\times \bbR^d\\
    u(t,x)\in [{\theta^*},1-{\theta^*}]}}  u_t(t,x) t^{\al_2'} &\geq \mu_*
    \end{aligned}
\eeq
 for all $a\in[0,a_2]$ and either all $u\in\calU_{f,a}$ (when assuming \textbf{(H2'')})  or all $u\in\bigcup_{n\ge n_4} \calU_{f_n,a}$ (when assuming \textbf{(H3)} and \textbf{(H4'')}), again with some $\mu_*,\kappa_*>0$.

We will first assume without loss that $a_2=0$.
Then of course also $a=0$ and $\calU_{f,a}=\calU_f$ above, so \eqref{7.0} is just \eqref{2.1} with the extra factor of $t^{\al_2'}$ in the second estimate.  We will now show how the results in Sections \ref{S2}--\ref{S6} and their proofs change due to this.

Of the results in Section \ref{S2}, clearly only  Lemmas \ref{L.3.1} and \ref{L.4.3} are affected by this change.  They will instead become the following two results.

\begin{lemma}
\label{L.7.1}
Let  $f_1$ satisfy \textbf{(H2'')} and $f_2$ satisfy \textbf{(H1')}, and let $M_*:=2^{\al_2'}\frac{1+M}{\mu_*}$, with $\mu_*,\kappa_*$ from \eqref{7.0} for all $u\in\calU_{f_1}$. 
Fix some $\omega\in\Omega$ and let $u_1,u_2:[0,\infty)\times\bbR^d\to [0,1]$ solve \eqref{1.1} with   $f_1,f_2$ in place of $f$, respectively.    If $u_1\in\calU_{f_1}$, $t_0\ge 0$, $T\ge 2\kappa_*$, and for some $\eta\in [0, \frac 12\min\{\theta^*,M_*^{-1}(T+t_0)^{-\al_2'}\} ]$ we have
\[
f_1(x,u,\omega)=f_2(x,u,\omega)\qquad \text{whenever }u_1(t_0,x)<1-\eta\text{ and }u\in [0,1],
\]
then
\[
    u_+(t,x) :=u_1((1+M_*(T+t_0)^{\al_2'}\eta)t+t_0,x)+\eta
\]
is a supersolution to \eqref{1.1} with $f_2$ in place of $f$ on $(\kappa_*,T)\times \bbR^d$, and
\[
u_{-}(t,x) :=u_1((1-M_*(T+t_0)^{\al_2'}\eta)t+t_0,x)-\eta
\]
is a subsolution to \eqref{1.1} with $f_2$ in place of $f$ on 
$
(2\kappa_*,T)\times \{ x\in\bbR^d \,|\,u_1(t_0,x)<1-\eta\}.
$


Moreover, there is $D_2=D_2(M,\theta_1,m_1,\al_1)\ge 1$ 
such that if also ${T}_{u_2}(y)\le T$, and
\[
\sup_{x\in B_R(y)} (u_2(0,x)- u_1(t_0,x))\le \eta 
\]
for some $y\in\bbR^d$ and
$R\ge D_2(1+{T}_{u_2}(y)),$
then
\[
{T}_{u_2}(y)\geq  \left(1+ {M_*}(T+t_0)^{\al_2'} \eta \right)^{-1}(T_{u_1}(y)-t_0- 2\kappa_*-\kappa_0).
\]
\end{lemma}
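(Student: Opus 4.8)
The plan is to mirror the proof of Lemma~\ref{L.3.1} (from Appendix~B), adapting each step to account for the extra time-dependent factor $t^{\al_2'}$ in the lower bound on $u_t$ in \eqref{7.0}. The essential point is that on the bounded time interval $[0,T]$ the quantity $u_t$ for $u_1\in\calU_{f_1}$ is bounded below by $\mu_*(T+t_0)^{-\al_2'}$ wherever $u_1\in[\theta^*,1-\theta^*]$, so the effective lower speed constant $\mu_*$ from the $\al_2'=0$ case simply gets replaced by $\mu_*(T+t_0)^{-\al_2'}$, and hence $M_*$ gets the compensating factor $(T+t_0)^{\al_2'}$ in the time rescaling; the factor $2^{\al_2'}$ in the definition of $M_*$ absorbs the shift by $t_0\le T$ (since $T+t_0\le 2T\le 2(T+t_0)$ type estimates, or rather since times appearing are $\le T+t_0$). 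First I would verify the supersolution claim: with $\eta$ in the stated range, for $(t,x)\in(\kappa_*,T)$ the shifted argument $s:=(1+M_*(T+t_0)^{\al_2'}\eta)t+t_0$ lies in $(\kappa_*,\,2T+t_0)$ or so (using $M_*(T+t_0)^{\al_2'}\eta\le \tfrac12$ from the range of $\eta$), and where $u_1(s,x)\in[\theta^*,1-\theta^*]$ we have $(u_1)_t(s,x)\ge \mu_* s^{-\al_2'}\ge \mu_*(2(T+t_0))^{-\al_2'}\cdot 2^{\al_2'}\cdot\ldots$ — here is exactly where the constant $2^{\al_2'}$ in $M_*$ is tuned so that $M_*(T+t_0)^{\al_2'}(u_1)_t(s,x)\ge (1+M)\eta\ge f_2$-increment $+$ Laplacian error, giving $\partial_t u_+ - \Delta u_+ - f_2(x,u_+,\omega)\ge 0$. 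Where $u_1(s,x)\notin[\theta^*,1-\theta^*]$ one argues as in Lemma~\ref{L.3.1}: below $\theta^*$ both reactions vanish (shift by $\eta$ keeps us below $1-\theta^*$ only if... — actually below $\theta^*+\eta\le 2\theta^*\le\theta_1$, fine) and above $1-\theta^*$ one uses monotonicity of $f_2$ near $1$ together with $f_1=f_2$ there via \eqref{2.4}. The subsolution claim is symmetric, on the set where $u_1(t_0,x)<1-\eta$, using that $u_-\le 1-\eta$.

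Next I would derive the arrival-time inequality for $T_{u_2}(y)$. The argument is the standard one: use Lemma~\ref{Cor.2.1} (whose $D_1$ bound on the $(1-\theta)^{1-m_1}$ ``ramp-up'' time feeds into the choice of $D_2$) to pass from $u_1$ reaching $1-\theta^*$ near $y$ to reaching $1-\eta$ on a large ball around $y$; then apply Lemma~\ref{L.2.1} (finite speed of propagation, bound $2\sqrt{Md}$) to control the comparison between $u_2$ and $u_-$ on $B_R(y)$, where $R\ge D_2(1+T_{u_2}(y))$ guarantees the boundary error term is negligible on the relevant time scale $[0,T_{u_2}(y)]\subseteq[0,T]$. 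This yields $u_2\ge u_-$ at $y$ on that interval, which unwinds (via the definition $u_-(t,x)=u_1((1-M_*(T+t_0)^{\al_2'}\eta)t+t_0,x)-\eta$ and $T_{u_1}$) to
\[
(1-M_*(T+t_0)^{\al_2'}\eta)\,T_{u_2}(y)+t_0 + 2\kappa_* + \kappa_0 \ge T_{u_1}(y),
\]
and then dividing by $(1-M_*(T+t_0)^{\al_2'}\eta)\ge\tfrac12$ and using $(1-x)^{-1}\le 1+2x\le\ldots$, or more simply bounding $(1-x)^{-1}\ge 1$ on the left is the wrong direction — instead one rearranges to get $T_{u_2}(y)\ge (1+M_*(T+t_0)^{\al_2'}\eta)^{-1}(T_{u_1}(y)-t_0-2\kappa_*-\kappa_0)$ using $(1-x)\ge (1+x)^{-1}$ for $x\in[0,1)$. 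The condition $T_{u_2}(y)\le T$ is needed precisely so that all time arguments stay within the interval $[0,T+t_0]$ where the supersolution/subsolution properties and the bound $M_*(T+t_0)^{\al_2'}\eta\le\tfrac12$ are valid.

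The main obstacle I anticipate is bookkeeping the interplay of the three ``time budgets'': the physical time $T$, the shift $t_0$, and the ramp-up time $D_1\eta^{1-m_1}$ from Lemma~\ref{Cor.2.1}, while making sure that the single constant $D_2$ can be chosen depending only on $M,\theta_1,m_1,\al_1$ (not on $T,t_0,\eta$). In the original Lemma~\ref{L.3.1} this works because the ramp-up time is $O(1)$ relative to the distances involved; here one must check that the extra stretching factor $(1+M_*(T+t_0)^{\al_2'}\eta)\le\tfrac32$ does not destroy this, which it does not since it is bounded. A secondary subtlety is that the supersolution is only claimed on $(\kappa_*,T)$ rather than $(\kappa_*,\infty)$ — this truncation is essential because the lower bound $u_t\ge\mu_* t^{-\al_2'}$ degrades as $t\to\infty$, so the comparison argument must be set up to only use times $\le T+t_0$, which is guaranteed by the hypothesis $T_{u_2}(y)\le T$. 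Once this is arranged, the verification that each term in the parabolic inequality has the right sign is a routine computation identical in structure to Appendix~B.
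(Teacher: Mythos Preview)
Your outline for the super/subsolution part is essentially correct and matches the paper's approach: the only change from Appendix~B is replacing the uniform lower bound $(u_1)_t\ge\mu_*$ by $(u_1)_t\ge\mu_* s^{-\al_2'}$ at the shifted time $s=\tau_+(t)$, and then using $s\le 2T+t_0\le 2(T+t_0)$ so that $M_*(T+t_0)^{\al_2'}\mu_* s^{-\al_2'}\ge 2^{\al_2'}(1+M)\cdot 2^{-\al_2'}=1+M$. That is exactly where the factor $2^{\al_2'}$ in $M_*$ is spent.

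However, your derivation of the arrival-time inequality is backwards. You propose to compare $u_2$ with the \emph{subsolution} $u_-$ and obtain $u_2\ge u_-$; but the hypothesis $u_2(0,\cdot)\le u_1(t_0,\cdot)+\eta$ is an \emph{upper} bound on $u_2$, so it is the \emph{supersolution} $u_+$ that dominates $u_2$. Concretely, from $(u_1)_t\ge 0$ one gets $u_2(0,\cdot)\le u_+(\kappa_*,\cdot)$ on $B_R(y)$, and Lemma~\ref{L.2.1} then gives $u_2(t,y)\le u_+(t+\kappa_*,y)+\tfrac{\theta^*}{2}$ for $t\in[0,T_{u_2}(y)]$ once $R\ge D_2(1+T_{u_2}(y))$ with $D_2:=2\sqrt{Md}\ln\tfrac{4d}{\theta^*}$. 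Evaluating at $t=T_{u_2}(y)$ yields $u_1(\tau_+(T_{u_2}(y)+\kappa_*),y)\ge 1-2\theta^*$, and Lemma~\ref{L.2.2} upgrades this to $T_{u_1}(y)\le \tau_+(T_{u_2}(y)+\kappa_*)+\kappa_0\le (1+M_*(T+t_0)^{\al_2'}\eta)T_{u_2}(y)+2\kappa_*+\kappa_0+t_0$, which is the stated bound. Lemma~\ref{Cor.2.1} and the constant $D_1$ play no role here; $D_2$ depends only on $M,\theta^*$ and arises solely from the exponential tail in Lemma~\ref{L.2.1}. Your route via $u_-$ would, even if the comparison could be set up, only yield an \emph{upper} bound on $T_{u_2}(y)$, which is the wrong direction.
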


\begin{proof}
The proof is the same as that of Lemma \ref{L.3.1}, replacing \eqref{2.1} by \eqref{7.0} and using $\tau_\pm(t):=(1\pm M_*(T+t_0)^{\al_2'}\eta)t+t_0$.
In particular, we use in it that for $t\le T$ we have
\[
{M_*}(T+t_0)^{\al_2'} \eta \mu_*((1+M_*(T+t_0)^{\al_2'}\eta)t+t_0)^{-\al_2'} \ge {M_*}(T+t_0)^{\al_2'} \eta \mu_*(2T+t_0)^{-\al_2'} \ge (1+M)\eta.
\]
We also have $D_2:=2\sqrt{Md}\ln \frac{4d}{\theta^*}$ 
as before.
\end{proof}

\begin{lemma}\lb{L.7.2}
Let  $f_1$ satisfy \textbf{(H2'')} and $f_2$ satisfy \textbf{(H1')}, with at least one satisfying \textbf{(H3)} with $\al_3\le 1$, and let $M_*,D_2$ be from Lemma \ref{L.7.1}. 
Fix some $\omega\in\Omega$ and let $u_1,u_2:[0,\infty)\times\bbR^d\to [0,1]$ solve \eqref{1.1} with $f_1,f_2$ in place of $f$, respectively. If $u_1\in\calU_{f_1}$, for some $y\in\bbR^d$, $R\ge D_2(1+{T}_{u_2}(y))$, and
$
\eta\in [0, \frac 12\min\{\theta^*,M_*^{-1}(\max\{{T}_{u_2}(y),2\kappa_*\}+t_0)^{-\al_2'}\} ]
$ we have
\[
f_1(x,u,\omega)\geq f_2(x,u,\omega)-\al_3\eta^{m_3}\qquad \text{for all $(x,u)\in B_R(y)\times [0,1]$},
\]
and $u_2(0,\cdot)\leq u_1(t_0,\cdot)$ for some $t_0\ge 0$ and all $x\in B_R(y)$, then
\[
{T}_{u_2}(y)\geq \left(1+ {M_*} (\max\{{T}_{u_2}(y),2\kappa_*\}+t_0)^{\al_2'} \eta \right)^{-1} \left(T_{u_1}(y)-t_0-2\kappa_*- \kappa_0\right).
\]
\end{lemma}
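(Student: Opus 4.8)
The plan is to mimic the proof of Lemma~\ref{L.4.3} (which appears in Appendix~C), but carrying the extra time-dependent factor $t^{\al_2'}$ through the construction of barriers, exactly as Lemma~\ref{L.7.1} adapts Lemma~\ref{L.3.1} to \textbf{(H2'')}. First I would reduce the statement to an application of Lemma~\ref{L.7.1}: the point of Lemma~\ref{L.4.3}/Lemma~\ref{L.7.2} is that a reaction perturbation bounded by $\al_3\eta^{m_3}$ can be absorbed into a time-rescaling of $u_1$ by interpolating through a function whose reaction differs from $f_1$ only where $u_1$ is already close to $1$. Concretely, set $T:=\max\{{T}_{u_2}(y),2\kappa_*\}$ (so $T\ge 2\kappa_*$, as Lemma~\ref{L.7.1} requires) and note that the hypothesis on $\eta$ is precisely the one under which Lemma~\ref{L.7.1} applies with this $T$.

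The key steps, in order: (i) Using \textbf{(H3)} (for whichever of $f_1,f_2$ satisfies it) and the bound $f_1\ge f_2-\al_3\eta^{m_3}$ on $B_R(y)\times[0,1]$, construct an intermediate reaction $\tilde f$ with $\tilde f(x,u,\omega)=f_2(x,u,\omega)$ whenever $u_1(t_0,x)<1-\eta$, and such that the solution $\tilde u$ to \eqref{1.1} with reaction $\tilde f$ and initial data $\tilde u(0,\cdot)=u_2(0,\cdot)$ is a subsolution to \eqref{1.1} with reaction $f_2$ on $B_R(y)$ up to time $T$; the gain $\al_3\eta^{m_3}$ in the reaction near $u=1$ compensates for the downward shift by $\eta$ coming from \textbf{(H3)}, which is exactly the mechanism used in Appendix~C. (ii) Apply Lemma~\ref{L.7.1} with $(f_1,\tilde f,u_1,\tilde u,t_0,T,R)$ in place of $(f_1,f_2,u_1,u_2,t_0,T,R)$: since $\tilde f=f_1$ off $\{u_1(t_0,\cdot)<1-\eta\}$ — wait, more precisely since $\tilde f$ agrees with $f_2$ exactly where we need it and with the shift structure set up in step~(i), the lemma yields
\[
{T}_{\tilde u}(y)\ge \left(1+M_*(T+t_0)^{\al_2'}\eta\right)^{-1}\left(T_{u_1}(y)-t_0-2\kappa_*-\kappa_0\right).
\]
(iii) Since $u_2(0,\cdot)\le u_1(t_0,\cdot)$ on $B_R(y)$ and $\tilde u$ was built as a subsolution to the $f_2$-equation on $B_R(y)$ with the same initial data, the comparison principle inside $B_R(y)$ — using the finite speed of propagation from Lemma~\ref{L.2.1} together with $R\ge D_2(1+{T}_{u_2}(y))$ to ensure no information enters from the lateral boundary before the relevant time — gives $u_2\ge \tilde u$ on the relevant space-time region, hence ${T}_{u_2}(y)\le {T}_{\tilde u}(y)$, and combined with (ii) and $T=\max\{{T}_{u_2}(y),2\kappa_*\}$ this is the claimed inequality.

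The main obstacle is step~(i): one must verify that the intermediate reaction $\tilde f$ can be chosen so that the $\eta$-shifted, time-rescaled comparison goes through uniformly, i.e. that the loss from replacing $u$ by $u-\eta$ (from \textbf{(H3)}) and the loss from the time-rescaling factor $1+M_*(T+t_0)^{\al_2'}\eta$ together do not exceed the gain $\al_3\eta^{m_3}$ in the reaction — this is precisely where the constraint $\eta\le \tfrac12\min\{\theta^*,M_*^{-1}(\max\{{T}_{u_2}(y),2\kappa_*\}+t_0)^{-\al_2'}\}$ is consumed, and the bookkeeping differs from Appendix~C only in that the factor $(T+t_0)^{\al_2'}$ now multiplies $\eta$ everywhere, as in the displayed inequality
\[
M_*(T+t_0)^{\al_2'}\eta\,\mu_*\big((1+M_*(T+t_0)^{\al_2'}\eta)t+t_0\big)^{-\al_2'}\ge (1+M)\eta \qquad (t\le T)
\]
from the proof of Lemma~\ref{L.7.1}. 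I expect the rest to be a routine transcription of Appendix~C with these substitutions, so I would state in the proof ``The proof is identical to that of Lemma~\ref{L.4.3}, using Lemma~\ref{L.7.1} in place of Lemma~\ref{L.3.1} and \eqref{7.0} in place of \eqref{2.1}, with $T=\max\{{T}_{u_2}(y),2\kappa_*\}$'' and only spell out the modified inequality above.
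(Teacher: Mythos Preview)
Your final one-line summary---``The proof is identical to that of Lemma~\ref{L.4.3}, using Lemma~\ref{L.7.1} in place of Lemma~\ref{L.3.1} and \eqref{7.0} in place of \eqref{2.1}, with $T=\max\{{T}_{u_2}(y),2\kappa_*\}$''---is exactly what the paper says, and together with the displayed inequality it is enough. But your steps~(i)--(iii) do not describe what Appendix~C actually does, and in fact contain a genuine error.

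First, Appendix~C does not introduce an intermediate reaction $\tilde f$ or an intermediate solution $\tilde u$, and it does \emph{not} apply Lemma~\ref{L.3.1} as a black box. Lemma~\ref{L.3.1} (and hence Lemma~\ref{L.7.1}) requires $f_1=f_2$ on $\{u_1(t_0,\cdot)<1-\eta\}$, which is simply not available here---you noticed this yourself mid-sentence in~(ii). What Appendix~C does instead is re-verify directly that $u_+(t,x)=u_1(\tau_+(t),x)+\eta$ is a supersolution to the $f_2$-equation on $(\kappa_*,\infty)\times B_R(y)$: when $u_1(\tau_+(t),x)\in[1-\theta^*,1-\eta]$ one uses \textbf{(H3)} together with $f_1\ge f_2-\al_3\eta^{m_3}$ to get $f_1(x,u)\ge f_2(x,u+\eta)$, and when $u_1(\tau_+(t),x)\in(\theta^*,1-\theta^*)$ one absorbs the extra $\al_3\eta^{m_3}\le\eta$ into the time-derivative gain $M_*\eta\mu_*\ge(1+M)\eta$ (which in the \textbf{(H2'')} setting becomes $M_*(T+t_0)^{\al_2'}\eta\,\mu_*\,\tau_+(t)^{-\al_2'}\ge(1+M)\eta$ for $t\le T$, exactly the inequality you wrote). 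Then one compares $u_2$ with $u_+$ via Lemma~\ref{L.2.1} as at the end of Appendix~B.

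Second, even if an intermediate $\tilde u$ existed as you describe, the comparison in~(iii) runs the wrong way: from $u_2\ge\tilde u$ you get $T_{u_2}(y)\le T_{\tilde u}(y)$, and combining this with a \emph{lower} bound $T_{\tilde u}(y)\ge(\ldots)$ yields no lower bound on $T_{u_2}(y)$. The correct comparison is $u_2\le u_+$ (supersolution built from $u_1$), which forces $u_1(\tau_+(T_{u_2}(y)+\kappa_*),y)$ to be large and hence bounds $T_{u_1}(y)$ from above in terms of $T_{u_2}(y)$.
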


\begin{proof}
The proof is the same as that of Lemma \ref{L.4.3}, replacing \eqref{2.1} and Lemma \ref{L.3.1} by \eqref{7.0} and Lemma \ref{L.7.1} with $T:=\max\{{T}_{u_2}(y),2\kappa_*\}$, and using $\tau_+(t):=(1+ M_* (T+t_0)^{\al_2'}\eta)t+t_0$.
\end{proof}

We can now extend all of Sections \ref{S3}--\ref{S5} to \textbf{(H2'')} in place of \textbf{(H2')}, and obtain the following analog of Proposition \ref{P.5.3}.

\begin{proposition}\lb{T.7.3}
Assume that $f$ either satisfies \textbf{(H2'')} and has a finite range of dependence, or satisfies \textbf{(H3)} and  \textbf{(H4'')}. Then for each $e\in\bbS^{d-1}$, \eqref{1.1} has a deterministic strong front speed $c^*(e)\in [c_0,c_1]$ in direction $e$. 
\end{proposition}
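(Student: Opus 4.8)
The plan is to follow the same architecture already developed for Proposition \ref{P.5.3}, simply feeding in the modified ingredients adapted to \textbf{(H2'')}. The whole structure of Sections \ref{S3}--\ref{S6} rests on three pillars: (i) the ``additive structure'' estimate for arrival times (Proposition \ref{P.3.0}), built from the perturbation Lemmas \ref{L.3.1} and \ref{L.4.3} applied to solutions in $\calU_f$; (ii) the resulting subgaussian fluctuation bounds via the martingale/Azuma argument (Propositions \ref{P.3.3}, \ref{P.3.4}, \ref{P.3.1}); and (iii) the near-linearity of $\bbE[T(le,\cdot\,;\calH_e^-)]$ in $l$ giving the limit $\bar T(e)$ (Proposition \ref{P.4.5}), from which Proposition \ref{P.5.3} deduces the front speed $c^*(e)=\bar T(e)^{-1}$ via Borel--Cantelli. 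Having just proved Lemmas \ref{L.7.1} and \ref{L.7.2} --- the exact analogs of \ref{L.3.1} and \ref{L.4.3} under \textbf{(H2'')}, now with the time-dependent factor $(T+t_0)^{\al_2'}$ multiplying $\eta$ in the shift --- the task is to verify that this extra polynomial-in-time factor does not destroy any of the estimates, only worsening exponents in a controlled way.

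First I would re-derive Proposition \ref{P.3.0} (and hence \ref{P.3.3}, \ref{P.3.4}, \ref{P.3.1}) with Lemmas \ref{L.7.1}--\ref{L.7.2} in place of \ref{L.3.1}--\ref{L.4.3}. The key point is that in all applications the relevant times $T,t_0,T_{u_i}(y)$ are $O(d(x,S))$ (by Lemmas \ref{L.2.2} and Corollary \ref{C.2.1}), so the term $M_*(T+t_0)^{\al_2'}\eta\, T_i(x,\omega)$, which under \textbf{(H2')} was $O(\eta\, d(x,S))$, now becomes $O(\eta\, d(x,S)^{1+\al_2'})$. Choosing $\eta\sim d(x,S)^{-\gamma}$ with $\gamma$ now required to satisfy $\gamma>\al_2'$ (together with the old constraints $\gamma\le\frac1{m_1}$ and $\gamma\le\frac{1-\al_2}{m_2+1}$) keeps this error sublinear; the hypothesis $\al_2'<\min\{\frac1{m_1-1},\frac{1-\al_2}{m_2}\}$ in \textbf{(H2'')} is exactly what guarantees such a $\gamma$ exists and that the resulting sublinear exponent $\be_1'$ (replacing $\be_1$) still lies in $(0,1)$. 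So one defines $\be_1':=\max\{\gamma(m_1-1),\al_2+\gamma m_2,1-\gamma,\al_2'\}<1$ and reruns the proof of Proposition \ref{P.3.0} verbatim, then the martingale argument of \ref{P.3.3} verbatim with $\be_1'$, then Proposition \ref{P.4.5} verbatim (its induction on $N_p$ only uses $\beta<1$). For the \textbf{(H3)}+\textbf{(H4'')} case one additionally uses the extra constraint $\al_2'<\frac{m_4}{m_3}$ in \textbf{(H4'')} to ensure the term $Cn^{-m_4/m_3}(1+d(x,S))^{1+\al_2'}$ in the analog of \eqref{4.40} is still controlled, yielding an exponent $\be_3'<1$; one then checks $\be_3'\ge\frac{2d+1}{2d+1+m_4'}$ as before.

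Finally I would carry out the analog of the proof of Proposition \ref{P.5.3}. The structure is identical: one defines the bad events $I_t^\eta(K,\Lambda)$, bounds $\bbP[I_t^\eta(K,\Lambda)]$ by a polynomial-in-$t$ number of terms $\bbP[|T(x,\Upsilon_y\omega)-\bbE T(x,\cdot)|\ge \lambda_t]$, uses the fluctuation bound \eqref{5.7} (now with $\be_1'$ or $\be_3'$), and applies Borel--Cantelli. The only modification is that in establishing \eqref{6.2}-type statements one uses Lemma \ref{Cor.2.1} and Lemma \ref{L.2.2} as before, noting these are unaffected by \textbf{(H2'')}; and in the ``second statement'' part of the proof, the width bound $L_{t,\eta}$ from \eqref{7.0} carries the same $(1+t^{\al_2})\eta^{-m_2}$ form, so the argument goes through unchanged. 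The conclusion $c^*(e):=\bar T(e)^{-1}\in[c_0,c_1]$ being a deterministic strong front speed follows exactly as in Proposition \ref{P.5.3}, using that $2-2\beta>0$ still holds.

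The main obstacle I anticipate is purely bookkeeping: making sure the new exponents $\be_1',\be_3'$ are simultaneously compatible with \emph{all} the inequalities that appeared scattered through Sections \ref{S3}--\ref{S5} (in particular $\be_3\ge\frac{d+1}{2d+1+m_4'}$ used in Proposition \ref{P.4.4}, and $\be_3>\frac{2d+1}{2d+1+m_4'}$ used for summability of the Borel--Cantelli series), and that the three strict inequalities packed into the definition of $\al_2'$ in \textbf{(H2'')}, \textbf{(H4'')} are precisely the ones needed. No genuinely new analytic difficulty arises --- the heavy lifting (finite-range martingale fluctuations, subadditivity of mean arrival times, passage to deterministic speeds) was all done in Sections \ref{S3}--\ref{S6}, and \textbf{(H2'')} only costs us a uniformly-in-time-polynomial slowdown that the hypotheses are designed to absorb. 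One should of course also record, as in Remark 2 after Definition \ref{D.5.0}, that the exclusivity statement (Proposition \ref{P.6.4}) extends verbatim with $\lambda_{K,\omega,e}(a)=a$, since its proof only invokes Lemma \ref{L.7.1} and Proposition \ref{T.7.3} in place of their \textbf{(H2')} counterparts.
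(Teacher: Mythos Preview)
Your approach is essentially that of the paper, and the overall architecture is right. A few corrections are needed.

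First, the exponent bookkeeping is slightly off. When you apply Lemma~\ref{L.7.1}, the error term $M_*(T+t_0)^{\al_2'}\eta\,T_i(x,\omega)$ with $\eta\sim d(x,S)^{-\gamma}$ and $T,t_0,T_i\lesssim d(x,S)$ contributes $d(x,S)^{1+\al_2'-\gamma}$, so the correct replacement for $\be_1$ is
\[
\be_1=\max\left\{\gamma(m_1-1),\,\al_2+\gamma m_2,\,1+\al_2'-\gamma\right\},
\]
not $\max\{\dots,1-\gamma,\al_2'\}$. Optimizing then gives $\gamma=\min\{\frac{1+\al_2'}{m_1},\frac{1+\al_2'-\al_2}{m_2+1}\}$ and $\be_1=\max\{\frac{(1+\al_2')(m_1-1)}{m_1},\frac{(1+\al_2')m_2+\al_2}{m_2+1}\}$, and the hypothesis $\al_2'<\min\{\frac1{m_1-1},\frac{1-\al_2}{m_2}\}$ is exactly $\be_1<1$.

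Second, you missed a modification in Lemma~\ref{L.3.3}. Its proof used the second estimate in \eqref{2.1} to get \eqref{e.3.4}; under \textbf{(H2'')} that estimate degrades by $t^{-\al_2'}$, so the paper instead invokes Lemma~\ref{L.2.2} together with the width bound (first line of \eqref{7.0}) to obtain $T(x,\cdot)\le\tau(x,\cdot)+C(1+t^{\al_2})$ on $F_{t,x}$. Consequently the constant $C$ in Lemma~\ref{L.3.3} becomes $C(1+t^{\al_2})$, which propagates harmlessly through \eqref{3.22}--\eqref{3.555} since $\al_2<\be_1$.

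Finally, your closing remark that exclusivity (Proposition~\ref{P.6.4}) ``extends verbatim'' is wrong, and this matters for what comes after. The proof of Proposition~\ref{P.6.4} builds the supersolution $u_+$ for \emph{all} $t>\kappa_*$, but Lemma~\ref{L.7.1} only yields a supersolution on $(\kappa_*,T)$ with an $\eta$ that must shrink like $T^{-\al_2'}$. One cannot then conclude \eqref{6.222} by comparison on an unbounded time interval. The paper obtains exclusivity only when $a_2>0$, via a genuinely different route (Propositions~\ref{P.7.6}--\ref{P.7.4}): it studies the rescaled reactions $f_a$, shows each has its own front speed $c_a^*(e)$ with $c_a^*(e)\to c^*(e)$ uniformly, and uses this convergence to verify \eqref{6.222}. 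This is why Theorem~\ref{T.1.7} splits into cases (i) and (ii). For the present Proposition~\ref{T.7.3}, however, exclusivity is not claimed and your outline (with the fixes above) suffices.
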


\begin{proof}
In the whole proof, we assume without loss that $a_2=0$ (and so $a=0$ as well).
When extending results from Section \ref{S3}, we assume that $f$ satisfies \textbf{(H2'')} and has range of dependence at most $\rho\in[1,\infty)$; in Section \ref{S4} we instead assume \textbf{(H3)} and \textbf{(H4'')}; and in Sections \ref{S5} and \ref{S6} we assume either of these two cases, as before.

Most of Section \ref{S3} is unchanged, with  \eqref{3.13} replaced by
\beq\lb{7.111}
\be_1:=\max\left\{ \frac{(1+\alpha_2')(m_1-1)}{m_1}, \frac{(1+\alpha_2')m_2+\alpha_2}{m_2+1} \right\},
\eeq
which is still in $(0,1)$ thanks to $\alpha_2'<\min\{ \frac 1{m_1-1}, \frac {1-\alpha_2}{m_2} \}$.
The first adjustment is required in Lemma \ref{L.3.3}, where we used the second claim in \eqref{2.1} to obtain \eqref{e.3.4}.   
We have here  $u(\tau(x,\omega),y,\omega)\ge\theta^*$ for some $y\in\overline{B_1(x)}$, and can instead use Lemma \ref{L.2.2} to get
\[
u(\tau(x,\omega)+ \kappa_0 + 2c_0^{-1} (L_{u,\theta^*,1-\theta^*}(\tau(x,\omega))+1),x,\omega)\geq 1-\theta^*.
\]
Since
the first claim in \eqref{7.0} (with $a=0$ and $\eta=\theta^*$) yields $C>0$ such that
\[
\kappa_0 + 2c_0^{-1} (L_{u,\theta^*,1-\theta^*}(\tau(x,\omega))+1) \le C(1+\tau(x,\omega)^{\alpha_2})
\]
(recall that $u\in\calU_f$) and Lemma \ref{L.2.2} also implies $\tau(x,\omega)\le C(1+t)$ whenever $\omega\in F_{t,x}$, we have
$T(x,\cdot)\le \tau(x,\cdot)+C(1+t^{\alpha_2})$ on $F_{t,x}$.  Hence the first claim of  Lemma \ref{L.3.3} becomes
\[
   \left|\bbE[T(x,\cdot)\chi_{F_{t,x}}\,|\,\calG_{t}]-T(x,\cdot)\chi_{F_{t,x}}\right|\leq C(1+t^{\al_2}) \qquad \text{ on }\Omega,
\]
while the second holds only for $s\in[0,t-C(1+t^{\al_2})]$.

As for Proposition \ref{P.3.0}, instead of \eqref{3.26} we let 
\[
\eta:=  C_1^{-1}(\rho+d(x_0,S))^{-\gamma},
\]
where $C_1>0$ will be chosen shortly and 
\beq\lb{7.6}
\gamma:=\min\left\{ \frac{1+\al_2'}{m_1} , \frac{1+\al_2'-{\al_2}}{m_2+1} \right\}>0.
\eeq
Note that then, similarly to \eqref{3.13'}, we have
\beq\lb{7.5}
\max\{ \gamma(m_1-1), \al_2+\gamma m_2,1+\al_2'-\gamma\}=\be_1<1
\eeq
(in particular, $\gamma>\alpha_2'$), and \eqref{3.21} and \eqref{3.2} continue to hold. Now we pick $C_1$ so that with with  $T:=\max\{T(x_0,\omega), {T}_i(x_0,\omega),2\kappa_*\}$ we have
\[
\eta\leq \min \left\{ \frac{\theta^*}2,\frac { ( T+ t_0+t_3)^{-\gamma}} {2M_*} \right\},
\]
which is possible due to  $\max\{T(x_0,\omega),{T}_i(x_0,\omega),t_0\}\leq C(1+d(x_0,S))$ and \eqref{3.2}.
Then
we can use Lemma \ref{L.7.1} with this $T$ and  $\eta$ (instead of Lemma \ref{L.3.1}) to see that \eqref{3.9} becomes
\[
    T(x_0,\omega)-t_0-{T}_i(x_0,\omega)\leq 
    {M_*}\eta \big(T+t_0+t_3\big)^{\al_2'} {T}_i(x_0,\omega) +2\kappa_*+\kappa_0+t_3
    \leq C(\rho+d(x_0,S)^{\be_1})
\]
because $\be_1\ge 1+\alpha_2'-\gamma$.  This ends the first half of the proof. 
Using again Lemma \ref{L.7.1} instead of Lemma \ref{L.3.1} in the second half of it, with $\eta$ as above, shows that
\[
u_{-}(t,x) :=u((1-M_*\eta( T+t_0)^{\al_2'})t+t_0,x)-\eta
\]
is a subsolution to \eqref{1.1} on
$
 (2\kappa_*,T)\times (\bbR^d\backslash \Gamma_{u,1-\eta}(t_0,\omega))$. The rest of the proof does not use the second claim in \eqref{2.1} and is unchanged (using this $u_-$ and also \eqref{7.5}), with  \eqref{3.10} becoming
\begin{align*}
 {T}_i(x_0,\omega)+t_0-T(x_0,\omega)\leq t_4+M_*\eta (T+t_0)^{\al_2'} T_i(x_0,\omega)\leq C(\rho+ d (x_0,S)^{\be_1}).
\end{align*}
This finishes the proof.


The proof of Lemma \ref{L.3.6} remains the same.   In the proof of Proposition \ref{P.3.3},  the change in Lemma \ref{L.3.3} turns the $C$ in  \eqref{3.22} and \eqref{3.18a} into $C(1+t^{\al_2})$ (recall that $s\le t$ in the argument), which is then added to the right-hand sides of \eqref{3.30} and \eqref{3.18}.  Then \eqref{3.18a} shows that there is $C>0$ such that $X_t=T(x,\cdot)$ for all $t\ge C(1+d(x,S))$,  and we now pick $N$ to be the smallest integer with $N\tau\ge C(1+d(x,S))$.  The estimate $N\leq C_1d(x,S)(\rho+d(x,S)^{\be_1})^{-1}$ now still holds (recall that $d(x,S)>\rho\geq 1$ here), and \eqref{3.555} remains unchanged because the term added to \eqref{3.18} is estimated by $C(1+(N\tau)^{\al_2})\le C(1+d(x,S)^{\be_1})$ because $\alpha_2<\be_1$.
%
The rest of the proof of Proposition \ref{P.3.3} remains the same.

Lemmas \ref{L.4.0'} and \ref{L.4.0} are unchanged except for replacement of \textbf{(H2')}, \textbf{(H4')}, $\calU_f$, $\calU_f'$, Lemma~\ref{L.3.1}, and Lemma \ref{L.4.3} in their statements and proofs by \textbf{(H2'')}, \textbf{(H4'')}, $\calU_{f,a}$, $\calU_{f,a}'$, Lemma \ref{L.7.1}, and Lemma \ref{L.7.2}, respectively (here we can even allow any fixed $a_2\in[0,\frac 12\theta^*]$ in \textbf{(H2'')}, although $a_2=0$ is sufficient).
The proof of Proposition \ref{P.3.4} also remains the same, using Lemma \ref{L.7.1} (with $T=\infty$ because $\eta=0$) instead of Lemma \ref{L.3.1}.

In Proposition \ref{P.3.1}, we replace \eqref{3.1} by
\beq\lb{7.333}
\be_3:=\max\left\{ \be_1,  \frac{(1+2\al_2')m_3}{m_3+2m_4} , \frac{2d+2}{2d+2+m_4'}\right\},
\eeq
which is still in $(0,1)$ thanks to $\alpha_2'<\frac{m_4}{m_3}$.  Then when we use Lemma \ref{L.4.3} in the proof, we replace it by Lemma \ref{L.7.2} with the same $\eta:=(\frac{\al_4}{\al_3}n^{-m_4})^{1/m_3}$, but now we need to pick $n$ so that
\[
\al_4^{1/m_3} \al_3^{-1/m_3} n^{-\frac{m_4}{m_3}}\leq \frac{1}{2}\min\{\theta^*,\,M_*^{-1}(T+\kappa_0+2c_0^{-1}R_0)^{-\al_2'}\},
\]
with $T:=\max\{T(x,\omega;S),T_n(x,\omega;S),2\kappa_*\}$.  Since $T \leq C(1+d(x,S))$ due to Lemma \ref{L.2.2}, and $\be_3\frac{m_4}{m_3}>\alpha_2'$ due to $\alpha_2'<\frac{m_4}{m_3}$ and \eqref{7.333}, there is again $C_1>0$ such that it suffices to let $n$ be the smallest integer for which
\eqref{4.333} holds.
Then a double application of Lemma \ref{L.7.2} replaces \eqref{4.40} by
\[
    |T_n(x,\cdot\,;S)-T(x,\cdot\,;S)|\leq Cn^{-\frac{m_4}{m_3}}\left(1+T\right)^{1+\al_2'}+C \leq  Cn^{-\frac{m_4}{m_3}}\left(1+d(x,S)\right)^{1+\al_2'}+C.
\]
Since there is again $C_2>0$ such that
\begin{align*}
4Cn^{-\frac{m_4}{m_3}}\left(1+d(x,S)\right)^{1+\al_2'}+4C \leq C_2(1+d(x,S)^{1+\al_2'-\be_3\frac{m_4}{m_3}}) \leq C_2(1+d(x,S)^{\frac{1+\be_3}2})
\end{align*}
 because $ 1+\al_2' - \be_3\frac{m_4}{m_3} \leq \frac{1+\be_3}{2}$, the rest of the proof of Proposition \ref{P.3.1} is unchanged.

Most of Section \ref{S5} is also unchanged,
with the only two adjustments needed in the proof of Proposition \ref{P.4.4}. 
We used Lemma \ref{L.3.1} when proving \eqref{5.5}, and we can just replace it by Lemma \ref{L.7.1} without any other change  because there we had $\eta=0$.  We also used Lemma \ref{L.3.1} when proving \eqref{5.5'}, and the change to Lemma \ref{L.7.1} now requires us to replace \eqref{4.4} by
\[
\eta:=C_1^{-1}\min\left\{\theta^*,M_*^{-1} l^{-\gamma}\right\}
\]
with $\gamma$ from \eqref{7.6} (recall that \eqref{7.5} shows $\gamma\ge 1+\alpha_2'-\be_1\ge 1-\be_1$, so $\eta\le l^{\be_1-1}$ as well; in fact, we could have chosen this $\eta$ in \eqref{4.4} as well) and $C_1\ge 2$ such that with $T=\max\{\kappa_0+\frac{2(l+m)}{c_0},2\kappa_*\}$ we have $\eta\le \frac 1{2M_*}(T+\tau_0^l)^{-\al_2'}$.  This is possible because of \eqref{5.333} and $\gamma\ge \al_2'$.
 Then
replacing Lemma \ref{L.3.1} by Lemma~\ref{L.7.1} with this $T$ yields
\begin{align*}
{T}((l+m)e,\omega;\calH_e^-+le) \leq (1+{M_*}\eta(T+\tau_0^l)^{\al_2'}) \big[ {T}((l+m)e,\omega;\calH_e^-)-T_1'(\omega) \big]+{\tau_0^l}+2\kappa_*+\kappa_0
\end{align*}
instead of \eqref{5.5'}. But the addition of $(T+\tau_0^l)^{\al_2'}$ here does not require further changes because
\[
{M_*}\eta(T+\tau_0^l)^{\al_2'} {T}((l+m)e,\omega;\calH_e^-) \le Cl^{1+\al_2'-\gamma}\le Cl^{\be_1}
\]
by \eqref{7.5} (recall that we assume here $l\ge m\ge \frac {4c_1}{c_0}l_0$).

The proofs of Propositions \ref{P.4.5} and \ref{P.5.3} then remain unchanged, finishing the proof.
\end{proof}

We are only able to obtain an \textbf{(H2'')}-version of Proposition \ref{P.6.4} when $a_2>0$, and we do so below.  But even without that, we can already prove Theorem \ref{T.1.7}(ii).

\begin{proof}[Proof of Theorem \ref{T.1.7}(ii)]
This is identical to the proof of Theorem \ref{T.1.3} above, with the word ``exclusive'' and \eqref{6.222} dropped, and using Proposition \ref{T.7.3} and \cite[Theorem 1.4(iii)]{zlatos2019} instead of Proposition \ref{P.6.4} and \cite[Theorem 5.4]{zlatos2019}, respectively.  Note that  $f$ is also  stationary ergodic in \cite[Theorem 1.4(iii)]{zlatos2019}, 
 but this is only used in the first paragraph of its proof to show existence of deterministic strong front speeds for all $e\in\bbS^{d-1}$ (which we proved in Proposition~\ref{T.7.3}), so that result extends to the case at hand.
\end{proof}



To prove Theorem \ref{T.1.7}(i), we  need to show that the deterministic strong front speeds from Proposition~\ref{T.7.3} are exclusive.  For this, we will need some uniform estimates on the reactions
\[
f_a(x,u,\omega):=\frac{f(x,(1-a)u+a,\omega)}{1-a}
\]
for $(x,u,\omega)\in \bbR^d\times[0,1]\times \Omega$.  Note that the transformation $u\mapsto\frac{u-a}{1-a}$ turns solutions $u$ to \eqref{1.1} for which $a\le u\le 1$ into solutions to \eqref{1.1} with $f_a$ in place of $f$ for which $0\le u\le 1$.



\begin{proposition}\lb{P.7.6}
Assume that  $f$ either satisfies \textbf{(H2'')} with $a_2>0$ and has a finite range of dependence, or satisfies \textbf{(H3)} and  \textbf{(H4'')} with $a_2>0$. Then for each $(a,e)\in (0,a_2]\times \bbS^{d-1}$, \eqref{1.1} with $f_a$ in place of $f$ has a deterministic strong front speed $c^*_a(e)\in [c_0,c_1]$ in direction $e$,
and $\lim_{a\to 0} c^*_a(e)=c^*(e)$ holds uniformly in $e\in \bbS^{d-1}$ (with $c^*(e)$ from Proposition \ref{T.7.3}). 
\end{proposition}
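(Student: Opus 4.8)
The plan is to verify that the rescaled reactions $f_a$ satisfy, uniformly in $a\in(0,a_2]$, the same structural hypotheses that were used to prove Proposition~\ref{T.7.3}, and then to quantify the $a$-dependence of the resulting front speeds. First I would check that $f_a$ satisfies \textbf{(H1)} with constants that are uniform in $a\in[0,a_2]$: Lipschitzness in $(x,u)$ with a constant like $\frac{M}{1-a_2}$, vanishing on $[0,\theta_1^{(a)}]\cup\{1\}$ where $\theta_1^{(a)}:=\frac{\theta_1-a}{1-a}\ge\frac{\theta_1/2}{1}$ (this is why the factor $\frac12$ was inserted in the definition \eqref{2.10} of $\theta^*$, as the remark there anticipates), the lower bound $f_a(\cdot,u,\cdot)\ge\alpha_1(1-a)^{m_1-1}(1-u)^{m_1}\ge\alpha_1(1-\tfrac18\theta_1)^{m_1-1}(1-u)^{m_1}$ near $u=1$, and monotonicity near $u=1$. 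Next, the key point is that \textbf{(H2'')} for $f$ is precisely designed to give \textbf{(H2'')} (and in particular \eqref{7.000}) for $f_a$ uniformly in $a\in[0,a_2]$: the scaling $u\mapsto(1-a)u+a$ maps $u_{0,k,a}$ (satisfying \eqref{7.3}) to initial data satisfying \eqref{2.7'} for the reaction $f_a$, so the family $\calU_{f_a}$ (built from \eqref{2.7'} for $f_a$) corresponds exactly to $\calU_{f,a}$, and \eqref{7.000} translates the $L$-bound and the $u_t\,t^{\alpha_2'}$-bound verbatim. Likewise, a finite range of dependence of $f$ gives the same range for each $f_a$, and \textbf{(H3)}+\textbf{(H4'')} for $f$ transfer to $f_a$: for \textbf{(H3)} one uses $f_a(x,u-\eta,\omega)-f_a(x,u,\omega)=\frac1{1-a}(f(x,u'-\eta',\omega)-f(x,u',\omega))$ with $u'=(1-a)u+a$, $\eta'=(1-a)\eta\ge(1-a_2)\eta$, and for \textbf{(H4'')} one sets $(f_n)_a$ as approximants, noting $\|(f_n)_a-f_a\|_\infty\le\frac1{1-a_2}\|f_n-f\|_\infty$ and that the range of dependence and the probability bound in \textbf{(H4')} are unaffected.

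With these uniform hypotheses in hand, Proposition~\ref{T.7.3} applies to each $f_a$ and yields a deterministic strong front speed $c^*_a(e)=\bar T_a(e)^{-1}\in[c_0,c_1]$, where $\bar T_a(e)$ is the $l\to\infty$ limit of $\frac1l\bbE[T_a(le,\cdot\,;\calH_e^-)]$ for the solutions of \eqref{4.1} with $f_a$. The remaining task is the uniform convergence $c^*_a(e)\to c^*(e)$ as $a\to0$, equivalently $\bar T_a(e)\to\bar T_0(e)=\bar T(e)$ uniformly in $e$. The natural route is to compare, for fixed large $l$, the arrival times $T_a(le,\omega;\calH_e^-)$ and $T(le,\omega;\calH_e^-)$, using the fact that the corresponding solutions are close when $a$ is small. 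Concretely, since $f_a\to f$ uniformly on $\bbR^d\times[0,1]\times\Omega$ as $a\to0$ (indeed $\|f_a-f\|_\infty\le\frac{a}{1-a}(\|f\|_\infty+M)\le C a$), one can apply Lemma~\ref{L.7.2} (or rather its analog, exactly as in the proof of Proposition~\ref{P.3.1}) twice, with $f_1=f$, $f_2=f_a$ and then reversed, taking $\eta\sim(Ca/\alpha_3)^{1/m_3}$, $R=\infty$ (or $R$ growing with $l$), $y=le$, to get
\[
|T_a(le,\omega;\calH_e^-)-T(le,\omega;\calH_e^-)|\le C a^{m_4'' }(1+l)^{1+\alpha_2'}+C
\]
for a suitable exponent (here $a^{1/m_3}$ plays the role of $n^{-m_4/m_3}$), uniformly in $\omega$ and $e$; dividing by $l$, taking expectations, then $l\to\infty$, gives $|\bar T_a(e)-\bar T(e)|\le C a^{1/m_3}$, hence $|c^*_a(e)-c^*(e)|\le C a^{1/m_3}$ uniformly in $e$ since all speeds lie in $[c_0,c_1]$.

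A small wrinkle: the initial data $u_{0,k,a}$ in $\calU_{f,a}$ need not coincide under the scaling with the $u_{0,k}$ used for $f$, but the definitions of deterministic front speeds and of $\bar T(e)$ are independent of the choice of admissible initial datum (Lemma~\ref{L.2.2}, comparison principle, and Remark~1 after Definition~\ref{D.5.0}), so one is free to pick compatible data, or simply absorb the discrepancy into the additive $O(1)$ error above via Lemma~\ref{L.2.2}. One must also check that the half-space extension (Lemma~\ref{L.4.0'}) and Lemma~\ref{L.4.0} go through for $f_a$ with uniform constants, which is immediate once \textbf{(H2'')} holds uniformly.

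I expect the main obstacle to be bookkeeping rather than conceptual: one must carefully verify that every constant appearing in Sections~\ref{S2}--\ref{S6} (and in Proposition~\ref{T.7.3}) depends on $f_a$ only through the uniform-in-$a$ versions of \eqref{const}, so that the error bounds in Propositions~\ref{P.3.4}, \ref{P.3.1}, \ref{P.4.5} hold with a single constant for all $a\in(0,a_2]$ — this is what makes the convergence $c^*_a(e)\to c^*(e)$ uniform in $e$ and is exactly why \textbf{(H2'')} is stated with a $\sup$ over $a\in[0,a_2]$ and $\theta^*,R_0$ chosen independent of $a$ in \eqref{2.10}. A secondary subtlety is ensuring the comparison estimate via Lemma~\ref{L.7.2} is applied on a ball $B_R(le)$ with $R\ge D_2(1+T_a(le,\omega;\calH_e^-))$, which holds because $T_a(le,\cdot)\le\kappa_0+\frac{2l}{c_0}$ by Lemma~\ref{L.2.2} and we may take $R$ linear in $l$ — this is precisely the finite-speed-of-propagation argument already used at the end of the proof of Proposition~\ref{P.3.4}.
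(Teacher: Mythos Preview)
Your verification that each $f_a$ satisfies the structural hypotheses \textbf{(H1)}, \textbf{(H2'')} (with $a_2=0$), \textbf{(H3)}, \textbf{(H4'')} uniformly in $a\in[0,a_2]$, and hence that Proposition~\ref{T.7.3} applies with constants uniform in $a$, is correct and matches the paper exactly.

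However, the convergence argument has two genuine gaps.

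\textbf{First, the use of Lemma~\ref{L.7.2} is not available in the first case.} That lemma requires at least one of $f_1,f_2$ to satisfy \textbf{(H3)}, but under the hypothesis ``\textbf{(H2'')} with $a_2>0$ and finite range of dependence'' no such monotonicity-with-rate near $u=1$ is assumed. Your comparison of $T$ with $T_a$ via $\|f-f_a\|_\infty\le Ca$ therefore breaks down in that case. The paper avoids this entirely by a different (and cleaner) idea: rather than comparing the reactions $f$ and $f_a$, it compares two solutions of the \emph{same} equation with reaction $f$. Indeed, if $v_{e,a}$ solves \eqref{4.1} with $f_a$, then $u_{e,a}:=(1-a)v_{e,a}+a$ solves \eqref{1.1} with $f$ itself (this is the point of the scaling defining $f_a$), and so does $u_{e,0}$. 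One then applies Lemma~\ref{L.7.1} with $f_1=f_2=f$ (so hypothesis~\eqref{2.4} holds trivially), $u_1=u_{e,0}$, $u_2=u_{e,a}$, and $\eta=a$; this yields
\[
T_{e,0}(le,\cdot)\le T_{e,a}(le,\cdot)+C(1+l^{1+\alpha_2'}a)
\]
without any appeal to \textbf{(H3)}.

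\textbf{Second, your limiting step is incorrect as written.} From the bound $|T_a-T|\le C a^{1/m_3}(1+l)^{1+\alpha_2'}+C$, dividing by $l$ leaves a term of order $l^{\alpha_2'}a^{1/m_3}$, which diverges as $l\to\infty$ for fixed $a$; you cannot conclude $|\bar T_a(e)-\bar T(e)|\le Ca^{1/m_3}$. What is needed (and what the paper does) is to choose $l=l(a)\to\infty$ as $a\to0$ so that both this term and the error $\bar C_\rho'\,l^{-(1-\beta)/2}$ from the (uniform-in-$a$) Proposition~\ref{P.4.5} vanish. Concretely the paper takes $l=a^{-1/2\alpha_2'}$, which makes $l^{\alpha_2'}a=a^{1/2}\to 0$ and $l^{-(1-\beta)/2}\to 0$, giving the desired uniform convergence $c_a^*(e)\to c^*(e)$.
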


\begin{proof}
Clearly $f_a$ satisfies \textbf{(H1)} with the same $M$ and $m_1$, and $\theta_1$ and $\al_1$ replaced by $\frac 12\theta_1$ and $\al_1(1-\frac18 {\theta_1})^{m_1-1}$, respectively (recall that $a\le a_2\le\frac18{\theta_1}$).
%

If we now assume \textbf{(H2'')} and finite range of dependence of $f$, and let $u^a:=\frac {u-a}{1-a}$ for some $u\in \calU_{f,a}$ with initial datum $u_{0,k,a}$, then $u^a_t=\Delta u^a+f_a(x,u^a,\omega)$ on $(0,\infty)\times\bbR^d$ (with the same $\omega$) and its initial datum $\frac1{1-a}(u_{0,k,a}-a)$ satisfies \eqref{2.7'} and \eqref{2.6} (with $F$ now defined via $f_a$). 
Moreover, \eqref{7.000} and $a_2\le\frac 12$ show that we also have
\[
\begin{aligned}
&\limsup_{t\to\infty} \sup_{a\in[0,a_2]} \sup_{u\in\calU_{f,a}} \sup_{\eta>0}\frac{L_{u^a,\eta,1-2\theta^*}(t)}{t^{\al_2} \eta^{-{m_2}}}<\infty, \\
&    \liminf_{t\to\infty} \inf_{a\in[0,a_2]} \inf_{u\in\calU_{f,a}} \inf_{ u^a(t,x)\in [\theta^*,1-2\theta^*]}u^a_t(t,x)t^{\al_2'}>0.
\end{aligned}
\]
Hence for each such $f_a$ we have \textbf{(H2'')} with $a_2=0$, $\theta^*$ replaced by $2\theta^*$,  the above constants in \textbf{(H1)}, and $\calU_{f_a}:=\{u^a\,|\,u\in\calU_{f,a}\}$ (and the same $m_2,\alpha_2,\al_2'$).  Moreover, 
there are $\mu_*,\kappa_*>0$ such that \eqref{7.0} holds for all $a\in[0,a_2]$ and $u\in\calU_{f_a}$, with $\theta^*$ replaced by $2\theta^*$.  This and the remark after \eqref{2.10} (which shows that replacing $\theta^*$ by $2\theta^*$ in \eqref{7.0} does not change any of the above proofs) now show that Proposition \ref{T.7.3} holds for all the $f_a$, and all constants in its proof are uniform in $a\in[0,a_2]$. 
In particular, \eqref{7.0} holds with the same $\mu_*,\kappa_*>0$ (and $\theta^*$ replaced by $2\theta^*$) for all $a\in[0,a_2]$ and $u\in\calU_{f_a}'$ (see Lemma \ref{L.4.0'}), and the first claim in Proposition \ref{P.4.5} holds for $f_a$ with $\be_1$ from \eqref{7.111} and $C_\delta$ uniform in $a\in[0,a_2]$.  

The same argument applies when we assume \textbf{(H3)}+\textbf{(H4'')}, where the passage to $f_a$ and 
$f_{n,a}$ also 
replaces $\al_3 $ by $\al_3(1-\frac{\theta_1}8)^{m_3-1}$ in \textbf{(H3)}
and $\al_4$ by $\al_4 (1-\frac{\theta_1}8)^{-1}$ in \textbf{(H4'')}.
Again we obtain Proposition \ref{T.7.3}  for all the $f_a$, as well as that \eqref{7.0} holds with some $\mu_*,\kappa_*>0$ (and with $\theta^*$ replaced by $2\theta^*$) for all $a\in[0,a_2]$ and $u\in\calU_{f_a}'$, and the second claim in Proposition~\ref{P.4.5} holds for $f_a$ with $\be_3$ from \eqref{7.333} and $C_\delta$ uniform in $a\in[0,a_2]$.  

It therefore remains to prove the last claim, with the above deterministic strong front speeds denoted $c^*_a(e)$ (where clearly $c^*_0(e)=c^*(e)$).  To achieve this, we will use  uniformity of the estimates in Proposition~\ref{P.4.5} in $a\in[0,a_2]$.  We  therefore denote by $v_{e,a}(\cdot,\cdot,\omega)$
the solution to \eqref{4.1} with $f$ replaced by $f_a$, some $(e,\omega)\in\bbS^{d-1}\times\Omega$, and $l=0$ (then of course $v_{e,a}(\cdot,\cdot,\omega)\in \calU_{f_a}'$).  We also let
\beq\lb{7.444}
u_{e,a}(\cdot,\cdot,\omega):=(1-a)v_{e,a}(\cdot,\cdot,\omega)+a\in\calU_{f,a}',
\eeq
with $\calU_{f,a}'$ obtained from $\calU_{f,a}$ as in Lemma \ref{L.4.0'}, and for any $x\in\bbR^d$,
\begin{align*}
T_{e,a}(x,\omega) & :=\inf\{t\geq 0\,|\, v_{e,a}(t,x,\omega)\geq 1-\theta^*\}=\inf\{t\geq 0\,|\, u_{e,a}(t,x,\omega)\geq 1-(1-a)\theta^*\}, \\
T_{e,a}'(x,\omega) & :=\inf\{t\geq 0\,|\, u_{e,a}(t,x,\omega)\geq 1-\theta^*\}.
\end{align*}
These definitions, Lemma \ref{L.2.2}, and $a_2\le \frac 12\theta^*$ show that there is $C$ such that 
\beq\lb{7.777}
T_{e,a}'\le T_{e,a}\le T_{e,a}'+C. 
\eeq
We will treat both cases \textbf{(H2'')}+finite range and \textbf{(H3)}+\textbf{(H4'')} at once, using the notation from either \eqref{b.1'} in the first case or \eqref{b.1''} in the second. Then the claims from Proposition~\ref{P.4.5}, with $\delta:=\frac{1-\beta}2$ and $\bar C'_\rho:=C_{(1-\beta)/2} \bar C_\rho$ independent of $(e,a,\omega)$ become
\beq\lb{7.555}
\left|\frac {\bbE[T_{e,a}(le,\omega)]} {l} - \frac 1{c_a^*(e)} \right|\leq \bar C'_\rho\, l^{-({1-\beta})/2}
\eeq
for all $(e,a,\omega)\in\bbS^{d-1}\times[0,a_2]\times\Omega$  and all $l\ge 1$.


Lemma \ref{L.2.2} shows that $u_{e,a}(\tau_0,\cdot,\cdot)\geq u_{e,0}(0,\cdot,\cdot)$ with $\tau_0:=\kappa_0+\frac{2R_0}{c_0}$, hence the comparison principle yields $u_{e,a}(\tau_0+t,\cdot,\cdot)\geq u_{e,0}(t,\cdot,\cdot) $ for all $t\geq 0$.  This and \eqref{7.444} immediately imply
\[
T_{e,a}'\le T_{e,0}+\tau_0,
\]
so $c_a^*(e)\geq c^*(e)$ for all $(e,a)\in\bbS^{d-1}\times[0,a_2]$ by \eqref{7.777} (this also shows that $c_a^*(e)\geq c_0$).

Since $T_{e,a}'(le,\cdot)\leq Cl$ for all $l\geq 1$ by Lemma \ref{L.2.2},
 Lemma \ref{L.7.1} with $f_1=f_2=f$ and 
\[
(u_1,u_2,\eta,t_0,T,R)=(u_{e,0},u_{e,a}, a,\tau_0,\max\{T_{e,a}'(le,\cdot),2\kappa_*\},\infty)
\]
yields
\[
T_{e,0}(le,\cdot)\leq T_{e,a}'(le,\cdot) + C(1+ l^{1+\al_2'}a).
\]
as long as $l \in[1,(C'a)^{-1/\al_2'}]$ (for some $C,C'>0$).  It follows by \eqref{7.777} that for such $l$ we have
\[
T_{e,0}(le,\cdot)\le T_{e,a}(le,\cdot) + C(1+ l^{1+\al_2'}a).
\]
Picking $l:=a^{-1/2\al_2'}$ and using \eqref{7.555} now yield for all small enough $a$ (depending only on \eqref{const} and $\alpha_2$),
\[
c^*(e)^{-1}\le c^*_a(e)^{-1} + C a^{1/2} + \bar C'_\rho\, a^{{(1-\beta)}/4\al_2'}.
\]
Since $\beta<1$, $\al_2'>0$, and $c_a^*(e)\geq c^*(e)$, 
the uniform convergence claim follows.
%
\end{proof}

We can now extend Proposition \ref{P.6.4} to the case $a_2>0$.

\begin{proposition}\lb{P.7.4}
Under the hypotheses of Proposition \ref{P.7.6}, for each $e\in\bbS^{d-1}$, the speed $c^*(e)$ is a deterministic strong  exclusive front speed in direction $e$ for \eqref{1.1}.
\end{proposition}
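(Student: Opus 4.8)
Since Proposition \ref{T.7.3} already provides that $c^*(e)$ is a deterministic strong front speed in direction $e$, only the exclusivity property \eqref{6.222} remains. The plan is to realise the test solutions $w_{e,a}$ from Definition \ref{D.5.0} as rescaled solutions of a reaction--diffusion equation whose front speed is supplied by Proposition \ref{P.7.6}, and then let $a\to0$. Fix $e\in\bbS^{d-1}$ and $a\in(0,a_2]$. Since $a\le a_2\le\tfrac18\theta_1<\theta_{x,\omega}$ for every $(x,\omega)$, the constants $a$ and $1$ are a sub- and a supersolution of \eqref{1.1}, so $a\le w_{e,a}\le1$; hence $v^a:=\tfrac{w_{e,a}-a}{1-a}$ is $[0,1]$-valued and solves \eqref{1.1} with $f_a$ in place of $f$ and initial datum $\chi_{\calH_e^-}$. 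By Proposition \ref{P.7.6}, \eqref{1.1} with $f_a$ has a deterministic strong front speed $c^*_a(e)\in[c_0,c_1]$, realised by a solution $v_{e,a}$ with an admissible datum $u_{0,\calH_e^-}$ (Definition \ref{D.5.0}), and $c^*_a(e)\to c^*(e)$ uniformly in $e$ as $a\to0$; since all the estimates in the proof of Proposition \ref{P.7.6} are uniform in $a\in(0,a_2]$, the corresponding full-measure set $\Omega_e$ can be taken independent of $a$.

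The crux is to show that $v^a$ obeys the strong front estimates \eqref{6.111} with the same speed $c^*_a(e)$, despite starting from $\chi_{\calH_e^-}$ rather than from an admissible datum. The ``behind the front'' statement is immediate: Lemma \ref{L.2.2} (together with a bound on the interface smoothing from finite speed of propagation) gives $v^a(\tau_0,\cdot)\ge u_{0,\calH_e^-}=v_{e,a}(0,\cdot)$ for a suitable constant $\tau_0$, so the comparison principle yields $v^a(t+\tau_0,\cdot)\ge v_{e,a}(t,\cdot)$ for all $t\ge0$, and the first line of \eqref{6.111} for $v_{e,a}$ transfers to $v^a$ exactly as in the proof of Proposition \ref{P.5.3}. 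For the ``ahead of the front'' statement, fix a margin $\delta>0$ and $\Lambda\ge0$, and apply Lemma \ref{L.7.1} (in its version for $f_a$, as set up in the proof of Proposition \ref{P.7.6}) with $f_1=f_2=f_a$, $u_1=v_{e,a}$, $u_2=v^a$, $T=t$, a small parameter $\eta=\eta(t):=\epsilon\,t^{-\al_2'}$, and $t_0=t_0(t)$ of order $\eta(t)^{1-m_1}$ chosen so that $v_{e,a}(t_0(t),\cdot)\ge1-\eta(t)$ on a neighbourhood of $\calH_e^-$ large enough that $v^a(\kappa_*,\cdot)\le v_{e,a}(t_0(t),\cdot)+\eta(t)$ on all of $\bbR^d$ (using Lemma \ref{L.2.2}, Lemma \ref{Cor.2.1}, and the Gaussian decay of $v^a(\kappa_*,\cdot)$ away from $\calH_e^-$). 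Because $\al_2'(m_1-1)<1$ (one of the defining inequalities of \textbf{(H2'')}), $t_0(t)=o(t)$ and $M_*(t+t_0(t))^{\al_2'}\eta(t)\to M_*\epsilon$, so the supersolution produced by Lemma \ref{L.7.1} gives
\[
v^a(t,\cdot)\le v_{e,a}\big((1+M_*\epsilon+o(1))t,\cdot\big)+\epsilon\,t^{-\al_2'}\qquad(t\to\infty).
\]
Choosing $\epsilon$ small relative to $\delta/c_1$ and invoking the strong front estimate \eqref{6.111} for $v_{e,a}$ at time $\sim(1+M_*\epsilon)t$ and shifts $\Upsilon_y$ with $|y|\le\Lambda t$ then forces $v^a(t,x,\Upsilon_y\omega)\to0$ uniformly over $\{x\cdot e\ge(c^*_a(e)+\delta)t,\ |y|\le\Lambda t\}$; a countable exhaustion over $\delta=\tfrac1n$ and $\Lambda=n$ keeps the exceptional set inside the fixed $\Omega_e$. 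This is the second line of \eqref{6.111} for $v^a$.

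It remains to deduce exclusivity. Fix a compact $K\subseteq\calH_e^+$, put $d_K:=d(K,\calH_e^-)>0$, and use the uniform convergence $c^*_a(e)\to c^*(e)$ to pick $a_0\in(0,a_2]$ with $c^*_a(e)<c^*(e)+\tfrac12 d_K$ for all $e\in\bbS^{d-1}$ and $a\le a_0$. For $a\le a_0$ we have $(c^*(e)e+K)t=(c^*_a(e)e+K_a)t$ with $K_a:=K-(c^*_a(e)-c^*(e))e$ a compact subset of $\calH_e^+$ satisfying $d(K_a,\calH_e^-)\ge\tfrac12 d_K$, so the previous paragraph (with $\delta=\tfrac12 d_K$) gives, for $\omega\in\Omega_e$, $\lim_{t\to\infty}\sup_{|y|\le\Lambda t}\sup_{x\in(c^*(e)e+K)t}v^a(t,x,\Upsilon_y\omega)=0$; since $w_{e,a}=(1-a)v^a+a\le v^a+a$, this yields $\limsup_{t\to\infty}\sup_{|y|\le\Lambda t}\sup_{x\in(c^*(e)e+K)t}w_{e,a}(t,x,\Upsilon_y\omega)\le a$. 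Setting $\lambda_{K,\omega,e}(a):=a$ for $a\in(0,a_0]$ and $\lambda_{K,\omega,e}(a):=1$ for $a\in(a_0,1]$ gives $\lim_{a\to0}\lambda_{K,\omega,e}(a)=0$, which is \eqref{6.222}; together with Proposition \ref{T.7.3} this shows $c^*(e)$ is a deterministic strong exclusive front speed, and in fact $\lambda_{K,\omega,e}(a)=a$ for all sufficiently small $a$, as claimed in Remark 2 after Definition \ref{D.5.0}.

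The step I expect to be the main obstacle is the ``ahead of the front'' estimate for $v^a$ in the second paragraph: one must transfer the propagation estimate from $v_{e,a}$ (which enjoys $u_t\ge0$ and an admissible datum) to $v^a$, absorbing an $O(1)$ discrepancy between $v^a(0,\cdot)=\chi_{\calH_e^-}$ and $v_{e,a}(0,\cdot)$ on the \emph{entire} half-space $\calH_e^-$. With only the degraded monotonicity $u_t\gtrsim t^{-\al_2'}$ available, a fixed comparison error is fatal (it produces a time dilation growing like a power of the distance travelled), so the error must be let to decay like $t^{-\al_2'}$, and then the correction time $t_0\sim\eta^{1-m_1}$ remains of lower order precisely because $\al_2'(m_1-1)<1$. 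Keeping all constants uniform in $a\in(0,a_2]$, so that a single full-measure set works for every $a$, is the other point that requires care.
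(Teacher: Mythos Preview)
Your proposal is correct and follows essentially the same route as the paper: both compare $w_{e,a}$ to the front-like solution $u_{e,a}=(1-a)v_{e,a}+a$ via Lemma~\ref{L.7.1} with a time-decaying $\eta$ (the paper takes $\eta_T=T^{-\gamma}$ with $\gamma=\tfrac12(\alpha_2'+\tfrac1{m_1-1})$, you take $\eta=\epsilon t^{-\alpha_2'}$ and then choose $\epsilon$ small), and both rely on $\alpha_2'(m_1-1)<1$ to make the time shift $t_0\sim\eta^{1-m_1}=o(t)$; your passage to the $f_a$-coordinates is just the affine change $u\mapsto(u-a)/(1-a)$, so the two arguments are the same computation. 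The one place to tighten is the claim that a single full-measure set $\Omega_e$ works for all $a$: uniformity of the estimates in $a$ is not by itself sufficient, and the paper instead restricts first to $a\in[0,a_2]\cap\bbQ$ (a countable intersection) and then extends to all $a\in[0,\delta]$ using that $w_{e,a}$ is non-decreasing in $a$.
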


\begin{proof}
Fix any $e\in\bbS^{d-1}$, and let $u_{e,a}$ be from \eqref{7.444} and  $c_a^*(e)$ from Proposition \ref{P.7.6}.
From that proposition  and $f(\cdot,a,\cdot)\equiv 0$ for all $a\in[0,a_2]$ we know that for almost all $\omega\in\Omega$ we have 
\beq\lb{7.9}
\lim_{t\to\infty}\sup_{|y|\leq \Lambda t} \, \sup_{x\in(c_a^*(e)e+K)t}u_{e,a}(t,x,\Upsilon_y\omega)=a
\eeq
for each $a\in[0,a_2]\cap\bbQ$,  $\Lambda>0$, and compact $K\subseteq\calH^{+}_e=\{x\in \bbR^d\,|\, x\cdot e>0\}$.  Fix any such $\omega$, and then any compact $K\subseteq\calH^{+}_e$.
Let $K'\subseteq \calH_e^+$ be compact and such that $K\subseteq (K')^0$. 
Proposition \ref{P.7.6} then yields $\delta\in (0,a_2]$ 
such that for all $a\in[0, \delta]$ we have 
\beq\lb{7.11}
c^*(e)e+K\subseteq (c_a^*(e)e+K')(1+2\delta).
\eeq
Since $\alpha_2'<\min\{ \frac 1{m_1-1}, \frac {1-\alpha_2}{m_2} \}$, there exists $T_0\geq \kappa_*$ such that for all
 $T\geq T_0$ we have
 \beq\lb{7.13}
\eta_T\leq \frac{\theta^*}2\qquad\text{and}\qquad \max\left\{ M_*(T+\kappa_*+\tau_T)^{\al_2'}\eta_T,(\tau_T+2\kappa_*) T^{-1} \right\}\leq {\delta},
\eeq
where
 $\eta_T:=T^{-\gamma}$ with $\gamma:=\frac{1}{2}(\al_2'+\frac{1}{m_1-1})$, and $\tau_T:=1+D_1\eta_T^{1-m_1}$ with $D_1$ from Lemma~\ref{Cor.2.1}. 
 
 Now fix any $\Lambda>0$.
We see from \eqref{7.9} that for each $a\in [0,a_2]\cap\bbQ$, there is a function $\varphi_{a}:[0,\infty)\to [0,\infty)$ 
such that $\lim_{t\to\infty}\varphi_{a}(t)=0$ and
\beq\lb{7.99}
\sup_{t\geq T}\sup_{|y|\leq \Lambda t}\sup_{x\in (c_a^*(e)e+K')t}u_{e,a}(t,x,\Upsilon_y\omega)\leq a+\varphi_{a}(T).
\eeq
Pick any $T\ge T_0$ and $a\in[0, \delta]\cap\bbQ$, and let $w_{e,a}$ be from Definition \ref{D.5.0}.
Then Lemma \ref{Cor.2.1} yields $u_{e,a}(\tau_T,\cdot,\cdot)\geq 1-\eta_T$ on $\calH_e^-$, so from $u_{e,a}\ge a$ and  $(u_{e,a})_t\geq 0$ we see that
\beq\lb{7.16}
u_{e,a}(t+\tau_T,\cdot,\cdot)+\eta_T \geq w_{e,a}(0,\cdot,\cdot)
\eeq
for all $t\ge 0$.  Since Lemma \ref{L.7.1} and  \eqref{7.13} show that
\[
u_+(t,x,\cdot) :=u_{e,a}((1+M_*(T+\kappa_*+\tau_T)^{\al_2'}\eta_T)t+\tau_T,x,\cdot )+\eta_T
\]
is a supersolution to \eqref{1.1} on $(\kappa_*,T+\kappa_*)\times\bbR^d$, the comparison principle and \eqref{7.16} yield 
\[
u_+(t+\kappa_*,x,\cdot)\geq w_{e,a}(t,x,\cdot)
\]
for all $(t,x)\in [0,T]\times\bbR^d$.  This, \eqref{7.13},  \eqref{7.11}, \eqref{7.99}, and $(u_{e,a})_t\geq 0$ now show that
\begin{align*}
    \sup_{|y|\leq \Lambda T} & \sup_{x\in (c^*(e)e+K)T} w_{e,a}(T,x,\Upsilon_y\omega) \\
    &\leq \sup_{|y|\leq \Lambda T}\sup_{x\in (c^*(e)e+K)T} u_{e,a}((1+M_*(T+\kappa_*+\tau_T)^{\al_2'}\eta_T)(T+\kappa_*)+\tau_T,x,\Upsilon_y\omega)+\eta_T\\
        &\leq \sup_{|y|\leq \Lambda T}\sup_{x\in (c^*(e)e+K)T} u_{e,a}((1+\delta)T+\tau_T+2\kappa_*,x,\Upsilon_y\omega)+\eta_T\\
    &\leq \sup_{|y|\leq \Lambda T}\sup_{x\in (c^*(e)e+K)T} u_{e,a}((1+2\delta)T,x,\Upsilon_y\omega)+\eta_T \\
    &\leq \sup_{|y|\leq \Lambda(1+2\delta) T} \, \sup_{x\in (c_a^*(e)e+K')(1+2\delta)T} u_{e,a}((1+2\delta)T,x,\Upsilon_y\omega)+\eta_T \\
    &\leq a+\varphi_{a}((1+2\delta)T)+T^{-\gamma}.
\end{align*}
 Hence
\[
\lim_{t\to\infty}\sup_{|y|\leq \Lambda t}\sup_{x\in (c^*(e)e+K)t}w_{e,a}(t,x,\Upsilon_y\omega)\le  a
\]
for all $a\in[0,\delta]\cap\bbQ$ (and the previously fixed $(\omega,K,\Lambda)$).  
Since $w_{e,a}$ is non-decreasing in $a$, this  extends to all $a\in [0,\delta]$. 
And since $\delta$ does not depend on $\Lambda$, we obtain \eqref{6.222} with $\lambda_{K,\omega,e}(a)=a+(1-a)\chi_{(\delta,1]}(a)$, so the result follows. 
\end{proof}

\begin{proof}[Proof of Theorem \ref{T.1.7}(i)]
This is now identical to the proof of Theorem \ref{T.1.3} in Section \ref{S6}, using Proposition \ref{P.7.4} in place of Proposition \ref{P.6.4}.
\end{proof}

\medskip

\appendix

\section{Proof of Lemma \ref{L.2.4}}

If $F_0$ is the function defined before Lemma \ref{L.2.2}, then we have 
\beq\lb{A.5}
\delta=\delta(M,\theta_1,m_1,\al_1):= \min_{u\in [1-2\theta_1/3,1-\theta^*]} {F_0}(u)>0.
\eeq

We now claim that for each ${L}\geq 1$, there is $R_{L}:=R_L(M,\theta_1,m_1,\al_1)$ and a smooth function $u_{L}:\bbR^d\to\bbR$ such that 
\begin{align}
   (1-\theta^*)\chi_{S} \leq u_{L} & \leq (1-\theta^*)\chi_{B_{R_{L}}(S)},\lb{A.6}\\
|\Delta u_{L}|+|\nabla u_{L}|^2 & \leq \frac \delta L   \lb{A.4}
\end{align}
hold on $\bbR^d$, and for each $x\in\bbR^d$ with $u_L(x)< \frac{1-\theta^*}{3}$ we have
\beq \lb{A.1}
\Delta u_{L}(x)\geq 0.
\eeq
Note that if we also had $1-\frac 23 \theta_1\leq \frac{1-\theta^*}{3}$ (which is not the case), 
then \eqref{A.5} and \eqref{A.4} would show that for such $u_L$ (with $L\ge 1$) we have
\[
{F_0}\left(u_L(x)\right)\geq \delta\geq -\Delta u_{L} (x)
\]
whenever $u_L(x)\in\left[\frac{1-\theta^*}{3},1-\theta^*\right]$,
so this and \eqref{A.1} would yield
\[
\Delta u_{L}+{F_0}(u_{L})\geq 0
\]
on $\bbR^d$.  Hence the result would follow with $u_{0,S}:=u_{1}$ and $R_0:=R_{1}$ because $F_0\le F$.

Let us now prove the claim.
For any $a\in (0,\frac{1}{8})$, let $0\not\equiv \xi_a:\bbR^d\to\bbR$ be a smooth, radially symmetric, non-negative function supported in $B_{a}(0)$, and define 
\[
\varphi_a:=\frac{\zeta*\xi_a}{\|\zeta*\xi_a\|_{L^1}},
\]
where \begin{equation*}
    {\zeta}(x):=\left\{\begin{aligned}
    &\left({|x|^{2-d}}-2^{d-2}\right)_+&&\text{ if }d\geq 3,\\
    &\ln_- (2|x|)&&\text{ if }d=2.
    \end{aligned}\right.
\end{equation*}
Notice that ${\zeta}$ is sub-harmonic on $\bbR^d\backslash \{0\}$, and it is supported  and integrable in $B_{1/2}(0)$. Therefore it is not hard to see that
\beq\lb{A.i}
\lim_{a\to\infty}\int_{B_a(0)}\varphi_a(x)dx=0.
\eeq
And since $\xi_a$ is supported in $B_a(0)$, we also have
\[
\Delta \varphi_a(x)=\int_{\bbR^d}\Delta\zeta(x-y)\xi_a(y)dy \geq 0
\]
for all $x\in \bbR^d\setminus B_a(0)$.
Thus, for any $R\geq 1$, the function 
\[
 \varphi_{a,R}(x):=R^{-d} \varphi_a (R^{-1}x)
\]
 satisfies
\beq\lb{A.ii}
\Delta \varphi_{a,R}\geq 0
\eeq
on $\bbR^d\setminus B_{aR}(0)$.

Next, for some $N\geq 1$ (to be determined later), take 
\[
u=u_{a,R,N,S}:=(1-\theta^*)\chi_{B_{NR}(S)}*\varphi_{a,R}.
\]
Direct computations then yield
\begin{align*}
    |\nabla u(x)|  & \leq \int_{\bbR^d}|\nabla \varphi_{a,R}(y)| dy
    = R^{-1}\int_{B_{1}(0)}|\nabla \varphi_a (y)|dy, \\
|\Delta u(x)| & \leq \int_{\bbR^d}\left|\Delta \varphi_{a,R}(y)\right| dy
    = R^{-2}\int_{B_{1}(0)}\left|\Delta \varphi_a (y)\right|dy
\end{align*}
because $\varphi_a$ is supported in $B_1(0)$.  Hence \eqref{A.4} will hold with $u_L:=u$ provided $R=R(a,\delta,{L})$ is chosen  large enough.  And then $N\ge 1$ shows that \eqref{A.6} will also hold as long as we pick $R_L\ge (N+1)R$ (given this $R$, as well as some yet to be determined $a$ and $N$).

It remains to show \eqref{A.1} when $u(x)< \frac{1-\theta^*}{3}$.  If $d(x,S)\ge (N+a)R$, then \eqref{A.ii} yields
\[
\Delta u (x)= (1-\theta^*)\left(\chi_{B_{NR}(S)}*\Delta\varphi_{a,R} \right)(x)\geq 0,
\] 
so \eqref{A.1} holds. If $d(x,S) \le (N+a)R$, let $z\in\overline S\cap\overline{B_{(N+a)R}(x)}$.  Then
\[
 (\chi_{B_{NR}(S)}*\varphi_{a,R})(x)
 \geq  \int_{ B_{NR}(z)}\varphi_{a,R}(x-y)dy= \int_{ B_{N}(z')}\varphi_{a}(y)dy,
\]
with $z':=\frac{z-x}R$, so $|z'|\le N+a$.
From \eqref{A.i} and radial symmetry of $\varphi_a$, we get
\[
\lim_{N\to\infty}\lim_{a\to 0}\int_{ B_{N}((N+a)(1,0,\dots,0))}\varphi_{a}(y)dy=\frac{1}{2},
\]
so there are universal $a\in(0,\frac 18)$ and $N\ge 1$ such that the last integral is at least $\frac 13$.  Then
\begin{align*}
    u(x)=(1-\theta^*)(\chi_{B_{NR}(S)}*\varphi_{a,R})(x)\geq \frac{1-\theta^*}{3}
\end{align*}
holds when $d(x,S)\ge (N+a)R$, so  \eqref{A.1} holds when $u(x)< \frac{1-\theta^*}{3}$ and the claim is proved.

Next, to prove the lemma, recall that $1-\frac 23 \theta_1\in( \frac{1-\theta^*}{3},1-\theta^*)$ and
take $u_{0,S}:=\psi(u_{L})$, for some $L\ge 1$ and some $\psi:[0,1-\theta^*]\to[0,1-\theta^*]$ satisfying the following:
\begin{enumerate}[(i)]
    \item $\psi $ is smooth and non-decreasing on $[0,1-\theta^*]$;
    \smallskip
    \item $\psi(0)=0$, $\psi(1-\theta^*)=1-\theta^*$ and $\psi (\frac{1-\theta^*}3 )=1-\frac 23 \theta_1$,
    \smallskip
    \item $\psi''= 0$ on $[0,\frac{1-\theta^*}3]$.
\end{enumerate}
From (i,ii) and \eqref{A.6} we clearly have
\[
(1-\theta^*)\chi_{S}\leq u_{0,S}\leq (1-\theta^*)\chi_{B_{R_{L}}(S)},
\]
so it suffices to take $R_0:=R_L$ and verify \eqref{2.6}. 

When $ u_{L}(x)< \frac{1-\theta^*}{3}$,  \eqref{A.1} and (i,iii) yield
\[
\Delta \psi(u_{L}(x))+{F_0}(\psi(u_{L}(x)))\geq \psi'(u_L(x)) \Delta u_{L}(x)+\psi'' (u_L(x)) |\nabla u_{L}(x)|^2\geq 0.
\]
When $u_{L}(x)\geq \frac{1-\theta^*}{3}$, \eqref{A.5} and (ii) yield $ {F_0}(\psi(u_{L}(x)))\geq \delta$.
Hence with
\[
{L}=L(M,\theta_1,m_1,\al_1):=\max\{\|\psi'\|_\infty,\,\|\psi''\|_\infty\},
\]
we get 
\begin{align*}
\Delta \psi(u_{L}(x))+{F_0}(\psi(u_{L}(x)))&\geq \psi'(u_L(x)) \Delta u_{L}(x)+\psi'' (u_L(x)) |\nabla u_{L}(x)|^2 +\delta\\
& \geq \delta-{L}(|\Delta u_{L}(x)|+|\nabla u_{L}(x)|^2).
\end{align*}
So \eqref{2.6} follows from \eqref{A.4} and $F_0\le F$, concluding the proof.

\section{Proof of Lemma \ref{L.3.1}}



Let us drop $\omega$ from the notation.  Also recall that we extend the reactions by 0 to $u\notin[0,1]$.


Let us start with four estimates involving the reactions where $u_1(t,x)\notin(\theta^*,1-\theta^*)$.
From \eqref{2.10} and $\eta\le\frac{\theta^*}2$ we get ${\theta^*}+\eta\leq \theta_1$. Hence \textbf{(H1)} shows that for $u\leq {\theta^*}$ we have 
\beq \lb{8.1}
f_1(\cdot,u)\equiv f_2(\cdot,u\pm \eta) \equiv 0
\eeq
 on $\bbR^d$, while for  $u\geq 1-\eta$ we have
\beq \lb{8.2}
f_1(\cdot,u)\geq 0 \equiv  f_2(\cdot,u+{\eta})
\eeq
 on $\bbR^d$.  If $u_1(t,x)\in [1-\theta^*,1-\eta)$ for some $(t,x)\in[t_0,\infty)\times\bbR^d$, then $(u_1)_t\geq 0$ shows that $u_1(t_0,x)<1-\eta$, so \textbf{(H1)} and \eqref{2.4} yield
\beq \lb{8.3}
f_1(x,u_1(t,x))=f_2(x,u_1(t,x))\geq f_2(x,u_1(t,x)+{\eta}).
\eeq
Finally, if $u_1(t,x)\in [1-\theta^*,1]$ for some $(t,x)\in[t_0,\infty)\times\bbR^d$ and $u_1(t_0,x)<1-\eta$, then \textbf{(H1)} and \eqref{2.4} again yield
\beq \lb{8.4}
f_1(x,u_1(t,x))\leq f_1(x,u_1(t,x)-\eta)= f_2(x,u_1(t,x)-{\eta}).
\eeq

Denote $\tau_\pm(t):=(1\pm {M_*}\eta)\,t+t_0$, so that $u_\pm(t,x)=u_1(\tau_\pm(t),x)\pm\eta$.  If now  $u_1(\tau_+(t),x)\notin (\theta^*,1-\theta^*)$ for some $(t,x)\in(0,\infty)\times\bbR^d$, then \eqref{8.1}, \eqref{8.2}, and \eqref{8.3} yield
\begin{align*}
    [(u_+)_t-&\Delta u_+-f_2(\cdot,u_+)](t,x)\\
    \geq\,& (1+{M_*}\eta)(u_1)_t(\tau_+(t),x)-\Delta u_1(\tau_+(t),x)-f_1(x,u_1(\tau_+(t),x))\\
    = \,&{M_*} \eta (u_1)_t(\tau_+(t),x) \\
     \geq\,& 0.
\end{align*}
Similarly,  if $u_1(\tau_-(t),x)\notin (\theta^*,1-\theta^*)$ and $u_1(t_0,x)<1-\eta$, then  \eqref{8.1}, \eqref{8.2}, and \eqref{8.4} yield
\[
[(u_-)_t-\Delta u_--f_2(\cdot,u_-)](t,x)\leq 0.
\]

Let us now consider those $(t,x)\in (\kappa_*,\infty)\times\bbR^d$ for which $u_1(\tau_+(t),x)\in (\theta^*,1-\theta^*)$.  Then  $(u_1)_t(\tau_+(t),x) \geq \mu_*$ by  \eqref{2.1}, so 
$|f_2(x,u_+(t,x))-f_2(x,u_1(\tau_+(t),x))|\leq  {M}\eta$ yields
\begin{align*}
  [(u_+)_t- &\Delta u_+-f_2(\cdot,u_+)](t,x)\\ 
  \geq\,& (1+{M_*}\eta)(u_1)_t(\tau_+(t),x)-\Delta u_1(\tau_+(t),x)-f_2(x,u_1(\tau_+(t),x))- M\eta\\
  \ge \,& (u_1)_t(\tau_+(t),x)-\Delta u_1(\tau_+(t),x)-f_1(x,u_1(\tau_+(t),x)) + {M_*}\eta\mu_*- M\eta\\
    \geq\, & 0,
\end{align*}
where we again used \eqref{2.4} due to $u_1(t_0,x)<1-\theta^*<1-\eta$.
Similarly if $u_1(\tau_-(t),x)\in (\theta^*,1-\theta^*)$ for some $(t,x)\in ( 2\kappa_*,\infty)\times\bbR^d$ (so $\tau_-(t)>\kappa_*$ because $M_*\eta\le \frac 12$), 
we obtain
\[
[(u_-)_t-\Delta u_--f_2(\cdot,u_-)](t,x)\leq 0.
\]
This proves the claims about $u_+$ and $u_-$.

If now $u_2(0,\cdot)\leq u_1(t_0,\cdot)+\eta$ on $B_R(y)$, from $(u_1)_t\geq 0$ we also obtain $u_2(0,\cdot)\leq u_+(\kappa_*,\cdot)$ there.
Since $u_+$ is a supersolution to \eqref{1.1} with $f_2$ in place of $f$ on $(\kappa_*,\infty)\times B_R(y)$, Lemma~\ref{L.2.1} yields
\[
u_2(t,y)\leq u_+(t+\kappa_*,y)+2de^{2M t -  \sqrt{M/d\,}R}
\]
 for all 
$t\ge 0$.  Hence,
\[
u_2(t,y)\leq u_+(t+\kappa_*,y)+\frac{\theta^*}{2}
\]
for all 
$t\in [0,{T}_{u_2}(y)]$ as long as 
\[
R \geq  2\sqrt{Md\,}{T}_{u_2}(y)+\sqrt{d/M}\ln\frac {4d}{\theta^*},
\]
which will be guaranteed by taking $D_2:=2\sqrt{Md}\ln\frac{4d}{\theta^*}$.

It follows from $\eta\le\frac{\theta^*}2$ and the definition of $T_{u_2}(y)$ that,
\[
    u_1(\tau_+(T_{u_2}(y)+\kappa_*),y) \ge u_2(T_{u_2}(y), y)-\eta-\frac{\theta^*}{2}
\ge 1-2\theta^*.
\]
By Lemma \ref{L.2.2},
we have
\[
u_1(\tau_+(T_{u_2}(y)+\kappa_*)+\kappa_0,y)\geq 1-\theta^*.
\]
Therefore
\beq \lb{8.5}
T_{u_1}(y)\leq \tau_+(T_{u_2}(y)+\kappa_*)+\kappa_0\leq (1+{M_*}\eta)T_{u_2}(y)+2\kappa_*+\kappa_0 + t_0.
\eeq


\section{Proof of Lemma \ref{L.4.3}}

We again have \eqref{8.1} and \eqref{8.2}.
For $(x,u)\in B_R(y)\times [1-\theta^*,1-\eta]$, \textbf{(H3)} and \eqref{3.3} yield either 
\begin{align*}
    f_1(x,u)\geq f_1(x,u+\eta)+\al_3\eta^{m_3}\geq f_2(x,u+{\eta})
\end{align*}
(if $f_1$ satisfies \textbf{(H3)}) or 
\begin{align*}
    f_1(x,u)\geq f_2(x,u)-\al_3\eta^{m_3}\geq f_2(x,u+{\eta})
\end{align*}
(if $f_2$ does), replacing \eqref{8.3}.
Hence, as in the proof of Lemma \ref{L.3.1} and  with $\tau_+,u_+$ from it, 
we have 
that if $u_1(\tau_+(t),x)\notin (\theta^*,1-\theta^*)$ for some $(t,x)\in(0,\infty)\times B_R(y)$, then
\[
    [(u_+)_t- \Delta u_+-f_2(\cdot,u_+)](t,x)\ge 0.
\]

Let us now consider those $(t,x)\in (\kappa_*,\infty)\times B_R(y)$ for which $u_1(\tau_+(t),x)\in (\theta^*,1-\theta^*)$.  Then  $(u_1)_t(\tau_+(t),x) \geq \mu_*$ by  \eqref{2.1}, so 
\[
f_2(x,u_+(t,x))-f_1(x,u_1(\tau_+(t),x)) \leq  \al_3 \eta^{m_3}+{M}\eta\leq (1+M)\eta
\]
yields
\begin{align*}
   [(u_+)_t-& \Delta u_+-f_2(\cdot,u_+)](t,x)\\
  \geq\,& (1+{M_*}\eta)(u_1)_t(\tau_+(t),x)-(\Delta u_1)(\tau_+(t),x)-f_1(x,u_1(\tau_+(t),x))-(1+ M)\eta\\
    \geq\,& {M_*} \eta \mu_*-(1+ M)\eta\\
    =\,&0.
\end{align*}

Hence $u_+$ is a supersolution to \eqref{1.1} with $f_2$ in place of $f$ on $(\kappa_*,\infty)\times B_R(y)$.  Since we also have $u_2(0,\cdot)\le u_+(\kappa_*,\cdot)$ on $B_R(y)$ due to $(u_1)_t\ge 0$, 
\eqref{8.5} follows via Lemmas \ref{L.2.1} and \ref{L.2.2}
as at the end of the proof of Lemma \ref{L.3.1}.






\end{document}